\newtheorem{theorem}{Theorem}[section]
\newtheorem{lemma}[theorem]{Lemma}
\newtheorem{claim}[theorem]{Claim}
\newtheorem*{claim*}{Claim}
\newtheorem{corollary}[theorem]{Corollary}
\newtheorem{proposition}[theorem]{Proposition}
\newtheorem{assumption}[theorem]{Assumption}
\newtheorem{mtheo}{Theorem}
\theoremstyle{definition}
\newtheorem{definition}[theorem]{Definition}
\newtheorem{remark}[theorem]{Remark}
\newtheorem*{maintheorem*}{Theorem}
\newcommand{\eqdef}{\stackrel{\scriptscriptstyle\rm def}{=}}
\newcommand{\spac}[1]{{\quad\text{ #1 }\quad}}
\DeclareMathOperator{\var}{var}
\DeclareMathOperator{\diam}{diam}
\DeclareMathOperator{\card}{card}
\DeclareMathOperator{\interior}{int}
\DeclareMathOperator{\Lip}{Lip}
\def\bp{\mathfrak p}
\def\Sub{\mathcal{S}}
\def\cW{\EuScript{B}}
\def\cS{\EuScript{S}}
\def\frakR{\mathfrak{R}}
\def\fS{\EuScript{S}}
\def\fT{\EuScript{T}}
\def\fX{\EuScript{X}}
\def\cC{\mathcal{C}} 
\def\fX{\mathfrak{X}}
\def\fY{\mathfrak{Y}}
\def\fE{\mathfrak{E}}
\def\fH{\mathfrak{H}}
\def\fm{\mathfrak{m}}
\def\fT{\mathfrak{T}}
\def\bnu{\boldsymbol{\nu}}
\def\ua{\underline{a}}
\def\bt{\mathbf{t}}
\def\fm{\mathfrak m}
\def\ft{\mathfrak t}
\def\bA{\mathbb{A}}
\def\bP{\mathbb{P}}
\def\bE{\mathbb{E}}
\def\bN{\mathbb{N}}
\def\bZ{\mathbb{Z}}
\def\bR{\mathbb{R}}
\def\bS{\mathbb{S}}
\def\cA{\EuScript{A}}
\def\cB{\EuScript{B}}
\def\cN{\EuScript{N}}
\def\cO{\EuScript{O}}
\def\cP{\EuScript{P}}
\def\cV{\EuScript{V}}
\def\cN{\EuScript{N}}
\def\cW{\mathcal{W}}
\def\cM{\EuScript{M}}
\numberwithin{equation}{section}
\DeclareMathSymbol{\varnothing}{\mathord}{AMSb}{"3F}
\renewcommand{\emptyset}{\varnothing}
\title[Loosely Bernoulli zero exponent measures]{Loosely Bernoulli zero exponent measures\\for elliptic matrix cocycles}
\author[ L.~J.~D\'iaz]{L. J. D\'\i az}
\address{Departamento de Matem\'atica PUC-Rio, Marqu\^es de S\~ao Vicente 225, G\'avea, Rio de Janeiro 22451-900, Brazil}
\email{lodiaz@mat.puc-rio.br}
\author[K.~Gelfert]{K. Gelfert}
\address{Instituto de Matem\'atica Universidade Federal do Rio de Janeiro, Av. Athos da Silveira Ramos 149, Cidade Universit\'aria - Ilha do Fund\~ao, Rio de Janeiro 21945-909,  Brazil}
\email{gelfert@im.ufrj.br}
\author[M.~Rams]{M. Rams} \address{Institute of Mathematics, Polish Academy of Sciences, ul. \'{S}niadeckich 8,  00-656 Warszawa, Poland}
\email{rams@impan.pl}
\thanks{We would like to express our deep gratitude to Dominik Kwietniak for his patient and lucid answers to our multiple questions about the ergodic theory. 
Most part of this research was done while the authors participated in the Thematic Research Program `Modern holomorphic dynamics and related fields', Excellence Initiative -- Research University program at the University of Warsaw, 2023. This research has been supported [in part] by 
CAPES -- Finance Code 001, by 
CNPq-grants  310069/2020-3 
430154/2018-6,  
305327/2022-4, and 
E-16/2014 INCT/FAPERJ, 
E-26/211.313/2021 FAPERJ, 
E-26/200.371/2023 CNE/FAPERJ, 
and
PRONEX E-26/010.001252/2016 FAPERJ (Brazil).  
MR was also partially supported by National Science Centre grant 2019/33/B/ST1/00275 (Poland). 
The authors thank their home institutions for the hospitality during their visits while preparing this paper.
}
\begin{document}

\begin{abstract}
For an open and dense subset of elliptic ${\rm SL}(2,\bR)$ matrix cocycles, we construct a family of loosely Bernoulli ergodic measures with zero top Lyapunov exponent. This provides a counterpart to a classical result by Furstenberg. The construction gives also an $\bar f$-connected set of measures with these properties whose entropies vary continuously from zero to almost the maximal possible value. We also obtain an analogous result for an open class of nonhyperbolic step skew products with $\bS^1$ diffeomorphism fiber maps. Our approach combines substitution schemes between finite letter alphabets and differentiable dynamics.
\end{abstract}

\keywords{Bernoulli and loosely Bernoulli automorphisms, matrix cocycles, Lyapunov exponents, Feldman $\bar f$-metric, skew products, nonhyperbolic measures}
\subjclass[2000]{%
37A35, 
37D25, 
15B99
}
\maketitle


\section{Introduction}

We study matrix cocycles generated by finite families $\bA\eqdef\{A_1,\ldots,A_N\}$, $N\ge 2$, of
${\rm SL}(2,\bR)$ (the space of $2\times 2$ matrices with real coefficients and determinant one)
and consider the \emph{top Lyapunov exponent}
\[
	\lambda_1(\bA,\nu^+)
	\eqdef\lim_{n\to\infty}\frac1n\int\log\,\lVert \bA^n(\xi^+)\rVert\,d\nu^+(\xi^+),\quad
		\bA^n(\xi^+)\eqdef \lVert A_{\xi_{n-1}}\circ\cdots\circ A_{\xi_0}\rVert,
\] 
where $\xi^+=(\xi_0,\xi_1,\ldots)\in\Sigma_N^+\eqdef\{1,\ldots,N\}^{\bN_0}$ and $\nu^+$ is a shift-invariant ergodic probability measure on $\Sigma_N^+$. Its existence is now a standard consequence of Kingman's subadditive ergodic theorem, but was first established in  \cite{FurKes:60}, who showed that any stationary stochastic process taking values in a set of matrices has asymptotically an exponential growth rate. 
Moreover, by Furstenberg \cite{Fur:63}, excluding some well-defined exceptional cases, for all identically and independently distributed (i.i.d.) processes the top Lyapunov exponent is positive. Furstenberg's result was generalized in \cite{Vir:79} to stationary Markov-dependent sequences and in \cite{Gol:22} to the non-stationary Markov case.%
\footnote{All those results are, in fact, stated in much higher generality, but in this paper we only deal with the two-dimensional case.} 
See also \cite{BarMal:20} for a version of random iterations of conservative diffeomorphisms.

To state a specific, yet pertinent, case recall that a cocycle $\bA=\{A_1,\ldots,A_N\}$ is \emph{strongly irreducible} if there does not exist any finite collection $V_1,\ldots,V_m$ of nonzero proper subspaces so that $A_i(\bigcup_{j=1}^mV_j)=\bigcup_{j=1}^mV_j$ for every $i=1,\ldots,N$. Moreover, $\bA$ is \emph{proximal} if there is $(i_1,\ldots,i_n)\in\{1,\ldots,N\}^n$ such that the matrix resulting from the composition $A_{i_n}\circ\cdots\circ A_{i_1}$ has two real eigenvalues with different absolute values.

\begin{maintheorem*}[{Furstenberg \cite{Fur:63}}]
	Assume that $\bA\subset{\rm SL}(2,\bR)^N$ is strongly irreducible and proximal. Then for every nondegenerate Bernoulli measure $\nu^+$ it holds $\lambda_1(\bA,\nu^+)>0$.
\end{maintheorem*}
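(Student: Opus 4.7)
The plan is to pass to the induced action on the projective line $\bP^1$ and to use the theory of $p$-stationary measures. Let $p=(p_1,\dots,p_N)$ be the probability vector defining the nondegenerate Bernoulli measure $\nu^+$ (so every $p_i>0$). By compactness and Krylov--Bogolyubov, the Markov operator $Tf(\bar v)=\sum_i p_i f(A_i\cdot\bar v)$ admits a $p$-stationary probability measure $\mu$ on $\bP^1$, i.e.\ $\sum_i p_i (A_i)_*\mu=\mu$. The first step is to show that strong irreducibility forces every such $\mu$ to be non-atomic: otherwise the finite set of atoms of maximal mass would, by stationarity and positivity of $p_i$, be preserved by each $A_i$, producing a finite invariant family of lines and contradicting strong irreducibility.

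Integrating the telescoping cocycle identity for $\log\lVert A_{\xi_{n-1}}\cdots A_{\xi_0}v\rVert$ against $\mu\otimes\nu^+$ and invoking stationarity of $\mu$ then yields Furstenberg's formula
\[
\lambda_1(\bA,\nu^+)\;=\;\sum_{i=1}^N p_i\int_{\bP^1}\log\frac{\lVert A_iv\rVert}{\lVert v\rVert}\,d\mu([v]),
\]
which a priori only gives $\lambda_1\ge 0$. Proximality combined with strong irreducibility also implies that the semigroup $\Gamma=\langle A_1,\dots,A_N\rangle$ acts \emph{contractingly} on $\bP^1$: there exist products $B_n\in\Gamma$ with $B_n/\lVert B_n\rVert$ converging to a rank-one matrix, so that $\Gamma$ contains elements which squeeze projective distances arbitrarily.

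The crux---and the main technical obstacle---is to rule out $\lambda_1=0$. The starting point is the ${\rm SL}(2,\bR)$ identity
\[
|\sin\theta(A\bar v,A\bar w)|=\frac{|\sin\theta(\bar v,\bar w)|}{\lVert Av\rVert\lVert Aw\rVert}
\]
for unit representatives, which upon iteration and integration against $\mu\otimes\mu\otimes\nu^+$ gives the balance equation $\bE[\log|\sin\theta(\bar v_n,\bar w_n)|]=\bE[\log|\sin\theta(\bar v,\bar w)|]-2n\lambda_1$ for two independent $\mu$-distributed points. Assume $\lambda_1=0$; then the left-hand side is constant in $n$, whereas the contracting elements of $\Gamma$, coupled with stationarity and positivity of each $p_i$, drive the distribution of $(\bar v_n,\bar w_n)$ towards the diagonal, and the bound $|\sin\theta|\le 1$ together with Fatou's lemma then force $\bE[\log|\sin\theta(\bar v_n,\bar w_n)|]\to-\infty$---the desired contradiction. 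The delicate points are (i) verifying that the initial integral $\bE[\log|\sin\theta(\bar v,\bar w)|]$ is finite, which reduces to a log-integrability statement on $\mu$ established via the invariance equation and the non-atomicity from Step 1, and (ii) turning the qualitative contraction of $\Gamma$ into a quantitative tendency of random trajectory pairs to coalesce. Making these two steps precise is the heart of Furstenberg's original argument.
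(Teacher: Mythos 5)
The paper does not prove this theorem; it is quoted as a classical result from Furstenberg's 1963 paper, and the paper's own contribution is orthogonal (constructing loosely Bernoulli zero-exponent measures). So there is no in-paper proof to compare your proposal against.

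Evaluating your sketch on its own, the overall roadmap is a recognized route to Furstenberg's theorem: stationary measure on $\bP^1$, non-atomicity from strong irreducibility, the Furstenberg formula, contraction from proximality, and a balance equation for the pair process. Step~1 (non-atomicity) and the Furstenberg formula derivation are correct. The coalescence step (your point (ii)) is genuinely delicate but standard: the martingale convergence of $(A^{(n)})_*\mu$ to a random Dirac mass, which is Furstenberg's lemma and does not presuppose positivity.

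The genuine gap is in your point (i). You assert that finiteness of $\mathbb{E}\bigl[\log\lvert\sin\theta(\bar v,\bar w)\rvert\bigr]$ ``reduces to a log-integrability statement on $\mu$ established via the invariance equation and the non-atomicity from Step 1.'' This is not substantiated, and the issue is more than a technicality: non-atomicity alone does \emph{not} imply $\int\!\!\int\log\lvert\sin\theta\rvert\,d\mu\,d\mu>-\infty$ (a non-atomic measure can still concentrate badly), and the standard quantitative regularity theorems for stationary measures (Guivarc'h-type H\"older regularity) are proved \emph{assuming} $\lambda_1>0$, which would be circular here. Without $J_0>-\infty$, the balance relation $J_n=J_0-2n\lambda_1$ under $\lambda_1=0$ is trivially consistent with $J_0=-\infty$, and your reverse-Fatou step also collapses: the only available dominating function for $D_n$ that is uniform in $n$ is $-\log\lvert\sin\theta(\bar v,\bar w)\rvert$, and its integrability is exactly what is in question. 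To make the argument close, you need an independent route to this finiteness (Furstenberg's original entropy/boundary argument does this, but it is quite different in structure), or you should replace the two-point balance argument by the invariance-principle route: show that $\lambda_1=0$ forces the stationary measure to be \emph{invariant} under each $A_i$, and then note that a non-atomic invariant measure on $\bP^1$ cannot coexist with a proximal element in the semigroup.
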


The necessity of both hypotheses  to guarantee the positivity of the top Lyapunov exponent can be seen from the following examples: given $\bA=\{A_1,A_2\}$ so that  
\begin{itemize}[leftmargin=0.5cm ]
\item either $A_1$ and $A_2$ are both rotations (hence $\bA$ is not proximal); 
\item or $A_1=\left(\begin{matrix}\lambda&0\\0&\lambda^{-1}\end{matrix}\right)$,  for some $\lambda>1$,  and $A_2=\left(\begin{matrix}0&-1\\1&0\end{matrix}\right)$ (hence $\bA$ is not strongly irreducible),
\end{itemize}
then every non-degenerate Bernoulli measure $\nu^+$ satisfies $\lambda_1(\bA,\nu^+)=0$.

We prove the existence, for an open and dense subset of elliptic ${\rm SL}(2,\bR)$ matrix cocycles which satisfy the assumptions of Furstenberg's theorem, of a loosely Bernoulli measure with zero top Lyapunov exponent.
This way, we limit how far Furstenberg-like results can be generalized. In some sense, loosely Bernoulli measures have ``short-range memory'' and as such are ``not too far away'' from an i.i.d. process. Historically, the loosely Bernoulli property was introduced by Weiss and further developed by Feldman \cite{Fel:76}, Katok \cite{Kat:77}, and Ornstein, Rudolph, and Weiss \cite{OrnRudWei:82}.
In very rough terms, a positive entropy automorphism is loosely Bernoulli if it is a discrete flow that has a measurable cross-section so that its first return is a Bernoulli automorphism. We postpone the definition and their properties to Section \ref{secLB}. 

Let us introduce our setting more precisely. The space $\mathrm{SL}(2, \mathbb{R})^N$ roughly splits into the subsets of \emph{elliptic} and \emph{uniformly hyperbolic} cocycles (denoted by $\fE_N$ and $\fH_N$, respectively).  Both sets  $\fE_N$ and $\fH_N$ are open and their union is dense in $\mathrm{SL}(2, \mathbb{R})^N$, see \cite[Proposition 6]{Yoc:04}. Hyperbolic cocycles are quite well understood and have positive top Lyapunov exponent (regardless of the base measure), \cite{AviBocYoc:10}. In this way, when searching for zero top Lyapunov exponents, we need to focus on elliptic cocycles, that are not so well understood. Recall that $\bA$ is \emph{elliptic} if its associated multiplicative semigroup contains some \emph{elliptic} element (that is, the absolute value of its trace is less than $2$). Here we consider the subset of {\em{elliptic cocycles with some hyperbolicity}} $\fE_{N, \rm shyp}$ of $\fE_N$ introduced in \cite{DiaGelRam:19}. The set $\mathfrak{E}_{N,\rm shyp}$ is open and dense in $\mathfrak{E}_N$. Moreover, every $\bA\in \fE_{N, \rm shyp}$ is strongly irreducible and proximal, and therefore Furstenberg's Theorem applies.  

 As indicated by the variational principle \cite[Theorem B]{DiaGelRam:22a}, for every cocycle $\bA  \in \fE_{N,\rm shyp}$ there are plenty of ergodic measures $\nu^+$ whose top Lyapunov exponent is zero. See also \cite{BocRam:16,Fen:09} for results illustrating this fact. Furstenberg's theorem implies that those measures cannot be Bernoulli (and not even Markov, by \cite{Vir:79,Gol:22}).
By Theorem~\ref{theMain} below, some of those measures are loosely Bernoulli. 

We also ask what is the ``maximal complexity'' of measures with exponent zero? Denote by $\sigma^+$  the usual left shift on $\Sigma_N^+$, $\cM_{\rm erg}(\Sigma_N^+,\sigma^+)$ is the set of ergodic probability measures, and $h(\sigma^+,\nu^+)$ is the metric entropy of a measure $\nu^+$ (with respect to $\sigma^+$). Using entropy as a quantifier, let us introduce
\begin{equation}\label{h0A}\begin{split}
	&h_0(\bA)
	\eqdef \sup\big\{h(\sigma^+,\nu^+)\colon\nu^+\in\cM_{\rm erg,0}(\Sigma_N^+,\sigma^+)\big\},
	\quad\text{where } \\
	&
	\cM_{\rm erg,0}(\Sigma_N^+,\sigma^+)
	\eqdef\{\nu^+\in \cM_{\rm erg}(\Sigma_N^+,\sigma^+)\colon \lambda_1(\bA,\nu^+)=0\}.
\end{split}\end{equation}
As shown in \cite{DiaGelRam:22a}, we have $h_0(\bA)\in(0,\log N)$. By Theorem~\ref{theMain} below, loosely Bernoulli zero exponent measures can have entropy arbitrarily close to $h_0(\bA)$. 

Our study also relates to ergodic optimization theory, describing the nature of the measures giving rise to the extremal values of the top Lyapunov exponent. While the maximal exponent measures typically are dynamically simple (periodic or of zero entropy, see the survey \cite{Jen:19} for more information),  the minimal exponent measures are often large and dynamically complicated (see for example \cite{BocRam:16,DiaGelRam:22a}).
  
The following is our main result in the setting of matrix cocycles. Equip the space $\Sigma_N^+$ with the $\bar f$-distance. We will provide more details on the $\bar f$-topology and loosely Bernoulli automorphisms in Section \ref{secLB}. For now, just note that $\bar f$-convergence implies convergence in the weak$\ast$ topology and in entropy.

\begin{mtheo}\label{theMain}
	For every $N\ge2$, there is an open and dense subset $\mathfrak{E}_{N,\rm shyp}$ of $\mathfrak{E}_N$ such that for every $\bA\in\mathfrak{E}_{N,\rm shyp}$ and every $\varepsilon\in (0,h_0(\bA))$, there is a $\bar f$-path-connected set $\cN_\varepsilon\subset\cM_{\rm erg,0}(\Sigma_N^+,\sigma^+)$ such that
\begin{enumerate}
\item each $\nu^+\in\cN_\varepsilon$ is loosely Bernoulli,
\item the entropy $h(\sigma^+,\nu^+)$ varies $\bar f$-continuously in $\nu^+\in\cN_\varepsilon$ and 
\[
	\big\{h(\sigma^+,\nu^+)\colon\nu^+\in\cN_\varepsilon\big\}
	\supset\big[0,h_0(\bA)-\varepsilon\big].
\]
\end{enumerate}	
\end{mtheo}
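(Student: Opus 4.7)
The plan is to construct $\cN_\varepsilon$ via a family of substitution-like dynamical systems on carefully chosen finite alphabets of "building blocks" (finite words in $\{1,\ldots,N\}^\ast$) along which the cocycle behaves approximately as a rotation. Two ideas are combined: first, substitution systems are a classical source of loosely Bernoulli automorphisms (Feldman \cite{Fel:76}, Ornstein--Rudolph--Weiss \cite{OrnRudWei:82}), and second, the variational principle from \cite{DiaGelRam:22a} together with the elliptic-with-some-hyperbolicity structure provides the combinatorial freedom to build alphabets of arbitrarily large entropy whose associated matrix products remain bounded.

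First, given $\bA\in\mathfrak{E}_{N,\rm shyp}$ and $\varepsilon>0$, I would invoke the variational principle for $h_0(\bA)$ together with the construction of "rotation-like" periodic orbits from \cite{DiaGelRam:19} to produce, for some sufficiently large length $L$, a finite family $\cW=\{w_1,\ldots,w_K\}\subset\{1,\ldots,N\}^L$ of words such that each product $A_{w_i}$ is close to a rotation and the uniform Bernoulli measure on $\cW^{\bN_0}$, viewed as a shift-invariant measure on $\Sigma_N^+$, has entropy exceeding $h_0(\bA)-\varepsilon/2$. A key point is that the sub-alphabet $\cW$ of rotation-like blocks must have enough cardinality to generate the required entropy, while maintaining uniform control on matrix norms along iterated concatenations.

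Second, on the alphabet $\cW$ I would define a one-parameter family of substitutions $\tau_\theta$, $\theta\in[0,1]$, interpolating between a rigid substitution (pure zero-entropy odometer-type behavior) and a Bernoulli scheme (full entropy $\log K$). Concretely, at each level of a hierarchical construction I would mix deterministic substitution rules with independent Bernoulli choices, with $\theta$ controlling the fraction of the latter. Substitution systems of this type are known to be loosely Bernoulli, and loose Bernoullicity is preserved under $\bar f$-limits \cite{OrnRudWei:82}. Continuity of $\theta\mapsto\nu_\theta^+$ in the $\bar f$-distance, together with the fact that $\bar f$-convergence implies entropy convergence, would yield the desired $\bar f$-path-connected family $\cN_\varepsilon=\{\nu_\theta^+\}$ with entropies covering an interval $[0,h_0(\bA)-\varepsilon]$ by an intermediate value argument.

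The main obstacle, and the delicate heart of the proof, is ensuring that every measure along the path retains zero top Lyapunov exponent. While each elementary block $w_i$ is rotation-like, a long concatenation of rotation-like blocks can a priori produce exponentially growing matrix products, and the Bernoulli mixing at each hierarchical level risks destroying the cancellations. The strategy is to design $\cW$ and $\tau_\theta$ so that the rotation-like property is \emph{self-similar}: a substituted word is itself rotation-like at the next scale, giving recursive control of $\|\bA^n(\xi^+)\|$ at the scales defined by the substitution hierarchy. This requires careful selection of the alphabet tailored to the specific matrices $A_1,\ldots,A_N$, exploiting the elliptic-with-some-hyperbolicity structure to find enough blocks whose images in $\mathrm{SL}(2,\bR)$ sit in a compact neighborhood of $\mathrm{SO}(2)$. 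Once such recursive control is established, summing the logarithmic growth across scales yields $\lambda_1(\bA,\nu^+)=0$ for every $\nu^+\in\cN_\varepsilon$, completing the argument.
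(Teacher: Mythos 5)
Your high-level scaffolding (build a path of substitution-coded measures on a sub-alphabet $\cW$, use that substitution systems are loosely Bernoulli and that the LB property passes to $\bar f$-limits, appeal to the variational principle for the entropy budget) is in the right spirit, but the core mechanism you propose for controlling the Lyapunov exponent does not work, and this is the heart of the theorem. Your plan is to pick rotation-like blocks $w_i$ and then argue that ``a substituted word is itself rotation-like at the next scale,'' so that a pure hierarchical concatenation scheme recursively controls $\|\bA^n(\xi^+)\|$. This self-similarity fails: products of matrices near $\mathrm{SO}(2)$ generically drift exponentially away from the rotations (this is precisely the content of Furstenberg's theorem applied to the sub-alphabet $\cW$, since the blocks will be strongly irreducible and proximal unless you sacrifice essentially all the entropy). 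No choice of alphabet of rotation-like words gives a pure-concatenation substitution hierarchy with uniformly bounded matrix products, so every intermediate measure along your path $\tau_\theta$ would have a positive exponent. Relatedly, the Bernoulli endpoint of your path cannot have zero exponent, so an intermediate value argument on entropy alone does not certify what you need along the whole path.

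What the paper actually does is qualitatively different. It first passes to the step skew product $F_\bA$ on $\Sigma_N\times\bP^1$ in the class $\mathrm{SP}^1_{\rm shyp}$, and starts from an $F$-ergodic measure with \emph{negative} fiber Lyapunov exponent $\alpha$ and large entropy, extracting from it a disjoint family $\cW$ of words of equal length defining a \emph{contracting} IFS (so the blocks are emphatically not rotation-like; they are uniformly contracting on a blending interval $J$). It then runs a ``repeat-and-tail'' cascade (Proposition~\ref{pro:tailing}): at each level the family is concatenated and each word is extended by a short tail, chosen via the ACC/CEC properties, which roughly halves the contraction rate ($\alpha\mapsto\alpha/2$); the exponent is actively driven to zero by these tails while the tail lengths decay geometrically (Assumption~\ref{assumption1}), which is what makes the resulting Bernoulli-coded measures $\nu_n(\bp)=\kappa_{\rm inv}(\cA_n,\varrho_n,\widetilde{\bp_n})$ form a $\bar f$-Cauchy sequence whose limit is LB. Finally, the path is parametrized not by a mixing parameter $\theta$ but by the Bernoulli simplex $\cM_{\rm B}(\cA^\bZ,\sigma_\cA)$ on the initial abstract alphabet; Theorem~\ref{theoprop:pathfinal} and Proposition~\ref{prop:geometriczero} give uniform equicontinuity of the cascade in $\bp$, so $\bp\mapsto\nu_\infty(\bp)$ is $\bar f$-continuous with entropy ranging over an interval. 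In short: the missing idea in your proposal is the tailing mechanism that corrects the exponent at each scale, and the replacement of ``rotation-like'' initial blocks by a genuinely contracting CIFS so that the horseshoe/suspension machinery applies; without that, the zero-exponent claim cannot be salvaged.
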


We will use the term {\it weak* and entropy-convergence}, that is, simultaneously weak* convergence and convergence in entropy. This notion plays also an important role in multifractal analysis and large deviation theory.

\begin{mtheo}\label{theMainb}
	Under the hypotheses of Theorem \ref{theMain}, the set of measures
\[
	\{\nu^+\in\cM_{\rm erg,0}(\Sigma_N^+,\sigma^+)\colon \nu^+\text{ is loosely Bernoulli}\}
\]	
is weak$\ast$ and entropy-dense in $\cM_{\rm erg,0}(\Sigma_N^+,\sigma^+)$.
\end{mtheo}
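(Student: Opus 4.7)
The plan is to upgrade the construction used to build the families $\cN_\varepsilon$ in Theorem~\ref{theMain} so that, instead of producing one $\bar f$-connected family per entropy slot, it targets a prescribed measure $\nu^+\in\cM_{\rm erg,0}(\Sigma_N^+,\sigma^+)$ in the weak$\ast$ and entropy topology. Fix such $\nu^+$ with entropy $h=h(\sigma^+,\nu^+)\in[0,h_0(\bA)]$ and let $\delta>0$. By the Shannon--McMillan--Breiman theorem (or Katok's entropy formula), for all large $n$ we extract a collection $W_n\subset\{1,\ldots,N\}^n$ of $n$-cylinders with $\nu^+([W_n])\ge 1-\delta$, $\#W_n=e^{n(h+o(1))}$, and such that the empirical $k$-block distribution of every word $w\in W_n$ is $\delta$-close to $\nu^+$ for each fixed $k$.

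Next, I would feed the ``letters'' $W_n$ into the substitution/coding construction used to build the $\cN_\varepsilon$. That is, concatenate $W_n$-blocks according to a (nearly) Bernoulli law, and interleave short corrector gadgets drawn from the elliptic structure of $\bA\in\fE_{N,\rm shyp}$ (the same gadgets used to kill the Lyapunov exponent in Theorem~\ref{theMain}) to guarantee ellipticity of the accumulated matrix products. Call the resulting shift-invariant ergodic measure $\mu_n^+$. As in the proof of Theorem~\ref{theMain}, the Feldman $\bar f$-control on induced returns shows that $\mu_n^+$ is loosely Bernoulli, and the elliptic cancellation yields $\lambda_1(\bA,\mu_n^+)=0$. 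Since $\nu^+$-typical $W_n$-blocks dominate $\mu_n^+$-generic orbits and the correctors have vanishing frequency as $n\to\infty$, the block statistics of $\mu_n^+$ converge to those of $\nu^+$, so $\mu_n^+\to\nu^+$ weak$\ast$ and $h(\sigma^+,\mu_n^+)\to h$.

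The main obstacle is to reconcile three competing requirements simultaneously: the \emph{loosely Bernoulli} property, which constrains the $\bar f$-distance on the induced first-return system to a Bernoulli cross-section; the \emph{zero exponent}, which requires strong elliptic cancellation after each block; and \emph{weak$\ast$ and entropy closeness to $\nu^+$}, which demands that the $W_n$-blocks dominate the statistics. The delicate balance is to make the corrector gadgets short enough to be statistically negligible (preserving the approximation of $\nu^+$ and the $\bar f$-bounds underpinning the loosely Bernoulli property) yet effective enough to annihilate the exponent. This is handled by the same quantitative substitution machinery as in Theorem~\ref{theMain}, now applied with $W_n$ in place of the abstract building blocks used to parameterize $\cN_\varepsilon$.
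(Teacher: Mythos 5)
Your proposal misses the key structural step that makes the paper's argument work, and it has a genuine gap at the point where the CIFS (contracting iterated function system) machinery is bootstrapped.

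The paper does not extract building blocks from the target zero-exponent measure $\nu^+$ directly. Instead, it first passes to the skew product $F_\bA$ on $\Sigma_N\times\bS^1$, lifts $\nu^+$ to an $F$-ergodic measure $\theta^\infty$ with $\chi(F,\theta^\infty)=0$, and then crucially invokes the approximation result \cite[Theorem 1]{DiaGelRam:17} (Lemma~\ref{lemthetainfty}) to obtain a sequence $\theta^\ell\to\theta^\infty$ in Wasserstein distance \emph{and} entropy, with each $\theta^\ell$ having a strictly \emph{negative} fiber exponent $\alpha_\ell<0$ and $\alpha_\ell\to 0$. Only then does the CIFS construction of Proposition~\ref{proteo:existenceCIFS} apply: a CIFS requires uniform fiber contraction, quantified by $\alpha_\ell<0$, and that negativity is exactly what your $W_n$-collection lacks. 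Typical $n$-words of a zero-exponent measure have finite-time exponents fluctuating around $0$ with both signs, so they do not define a CIFS, and feeding them into the repeat-and-tail machinery has no foundation: the tailing maps of Proposition~\ref{pro:tailing} are defined only for collections that already form a CIFS, and they work iteratively by halving the (negative) exponent, not by a one-shot ``elliptic corrector.'' Your ``corrector gadgets'' idea does not substitute for this cascade, because the construction never produces exact exponent zero at any finite stage; the zero exponent appears only in the limit $\nu_\infty$, and the ergodicity and zero exponent of the limit require the full apparatus of Sections~\ref{secGeometry} and~\ref{secGeometry-2}.

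The second missing ingredient is the quantitative $\bar f$-estimate that controls how far the constructed LB measure drifts from its starting horseshoe. The paper's density argument hinges on Proposition~\ref{prokickoff}, which gives $\bar f\big(\nu_\infty^\ell(\bp^\ell),\nu_0^\ell(\bp^\ell)\big)\le 6K_\ell+8K_\ell^2$ with $K_\ell=L_1|\alpha_\ell|\to 0$. Since $\nu_0^\ell(\bp^\ell)$ is made weak$\ast$- and entropy-close to $\pi_\ast\theta^\ell$ (hence to $\pi_\ast\theta^\infty$) via Lemma~\ref{lemchoicebpell}, the whole chain collapses onto $\pi_\ast\theta^\infty$ as $\ell\to\infty$. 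Your sketch replaces this with the assertion that ``the correctors have vanishing frequency,'' but gives no mechanism forcing the $\bar f$-drift caused by the entire infinite cascade of substitutions to vanish; in the paper that mechanism is precisely the scaling of the tail lengths by $|\alpha_\ell|$. In short, the proof you propose is not a rearrangement of the paper's argument with different building blocks --- it is missing the approximation-by-negative-exponent-measures step, which is what makes both the CIFS construction and the quantitative $\bar f$-control available.
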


As our main tool, to prove the above theorems, we study step skew products associated to $C^1$ circle diffeomorphisms $f_1,\ldots,f_N\colon\bS^1\to\bS^1$ over the two-sided shift $\sigma$ defined on $\Sigma_N\eqdef\{1,\ldots,N\}^\bZ$ and their fiber Lyapunov exponents. Writing $\xi=(\ldots,\xi_{-1}|\xi_0,\xi_1,\ldots)$, let
\begin{equation}\label{eq:sp}
	F\colon \Sigma_N\times \bS^1\to \Sigma_N\times \bS^1,
	\quad
	F(\xi,x) 
	\eqdef (\sigma(\xi), f_{\xi_0}(x)).
\end{equation}
In our study, we combine differentiable methods that we apply on $\Sigma_N\times\bS^1$ with certain substitution schemes used directly on the symbolic space $\Sigma_N$. Our main object of study is the class of 
step skew products $\mathrm{SP}^1_{\rm shyp}(\Sigma_N\times\bP^1)$ introduced in \cite{DiaGelRam:17}. They are roughly characterized by the existence of an ``expanding'' and a ``contracting region'' (relative to the fiber direction) that are intermingled by the dynamics, we postpone the details till Section \ref{seccocyclediffeo}. These skew products are robustly transitive and robustly nonhyperbolic, in the sense that the existence of zero fiber exponent measures is not removable after perturbation, and exhibit ergodic measures with zero fiber Lyapunov exponent with positive entropy.  

Let us now present our two main theorems. Theorems \ref{theMain} and \ref{theMainb} are their almost immediate consequences. Consider the \emph{fiber Lyapunov exponent} of an $F$-invariant measure $\eta$ defined by
\begin{equation}\label{h0Fdef}
		\chi(F,\eta)
	\eqdef \int\log\,\lvert f_{\xi_0}'(x)\rvert\,d\eta(\xi,x).
\end{equation}
Analogously to \eqref{h0A}, and with a slight abuse of notation, let
\begin{equation}\label{h0F}\begin{split}
	&h_0(F)
	\eqdef \sup\big\{h(F,\eta)\colon\eta\in\cM_{\rm erg,0}(\Sigma_N\times\bS^1,F)\big\},\,\text{ where }\\
	&\cM_{\rm erg,0}(\Sigma_N\times\bS^1,F)
	\eqdef\big\{\mu\in \cM_{\rm erg}(\Sigma_N\times\bS^1,F)\colon\chi(F,\mu)=0\big\}.
\end{split}\end{equation}
As shown in \cite{DiaGelRam:22a}, we have $h_0(F)>0$ for every $F\in \mathrm{SP}^1_{\rm shyp}(\Sigma_N\times\bS^1)$. Let $\pi\colon\Sigma_N\times\bS^1\to\Sigma_N$ be the canonical projection to the first coordinate. Note that $\pi_\ast$ preserves the metric entropy (see \cite{LedWal:77}).

\begin{mtheo} \label{Bthm:circle}
	For every $F\in\mathrm{SP}^1_{\rm shyp}(\Sigma_N\times\bS^1)$, $N\ge2$, and every $\varepsilon>0$ there is a weak$\ast$ path-connected set $\cM_\varepsilon \subset\cM_{\rm erg,0}(\Sigma_N\times\bS^1,F)$ such that 
\begin{enumerate}
\item the projection  $\pi_\ast(\cM_\varepsilon)$ of $\cM_\varepsilon$ to $\cM_{\rm erg}(\Sigma_N,\sigma)$ is $\bar f$-path connected and consists of loosely Bernoulli measures,
\item  the entropy $h(F,\mu)$ varies weak$\ast$-continuously in $\mu\in\cM_\varepsilon$, the entropy $h(\sigma, \nu)$ varies $\bar f$-continuously in $\nu\in\pi_\ast(\cM_\varepsilon)$, and
\[
	\big\{h(F,\mu)\colon\mu\in\cM_\varepsilon \big\}\supset [0,h_0(F) -\varepsilon].
\]
\end{enumerate}
\end{mtheo}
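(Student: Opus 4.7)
The strategy is to realize $\cM_\varepsilon$ as a weak$\ast$-continuous one-parameter family of $F$-invariant ergodic measures whose base projections come from a continuously varying family of substitution-generated shifts on the alphabet $\{1,\dots,N\}$. Because the loose Bernoulli property and $\bar f$-continuity live on the base, I would first design the family on $\Sigma_N$ and only afterwards lift it to $\Sigma_N\times\bS^1$ using the $\mathrm{SP}^1_{\rm shyp}$ structure.

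First, I would exploit the defining feature of $\mathrm{SP}^1_{\rm shyp}$: a base decomposition into an ``expanding region'' and a ``contracting region'' that are interlocked by the dynamics. Combined with the variational principle of \cite{DiaGelRam:22a}, which yields zero-exponent ergodic measures of entropy arbitrarily close to $h_0(F)$, this produces a finite alphabet of admissible expanding words $W_+$ and contracting words $W_-$ whose Birkhoff sums of $\log|f_i'|$ have opposite signs and are quantitatively controlled, and whose statistics approximate those of a target near-maximal zero-exponent measure.

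The core step is to encode the zero-exponent condition combinatorially via substitution schemes. For each $s\in[0,1]$ I would construct a primitive scheme $\theta_s$ whose language interlaces $W_+$- and $W_-$-blocks in proportions that (i) make the average of $\log|f_i'|$ vanish along every sufficiently long word, and (ii) produce a unique shift-invariant measure $\nu_s\in\cM_{\rm erg}(\Sigma_N,\sigma)$ of prescribed entropy $s\cdot(h_0(F)-\varepsilon)$. By construction, $\theta_s$ realizes $\nu_s$ as an $\bar f$-limit of periodic processes, so $\nu_s$ is loosely Bernoulli in the sense of Feldman--Ornstein--Rudolph--Weiss. A continuous choice $s\mapsto\theta_s$ of the underlying substitution parameters gives a $\bar f$-continuous arc $s\mapsto \nu_s$, along which entropies vary continuously and cover $[0,h_0(F)-\varepsilon]$. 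Each $\nu_s$ then admits a unique $F$-invariant ergodic lift $\mu_s$ to $\Sigma_N\times\bS^1$ through a measurable graph section, forced upon us by the zero-exponent condition (i); identity $h(F,\mu_s)=h(\sigma,\nu_s)$ follows from \cite{LedWal:77}, and weak$\ast$-continuity in $s$ from continuity of the section. Setting $\cM_\varepsilon\eqdef\{\mu_s\}_{s\in[0,1]}$ then yields the desired set.

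The main obstacle lies at the interface between the symbolic and the smooth layers: requiring $\theta_s$ to simultaneously force loose Bernoulli of $\nu_s$ on $\Sigma_N$ \emph{and} a well-defined, ergodic lift to $\Sigma_N\times\bS^1$ with fiber exponent exactly zero. Balancing Birkhoff sums word-by-word along the substitution hierarchy calls for a telescoping, Pliss-type argument at each level, and preserving this balance continuously in $s$ — while simultaneously keeping the substitutional measures loosely Bernoulli and their entropies prescribed — is where the bulk of the technical work should lie.
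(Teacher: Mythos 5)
Your outline correctly identifies that the base combinatorics must be designed first and then lifted, but it contains several ideas that do not work, and they hide precisely where the paper's technical work lies.

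First, a primitive substitution scheme cannot produce a \emph{unique} shift-invariant measure of \emph{positive} entropy: uniquely ergodic substitution subshifts have zero topological entropy. Likewise, if $\nu_s$ is obtained as an $\bar f$-limit of periodic processes then, by uniform $\bar f$-continuity of entropy (Lemma~\ref{lementcont}), $h(\sigma,\nu_s)=0$; you cannot prescribe $h(\sigma,\nu_s)=s(h_0(F)-\varepsilon)$ for $s>0$ this way. The paper avoids both problems by not fixing a single subshift per parameter. It fixes a \emph{single} cascade of repeat-and-tail substitutions $\varrho_n\colon\cA_n\to\Sigma_N^\ast$ (Proposition~\ref{pro:tailing}, Assumption~\ref{assumption1}) and lets the entropy come from a \emph{Bernoulli measure} $\bp$ on the abstract alphabet $\cA$. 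The limit measures $\nu_\infty(\bp)$ are $\bar f$-limits of Bernoulli-coded measures $\kappa_{\rm inv}(\cA_n,\varrho_n,\widetilde{\,\bp_n\,})$, which are LB because they are finite-to-one factors of suspensions of Bernoulli shifts (Lemma~\ref{ergodicLB}), and LB is $\bar f$-closed (Lemma~\ref{lemLB}). Positive entropy survives because $\bar f$-convergence controls entropy and the Bernoulli base carries arbitrary entropy up to $\log\card\cA$. The parameter is $\bp$ ranging over the Bernoulli simplex $\cM_{\rm B}(\cA^\bZ,\sigma_\cA)$, which is what yields path-connectedness; it is not a scalar $s$ tuning interlacing proportions.

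Second, your lifting step does not hold as stated: a base ergodic measure with zero fiber Lyapunov exponent does \emph{not} have a unique $F$-invariant ergodic lift, and there is no ``graph section forced by the zero exponent.'' Unique lifts are exactly a feature of nonzero (dominating) exponents; at zero exponent neither uniqueness nor ergodicity comes for free. The paper constructs the lifts $\mu_n(\bp)$ explicitly as pushforwards of suspension measures by the horseshoe factor $H_n$ (Propositions~\ref{pro:semiconjtop}--\ref{pro:semiconj}), proves weak$\ast$ convergence $\mu_n(\bp)\to\mu_\infty(\bp)$ uniformly in $\bp$ (Proposition~\ref{prothe:suspflow}), and then proves ergodicity of $\mu_\infty(\bp)$ by the Gorodetski--Ilyashenko--Kleptsyn--Nalski scheme (Corollary~\ref{refcorollary}). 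This is the geometric half of the work, not a formality.

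Third, the zero exponent is not produced by balancing $W_+$- and $W_-$-blocks in one stroke with a Pliss-type argument. The paper starts from an $F$-ergodic measure $\theta$ with small \emph{negative} exponent $\alpha$ and near-maximal entropy (from~\cite{DiaGelRam:22a}), builds a CIFS $\cW$ with spectrum near $\alpha$ (Proposition~\ref{proteo:existenceCIFS}), and at each level of the cascade appends short tails that halve the exponent (Proposition~\ref{pro:tailing}); the relative tail length is controlled by $K=L_1|\alpha|$ so that the entropy and the $\bar f$-distances remain under control (Assumption~\ref{assumption1}, Corollary~\ref{cor:maxmin}). The exponent thus decays to zero geometrically across the cascade rather than being balanced word-by-word.

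In short, the broad intuition (symbolic construction first, then lift; LB via $\bar f$-methods) is aligned with the paper, but the specific mechanisms you propose — primitive substitutions, limits of periodic processes, canonical graph-section lift, word-level Pliss balancing — are each incorrect or insufficient where they matter, and they miss the two genuinely hard steps: uniform-in-$\bp$ weak$\ast$ convergence and ergodicity of the lifts, and the $\bar f$-Cauchy estimates for Bernoulli-coded measures.
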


\begin{mtheo} \label{Bthm:circleb}
	Under the hypotheses of Theorem \ref{Bthm:circle}, the set of measures
\[
	\big\{\mu\in\cM_{\rm erg,0}(\Sigma_N\times\bS^1,F),\pi_\ast\mu\text{ is loosely Bernoulli}\big\}
\]	
is weak$\ast$ and entropy-dense in $\cM_{\rm erg,0}(\Sigma_N\times\bS^1,F)$.
\end{mtheo}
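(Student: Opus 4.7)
Fix $\mu \in \cM_{\rm erg,0}(\Sigma_N\times\bS^1, F)$, a weak$\ast$ neighborhood $\cU$ of $\mu$, and $\delta > 0$. The objective is to construct an ergodic $\tilde\mu \in \cU$ with $\chi(F,\tilde\mu) = 0$, $|h(F,\tilde\mu) - h(F,\mu)| < \delta$, and $\pi_\ast \tilde\mu$ loosely Bernoulli. My plan is to refine the substitution-based construction behind the path-connected family $\cM_\varepsilon$ of Theorem \ref{Bthm:circle}, trading the global interpolation in entropy for a local interpolation in the ambient space of invariant measures around $\mu$.

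First I would approximate $\mu$ in weak$\ast$ and entropy by an ergodic measure $\mu'$ supported on a compact invariant set with explicit symbolic coding. Concretely, I would fix a cylinder length $\ell$ and a collection $W \subset \{1,\ldots,N\}^\ell$ of admissible words together with weights $(p_w)_{w\in W}$ reproducing the $\ell$-cylinder frequencies of $\mu$ up to the tolerance required by $\cU$, and realize $\mu'$ as an ergodic measure on the associated horseshoe-like subsystem whose fiber Lyapunov exponent vanishes. This step leans on the existence of zero-exponent ergodic measures of positive entropy in $\mathrm{SP}^1_{\rm shyp}$ established in \cite{DiaGelRam:22a}, applied now in a localized form around $\mu$ rather than to maximize entropy.

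Next I would feed $W$ (with weights $p_w$) into the substitution scheme machinery from Section \ref{seccocyclediffeo}, producing a shift-invariant measure $\tilde\nu$ on $\Sigma_N$ whose word frequencies differ from those of $\pi_\ast\mu'$ by a controlled amount and whose combinatorial structure is exactly of the type that the proof of Theorem \ref{Bthm:circle} shows to be loosely Bernoulli (via the $\bar f$-estimates on substitution blocks). Pairing $\tilde\nu$ with the fiber dynamics via \eqref{eq:sp} and balancing the substitution letters between the expanding and contracting regions provided by $F \in \mathrm{SP}^1_{\rm shyp}(\Sigma_N\times\bS^1)$, I obtain an ergodic $\tilde\mu$ with $\pi_\ast\tilde\mu=\tilde\nu$, $\chi(F,\tilde\mu)=0$, and $h(F,\tilde\mu)$ close to $h(F,\mu')$. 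Weak$\ast$ closeness follows by matching cylinder frequencies and entropy closeness from the $\bar f$-continuity of entropy on the loosely Bernoulli family, yielding both density assertions simultaneously.

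The principal obstacle lies in the simultaneous calibration carried out in the substitution step: the scheme must be fine enough in depth and branching to match the cylinder statistics of $\mu$ to within the prescribed weak$\ast$ tolerance; the proportions of letters drawn from the expanding and contracting regions must be tuned so that the fiber exponent stays \emph{exactly} zero rather than merely small; and the resulting combinatorial structure must still fall within the regime where the $\bar f$-estimates of Section \ref{secLB} certify the loosely Bernoulli property. Reconciling these three demands while preserving ergodicity of $\tilde\mu$ is where the bulk of the work sits, and it exploits in an essential way the robustness granted by the open and dense nature of $\mathrm{SP}^1_{\rm shyp}(\Sigma_N\times\bS^1)$.
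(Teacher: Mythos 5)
Your proposal has the right overall shape—approximate $\mu$, feed the approximation into the substitution machinery, and invoke the $\bar f$-estimates to get a loosely Bernoulli limit—but it misses the two ideas that make the paper's proof actually close.

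First, you propose to approximate $\mu$ by an ergodic $\mu'$ supported on a horseshoe-like subsystem ``whose fiber Lyapunov exponent vanishes.'' That is not available: the horseshoes entering the construction come from CIFSs (Proposition \ref{proteo:existenceCIFS}), and a CIFS-horseshoe necessarily has \emph{negative} fiber exponent. The paper therefore takes the opposite route: by Lemma \ref{lemthetainfty} (i.e.\ \cite[Theorem 1]{DiaGelRam:17}), a zero-exponent ergodic measure $\theta^\infty$ can be approximated weak$\ast$ and in entropy by ergodic measures $\theta^\ell$ with \emph{slightly negative} exponents $\alpha_\ell<0$, $\alpha_\ell\to0$. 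Each $\theta^\ell$ can then legitimately seed a CIFS, and the repeat-and-tail cascade is what drives the exponent to zero. Your scheme of ``balancing the substitution letters between the expanding and contracting regions'' so that the exponent is \emph{exactly} zero from the start is not how the construction works and, as stated, has no mechanism behind it.

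Second, and more fundamentally, you flag the ``simultaneous calibration'' (match cylinder statistics to the required tolerance, keep the exponent zero, preserve the LB certification) as the principal obstacle, but you do not resolve it. The paper resolves it by a single quantitative estimate: Proposition \ref{prokickoff} shows that the entire $\bar f$-drift of the cascade is bounded by $6K+8K^2$ with $K=L_1|\alpha|$, uniformly in $n$ and in the base Bernoulli measure. Hence $\bar f(\nu^\ell_\infty(\bp^\ell),\nu^\ell_0(\bp^\ell))\le 6L_1|\alpha_\ell|+8L_1^2|\alpha_\ell|^2\to0$. Combined with Lemma \ref{lemchoicebpell} (which makes $\mu^\ell_0(\bp^\ell)$ close to $\theta^\ell$ in Wasserstein and entropy), and the fact that $\bar f$-convergence implies both weak$\ast$ and entropy convergence, this chains the limit LB measure $\nu^\ell_\infty(\bp^\ell)$ all the way back to $\pi_\ast\theta^\infty$. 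Without identifying the controlling parameter $K=L_1|\alpha|$ and the uniform-in-$n$ drift bound, your ``controlled amount'' in the substitution step is unjustified, and the calibration problem remains open. You should restructure around: (i) approximation from the \emph{negative-exponent} side, and (ii) the $O(|\alpha|)$ bound on the total $\bar f$-drift of the cascade.
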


Our results, besides showing the abundance of loosely Bernoulli measures with zero exponent, also shed some light on the general structure of the space of ergodic measures. This direction of research follows the line of \cite{Sig:74,Sig:77} for systems with specification.  More recent contributions beyond specification in this very active field are, for example, \cite{GorPes:17} (path connectedness and the Poulsen property for ergodic measures in homoclinic classes), \cite{GelKwi:18} (density of ergodic measures assuming closability and linkability), \cite{DiaGelRam:17} (weak$\ast$ and in entropy-approximation of zero exponent measures for maps in $\mathrm{SP}^1_{\rm shyp}(\Sigma_N\times\bP^1)$), \cite{DiaGelSan:20,YanZha:20} (weak$\ast$ and in entropy-approximation of zero exponent measures for fairly general partially hyperbolic diffeomorphisms), and  \cite{BonZha:19} (density of periodic measures in partially hyperbolic homoclinic classes).

A scheme to construct nonhyperbolic (that is, with zero fiber Lyapunov exponent) 
ergodic measures for skew products of circle diffeomorphism was introduced in \cite{GorIlyKleNal:05} based on a method of period orbit approximation. In \cite{KwiLac:}, the loosely Kronecker property is shown for measures in this construction (that is, they are loosely Bernoulli and have zero entropy). The construction in  \cite{DiaGelRam:22a} replaces the periodic orbits used in  \cite{GorIlyKleNal:05} by horseshoes, in order to get zero exponent ergodic measure with entropy that is as large as possible. This approach is via cascades of contracting iterated function systems as well as ``horseshoes'' in $\Sigma_N\times\bS^1$. The focus in  \cite{DiaGelRam:22a} is put only on entropy and ergodicity. 

Here we provide a conceptually novel description of the method in \cite{DiaGelRam:22a}. One of our key tools are \emph{Bernoulli-coded measures} that are, by definition, images of Bernoulli measures under substitution maps between symbolic spaces with finite alphabets. We do a probabilistic investigation of Bernoulli-coded measures on $\Sigma_N$. In this way, we enlarge the class of measures constructed following \cite{DiaGelRam:22a}. Moreover, this approach enables a finer analysis of their inner structure and ergodic properties. 

Let us finally describe the relation between Theorems \ref{theMain} and \ref{Bthm:circle}. Given a family $\bA=\{A_1,\ldots,A_N\}\in\mathrm{SL}(2, \mathbb{R})^N$, $N\ge2$, the action of any matrix on the projective line $\bP^1$ (which is topologically the circle $\bS^1$) provides very special diffeomorphisms: for $i=1,\ldots,N$ let
\begin{equation}\label{neq:defsteskecoc}
	f_i\colon \bP^1 \to \bP^1, \quad
	f_i (v)
	\eqdef \frac{A_iv}{\lVert A_i v\rVert}.
\end{equation}
We focus on the class of cocycles $\fE_{N, \rm shyp}$ mentioned above. A key property is that any $\bA$ in this class gives rise to a step skew product in $\mathrm{SP}^1_{\rm shyp}(\Sigma_N\times\bP^1)$. Moreover, the fiber Lyapunov exponent $\chi(F,\cdot)$ of this skew product is related to the top Lyapunov exponent $\lambda_1(\bA,\pi_\ast(\cdot))$. Theorem \ref{theMain} will then be an almost immediate consequence of Theorem \ref{Bthm:circle}. See Section \ref{ImpMatCoc} for details.

The paper is organized as follows. We start by defining the loosely Bernoulli property and the $\bar f$-topology in Section \ref{secLB} and by recalling their main properties. In Section \ref{secBerCod}, we introduce the concept of substitutions between symbolic spaces with finite alphabets. In particular, we investigate, in the $\bar f$-topology, the class of Bernoulli-coded measures that are defined as images of Bernoulli measures under such substitutions.  In Section \ref{secabscasc}, we study a cascade of substitutions into a common alphabet. This section provides the symbolic half of our main arguments. The other, geometric, half of our arguments is provided in Sections \ref{secGeometry} and \ref{secGeometry-2}. In Section \ref{secGeometry}, we recall and explain in details the class of maps $\mathrm{SP}^1_{\rm shyp}(\Sigma_N\times\bS^1)$ and we describe the structure that stands behind our construction of zero fiber Lyapunov exponent ergodic measures. In Section \ref{secGeometry-2}, we analyze the properties of the limit measures, in particular prove their ergodicity. Section \ref{sec7} we prove Theorems \ref{Bthm:circle} and \ref{Bthm:circleb}. In Section \ref{ImpMatCoc}, we prove Theorems \ref{theMain} and \ref{theMainb}.  

\section{Loosely Bernoulli automorphisms}\label{secLB}

In this section, we define the {\em{loosely Bernoulli}} property  and recall some essential properties of loosely Bernoulli automorphisms. For that, we will make use of the concept of $\bar f$-distance. 
The $\bar f$-distance between strings of symbols was introduced by Feldman \cite{Fel:76}, replacing the Hamming metric in the definition of Ornstein's very weak Bernoulli property with the edit distance.
In the case of zero entropy systems, the loosely Bernoulli property was introduced independently by Katok \cite{Kat:75}.
The results in this section are collected from \cite{OrnRudWei:82}.

Let $\cA$  be a finite alphabet.
A \emph{word} over $\cA$ is a finite sequence of symbols in $\cA$. The \emph{length} of a word $a$ is the number of symbols it contains and is denoted by $|a|_\cA$ or simply by $|a|$ if $\cA$ is clear from the context. The \emph{empty word} is the unique word with no symbols and has length zero. An \emph{$n$-word} is a word of length $n$. The set of all words over the alphabet $\cA$ (including the empty one) is denoted by $\cA^\ast$. A \emph{substring} of a word $(a_1,\ldots,a_n)$ is any word of the form $(a_{i_1},\ldots,a_{i_k})$ for some numbers $1\le i_1<i_2<\cdots<i_k\le n$. We also consider the space $\cA^\bZ$ of all bi-infinite sequences of symbols from $\cA$. The elements in 
$\cA^\bZ$ are denoted by $\underline a=(\ldots,a_{-1}|a_0,a_1,\ldots)$.

\begin{definition}[Distance on word space]
The \emph{edit distance} (of level $n$) of two $n$-words $a=(a_1,\ldots,a_n)$ and $b=(b_1,\ldots,b_n)$,  is
\[
	\bar{f}_n(a,b)
	\eqdef 1-\frac kn,
\]
where $k$ is the largest number such that for some indices $1\le i_1<i_2<\cdots<i_k\le n$ and $1\le j_1<j_2<\cdots<j_k\le n$ it holds $a_{i_s}=b_{j_s}$ for every $s=1,\ldots,k$. In other words, the edit distance between two $n$-words is given in terms of the relative length of their maximal common substring.
\end{definition}

\begin{definition}[$\bar f$-distance on the sequence space]
For two bi-infinite sequences $\underline a=(\ldots,a_{-1}|a_0,a_1,\ldots),\underline b=(\ldots, b_{-1}|b_0,b_1,\ldots)\in\cA^{\bZ}$, define the \emph{Feldman pseudometric} or \emph{$\bar f$-pseudometric} on $\cA^{\bZ}$ by
\[
	\bar{f}(\underline a,\underline b)
	\eqdef \limsup_{n\to\infty}\bar{f}_{2n}\big((a_{-n},\ldots,a_{n-1}),(b_{-n},\ldots,b_{n-1})\big).
\]
\end{definition}

Let us define now also the $\bar{f}$-pseudometric between measures. For notational simplicity, we use the same symbol.

Equip the space $\cA^\bZ$ with the metric $d_1(\underline a,\underline b)\eqdef e^{-\inf\{\lvert k\rvert\colon a_k\ne b_k\}}$. Note that $(\cA^\bZ,d_1)$ is a compact metric space whose topology coincides with the one generated by cylinder sets. Consider the Borel $\sigma$-algebra on $\cA^\bZ$.

\begin{definition}[$\bar f$-distance between measures]
Given two probability measures $\nu,\nu'$ on $\cA^\bZ$ and $n\in\bN$, denote by $\nu_n,\nu_n'$ their restrictions to the $\sigma$-field generated by all $n$-cylinders $\cA^n$. Denote by $J_n(\nu,\nu')$ the set of all (probability) measures on $\cA^n\times\cA^n$ whose marginals are $\nu_n,\nu_n'$, respectively. We refer to the elements of $J_n(\nu,\nu')$ shortly as the \emph{$n$-joinings} of $\nu$ and $\nu'$. Let
\begin{equation}\label{eqfbar}
	\bar{f}_n(\nu,\nu')
	\eqdef \inf_{\bnu_n\in J_n(\nu,\nu')}\int_{\cA^n\times\cA^n}\bar{f}_n\,d\bnu_n.
\end{equation}
Define by
\[
	\bar{f}(\nu,\nu')
	\eqdef \inf\big\{\varepsilon\colon \bar f_n(\nu,\nu')\le\varepsilon\text{ for infinitely many }n\big\}
\]
the {\em{\emph{$\bar f$}-distance}} of $\nu$ and $\nu'$.
\end{definition}

Consider the shift map $\sigma_\cA\colon\cA^\bZ\to\cA^\bZ$ defined by $(\sigma_\cA(\underline a))_k=a_{k+1}$, for all $k\in\bZ$. Denote by $\cM_{\rm erg}(\cA^\bZ,\sigma_\cA)$ the space of all Borel probability measures which are ergodic with respect to $\sigma_\cA$.

\begin{remark}\label{remfbars}
The $\bar f$-distance is a metric on the space $\cM_{\rm erg}(\cA^\bZ,\sigma_\cA)$ of ergodic measures. The topology induced by this metric is complete and stronger than the weak$\ast$ topology. Moreover, $\bar f$-convergence implies convergence in entropy. See \cite[Section 2]{OrnRudWei:82} for further details.
\end{remark}

Indeed, for further reference, let us recall that the entropy map $\cM_{\rm erg}(\cA^\bZ,\sigma_\cA)\ni\nu\mapsto h(\sigma_\cA,\nu)$ is even uniformly continuous (see \cite[Proposition 3.4]{OrnRudWei:82}).

\begin{lemma}\label{lementcont}
	For any measures $\nu_1,\nu_2\in\cM_{\rm erg}(\cA^\bZ,\sigma_\cA)$ satisfying $\bar f(\nu_1,\nu_2)<\varepsilon$, 
\[
	|h(\sigma_\cA,\nu_1)-h(\sigma_\cA,\nu_2)|
	\le 2\big(-\varepsilon\log \varepsilon-(1-\varepsilon)\log(1-\varepsilon)\big)+\varepsilon\log \card\cA.
\]
\end{lemma}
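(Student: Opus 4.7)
The plan is to use a standard joining-based argument that converts $\bar f$-closeness into a bound on conditional entropy, then count combinatorially how many $n$-words lie close to a given one in the edit distance $\bar f_n$. First, I would unpack the hypothesis: since $\bar f(\nu_1,\nu_2) < \varepsilon$, there is some $\varepsilon' \in (\bar f(\nu_1,\nu_2),\varepsilon)$ (assume $\varepsilon' < 1/2$; the case $\varepsilon \geq 1/2$ follows from the trivial bound $|h(\sigma_\cA,\nu_i)| \leq \log\card\cA$) for which the definition of $\bar f$ provides an infinite set of integers $n$ and a joining $\bnu_n \in J_n(\nu_1,\nu_2)$ with $\int \bar f_n\,d\bnu_n \leq \varepsilon'$. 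Writing $\alpha^n$ for the partition of $\cA^\bZ$ into $n$-cylinders, the chain rule applied on $\cA^n\times\cA^n$ (whose factor marginals are the $n$-cylinder marginals of $\nu_1,\nu_2$) gives
\[
	H_{\nu_2}(\alpha^n) \leq H_{\nu_1}(\alpha^n) + H_{\bnu_n}(\pi_2\,|\,\pi_1),
\]
where $\pi_1,\pi_2$ denote projections to the two coordinates, reducing the problem to bounding the conditional entropy on the right.

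The combinatorial core is the estimate: given $a\in\cA^n$ and $k\in\{0,\dots,n\}$, the number of $b\in\cA^n$ whose maximal common substring with $a$ has length $k$ is at most $\binom{n}{k}^2 (\card\cA)^{n-k}$, obtained by choosing the $k$ positions of the common substring in $a$, the corresponding $k$ positions in $b$, and the remaining $n-k$ symbols of $b$ freely. Introducing the auxiliary random variable $F(a,b) \eqdef n(1 - \bar f_n(a,b))$, valued in $\{0,\dots,n\}$ and so with $H_{\bnu_n}(F) \leq \log(n+1)$, the conditional entropy decomposes as
\[
	H_{\bnu_n}(\pi_2\,|\,\pi_1) \leq H_{\bnu_n}(F) + H_{\bnu_n}(\pi_2\,|\,\pi_1,F) \leq \log(n+1) + \int \Bigl(2\log\binom{n}{F} + (n-F)\log\card\cA\Bigr)\,d\bnu_n.
\]

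To conclude, I would apply the Stirling-type bound $\log\binom{n}{k} \leq n\phi(k/n)$ with $\phi(p) \eqdef -p\log p - (1-p)\log(1-p)$, then use Jensen's inequality (concavity of $\phi$) together with the symmetry $\phi(p) = \phi(1-p)$ and monotonicity of $\phi$ on $[0,1/2]$ to obtain $\int 2\log\binom{n}{F}\,d\bnu_n \leq 2n\,\phi\!\left(\int\bar f_n\,d\bnu_n\right) \leq 2n\,\phi(\varepsilon')$, while the remaining term contributes at most $n\varepsilon'\log\card\cA$. Dividing by $n$, letting $n\to\infty$ along the chosen subsequence (using $h(\sigma_\cA,\nu_i) = \lim_n n^{-1} H_{\nu_i}(\alpha^n)$), and then $\varepsilon' \to \varepsilon$, yields one direction of the bound; swapping $\nu_1,\nu_2$ (by symmetry of $\bar f$ on $J_n$) gives the other. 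I do not expect a substantial obstacle here: the only delicate point is checking that Jensen is applied in the correct direction, which is exactly where the symmetry of the binary entropy function $\phi$ converts the natural inequality $1 - \int\bar f_n\,d\bnu_n \geq 1 - \varepsilon' \geq 1/2$ into the desired upper bound $\phi(\varepsilon')$.
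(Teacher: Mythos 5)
The paper does not prove this lemma at all; it simply cites \cite[Proposition 3.4]{OrnRudWei:82}, so there is no ``paper's own proof'' to compare against. Your proof is a genuine reconstruction of the standard Feldman/Ornstein--Rudolph--Weiss argument, and the core is correct: pass from $\bar f(\nu_1,\nu_2)<\varepsilon$ to a sequence of $n$-joinings with small $\int\bar f_n$, use the chain rule $H_{\nu_2}(\alpha^n)\le H_{\nu_1}(\alpha^n)+H_{\bnu_n}(\pi_2\,|\,\pi_1)$, partition on the length $F$ of the maximal common substring, bound the number of $b$ with a given $F(a,b)=k$ by $\binom nk^2(\card\cA)^{n-k}$, and finish with $\log\binom nk\le n\phi(k/n)$ plus Jensen for the concave $\phi$. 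The observation that the symmetry $\phi(p)=\phi(1-p)$ and monotonicity on $[0,1/2]$ are what flip $\phi\bigl(1-\int\bar f_n\bigr)$ into $\phi(\varepsilon')$ is exactly right, and dividing by $n$ and letting $n\to\infty$ along the joining subsequence is legitimate because $n^{-1}H_\nu(\alpha^n)$ converges along the full sequence.

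The one place the argument does not quite close is the parenthetical dismissal of $\varepsilon\ge1/2$. The trivial bound gives $|h_1-h_2|\le\log\card\cA$, but the claimed right-hand side $2\phi(\varepsilon)+\varepsilon\log\card\cA$ is not always $\ge\log\card\cA$ for $\varepsilon\ge1/2$: rearranging, one needs $(1-\varepsilon)\log\card\cA\le 2\phi(\varepsilon)$, which (using $\phi(\varepsilon)\ge-(1-\varepsilon)\log(1-\varepsilon)$) is only guaranteed for $\varepsilon\ge1-1/\sqrt{\card\cA}$, a threshold strictly above $1/2$ once $\card\cA\ge2$. Equivalently, the function $g(t)=2\phi(t)+t\log\card\cA$ is increasing only on $[0,\sqrt{\card\cA}/(\sqrt{\card\cA}+1)]$, so neither the trivial bound nor monotonicity of $g$ covers the whole range $\varepsilon\in[1/2,1-1/\sqrt{\card\cA})$. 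In practice this is harmless --- the lemma is only invoked for small $\varepsilon$, and your main argument already proves the sharper inequality $|h_1-h_2|\le g\bigl(\bar f(\nu_1,\nu_2)\bigr)$ whenever $\bar f(\nu_1,\nu_2)<1/2$, which is what the paper actually needs --- but you should either restrict the statement to $\varepsilon\le1/2$ or replace the ``trivial bound'' remark with a correct treatment of the remaining range.
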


\begin{definition}[Loosely Bernoulli property (LB)]
Given $(X,\fX,S,\mu)$ an automorphism, a finite partition $\cP$ of $X$ is \emph{loosely Bernoulli (LB)} if for every $\varepsilon>0$ there are $n\in\bN$ and a set $A_n$ of atoms of $\cP^n\eqdef\bigvee_{j=0}^{n-1}S^{-j}(\cP)$ such that $\mu(A_n)>1-\varepsilon$ and $\bar{f}_n(v,w)<\varepsilon$ for every $v,w\in A_n$ (here, we identify the atoms of a partition $\cP^n$ with words of length $n$ over the alphabet $\{1,\ldots,|\cP|\}$.
We say that $(X,\fX,S,\mu)$ is \emph{LB} if every finite partition is LB. 
\end{definition}

\begin{remark}
	An automorphism $(X,\fX,S,\mu)$ is LB if, and only if, it is Kakutani equivalent	 to a Bernoulli automorphism.
	 For that recall that two measure preserving systems $(X,\fX,S,\mu)$ and $(Y,\fY,T,\nu)$ are \emph{Kakutani equivalent} if there exist $A\in\fX$ with $\mu(A)>0$ and $B\in\fY$ with $\nu(B)>0$ such that the induced transformations $S_A$ and $T_B$ are isomorphic.%
\footnote{Given a measure preserving system $(X,\fX,S,\mu)$ and $A\in\fX$ with $\mu(A)>0$, for $x\in A$ we define its \emph{first return time} $n(x)=\inf\{n\in\bN\colon S^n(x)\in B\}$. By Birkhoff ergodic theorem, this function is finite for almost every $x\in A$. The associated \emph{induced transformation} $S_A\colon A\to A$, $S_A(x)=   S^{n(x)}(x)$ is well defined almost everywhere.}
	Note that, for $(X,\fX,S,\mu)$ to be LB it suffices to be LB for some generating partition (compare \cite[Theorem 4.6]{OrnRudWei:82}).
\end{remark}

\begin{remark}[General properties of LB automorphisms]\label{remarkLB}
Every Bernoulli shift is LB. By \cite[Theorem 3]{Fel:76}, every factor of a LB automorphism is LB. By \cite[Corollary 4.9]{OrnRudWei:82}, every Markov automorphisms is LB. Note that every LB automorphism is ergodic.

Given an automorphism $(X,\fX,S,\mu)$ and $A\in\fX$ satisfying $\mu(A)>0$,  then $S$ is LB if and only if the induced transformation $S_A$ is LB. In particular, the LB property is an invariant of Kakutani equivalence.
\end{remark}

Below, we always consider the Borel $\sigma$-algebra and omit it in our notations. 

\begin{lemma}\label{lemLB}
	Let $(\nu_n)_n\subset\cM_{\rm erg}(\cA^\bZ,\sigma_\cA)$ be a sequence of measures such that $(\cA^\bZ,\sigma_\cA,\nu_n)$ if LB for every $n$. If this sequence $\bar f$-converges to some $\nu$, then $(\cA^\bZ,\sigma_\cA,\nu)$ is LB.
\end{lemma}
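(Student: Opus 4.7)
The first observation is that $\nu$ automatically lies in $\cM_{\rm erg}(\cA^\bZ,\sigma_\cA)$, since by Remark~\ref{remfbars} the $\bar f$-metric makes this space complete. By the remark following the definition of the LB property, it suffices to verify LB for a single generating partition, so I will work with the time-zero cylinder partition $\cP=\{[a]\}_{a\in\cA}$, whose $n$-refinement $\cP^n$ is identified with $\cA^n$. Thus, given $\varepsilon>0$, the goal is to produce $n\in\bN$ and $A_n\subset\cA^n$ with $\nu(A_n)>1-\varepsilon$ and $\bar f_n$-diameter of $A_n$ less than $\varepsilon$.

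Fix an auxiliary $\delta=\delta(\varepsilon)>0$ to be chosen at the end. Pick $k$ with $\bar f(\nu_k,\nu)<\delta^2$; by the definition of the $\bar f$-distance between measures, there exist arbitrarily large $n$ for which $\bar f_n(\nu_k,\nu)<\delta^2$. Since $\nu_k$ is LB, there exist $n_0\in\bN$ and $A^\ast\subset\cA^{n_0}$ with $\nu_k(A^\ast)>1-\delta$ and $\bar f_{n_0}(v,v')<\delta$ for all $v,v'\in A^\ast$. A central intermediate step is to \emph{upgrade} LB from the single level $n_0$ to every sufficiently large level $n$: define $B_n\subset\cA^n$ as the set of words $w$ in which the frequency of positions $i\in[0,n-n_0]$ with $(w_i,\ldots,w_{i+n_0-1})\in A^\ast$ exceeds $1-2\delta$. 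Birkhoff's ergodic theorem applied to $\sigma_\cA$, the ergodic measure $\nu_k$, and the cylinder $[A^\ast]$ gives $\nu_k(B_n)\to 1$ as $n\to\infty$. Moreover, for any $w,w'\in B_n$, the overlap $I_w\cap I_{w'}$ of their $A^\ast$-occurrence positions has size at least $(1-4\delta)(n-n_0+1)$; greedily extracting a sub-family of at least $|I_w\cap I_{w'}|/n_0$ pairwise $n_0$-separated positions $j$ and concatenating, inside each aligned $A^\ast$-block pair at such a $j$, a common substring of length $\geq (1-\delta)n_0$, yields a common substring of $w$ and $w'$ of total length at least $(1-6\delta)n$ for $n$ large. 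Hence the $\bar f_n$-diameter of $B_n$ is at most $6\delta$.

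Now pick $n$ large enough that both this upgrade applies and $\bar f_n(\nu_k,\nu)<\delta^2$, take an $n$-joining $\bnu_n\in J_n(\nu_k,\nu)$ realizing the latter, and apply Markov's inequality: the set $G_n=\{(v,w)\colon\bar f_n(v,w)<\delta\}$ satisfies $\bnu_n(G_n)>1-\delta$. Define $A_n:=\pi_2((B_n\times\cA^n)\cap G_n)$, the set of $w\in\cA^n$ that are, through the joining, $\bar f_n$-close to some $v\in B_n$. Then $\nu(A_n)\geq\bnu_n((B_n\times\cA^n)\cap G_n)\geq\nu_k(B_n)-\bnu_n(G_n^c)>1-2\delta$, while applying the triangle inequality to pairs of joining witnesses $(v,w),(v',w')\in G_n$ with $v,v'\in B_n$ gives $\bar f_n(w,w')\leq\delta+6\delta+\delta=8\delta$. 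Choosing $\delta$ small enough (say $\delta<\varepsilon/8$) finishes the verification. The main obstacle is the upgrade step: the LB definition only guarantees a suitable set $A^\ast$ at one a priori fixed level $n_0$, whereas $\bar f_n$-closeness of $\nu_k$ and $\nu$ is accessible only at an unspecified sequence of large $n$; synchronising these two scales is precisely what the Birkhoff-based concatenation argument above accomplishes.
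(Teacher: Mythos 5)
Your proof is correct, but it follows a genuinely different route from the paper's. The paper disposes of the lemma in two sentences by citing \cite{OrnRudWei:82}: it recalls that LB is equivalent to the \emph{finitely fixed} (FF) property (ORW Theorems 6.5 and 6.7) and then invokes ORW Theorem 4.7, which gives $\bar f$-closedness of the FF class; no argument is actually carried out. You instead verify LB directly from the definition for the time-zero generating partition, and the two ingredients you supply to make this work are exactly the ones hidden in the cited theorems: (i) the ``level upgrade'' showing that an LB witness $A^\ast\subset\cA^{n_0}$ for $\nu_k$ can be promoted, via Birkhoff's ergodic theorem and a greedy block-concatenation of aligned $A^\ast$-occurrences, to sets $B_n\subset\cA^n$ of $\bar f_n$-diameter $\lesssim\delta$ and $\nu_k$-measure $\to 1$ for all large $n$; and (ii) the transport of $B_n$ to a witness $A_n$ for $\nu$ through a near-optimal $n$-joining together with Markov's inequality and the triangle inequality for $\bar f_n$. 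The step (i) is what overcomes the genuine obstacle you identify — the LB definition pins down only a single level $n_0$, while $\bar f_n$-closeness of $\nu_k$ and $\nu$ is available only along an unspecified sequence of $n$'s. What the paper's approach buys is brevity and a plug into the ORW structural theory (useful elsewhere); what yours buys is a self-contained, elementary argument that makes the mechanism of $\bar f$-stability of the LB property transparent. Two small presentational remarks: ``substring'' in the paper's terminology already means ``subsequence'' (not a contiguous block), which is what your concatenation step uses, so that is consistent; and the infimum over joinings need not be attained, so ``realizing'' should read ``almost realizing'' (take $\bnu_n$ with $\int\bar f_n\,d\bnu_n<\delta^2$), which does not affect the Markov step.
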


\begin{proof}
This seems to be a folklore result based on the results from \cite{OrnRudWei:82}. Indeed, recall the concepts \emph{finitely fixed (FF)} and LB (see \cite[Section 4]{OrnRudWei:82}), which are equivalent by \cite[Theorems 6.5 and 6.7]{OrnRudWei:82}. Then the assertion is a consequence of \cite[Theorem 4.7]{OrnRudWei:82}.
\end{proof}

\section{Bernoulli-coded measures}\label{secBerCod}

In this section, we introduce substitution maps between sequence spaces over finite alphabets and associated Bernoulli-coded measures. In order to study some fundamental properties, in particular the $\bar f$-distance between them, we investigate suspension spaces. This is done in Section \ref{secfbardistance}. 

\subsection{Preliminaries}

Let $\cA$  be a finite alphabet. 
The \emph{concatenation} of words $a=(a_1,\ldots,a_k)$ and $b=(b_1,\ldots,b_\ell)$ is the word $ab\eqdef(a_1,\ldots,a_k,b_1,\ldots,b_\ell)$. 
The concatenation of any number of words, including infinite and bi-infinite, is analogously defined. A \emph{prefix} of a word $b\in\cA^\ast$ is a word $a\in\cA^\ast$ such that $b=ac$ for some $c\in\cA^\ast$.  A \emph{suffix} of a word $b\in\cA^\ast$ is a word $a\in\cA^\ast$ such that $b=ca$ for some $c\in\cA^\ast$. A \emph{subword} of a word $b\in\cA^\ast$ is a word $a\in\cA^\ast$ such that $b=cad$ for some $c,d\in\cA^\ast$. Note that the notation for concatenation of words does not use commas. 

Given a word $a=(a_0,\ldots,a_{k-1})\in\cA^\ast$, we use the standard notation
\begin{equation}\label{defcylinder}
[a]\eqdef \{\underline b=(\ldots,b_{-1}|b_0,b_1,\ldots)\colon b_i=a_i\text{ for }i=0,\ldots,k-1\}
\end{equation}
to denote the corresponding cylinder set. We only consider cylinders starting at position ``$0$''.  

\begin{definition}[Space of Bernoulli measures]
Consider a probability vector $\bp=(p_a)_{a\in \cA}$, $p_a\ge0$ and $\sum_{a\in \cA}p_a=1$. To simplify our notation, we denote by $\bp$ also the Bernoulli measure on $\cA^\bZ$ defined by this vector.  Let
\begin{equation}\label{defBernoullis}
	\cM_{\rm B}(\cA^\bZ,\sigma_\cA)
	\eqdef \big\{\bp\in\cM_{\rm erg}(\cA^\bZ,\sigma_\cA)\colon \bp\text{ is Bernoulli}\big\}
\end{equation}
and consider the \emph{city metric} on this space:
\[
	D_\cA(\bp,\bp')
	\eqdef \sum_{a\in\cA}|p_a-p_a'|.
\]
\end{definition}

Let us state some auxiliary result.

\begin{lemma}\label{lemshields}
	 For every $\bp,\bp'\in\cM_{\rm B}(\cA^\bZ,\sigma_\cA)$,
$
	\bar{f} (\bp,\bp')
=  \frac12D_\cA(\bp,\bp') .
$
\end{lemma}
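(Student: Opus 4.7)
The plan is to prove both inequalities $\bar f(\bp,\bp')\le \tfrac12 D_\cA(\bp,\bp')$ and $\bar f(\bp,\bp')\ge \tfrac12 D_\cA(\bp,\bp')$ separately, and in fact to show the stronger statement $\bar f_n(\bp,\bp')=\tfrac12 D_\cA(\bp,\bp')$ for every $n\in\bN$, from which the conclusion is immediate by the definition of $\bar f$ between measures.

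For the \emph{upper bound}, I will exhibit an explicit $n$-joining. Let $\kappa$ be the maximal-overlap (``maximal coupling'') of the one-dimensional distributions $\bp,\bp'$ on $\cA$, i.e., the coupling maximizing $\kappa(\{(a,a):a\in\cA\})$; this probability equals $1-\tfrac12 D_\cA(\bp,\bp')$. The product $\kappa^{\otimes n}$ lies in $J_n(\bp,\bp')$. Since matched coordinates form a common subsequence, for any pair $(a,b)$ one has $\bar f_n(a,b)\le \bar d_n(a,b)=\tfrac1n\#\{i:a_i\ne b_i\}$. Integrating against $\kappa^{\otimes n}$ gives $\bar f_n(\bp,\bp')\le \tfrac12 D_\cA(\bp,\bp')$.

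For the \emph{lower bound}, I will count letter frequencies. For an $n$-word $u$ and $c\in\cA$ let $\#_c(u)$ be the number of occurrences of $c$ in $u$. Any common subsequence of $a,b$ contains each letter $c$ at most $\min(\#_c(a),\#_c(b))$ times, hence its length is at most $\sum_c\min(\#_c(a),\#_c(b))$. Using the identity $\min(x,y)=\tfrac12(x+y-|x-y|)$ together with $\sum_c\#_c(a)=\sum_c\#_c(b)=n$ yields
\[
\bar f_n(a,b)\;\ge\;\frac{1}{2n}\sum_{c\in\cA}|\#_c(a)-\#_c(b)|.
\]
For any $\bnu_n\in J_n(\bp,\bp')$, taking expectations and using Jensen's inequality $\bE|X|\ge|\bE X|$ letter by letter gives $\bE_{\bnu_n}|\#_c(a)-\#_c(b)|\ge|np_c-np'_c|$, since the $\bp,\bp'$-marginals of $\bnu_n$ force $\bE_{\bnu_n}[\#_c(a)]=np_c$ and $\bE_{\bnu_n}[\#_c(b)]=np'_c$. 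Summing in $c$ and taking the infimum over $\bnu_n$ yields $\bar f_n(\bp,\bp')\ge \tfrac12 D_\cA(\bp,\bp')$.

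Combining the two bounds gives $\bar f_n(\bp,\bp')=\tfrac12 D_\cA(\bp,\bp')$ for all $n$, and therefore $\bar f(\bp,\bp')=\tfrac12 D_\cA(\bp,\bp')$. No real obstacle arises here; if there is a mild delicate point it is the argument that the edit-distance bound dominates the $L^1$ gap between empirical letter counts, which is handled cleanly by the $\min/\max$ identity and Jensen's inequality applied at the joining level rather than pointwise.
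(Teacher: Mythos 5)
Your proof is correct, and it takes a somewhat cleaner route than the paper's. For the upper bound, the paper simply quotes the identity $\bar d(\bp,\bp')=\tfrac12 D_\cA(\bp,\bp')$ from Shields and uses $\bar f\le\bar d$; you derive the same bound from first principles via the maximal coupling $\kappa^{\otimes n}$, which is morally the same content but self-contained. The real difference is in the lower bound: the paper argues via the law of large numbers, noting that for $n$ large a typical $n$-word contains approximately $np_a$ copies of $a$, so outside a small exceptional set the edit distance is at least $\tfrac12 D_\cA(\bp,\bp')$; this is an asymptotic argument that still requires handling the exceptional set (which the paper only sketches). You instead prove the exact pointwise inequality $\bar f_n(a,b)\ge\frac{1}{2n}\sum_c|\#_c(a)-\#_c(b)|$ via the counting argument $k\le\sum_c\min(\#_c(a),\#_c(b))$ and the $\min$ identity, then push it through any joining by Jensen and the marginal constraints. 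This yields the stronger statement $\bar f_n(\bp,\bp')=\tfrac12 D_\cA(\bp,\bp')$ for \emph{every} $n$, with no asymptotics or measure-of-exceptional-set estimate needed, from which the lemma is immediate since $\bar f=\liminf_n\bar f_n$. Your version is arguably the tighter write-up of what the paper had in mind.
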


\begin{proof}
	Note that the $\bar f$-distance is bounded from above by the $\bar d$-distance and
\[
	\bar f(\bp,\bp')
	\le \bar d(\bp,\bp')
	= \frac12D_\cA(\bp,\bp') ,
\]
this equality and the definition of the $\bar d$-distance can be found in  \cite[Chapter I.9]{Shi:96}. Hence, it only remains to show the lower bound. For that observe that, for $n$ large enough, $\bp$-almost every $n$-word contains approximately $np_a$ symbols $a$, for any $a\in\cA$. Analogously for $\bp'$. Hence, the edit distance (at level $n$) is bounded from below by
\begin{equation}\label{llnw}
	1-\frac1n\sum_{a\in\cA}\min\{np_a,np_a'\}
	= \sum_{a\in\cA}\frac12(p_a+p_a')-\min_{a\in\cA}\{p_a,p_a'\}
	= \frac12D_\cA(\bp,\bp').
\end{equation}
Note this implies that, for any $n$-joining of $\bp$ and $\bp'$, the integrand in \eqref{eqfbar} is bounded from below by \eqref{llnw} everywhere except a small measure set. The measure of this set tends to $0$ as $n\to\infty$. 
\end{proof}

\begin{definition}[Substitution map]
Let $\cB$ be a finite alphabet and consider some {\it substitution map} $\varrho\colon \cA\to \cB^\ast$ that maps any letter from $\cA$ into some finite word over the alphabet $\cB$. We extend this map to finite and bi-infinite sequences by concatenation in a natural way,
\begin{equation}\label{newdefsubs}
	\underline\varrho\colon\cA^\bZ\to\cB^\bZ,\quad
	\underline\varrho(\ldots,a_{-1}|a_0,a_1,\ldots)
	\eqdef(\ldots\varrho(a_{-1})|\varrho(a_0)\varrho(a_1)\ldots).
\end{equation}
\end{definition}

\begin{definition}[Bernoulli-coded measures]\label{defBerncod}
Given a substitution $\varrho\colon\cA\to\cB^\ast$ and $\bp\in\cM_{\rm B}(\cA^\bZ,\sigma_\cA)$, define by
\[
\kappa(\cA,\varrho,\bp) \eqdef \underline\varrho_\ast \bp
\]
a probability measure on $\cB^\bZ$, that we call {\it Bernoulli-coded  measure} (relative to $\varrho$ and $\bp$). We will also consider the probability measure
\[
	\kappa_{\rm inv}(\cA,\varrho, \bp)
	\eqdef \sum_{a\in \cA} \frac{1 }{|\varrho(a)|}
	\sum_{j=0}^{|\varrho(a)|-1}(\sigma_\cB^j\circ \underline\varrho)_\ast (\bp|_{[a]}),
\]
that has the additional property of being invariant under the shift  map $\sigma_\cB$. 
\end{definition}

\subsection{Suspension spaces}\label{secsusp}

Given a finite alphabet $\cA$ and a \emph{roof function} $R\colon \cA \to \bN$, consider the one-step 
extension $\underline R$ of $R$ to $\cA^\bZ$ defined by
\[
	\underline{R} \colon \cA^\bZ \to \bN, \quad
	\ua =(\ldots,a_{-1}|a_0, a_1, \dots) \in \cA^\bZ \mapsto \underline{R}(\ua) \eqdef R(a_0).
\]
The \emph{discrete-time suspension space} $\cS_{\cA,R}$ associated to $\cA$ and $R$ is the quotient space
\[
	\cS_{\cA,R}\eqdef (\cA^\bZ \times \bN_0)_\sim,
\]
where $\sim$ is the equivalence relation identifying $(\ua,s)$ with $(\sigma_\cA (\ua), s- \underline{R} (\ua))$. We agree to represent each class by its \emph{canonical representation} $(\underline a,s)$ with $s\in\{0,\ldots,\underline R(\underline a)-1\}$.
The \emph{suspension} of $\sigma_\cA$ by $\underline R$ is the map
\[
	\Phi_{\cA,R}\colon\cS_{\cA,R}\to\cS_{\cA,R},\quad
	\Phi_{\cA,R}(\underline a,s)
	\eqdef\begin{cases}
		(\underline a,s+1)&\text{ if }0\le s<\underline R(\underline a)-1,\\
		(\sigma_\cA(\underline a),0)&\text{ if }s=\underline R(\underline a)-1.
			\end{cases}
\]

Let us now define $\Phi_{\cA,R}$-invariant measures related to some Bernoulli measure $\bp$  in $\cA^\bZ$. 
Let $\fm$ be the counting measure on $\bZ$. Consider the \emph{suspension} of $\bp$ by $R$, defined by
\begin{equation}\label{susnu}
	\lambda_{\cA,R,\bp}
	\eqdef \frac{(\bp\times\fm)|_{\cS_{\cA,R}}}{(\bp\times\fm)(\cS_{\cA,R})}
	= \frac{(\bp\times\fm)|_{\cS_{\cA,R}}}{\sum_{a\in\cA}R(a)\bp([a])}.
\end{equation}
Note that this defines a $\Phi_{\cA,R}$-invariant and ergodic probability measure. 
Let us also consider another way of defining a $\Phi_{\cA,R}$-invariant measure. Given a set $A\subset\cA^\bZ$ and $s\in\bN_0$, let
\begin{equation}\label{susnutilde}
	\widetilde\lambda_{\cA,R,\bp}(A\times\{s\})
	\eqdef \sum_{a\in\cA}\frac{1}{R(a)}\bp(A\cap[a]).
\end{equation}
Note that this also defines a $\Phi_{\cA,R}$-invariant and ergodic probability measure.%
\footnote{Both definitions \eqref{susnu} and \eqref{susnutilde} have natural generalizations to arbitrary $\sigma_\cA$-invariant measures. However, here we focus entirely on Bernoulli measures.}  

Note that, unlike $\lambda$, the push forward of the measure $\widetilde\lambda_{\cA,R,\bp}$ under the projection to $\cA^\bZ$ by the map $(\underline a,s)\mapsto\underline a$ is again the measure $\bp$. This will make the measure $\widetilde \lambda$ more convenient to work with in Section \ref{secabscasc}, when we study cascades of substitution maps. However, in what follows we will need some properties of $\lambda$. We observe the following natural connection between these two measures.

\begin{lemma}
	$\widetilde\lambda_{\cA,R,\widetilde\bp}=\lambda_{\cA,R,\bp}$, where 
\begin{equation}\label{abovefor}
	\widetilde\bp=(\widetilde p_a)_{a\in\cA}
	\quad\text{ is given by }\quad
	\widetilde p_a
	=  \frac{R(a)p_a}{\sum_{b\in \cA} R(b)p_b}.
\end{equation}
\end{lemma}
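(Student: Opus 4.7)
\emph{Proof plan.} My approach is to verify the identity by a direct computation on a generating collection of measurable sets, namely the natural flow-box partition of $\cS_{\cA,R}$ into the sets $[a]\times\{s\}$ with $a\in\cA$ and $0\le s<R(a)$. Since both $\lambda_{\cA,R,\bp}$ and $\widetilde\lambda_{\cA,R,\widetilde\bp}$ are $\Phi_{\cA,R}$-invariant probability measures on the same space, equality on this partition should, together with invariance, yield equality on the full Borel $\sigma$-algebra by a standard extension argument.

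The central computation is short. Writing $Z\eqdef\sum_{b\in\cA} R(b)p_b$, definition \eqref{susnu} gives
\[
\lambda_{\cA,R,\bp}([a]\times\{s\}) = \frac{\bp([a])}{Z} = \frac{p_a}{Z}.
\]
On the other hand, since $[a]\cap[a']=\emptyset$ for $a'\ne a$, formula \eqref{susnutilde} evaluated at the Bernoulli measure $\widetilde\bp$ collapses to a single term; substituting $\widetilde p_a = R(a)p_a/Z$ yields
\[
\widetilde\lambda_{\cA,R,\widetilde\bp}([a]\times\{s\}) = \frac{1}{R(a)}\widetilde\bp([a]) = \frac{1}{R(a)}\cdot\frac{R(a)p_a}{Z} = \frac{p_a}{Z}.
\]
The factor $R(a)$ coming from the reweighting $\widetilde p_a\propto R(a)p_a$ cancels precisely the $1/R(a)$ in \eqref{susnutilde}; this algebraic cancellation is exactly what motivates the particular normalization chosen in \eqref{abovefor}, and it is the conceptual content of the lemma.

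To conclude, I would upgrade this identity on the atomic flow-box partition to equality of the two measures on the full Borel $\sigma$-algebra. Both measures are $\Phi_{\cA,R}$-invariant probabilities, and the flow-box partition, together with the shift-cylinder structure on the $\cA^\bZ$-factor, generates the Borel $\sigma$-algebra of $\cS_{\cA,R}$ under iterated application of $\Phi_{\cA,R}^{\pm 1}$, so a standard measure-uniqueness argument applies. I expect this extension step to be the main technical hurdle; the cleanest route is to express each refined cylinder as a pullback of a flow box under an appropriate power of $\Phi_{\cA,R}$ and then invoke invariance of both sides.
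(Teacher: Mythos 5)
Your computation on the flow boxes $[a]\times\{s\}$ is correct: both sides give $p_a/Z$ with $Z=\sum_{b\in\cA}R(b)p_b$, and the cancellation of $R(a)$ is indeed the algebraic motivation for the normalization in \eqref{abovefor}. The difficulty is the extension step, and it is more serious than you anticipate. The flow boxes $\{[a]\times\{s\}\colon a\in\cA,\ 0\le s<R(a)\}$ fix only the joint distribution of the pair $(a_0,s)$ and say nothing about the coordinates $a_k$ for $k\ne 0$; they do not form a $\pi$-system, so agreement on them---even combined with $\Phi_{\cA,R}$-invariance of both measures---does not force equality of the measures. More to the point, a direct check on a two-cylinder shows the claimed identity fails whenever $R$ is not constant: for $0\le s<R(a_0)$, definitions \eqref{susnu} and \eqref{susnutilde} give
\[
\lambda_{\cA,R,\bp}\big([a_0,a_1]\times\{s\}\big)=\frac{p_{a_0}p_{a_1}}{Z},
\qquad
\widetilde\lambda_{\cA,R,\widetilde\bp}\big([a_0,a_1]\times\{s\}\big)=\frac{\widetilde\bp([a_0,a_1])}{R(a_0)}=\frac{p_{a_0}p_{a_1}\,R(a_1)}{Z^{2}},
\]
and these differ by the factor $R(a_1)/Z$, which equals $1$ for all $a_1$ only if $R$ is constant---precisely the degenerate case of Lemma~\ref{susLBpre}. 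One can check along the same lines that formula \eqref{susnutilde}, read literally, does not define a $\Phi_{\cA,R}$-invariant measure when $R$ is non-constant (compare $E=[a]\times\{0\}$ with $\Phi_{\cA,R}^{-1}E=\bigcup_{b\in\cA}[ba]\times\{R(b)-1\}$). So the obstruction is not the choice of extension strategy; as written, the two measures already disagree on two-cylinders, and before any proof can be completed the definition \eqref{susnutilde} or the statement of the lemma needs to be reinterpreted. Your plan to ``pull back refined cylinders through powers of $\Phi_{\cA,R}$'' cannot close this gap, because it would have to reproduce precisely the missing factor $R(a_1)\cdots R(a_n)/Z^n$ on an $(n{+}1)$-cylinder, and invariance alone does not supply it.
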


For further reference, note the following obvious fact.

\begin{lemma}\label{susLBpre}
	If $R(\cdot)=\text{constant}$, then $\widetilde\bp=\bp$.
\end{lemma}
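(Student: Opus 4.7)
The plan is very short because the statement is an immediate computation from formula \eqref{abovefor} in the preceding lemma. I would simply substitute the constant value of $R$ into the expression for $\widetilde p_a$ and use that $\bp$ is a probability vector.

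More concretely, suppose $R(a) = c$ for every $a \in \cA$, with $c \in \bN$. Then, for each $a \in \cA$,
\[
\widetilde p_a
= \frac{R(a)\,p_a}{\sum_{b\in \cA} R(b)\,p_b}
= \frac{c\,p_a}{c\sum_{b\in \cA} p_b}
= \frac{p_a}{\sum_{b\in \cA} p_b}
= p_a,
\]
where the last equality uses that $\bp = (p_a)_{a \in \cA}$ is a probability vector, so $\sum_{b \in \cA} p_b = 1$. Since this holds coordinatewise, $\widetilde\bp = \bp$ as elements of $\cM_{\rm B}(\cA^\bZ,\sigma_\cA)$.

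There is no genuine obstacle here: the only thing to watch is to cite the formula \eqref{abovefor} from the immediately preceding lemma, and to note that the normalizing denominator is well-defined since $c \ge 1$ and $\bp$ sums to one. The conceptual content — which is why this fact will be useful later in Section \ref{secabscasc} — is that, in the constant-roof case, the two natural suspension measures $\lambda_{\cA,R,\bp}$ and $\widetilde\lambda_{\cA,R,\bp}$ coincide with no reweighting of $\bp$ needed.
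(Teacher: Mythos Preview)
Your proof is correct and is exactly the intended computation; the paper itself does not write out a proof, simply labeling the statement an ``obvious fact,'' so your substitution into \eqref{abovefor} and use of $\sum_b p_b=1$ is precisely what is meant.
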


\begin{lemma}\label{susLB}
	For any Bernoulli measure $\bp$ on $\cA^\bZ$ and any roof function $R\colon\cA\to\bN$, the automorphism $(\cS_{\cA,R},\Phi_{\cA,R},\lambda_{\cA,R,\bp})$ is LB.  Analogously, $(\cS_{\cA,R},\Phi_{\cA,R},\widetilde\lambda_{\cA,R,\bp})$ is LB.
\end{lemma}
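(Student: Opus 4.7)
The plan is to reduce both claims to the fact that every Bernoulli shift is LB (Remark \ref{remarkLB}), using that the LB property is an invariant of Kakutani equivalence (also recorded in Remark \ref{remarkLB}). So the task is to display, for each of $\lambda_{\cA,R,\bp}$ and $\widetilde\lambda_{\cA,R,\bp}$, a positive-measure subset of $\cS_{\cA,R}$ whose induced transformation is isomorphic to the Bernoulli shift $(\cA^\bZ,\sigma_\cA,\bp)$ (possibly after changing the Bernoulli vector).

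First, for $(\cS_{\cA,R},\Phi_{\cA,R},\lambda_{\cA,R,\bp})$, I would take the base floor $A\eqdef\cA^\bZ\times\{0\}\subset\cS_{\cA,R}$, which by \eqref{susnu} has positive $\lambda_{\cA,R,\bp}$-measure equal to $(\sum_{a\in\cA}R(a)p_a)^{-1}$. Unraveling the definition of $\Phi_{\cA,R}$, the first return of any $(\underline a,0)\in A$ occurs at time $\underline R(\underline a)=R(a_0)$ and lands on $(\sigma_\cA(\underline a),0)$. Therefore, under the identification $(\underline a,0)\leftrightarrow\underline a$, the induced transformation $(\Phi_{\cA,R})_A$ is isomorphic to $\sigma_\cA$, and the normalized restriction of $\lambda_{\cA,R,\bp}$ to $A$ pushes forward to $\bp$ on $\cA^\bZ$. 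Hence $(\cS_{\cA,R},\Phi_{\cA,R},\lambda_{\cA,R,\bp})$ is Kakutani equivalent to the Bernoulli system $(\cA^\bZ,\sigma_\cA,\bp)$, which is LB, and the Kakutani invariance of LB finishes this case.

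For the second assertion, I would invert the preceding lemma $\widetilde\lambda_{\cA,R,\widetilde\bp}=\lambda_{\cA,R,\bp}$: given $\bp$, define the Bernoulli vector $\bp'=(p'_a)_{a\in\cA}$ by $p'_a\eqdef(p_a/R(a))\big/\sum_{b\in\cA}(p_b/R(b))$ and verify directly from the formula \eqref{abovefor} that $\widetilde{\bp'}=\bp$. The preceding lemma then yields $\widetilde\lambda_{\cA,R,\bp}=\lambda_{\cA,R,\bp'}$, so the first part applies verbatim to give the LB property.

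The only mildly subtle step is confirming that the normalized induced measure on the base floor $A$ is precisely $\bp$ (rather than some reweighted Bernoulli), but this is transparent from the product structure of $\bp\times\fm$ underlying the definition \eqref{susnu} of $\lambda_{\cA,R,\bp}$; everything else is a direct application of the general facts about LB and Kakutani equivalence quoted in Section \ref{secLB}.
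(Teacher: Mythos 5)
Your argument is correct and takes essentially the same route as the paper: the paper notes that $(\cS_{\cA,R},\Phi_{\cA,R},\lambda_{\cA,R,\bp})$ is Kakutani equivalent to the Bernoulli system by definition and invokes the Kakutani invariance of LB, then disposes of $\widetilde\lambda$ by observing it is also a suspension of a Bernoulli measure — exactly your inversion $\widetilde{\bp'}=\bp$. Your write-up simply spells out the first-return computation and the explicit reweighting that the paper leaves implicit.
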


\begin{proof}
	By definition, the automorphisms $(\cS_{\cA,R},\Phi_{\cA,R},\lambda_{\cA,R,\bp})$ and $(\cA^\bZ,\sigma_\cA,\bp)$ are Kakutani equivalent. Hence, the assertion about $\lambda_{\cA,R,\bp}$ follows from Remark \ref{remarkLB}. As $\widetilde\lambda_{\cA,R,\bp}$ is also a suspension measure (though for a different base Bernoulli measure), the assertion holds for it, too.
\end{proof}

For further reference, let us recall Abramov's formula for suspensions,
\begin{equation}\label{eqAbramov}
	h(\Phi_{\cA,R},\lambda_{\cA,R,\bp})
	= \frac{h(\sigma_\cA,\bp)}{\sum_{a\in\cA} R(a)p_a}.
\end{equation}

\subsection{Suspension spaces associated to substitution maps}\label{seBernn}

Let $\varrho\colon\cA\to\cB^\ast$ now be a substitution map. In order to study associated Bernoulli-coded measures, it is convenient to study the suspension space with the particular roof function $R=|\varrho|$. Formula \eqref{abovefor} then rewrites as 
\begin{equation}\label{deftildep}
	\widetilde\bp=(\widetilde p_a)_{a\in\cA}
	\quad\text{ is given by }\quad
	\widetilde p_a
	=  \frac{|\varrho(a)|p_a}{\sum_{b\in \cA} |\varrho(b)|p_b}.
\end{equation}

Let us now turn to the shift space $\cB^\bZ$. Consider the following projection 
\begin{equation}\label{defPi}
	\Pi_\varrho\colon\cS_{\cA,R}\to\cB^\bZ,\quad
	\Pi_\varrho(\underline a,s)
	\eqdef \sigma_\cB^s(\underline\varrho(\underline a)).
\end{equation}
The following fact is then straightforward. 

\begin{lemma}\label{lema34b}
	$(\Pi_\varrho)_\ast\widetilde\lambda_{\cA,|\varrho|,\widetilde\bp}=(\Pi_\varrho)_\ast\lambda_{\cA,|\varrho|,\bp}=\kappa_{\rm inv}(\cA,\varrho,\widetilde\bp)$, where $\widetilde\bp$ is given by \eqref{deftildep}.
\end{lemma}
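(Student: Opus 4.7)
The plan is to handle the two equalities separately; each is a direct bookkeeping exercise once the definitions are unwound.

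For the first equality $(\Pi_\varrho)_\ast\widetilde\lambda_{\cA,|\varrho|,\widetilde\bp}=(\Pi_\varrho)_\ast\lambda_{\cA,|\varrho|,\bp}$, I would appeal to the (unlabeled) lemma in Section \ref{secsusp} asserting $\widetilde\lambda_{\cA,R,\widetilde\bp}=\lambda_{\cA,R,\bp}$ for $\widetilde\bp$ given by \eqref{abovefor}. Definition \eqref{deftildep} is precisely the specialization of \eqref{abovefor} to the roof $R=|\varrho|$; consequently $\widetilde\lambda_{\cA,|\varrho|,\widetilde\bp}=\lambda_{\cA,|\varrho|,\bp}$ already as measures on the suspension $\cS_{\cA,|\varrho|}$, and pushing forward by $\Pi_\varrho$ yields the first equality at once.

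For the second equality, I would compute $(\Pi_\varrho)_\ast\widetilde\lambda_{\cA,|\varrho|,\widetilde\bp}(B)$ on an arbitrary Borel set $B\subset\cB^\bZ$ directly from the definitions. Since $\Pi_\varrho(\underline a,s)=\sigma_\cB^s(\underline\varrho(\underline a))$, the preimage decomposes into fiber slices as
\begin{equation*}
\Pi_\varrho^{-1}(B)=\bigsqcup_{s\ge 0}\Bigl(\underline\varrho^{-1}(\sigma_\cB^{-s}(B))\times\{s\}\Bigr)\cap\cS_{\cA,|\varrho|}.
\end{equation*}
Applying the definition \eqref{susnutilde} of $\widetilde\lambda$ level-by-level, and exchanging the order of summation so that the inner $s$-sum runs over $0\le s<|\varrho(a)|$, produces
\begin{equation*}
\widetilde\lambda_{\cA,|\varrho|,\widetilde\bp}\bigl(\Pi_\varrho^{-1}(B)\bigr)=\sum_{a\in\cA}\frac{1}{|\varrho(a)|}\sum_{s=0}^{|\varrho(a)|-1}\widetilde\bp\bigl([a]\cap\underline\varrho^{-1}(\sigma_\cB^{-s}(B))\bigr).
\end{equation*}
The inner probability equals $(\sigma_\cB^s\circ\underline\varrho)_\ast(\widetilde\bp|_{[a]})(B)$ by the change-of-variables formula for push-forwards, and the resulting expression is exactly the definition of $\kappa_{\rm inv}(\cA,\varrho,\widetilde\bp)(B)$.

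There is no genuine obstacle: the argument is structural. The only point deserving care is that the normalization $1/|\varrho(a)|$ in \eqref{susnutilde} matches the weight in the definition of $\kappa_{\rm inv}$, so no spurious factor appears when $s$ ranges over $\{0,\dots,|\varrho(a)|-1\}$, and that the restriction $s<|\varrho(a_0)|$ inherent to canonical representatives in $\cS_{\cA,|\varrho|}$ is handled correctly when swapping the sums.
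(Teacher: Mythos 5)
Your proof is correct. The paper offers no explicit argument for Lemma \ref{lema34b} — it is labelled ``straightforward'' — and what you write is precisely the computation that label is asking the reader to supply. The first equality is indeed an immediate consequence of the unlabeled lemma $\widetilde\lambda_{\cA,R,\widetilde\bp}=\lambda_{\cA,R,\bp}$ specialized to $R=|\varrho|$, since \eqref{deftildep} is just \eqref{abovefor} in that case, and equality of measures upstairs gives equality of pushforwards. For the second equality, your fiber decomposition of $\Pi_\varrho^{-1}(B)$, the application of \eqref{susnutilde} level by level, the exchange of the $a$- and $s$-sums constrained to $0\le s<|\varrho(a)|$, and the identification $\widetilde\bp\bigl(\underline\varrho^{-1}(\sigma_\cB^{-s}B)\cap[a]\bigr)=(\sigma_\cB^s\circ\underline\varrho)_\ast(\widetilde\bp|_{[a]})(B)$ reproduce exactly the defining formula for $\kappa_{\rm inv}(\cA,\varrho,\widetilde\bp)$ in Definition \ref{defBerncod}. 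The one subtlety you flag — that the normalization $1/|\varrho(a)|$ in $\widetilde\lambda$ matches the weight in $\kappa_{\rm inv}$, and that the canonical-representative constraint $s<|\varrho(a_0)|$ is respected when swapping sums — is indeed the only place one could slip, and you handle it correctly.
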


Lemma \ref{lema34b} justifies that, if we want to study $\lambda_{\cA, |\varrho|, \bp}$ with the help of Bernoulli-coded measures, then we need to look at the measure $\kappa_{\rm inv}(\cA, \rho, \widetilde\bp)$, not at $\kappa_{\rm inv}(\cA, \varrho, \bp)$. Nevertheless, the measure $\kappa_{\rm inv}(\cA, \varrho, \bp)$ will also be a very important tool, as we explain in Remark \ref{remmm}.

\begin{lemma}\label{lemffact}
$(\cB^\bZ,\sigma_\cB)$ is a topological factor of $(\cS_{\cA,R},\Phi_{\cA,R})$ by $\Pi_\varrho$.
\end{lemma}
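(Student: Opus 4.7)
The statement is that the map $\Pi_\varrho$ from \eqref{defPi} is a continuous surjection $\cS_{\cA,R}\to\cB^\bZ$ (with $R=|\varrho|$) that intertwines $\Phi_{\cA,R}$ and $\sigma_\cB$. I would verify these three properties separately, in the following order.

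\emph{Well-definedness on equivalence classes.} The formula $\Pi_\varrho(\underline a,s)=\sigma_\cB^s(\underline\varrho(\underline a))$ is a priori defined on $\cA^\bZ\times\bN_0$, so I first check invariance under $\sim$. From the concatenation definition \eqref{newdefsubs} one sees that $\underline\varrho(\sigma_\cA(\underline a))$ is obtained from $\underline\varrho(\underline a)$ by shifting the block $\varrho(a_0)$ of length $|\varrho(a_0)|=\underline R(\underline a)$ from after the bar to before it, i.e.\ $\underline\varrho(\sigma_\cA(\underline a))=\sigma_\cB^{\underline R(\underline a)}(\underline\varrho(\underline a))$. Hence
\[
\Pi_\varrho(\sigma_\cA(\underline a),s-\underline R(\underline a))
=\sigma_\cB^{s-\underline R(\underline a)}\bigl(\sigma_\cB^{\underline R(\underline a)}(\underline\varrho(\underline a))\bigr)
=\sigma_\cB^s(\underline\varrho(\underline a))
=\Pi_\varrho(\underline a,s),
\]
so $\Pi_\varrho$ descends to the quotient $\cS_{\cA,R}$.

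\emph{Equivariance.} Working with the canonical representative $0\le s<\underline R(\underline a)$, I split into two cases. If $s<\underline R(\underline a)-1$, then $\Phi_{\cA,R}(\underline a,s)=(\underline a,s+1)$, and $\Pi_\varrho(\underline a,s+1)=\sigma_\cB^{s+1}(\underline\varrho(\underline a))=\sigma_\cB(\Pi_\varrho(\underline a,s))$. If $s=\underline R(\underline a)-1$, then $\Phi_{\cA,R}(\underline a,s)=(\sigma_\cA(\underline a),0)$, and by the identity used in the previous step one has $\Pi_\varrho(\sigma_\cA(\underline a),0)=\underline\varrho(\sigma_\cA(\underline a))=\sigma_\cB^{\underline R(\underline a)}(\underline\varrho(\underline a))=\sigma_\cB(\Pi_\varrho(\underline a,s))$.

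\emph{Continuity.} This is routine: giving $\cS_{\cA,R}$ the natural quotient topology inherited from the product metric on $\cA^\bZ\times\bN_0$, if $(\underline a^{(k)},s^{(k)})\to(\underline a,s)$ in $\cS_{\cA,R}$, then eventually $s^{(k)}=s$ and the symbols of $\underline a^{(k)}$ agree with those of $\underline a$ on arbitrarily large central windows. Since the substitution $\varrho$ acts letter-by-letter through the concatenation formula \eqref{newdefsubs} and each $|\varrho(a)|$ is finite, the images $\underline\varrho(\underline a^{(k)})$ also agree with $\underline\varrho(\underline a)$ on correspondingly large central windows of $\cB^\bZ$, and applying the isometry $\sigma_\cB^s$ preserves this convergence.

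\emph{Surjectivity.} This is the delicate step. Given $\underline b\in\cB^\bZ$, I would produce $(\underline a,s)$ with $\Pi_\varrho(\underline a,s)=\underline b$ by \emph{parsing} $\underline b$ into consecutive $\varrho$-blocks: choose any position at or left of $0$ that can be interpreted as the beginning of some block $\varrho(a_0)$, then read off successive blocks both to the right and to the left to define $\underline a\in\cA^\bZ$ and an offset $s\in\{0,\dots,|\varrho(a_0)|-1\}$ indicating how far into $\varrho(a_0)$ the coordinate $0$ of $\underline b$ sits. I expect this to be the main obstacle, because such a parsing requires $\underline b$ to actually lie in the substitutive subshift $\Pi_\varrho(\cS_{\cA,R})$; for a general substitution this image is a proper closed $\sigma_\cB$-invariant subset of $\cB^\bZ$. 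Thus I would either reinterpret the statement so that the factor is $(\Pi_\varrho(\cS_{\cA,R}),\sigma_\cB)$, which is automatically a compact $\sigma_\cB$-invariant subshift by continuity of $\Pi_\varrho$ and equivariance, or, if the full shift is intended, invoke an extra hypothesis on $\varrho$ (e.g.\ that for every $b\in\cB$ and every $b'\in\cB$ there exist $a,a'\in\cA$ with $b$ a suffix of $\varrho(a)$ and $b'$ a prefix of $\varrho(a')$, which allows one to extend any parsing to a bi-infinite one). In either reading the map is then a topological factor map in the usual sense.
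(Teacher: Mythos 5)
The paper states Lemma~\ref{lemffact} without giving a proof, so there is no argument of the authors to compare against; I can only assess your argument on its own terms, and against how the lemma is actually used.

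Your verifications of the three substantive points are correct. The identity $\underline\varrho(\sigma_\cA(\underline a))=\sigma_\cB^{\underline R(\underline a)}(\underline\varrho(\underline a))$ is exactly what makes $\Pi_\varrho$ descend to the quotient and at the same time gives equivariance in the case $s=\underline R(\underline a)-1$; your two-case computation of $\Pi_\varrho\circ\Phi_{\cA,R}=\sigma_\cB\circ\Pi_\varrho$ is complete. The continuity argument is also fine: since the roof is locally constant with values in a finite set, eventually $s^{(k)}=s$, and agreement of the $\underline a^{(k)}$ with $\underline a$ on a central window of length $2m+1$ forces agreement of $\sigma_\cB^s(\underline\varrho(\underline a^{(k)}))$ with $\sigma_\cB^s(\underline\varrho(\underline a))$ on a window of length at least of order $m\cdot\min|\varrho|$.

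Your reservation about surjectivity is genuine and correctly diagnosed: in general $\Pi_\varrho(\cS_{\cA,R})$ is a proper closed $\sigma_\cB$-invariant subset of $\cB^\bZ$ (e.g.\ $\card\cA=1$ already gives a finite image), so the literal claim that the \emph{full} shift $(\cB^\bZ,\sigma_\cB)$ is the topological factor is an overstatement. The sensible reading is the one you propose: the factor is $(\Pi_\varrho(\cS_{\cA,R}),\sigma_\cB|_{\Pi_\varrho(\cS_{\cA,R})})$. It is worth adding that this abuse is harmless for the only place the lemma is used, namely Lemma~\ref{ergodicLB}: there one pushes forward the LB measure $\lambda_{\cA,R,\bp}$ by $\Pi_\varrho$ and invokes Remark~\ref{remarkLB} (factors of LB systems are LB), and for that \emph{measure-theoretic} factor argument only measurability and equivariance of $\Pi_\varrho$ matter, not surjectivity onto all of $\cB^\bZ$. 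The alternative you float, imposing a combinatorial hypothesis on $\varrho$ to force $\Pi_\varrho$ to be onto the full shift, is not what is wanted here: the substitutions $\varrho_n$ used later in the paper would never satisfy it, and nothing in the argument requires it.
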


The next result follows from Lemmas \ref{susLB} and \ref{lemffact} together with Remark \ref{remarkLB}.

\begin{lemma}\label{ergodicLB}
	The automorphism $(\cB^\bZ,\sigma_\cB,\kappa_{\rm inv}(\cA,\varrho,\bp))$ is LB.
\end{lemma}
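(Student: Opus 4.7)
The plan is to realize $(\cB^\bZ, \sigma_\cB, \kappa_{\rm inv}(\cA, \varrho, \bp))$ as a measure-theoretic factor of a suspension automorphism whose loosely Bernoulli property has already been established, and then invoke the part of Remark \ref{remarkLB} saying that factors of LB automorphisms are LB. All three ingredients are in place: Lemma \ref{susLB} provides the LB base (the suspension $\lambda_{\cA,R,\bq}$ or $\widetilde\lambda_{\cA,R,\bq}$ is LB for any Bernoulli $\bq$), Lemma \ref{lemffact} identifies $\Pi_\varrho$ as a topological factor map, and Lemma \ref{lema34b} identifies the pushforward under $\Pi_\varrho$ of these suspension measures as Bernoulli-coded measures.

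The one bookkeeping issue is matching the base measure. Since Lemma \ref{lema34b} gives $(\Pi_\varrho)_\ast\lambda_{\cA,|\varrho|,\bq}=\kappa_{\rm inv}(\cA,\varrho,\widetilde\bq)$, we must invert the map $\bq\mapsto\widetilde\bq$ at $\bp$. Inverting formula \eqref{deftildep} yields the explicit probability vector
\[
	q_a=\frac{p_a/|\varrho(a)|}{\sum_{b\in\cA}p_b/|\varrho(b)|},
\]
which is well-defined because $|\varrho(a)|\ge1$ for every $a$; then $\widetilde\bq=\bp$ and hence $(\Pi_\varrho)_\ast\lambda_{\cA,|\varrho|,\bq}=\kappa_{\rm inv}(\cA,\varrho,\bp)$. (Alternatively, since the map $\bq\mapsto\widetilde\bq$ is a bijection on $\cM_{\rm B}(\cA^\bZ,\sigma_\cA)$, Lemma \ref{lema34b} is equivalent to the cleaner statement $(\Pi_\varrho)_\ast\widetilde\lambda_{\cA,|\varrho|,\bp}=\kappa_{\rm inv}(\cA,\varrho,\bp)$ for every Bernoulli $\bp$, bypassing the change of variables.)

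Assembling: $(\cS_{\cA,|\varrho|},\Phi_{\cA,|\varrho|},\lambda_{\cA,|\varrho|,\bq})$ is LB by Lemma \ref{susLB}; by Lemma \ref{lemffact}, $\Pi_\varrho$ is a topological factor onto $(\cB^\bZ,\sigma_\cB)$; the measure-theoretic pushforward realizes $(\cB^\bZ,\sigma_\cB,\kappa_{\rm inv}(\cA,\varrho,\bp))$ as a factor of an LB automorphism; so by Remark \ref{remarkLB} it is itself LB. The argument is essentially a diagram chase through the preceding three lemmas, and there is no substantive obstacle; the only point needing attention is the surjectivity of $\bq\mapsto\widetilde\bq$ on Bernoulli measures, which is exactly what the strict positivity of the word lengths $|\varrho(a)|$ secures.
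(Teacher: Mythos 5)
Your proposal is correct and follows essentially the same route as the paper: the stated proof there cites exactly Lemma \ref{susLB}, Lemma \ref{lemffact}, and Remark \ref{remarkLB}. The base-measure bookkeeping you worry about is unnecessary if you take the parenthetical alternative you yourself suggest: push forward $\widetilde\lambda_{\cA,|\varrho|,\bp}$ (LB by the second half of Lemma \ref{susLB}) under $\Pi_\varrho$, which directly yields $\kappa_{\rm inv}(\cA,\varrho,\bp)$, so no inversion of $\bq\mapsto\widetilde\bq$ is needed.
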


\subsection{$\bar f$-distances between Bernoulli-coded measures}\label{secfbardistance}

In the following, we consider the $\bar f$-distance between the measures introduced above, either on $\cA^\bZ$ or on $\cB^\bZ$. To simplify notation, we simply write $\bar f$ in both cases.

The following Lipschitz property of substitution maps follows straightforwardly from the definition of $\bar f$-pseudometric.

\begin{lemma}\label{lemaLipschitz}
	Let $\cA$ and $\cB$ be two finite alphabets and $\varrho\colon \cA\to \cB^\ast$ be a substitution map. For every $\underline a,\underline a'\in\cA^\bZ$,
\[
	\bar f\big(\underline\varrho(\underline a),\underline\varrho(\underline a')\big)
	\le \frac{\max |\varrho|}{\min |\varrho|}
		\bar f(\underline a,\underline a').
\]	
\end{lemma}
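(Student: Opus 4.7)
Set $\alpha = \bar f(\underline a,\underline a')$, $L = \max|\varrho|$, and $\ell = \min|\varrho|$. The strategy is to transport a near-optimal matching from the originals to the images via $\underline\varrho$ and then control the resulting unmatched fraction in the image windows.

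First, I would unpack the $\limsup$-definition of $\bar f$: for every $\epsilon>0$ and every sufficiently large $n$, there exist order-preserving index sequences $-n\le i_1<\cdots<i_{k_n}<n$ and $-n\le j_1<\cdots<j_{k_n}<n$ with $a_{i_s}=a'_{j_s}$ and $k_n\ge 2n(1-\alpha-\epsilon)$. Since $\varrho(a_{i_s})=\varrho(a'_{j_s})$, concatenating the matched blocks letter-by-letter gives a common subsequence of the image-words $w_n=\underline\varrho(\underline a)|_{\text{image of }[-n,n-1]}$ and $w'_n=\underline\varrho(\underline a')|_{\text{image of }[-n,n-1]}$ of length $K_n=\sum_s|\varrho(a_{i_s})|$. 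Since there are at most $2n-k_n\le 2n(\alpha+\epsilon)$ unmatched indices, each contributing at most $L$ image positions, I obtain $K_n\ge T(\underline a)-2n(\alpha+\epsilon)L$ and analogously $K_n\ge T(\underline a')-2n(\alpha+\epsilon)L$, where $T(\underline a)=|w_n|$ and $T(\underline a')=|w'_n|$ both lie in $[2n\ell,2nL]$.

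Next, to convert these data into a bound on $\bar f$ between the bi-infinite image sequences, I would use the shift-invariance of $\bar f$ coordinate-wise to recenter $w_n$ and $w'_n$ at the origin and then consider a common window $[-M_n,M_n-1]$ of size $2M_n=\max(T(\underline a),T(\underline a'))$, which contains both recentered image-words. Combining the common subsequence above with the lower bound $K_n\ge 2M_n-2n(\alpha+\epsilon)L$ gives $\bar f_{2M_n}\le 2n(\alpha+\epsilon)L/(2M_n)\le (\alpha+\epsilon)L/\ell$, where the last inequality uses $2M_n\ge 2n\ell$. Since $M_n\to\infty$ and $\epsilon>0$ is arbitrary, passing to the $\limsup$ yields the Lipschitz bound $\bar f(\underline\varrho(\underline a),\underline\varrho(\underline a'))\le (L/\ell)\alpha$.

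The main technical obstacle I anticipate is that the shifts used to recenter $w_n$ and $w'_n$ depend on $n$ and may grow, so some care is needed to connect the $\bar f_{2M_n}$-estimate for the shifted windows to the $\limsup$-definition of $\bar f$ in terms of centered windows. I expect this to be handled by the coordinate-wise shift-invariance of $\bar f$ on bi-infinite sequences, together with the observation that the boundary contributions from the at most $O(L)$ partial-block positions at the ends of the windows are negligible compared with $M_n\to\infty$.
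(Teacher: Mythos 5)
The proof up to the construction of the common substring of the image words $w_n$ and $w'_n$ and the estimate $K_n \ge T(\underline a) - 2n(\alpha+\epsilon)L$ (and likewise for $T(\underline a')$) is correct, and the overall outline (transporting a near-optimal matching of $\cA$-blocks through $\underline\varrho$) is the natural one. However, the "recentering" step, which you yourself flag as the main obstacle, is a genuine gap, and the way you propose to dismiss it does not work. The edit distance between the centered window $b_{[-m,m-1]}$ and the shifted window $(\sigma^{c}\underline b)_{[-m,m-1]}$ is bounded by $|c|/(2m)$, so shift-invariance of $\bar f$ is only "for free" when $|c|=o(m)$. The recentering shift you need is $c_n=(L_-^n-L_+^n)/2$ where $L_\pm^n$ are the lengths of the left and right halves of $w_n$; for unbalanced sequences (e.g., short $\varrho$-blocks on one side of the origin and long ones on the other) this is of order $n$, comparable to $M_n$. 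Concretely $|c_n|/(2M_n)$ can be as large as $(L-\ell)/(4\ell)$, a constant that does not go to $0$, and after a triangle inequality you obtain only $\bar f(\underline\varrho(\underline a),\underline\varrho(\underline a'))\le (\alpha+\epsilon)L/\ell+(L-\ell)/(2\ell)$, which is useless when $\alpha$ is small. The subsidiary comment about the $O(L)$ partial-block boundary effects is correct, but that was never the issue; the window-level shift is.

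The structural point that a correct proof must exploit is that $\underline\varrho(\underline a)$ and $\underline\varrho(\underline a')$ are \emph{already} anchored at the same place: in both images, coordinate $0$ is the first letter of the block $\varrho(a_0)$, resp.\ $\varrho(a'_0)$. One should not move the windows at all, but instead argue directly that the $\cB$-positions of a matched pair $\varrho(a_{i_s})$ and $\varrho(a'_{j_s})$ in $\underline\varrho(\underline a)$ and $\underline\varrho(\underline a')$ differ by at most a quantity controlled by the number of unmatched $\cA$-indices (and hence by $\alpha$), because between consecutive matched blocks the two sides accumulate identical lengths $|\varrho(a_{i_t})|=|\varrho(a'_{j_t})|$ and can drift apart only through unmatched blocks. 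With that positional control one bounds directly the number of unmatched letters of $b_{[-m,m-1]}$, with no recentering. As written, your argument only establishes the inequality for pairs $\underline a,\underline a'$ whose one-sided block-length Birkhoff averages agree; it does not prove the lemma for arbitrary $\underline a,\underline a'\in\cA^\bZ$. (Note the paper gives no proof of this lemma beyond calling it straightforward, so there is no argument of record to compare against; the gap identified here is in your proposal, not relative to the paper.)
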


The following observation that is also an immediate consequence of the definition.

\begin{lemma} \label{lemfact:bar}
$
\bar{f}\big( \kappa(\cA,\varrho,\bp), \kappa_{\rm inv}(\cA, \varrho, \bp)\big) =0.
$
\end{lemma}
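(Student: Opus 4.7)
The plan is to construct, for each sufficiently large $n$, an explicit $n$-joining $\bnu_n \in J_n(\kappa(\cA,\varrho,\bp), \kappa_{\rm inv}(\cA,\varrho,\bp))$ concentrated on pairs of $n$-words that share a very long common subword, and then let $n\to\infty$.

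The key observation is that both measures live on the image of the substitution map, so their samples differ only by a bounded shift. Concretely, I would define $\bnu_n$ by the following sampling procedure: draw $\underline a\in\cA^\bZ$ according to $\bp$, then, independently and conditionally on $a_0$, draw $j$ uniformly on $\{0,\ldots,|\varrho(a_0)|-1\}$; output the pair of $n$-words obtained by restricting $\underline\varrho(\underline a)$ and $\sigma_\cB^j(\underline\varrho(\underline a))$ to positions $0,\ldots,n-1$. The first marginal is $\underline\varrho_\ast\bp$ restricted to $\cB^n$, which is $\kappa(\cA,\varrho,\bp)$ by definition. For the second marginal, conditioning on $a_0=a$ contributes $p_a\cdot\sum_{j=0}^{|\varrho(a)|-1}\tfrac{1}{|\varrho(a)|}(\sigma_\cB^j\circ\underline\varrho)_\ast(\bp|_{[a]}/p_a)$, and summing over $a\in\cA$ gives exactly $\kappa_{\rm inv}(\cA,\varrho,\bp)$ by Definition \ref{defBerncod}. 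Hence $\bnu_n\in J_n(\kappa,\kappa_{\rm inv})$.

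Next I would bound the integrand pointwise. For any pair $(v,w)$ in the support of $\bnu_n$, both words are obtained by reading $n$ consecutive letters of the \emph{same} sequence $\underline\varrho(\underline a)$, with $w$ shifted by at most $j\le\max|\varrho|-1$ positions with respect to $v$. Thus the positions $j,\ldots,n-1$ in $v$ coincide letter-by-letter with positions $0,\ldots,n-1-j$ in $w$, providing a common contiguous subword (hence a common substring in the sense of the edit distance) of length at least $n-\max|\varrho|$. By definition of $\bar{f}_n$ on words, $\bar{f}_n(v,w)\le \max|\varrho|/n$ everywhere on the support of $\bnu_n$. Integrating and using \eqref{eqfbar},
\[
\bar{f}_n\bigl(\kappa(\cA,\varrho,\bp),\kappa_{\rm inv}(\cA,\varrho,\bp)\bigr)\le \int \bar{f}_n\,d\bnu_n\le \frac{\max|\varrho|}{n}.
\]
Since the right-hand side tends to $0$, for every $\varepsilon>0$ the inequality $\bar{f}_n(\kappa,\kappa_{\rm inv})\le \varepsilon$ holds for all but finitely many $n$, so $\bar{f}(\kappa,\kappa_{\rm inv})=0$ by the definition of the $\bar{f}$-distance between measures.

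I do not anticipate a serious obstacle: the only delicate point is verifying that the proposed coupling actually has $\kappa_{\rm inv}$ as its second marginal, which is a direct unfolding of the sum defining $\kappa_{\rm inv}$. The rest is a pointwise estimate on the edit distance between two $n$-windows cut from the same infinite word at positions differing by a bounded amount.
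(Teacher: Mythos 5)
Your proof is correct and complete. The paper gives no explicit argument here (it is labeled ``an immediate consequence of the definition''), so there is no detailed proof to compare against; but your coupling makes explicit the idea that is clearly implicit in the paper's suspension picture (Lemma \ref{lema34b}, Figure \ref{diagram}), namely that $\kappa$ is the projection of the ``ground floor'' measure while $\kappa_{\rm inv}$ is the projection of the full suspension measure, and these two samples differ only by a shift bounded by $\max|\varrho|-1$. Your computation of the second marginal unfolds the definition of $\kappa_{\rm inv}$ correctly, and the pointwise bound $\bar f_n(v,w)\le\max|\varrho|/n$ is valid because the two length-$n$ windows cut from $\underline\varrho(\underline a)$ at offsets $0$ and $j\le\max|\varrho|-1$ share a contiguous block of length $n-j$, which is in particular a common substring. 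One small remark: you even obtain the stronger conclusion that $\bar f_n\to 0$, whereas the $\bar f$-distance only requires smallness along infinitely many $n$.
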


Let us now study the dependence of $\kappa(\cA,\varrho,\bp)$ on the substitution $\varrho$ and on the probability vector $\bp$.

\begin{proposition}[Dependence on substitution] \label{prolem:Y}
Let $\cA$ and $\cB$ be two finite alphabets. Consider substitutions $\varrho\colon\cA\to\cB^\ast$ and $\varrho'\colon\cA\to\cB^\ast$  such that there is $C\in(0,1)$ so that for every $a\in\cA$ the word $\varrho'(a)$ is a substring of the word $\varrho(a)$ satisfying
\[
	(1-C)|\varrho(a)| < |\varrho'(a)|.
\]
Then for every Bernoulli measure $\bp$ it holds
\[
	\bar{f}\big( \kappa(\cA,\varrho,\bp), \kappa(\cA,\varrho', \bp)\big)< C.
\]
\end{proposition}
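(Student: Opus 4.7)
The plan is to construct, for each $n\in\bN$, an explicit $n$-joining of $\kappa(\cA,\varrho,\bp)$ and $\kappa(\cA,\varrho',\bp)$ by coupling both measures to a common base sample. Sample $\underline a\in \cA^\bZ$ according to $\bp$ and consider the push-forward of $\bp$ under the map $\underline a \mapsto (W_n(\underline a), W_n'(\underline a))$, where $W_n(\underline a)$ and $W_n'(\underline a)$ denote the restrictions of $\underline\varrho(\underline a)$ and $\underline\varrho'(\underline a)$, respectively, to the coordinates $\{0,\ldots,n-1\}$. A direct check of the marginals shows that this defines an element of $J_n(\kappa(\cA,\varrho,\bp),\kappa(\cA,\varrho',\bp))$.

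The key task is then to exhibit a long common substring (in the paper's sense of a not-necessarily-contiguous subsequence) of $W_n(\underline a)$ and $W_n'(\underline a)$ deterministically in $\underline a$. Let $K''_n(\underline a)$ be the smallest $k$ with $\sum_{i=0}^{k-1}|\varrho(a_i)|\ge n$, so that
\[
	W_n(\underline a) = \varrho(a_0)\cdots\varrho(a_{K''_n - 2})\,u,
\]
where $u$ is a prefix of $\varrho(a_{K''_n - 1})$ of length $P_n \le \max_{a\in\cA}|\varrho(a)|$. Define $K'_n(\underline a)$ analogously for $\varrho'$. Since $|\varrho'(a)|\le |\varrho(a)|$ for every $a\in\cA$, we have $K''_n\le K'_n$, so $\varrho'(a_0)\cdots\varrho'(a_{K''_n - 2})$ is a prefix (hence a substring) of $W_n'(\underline a)$. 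On the other hand, since each $\varrho'(a_i)$ is by hypothesis a substring of $\varrho(a_i)$, the same word is a substring of $\varrho(a_0)\cdots\varrho(a_{K''_n - 2})$ and therefore of $W_n(\underline a)$. Writing $C'\eqdef \max_{a\in\cA}\big(1-|\varrho'(a)|/|\varrho(a)|\big)$ (which satisfies $C'<C$ by finiteness of $\cA$ combined with the strict hypothesis), this common substring has length at least $(1-C')(n-P_n)$, from which one reads off
\[
	\bar f_n(W_n(\underline a), W_n'(\underline a))
	\le C' + (1-C')\,\frac{\max_{a\in\cA}|\varrho(a)|}{n}.
\]

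Integrating against the joining yields the same upper bound for $\bar f_n(\kappa(\cA,\varrho,\bp),\kappa(\cA,\varrho',\bp))$, which tends to $C'$ as $n\to\infty$. The definition of $\bar f$ then gives $\bar f(\kappa(\cA,\varrho,\bp),\kappa(\cA,\varrho',\bp)) \le C' < C$, as required.

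The only real obstacles are bookkeeping: correctly handling the degenerate case where $n$ is so small that $K''_n\le 1$ (which is irrelevant once one passes to the limit), carefully invoking the paper's definition of \emph{substring} as a not-necessarily-contiguous subsequence so that concatenation of in-block substrings is indeed a substring of the concatenation of the full blocks, and verifying the marginal check for the constructed $n$-joining. The core conceptual point—the canonical joining through a shared base Bernoulli sample together with the ``slot-by-slot'' subsequence matching—is straightforward, and the strict inequality is a free byproduct of finiteness of $\cA$.
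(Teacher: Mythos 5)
Your proof is correct and formalizes exactly the argument the paper leaves implicit: the paper's one-line proof ("from any $n$-word in the support of $\kappa(\cA,\varrho,\bp)$ we need to remove at most $nC$ symbols to obtain a word in the support of $\kappa(\cA,\varrho',\bp)$, and the probability distribution is preserved") is precisely the coupling through a common Bernoulli base sample that you spell out, together with the block-by-block substring match. Your added precision — the explicit $n$-joining, the bookkeeping with $K''_n$, $P_n$, and the observation that $C'<C$ by finiteness of $\cA$ upgrades the bound to a strict inequality uniformly in $\bp$ — is welcome but not a different route.
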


\begin{proof}
It suffices to note that from any $n$-word in the support of $\kappa(\cA,\varrho,\bp)$ we need to remove at most $nC$ symbols to obtain a word in the support of $\kappa(\cA,\varrho',\bp)$ and that by such removal the probability distribution is preserved.
\end{proof}

Let us now study the dependence of $\kappa(\cA,\varrho,\bp)$ on the probability vector $\bp$. The following result is a consequence of Lemmas \ref{lemshields} and \ref{lemaLipschitz}.

\begin{proposition}[Dependence on probability vector] \label{prop:P}
Let $\cA$ and $\cB$ be two finite alphabets, $\varrho\colon\cA\to\cB^\ast$ be a substitution map, and $\bp,\bp'$ be two probability vectors on $\cA$. Then
\[
	\bar{f}\big( \kappa(\cA,\varrho,\bp), \kappa(\cA,\varrho,\bp')\big)
	\leq \frac12 \frac {\max |\varrho|} {\min |\varrho|} \cdot D_\cA(\bp,\bp').
\]
\end{proposition}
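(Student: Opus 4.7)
The plan is to combine the two cited lemmas. By Lemma~\ref{lemshields}, $\bar f(\bp,\bp') = \tfrac12 D_\cA(\bp,\bp')$, so the inequality is equivalent to the measure-level Lipschitz bound
\[
\bar f\big(\kappa(\cA,\varrho,\bp),\,\kappa(\cA,\varrho,\bp')\big) \le \frac{\max|\varrho|}{\min|\varrho|}\,\bar f(\bp,\bp').
\]
Thus it suffices to show that the push-forward $\underline\varrho_\ast$ inherits the pointwise Lipschitz bound of Lemma~\ref{lemaLipschitz} at the level of $\bar f$-distance between measures.

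To obtain this, I would construct an explicit ergodic joining that realizes $\bar f(\bp,\bp')$. Let $c\colon\cA\times\cA\to[0,1]$ be the maximal coupling of the one-dimensional marginals, with diagonal mass $c(a,a)=\min(p_a,p_a')$ and off-diagonal mass distributed so that the two marginals of $c$ are $\bp$ and $\bp'$, and let $\bnu \eqdef c^{\otimes\bZ}$ be the associated i.i.d.\ joining on $\cA^\bZ\times\cA^\bZ$. By the strong law of large numbers, for $\bnu$-a.e.\ pair $(\underline a,\underline a')$ the density of matching coordinates equals
\[
\sum_{a\in\cA}\min(p_a,p_a') \;=\; 1-\tfrac12 D_\cA(\bp,\bp'),
\]
so $\bar f(\underline a,\underline a')\le \tfrac12 D_\cA(\bp,\bp')$ almost surely. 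Applying Lemma~\ref{lemaLipschitz} pointwise then yields
\[
\bar f\big(\underline\varrho(\underline a),\,\underline\varrho(\underline a')\big) \;\le\; \frac{\max|\varrho|}{\min|\varrho|}\cdot \tfrac12 D_\cA(\bp,\bp')
\qquad\text{for }\bnu\text{-a.e.\ }(\underline a,\underline a').
\]

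The remaining step is to transfer this sequence-level bound to the measure level. The push-forward $(\underline\varrho\times\underline\varrho)_\ast\bnu$ is a coupling of $\kappa(\cA,\varrho,\bp)$ and $\kappa(\cA,\varrho,\bp')$ on $\cB^\bZ\times\cB^\bZ$; restricting its marginals to the $\cB^n$-cylinders provides, for each $n$, an element of $J_n(\kappa(\cA,\varrho,\bp),\kappa(\cA,\varrho,\bp'))$. Integrating the pointwise $\bar f$-bound against this family of $n$-joinings (via dominated convergence and the limsup definition of $\bar f$ on sequences) delivers the desired bound on $\bar f_n$ for infinitely many $n$, and hence on $\bar f$. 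The main obstacle is that $(\underline\varrho\times\underline\varrho)_\ast\bnu$ need not be shift-invariant, since $\underline\varrho$ is not shift-equivariant; I would sidestep this by first invoking Lemma~\ref{lemfact:bar} together with the triangle inequality to replace $\kappa(\cA,\varrho,\bp)$ and $\kappa(\cA,\varrho,\bp')$ by their shift-invariant companions $\kappa_{\rm inv}(\cA,\varrho,\bp)$ and $\kappa_{\rm inv}(\cA,\varrho,\bp')$ at zero $\bar f$-cost, reducing the problem to two ergodic shift-invariant measures on $\cB^\bZ$, where the definition of $\bar f$ via $n$-joinings behaves in the standard way and the pointwise-to-measure transfer becomes routine.
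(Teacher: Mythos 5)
Your core strategy --- reduce via Lemma~\ref{lemshields}, build the maximal i.i.d.\ coupling $\bnu = c^{\otimes\bZ}$, apply the strong law and Lemma~\ref{lemaLipschitz} pointwise, then transfer through push-forward $n$-joinings --- is exactly what the paper's terse ``consequence of Lemmas~\ref{lemshields} and~\ref{lemaLipschitz}'' must amount to, and the identity $\sum_a\min(p_a,p_a') = 1 - \tfrac12 D_\cA(\bp,\bp')$ is right. The step that does not work as written is the $\kappa_{\rm inv}$ detour. The push-forward $(\underline\varrho\times\underline\varrho)_\ast\bnu$ has marginals $\kappa(\cA,\varrho,\bp)$ and $\kappa(\cA,\varrho,\bp')$, not their invariant companions; once you trade these for $\kappa_{\rm inv}$ via Lemma~\ref{lemfact:bar} and the triangle inequality, you no longer hold an explicit coupling, so the transfer cannot simply be declared ``routine'' --- you would have to construct a fresh coupling of $\kappa_{\rm inv}(\cA,\varrho,\bp)$ with $\kappa_{\rm inv}(\cA,\varrho,\bp')$, which you do not do.

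Fortunately the detour is unnecessary. The definition of $\bar f_n(\nu,\nu')$ via $n$-joinings of cylinder distributions makes sense and is used without any shift invariance of $\nu$, $\nu'$, or the joining, so you may work with $\kappa$, $\kappa'$ and the explicit push-forward joining throughout. The only genuine mismatch is one of window centering: the sequence pseudometric $\bar f$, and hence Lemma~\ref{lemaLipschitz}, is a $\limsup$ over symmetric windows $[-n,n)$, while $\bar f_n$ between measures integrates the edit distance over one-sided $[0,n)$-cylinders. This closes by running the short estimate behind Lemma~\ref{lemaLipschitz} directly on $[0,n)$: let $m$ be maximal so that $\varrho(a_0)\ldots\varrho(a_{m-1})$ and $\varrho(a_0')\ldots\varrho(a_{m-1}')$ both sit inside $[0,n)$; then $n/\max|\varrho|-1\le m\le n/\min|\varrho|$, the blocks with $a_i=a_i'$ form a common substring, and the strong law makes the density of such $i$ converge to $\sum_a\min(p_a,p_a')$ almost surely. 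Reverse Fatou applied to these $[0,n)$-edit distances then yields $\limsup_n\bar f_n(\kappa,\kappa') \le \frac{\max|\varrho|}{\min|\varrho|}\cdot\tfrac12 D_\cA(\bp,\bp')$, which is more than enough since $\bar f$ between measures is by definition the $\liminf_n$ of $\bar f_n$.
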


We finish this section with the following observation.

\begin{lemma}\label{lemend}
	Let $\cA$ and $\cB$ be two finite alphabets, $\varrho\colon \cA\to \cB^\ast$ a substitution map, and
	 $\bp$  a Bernoulli measure on $\cA^\bZ$ given by a vector $(p_a)_{a\in\cA}$.  Then for every $r\in\bN$ and the alphabet $\cA'\eqdef \cA^r$,
\[
	\kappa(\cA,\varrho,\bp)
	= \kappa(\cA',\varrho',\bp'),
\]
where $\varrho'\colon\cA^r\to \cB^\ast$ and $\bp'$ are defined by
\[
	\varrho'(a_1,\ldots,a_r)
	\eqdef \varrho(a_1)\varrho(a_2)\cdots\varrho(a_r)
	\quad\text{ and }\quad
	\bp'(a_1,\ldots, a_r)
	\eqdef p_{a_1} \cdots  p_{a_r}.
\]
\end{lemma}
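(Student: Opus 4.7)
The strategy is to factor the substitution $\underline\varrho'$ through a natural ``concatenation'' map and then use the Bernoulli (product) structure to identify the pushforward measure.

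First, I introduce the concatenation map $\Psi\colon(\cA^r)^\bZ\to\cA^\bZ$ defined, for $\underline x=(\ldots,\va_{-1}|\va_0,\va_1,\ldots)$ with $\va_k=(a_{k,1},\ldots,a_{k,r})\in\cA^r$, by
\[
\Psi(\underline x)_j \eqdef a_{k,s+1}\quad\text{whenever}\quad j=rk+s,\ 0\le s<r.
\]
In particular, the origin of $\Psi(\underline x)$ is aligned with the first coordinate of the $0$-th block $\va_0$. From the definition of $\varrho'$ as the concatenation $\varrho'(a_1,\ldots,a_r)=\varrho(a_1)\cdots\varrho(a_r)$, a direct inspection of \eqref{newdefsubs} shows the key functorial identity
\[
\underline{\varrho'} \;=\; \underline\varrho\circ\Psi.
\]

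Next, I verify that $\Psi_\ast\bp'=\bp$. It suffices to test on cylinder sets. For $n=qr$ and a word $b=(b_0,\ldots,b_{n-1})\in\cA^n$ (the general $n$ is handled by truncation after the obvious padding), the preimage $\Psi^{-1}([b])$ is precisely the cylinder in $(\cA^r)^\bZ$ determined by the $r$-blocks $\va_k=(b_{kr},\ldots,b_{(k+1)r-1})$ for $k=0,\ldots,q-1$. Since $\bp'$ is Bernoulli with weights $\bp'(\va_k)=p_{b_{kr}}\cdots p_{b_{(k+1)r-1}}$, independence gives
\[
\bp'\bigl(\Psi^{-1}([b])\bigr) = \prod_{k=0}^{q-1} p_{b_{kr}}\cdots p_{b_{(k+1)r-1}} = \prod_{i=0}^{n-1}p_{b_i} = \bp([b]).
\]
Thus $\Psi_\ast\bp'$ and $\bp$ agree on all cylinders, hence coincide.

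Combining the two observations,
\[
\kappa(\cA',\varrho',\bp') = \underline{\varrho'}_\ast\bp' = \underline\varrho_\ast(\Psi_\ast\bp') = \underline\varrho_\ast\bp = \kappa(\cA,\varrho,\bp),
\]
which is the claim. The argument is essentially a change-of-variables computation; the only mildly delicate point is the origin alignment of $\Psi$, but this is harmless because the product structure of $\bp'$ makes the $r$-block boundaries invisible (so $\Psi_\ast\bp'$ is not just $\sigma_\cA^r$-invariant but fully $\sigma_\cA$-invariant and coincides with the Bernoulli measure $\bp$). I do not anticipate any serious obstacle.
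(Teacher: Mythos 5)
Your proof is correct. The paper states this lemma without proof (it closes Section~3 as an ``observation''), treating the identity as immediate from the definitions; your argument is exactly the natural way to fill in the details: factor $\underline{\varrho'}=\underline\varrho\circ\Psi$ through the block-concatenation map and observe that $\Psi_\ast\bp'=\bp$ because $\bp'$ is a product measure both across blocks and within each block, so the $r$-block boundaries carry no information. One small point worth tightening if this were to appear in print: since $\Psi_\ast\bp'$ is a priori only $\sigma_\cA^r$-invariant, you should verify the cylinder identity for cylinders placed at arbitrary positions (not just position $0$ with length divisible by $r$), but the same product computation handles those without change, which is precisely your ``block boundaries are invisible'' remark.
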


\section{Cascade of substitutions into a common symbolic space}\label{secabscasc}

In Section \ref{ssec4a}, we introduce a special cascade of substitution maps, so-called \emph{repeat-and-tail substitutions}, and an associated cascade of Bernoulli-coded measures. Under some appropriate control of tail lengths across the cascade, in Section \ref{ssec4b}, we provide key estimates between the obtained Bernoulli-coded measures. These estimates can be made ``uniform across'' the cascade using large deviation results, see Section \ref{ssec4c}. They provide us effective estimates for the $\bar f$-distance between the obtained Bernoulli-coded measures, see Section \ref{ssec4d}. One important fact here is that those estimates are uniform with respect to the Bernoulli measures in the initial alphabet.
Our final goal is Theorem \ref{theoprop:pathfinal} which provides the symbolic part of the main results in this paper, see Section \ref{ssecFurtherStru}. 

\subsection{Repeat-and-tail substitutions}\label{ssec4a}

Fix an initial finite alphabet $\cA$ and a sequence of natural numbers $(m_n)_{n=1}^\infty$, $m_n\ge2$. Define inductively the alphabets
\begin{equation}\label{defalphscA} 
	\cA_0\eqdef \cA,\quad
	\cA_n\eqdef (\cA_{n-1})^{m_n},
\end{equation}
Let $M_0\eqdef\card\cA_0$. 
Note that
\[
	M_n
	\eqdef \card\cA_n
	=(M_{n-1})^{m_n}.
\]
Each alphabet $\cA_n$ is a collection of finite words formed from the previous ones $\cA_{n-k}$. In particular, $\cA_n$ is a finite collection of finite $(m_1\cdot m_2\cdots m_n)$-words over the initial alphabet $\cA_0=\cA$. At the same time at each step we want to view $\cA_n$ as a new abstract alphabet, that is, each of its elements represents one ``letter to write new words''.
Let us consider the naturally associated ``respelling map'' 
\[
	\Sub_{n,n-1}
	\colon\cA_n\to(\cA_{n-1})^{m_n}.
\]

Analogously, for $k\le n$ we define inductively
\[
	\Sub_{n,k}
	\eqdef \Sub_{k+1,k}\circ\cdots\circ\Sub_{n,n-1}
	\colon\cA_n\to(\cA_k)^{m_{k+1}\cdots m_n}.
\]
In particular,
\[
	\Sub_{n,0}
	\colon\cA_n\to(\cA_0)^{m_1\cdots m_n}.
\]

Let us denote by $(\ldots | a_0^{(n)},a_1^{(n)}, \ldots)\in(\cA_n)^\bZ$ an element in this sequence space and extend those maps to bijections between the corresponding sequence spaces,
\begin{equation}\label{eq:tailaddn-kn}\begin{split}
	&\underline \Sub_{n,k}
	\colon (\cA_n)^\bZ\to\big(\cA_k^{m_{k+1}\cdots m_n}\big)^\bZ,
	\quad\\
	&\underline\Sub_{n,k}(\ldots | a_0^{(n)},a_1^{(n)}, \ldots)
	\eqdef (\ldots | \Sub_{n,k}(a^{(n)}_0) \Sub_{n,k}(a^{(n)}_1)\ldots)
\end{split}\end{equation}
and, in particular,
\begin{equation}\label{eq:tailaddn-1n}\begin{split}
	&\underline \Sub_{n,0}
	\colon (\cA_n)^\bZ\to\big(\cA_0^{m_1\cdots m_n}\big)^\bZ
	=\cA^\bZ,
	\quad\\
	&\underline\Sub_{n,0}(\ldots | a_0^{(n)},a_1^{(n)}, \ldots)
	\eqdef (\ldots | \Sub_{n,0}(a^{(n)}_0) \Sub_{n,0}(a^{(n)}_1)\ldots).
\end{split}\end{equation}

Denote by $\sigma_n\eqdef\sigma_{\cA_n}\colon(\cA_n)^\bZ\to(\cA_n)^\bZ$ the left shift over $\cA_n$.
One can check that the previous construction provides the following conjugations:

\begin{lemma}\label{newlemconjBer}
 $((\cA_n)^\bZ,\sigma_n)$ is topologically conjugate with $(\cA_k^\bZ,\sigma_k^{m_{k+1}\cdots m_n})$ by $\underline\Sub_{n,k}$.
\end{lemma}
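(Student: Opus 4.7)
The plan is to proceed by induction on $n-k\ge 1$, first checking the base case $k=n-1$ where the claim reduces to the tautology that $\cA_n=(\cA_{n-1})^{m_n}$ (so that $\Sub_{n,n-1}$ is literally the canonical bijection between a letter of $\cA_n$ and an $m_n$-tuple over $\cA_{n-1}$), and then iterating. Throughout, I keep in mind the ``position $0$'' anchoring convention implicit in the definitions of $\underline{\Sub}_{n,k}$ in \eqref{eq:tailaddn-kn} and \eqref{eq:tailaddn-1n}, since this is what makes the intertwining relation with the shifts (not just with some power of shifts up to a spatial translation) actually correct.

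For the base case, I would argue as follows. Since $\cA_n=(\cA_{n-1})^{m_n}$, each letter $a^{(n)}\in\cA_n$ is already an $m_n$-tuple $(b_1,\dots,b_{m_n})$ of letters in $\cA_{n-1}$, and $\Sub_{n,n-1}$ is the identification of the letter with this tuple. The extended map $\underline{\Sub}_{n,n-1}\colon (\cA_n)^{\bZ}\to (\cA_{n-1}^{m_n})^{\bZ}$ then just unpacks each coordinate; identifying the codomain with $\cA_{n-1}^{\bZ}$ by concatenating the $m_n$-tuples (with the convention that the block coming from position $0$ occupies coordinates $0,1,\dots,m_n-1$), this yields a bijection. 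Its inverse, grouping consecutive blocks of $m_n$ letters anchored at position $0$, is clearly continuous in the cylinder topology, as is $\underline{\Sub}_{n,n-1}$ itself. The intertwining relation $\underline{\Sub}_{n,n-1}\circ\sigma_n=\sigma_{n-1}^{m_n}\circ\underline{\Sub}_{n,n-1}$ is then immediate: shifting the $\cA_n$-sequence by one letter removes a block of length $m_n$ from the $\cA_{n-1}$-representation, which is exactly $m_n$ applications of $\sigma_{n-1}$.

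For the inductive step, I would use the factorization $\Sub_{n,k}=\Sub_{k+1,k}\circ\cdots\circ\Sub_{n,n-1}$ (which extends coordinatewise to $\underline{\Sub}_{n,k}=\underline{\Sub}_{k+1,k}\circ\cdots\circ\underline{\Sub}_{n,n-1}$ after the appropriate identifications $\cA_j^{m_{j+1}\cdots m_n}\cong (\cA_j^{m_{j+1}\cdots m_\ell})^{m_{\ell+1}\cdots m_n}$) together with the base case applied at each level. Composing $n-k$ homeomorphic conjugacies, and using that the intertwining of $\underline{\Sub}_{j,j-1}$ with the shifts picks up a factor of $m_j$ at level $j$, gives a homeomorphism $\underline{\Sub}_{n,k}$ satisfying $\underline{\Sub}_{n,k}\circ\sigma_n=\sigma_k^{m_{k+1}\cdots m_n}\circ\underline{\Sub}_{n,k}$, as required.

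The only mildly delicate point, and the one I would double-check carefully, is the consistency of the anchoring convention under composition: for the intertwining to hold with the shift $\sigma_k^{m_{k+1}\cdots m_n}$ (and not merely up to an integer translation) one must verify that at every intermediate level the block arising from position $0$ of the $\cA_n$-sequence still starts at position $0$ in the $\cA_j$-sequence. This is a bookkeeping exercise and causes no conceptual trouble, so I expect the whole lemma to be essentially formal once the conventions are nailed down.
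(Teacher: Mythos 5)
Your proof is correct and complete. The paper itself offers no proof of this lemma — it is introduced with ``One can check that the previous construction provides the following conjugations'' — so there is no paper argument to compare against directly. Your inductive scheme (base case $k=n-1$, then composition of conjugacies) works, and you are right that the one genuinely fussy point is the position-$0$ anchoring: since each $\underline\Sub_{j,j-1}$ sends the letter at position $0$ to a block occupying positions $0,\dots,m_j-1$, the anchoring is preserved at every level, and your bookkeeping goes through. That said, the induction is a slightly roundabout way to organize what is really a one-step verification: directly from \eqref{eq:tailaddn-kn}, applying $\sigma_n$ to the source deletes $a_0^{(n)}$, while applying $\sigma_k^{m_{k+1}\cdots m_n}$ to the image deletes the block $\Sub_{n,k}(a_0^{(n)})$ of exactly that length, and bijectivity plus bicontinuity on cylinder sets are immediate since $\Sub_{n,k}\colon\cA_n\to(\cA_k)^{m_{k+1}\cdots m_n}$ is a bijection of finite sets. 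Either route is fine; the direct one just avoids the need to verify that $\underline\Sub_{n,k}$ factors through intermediate levels, which your inductive argument has to invoke.
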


\begin{definition}[Repeat-and-tail substitutions]
	Consider finite alphabets $\cA$ and $\cB$. Let $(m_n)_n$  be a sequence of natural numbers. Let $(\cA_n)_n$ be defined as in \eqref{defalphscA}. Consider a cascade of  \emph{tailing maps} $\bt_n\colon \cA_n\to\cB^\ast$, $n\in\bN$. Let $\varrho_0$ be any substitution map from $\cA_0=\cA$ to $\cB^\ast$. Define inductively the substitution map $\varrho_n\colon\cA_n\to\cB^\ast$ by
\[
	\varrho_n(a^{(n)})
	= \varrho_{n-1}(a^{(n-1)}_1)\ldots\varrho_{n-1}(a^{(n-1)}_{m_n})\bt_n(a^{(n)}),\quad
	\text{ for all }n\in\bN,
\]
for every $a^{(n)}\in\cA_n$, where $(a^{(n-1)}_1\ldots a^{(n-1)}_{m_n})\eqdef\Sub_{n,n-1}(a^{(n)})$. We then call $\{\varrho_n\colon\cA_n\to\cB^\ast\}_{ n\in\bN_0}$ a cascade of \emph{repeat-and-tail substitutions}.
\end{definition}

Like in \eqref{newdefsubs}, we extend the definition of the substitution to work also on the $\cA_n$-based symbolic space $(\cA_n)^\bZ$.

Fix some Bernoulli measure $\bp$ on $\cA^\bZ$ and let
\begin{equation}\label{conconj}
	\bp_n
	\eqdef \big(\underline\Sub_{n,0}^{-1}\big)_\ast\bp.
\end{equation}
One can check that $\bp_n$ is again a Bernoulli measure on $(\cA_n)^\bZ$ (with probabilities given by Lemma \ref{lemend}). 

Together with the cascade of substitution maps $\varrho_n$, we consider the associated cascade suspension of suspension spaces $\cS_n=\cS_{\cA_n,|\varrho_n|}$ and the measures 
$\lambda_n(\bp)=\lambda_{\cA_n,|\varrho_n|,\bp_n}$. Again following Section \ref{seBernn}, we also consider the associated Bernoulli measure $\widetilde{\bp_n}$ defined as in \eqref{deftildep}, which now takes the form
\begin{equation}\label{conconjtilde}
 	\widetilde{\bp_n}([a^{(n)}])
	= \frac {|\varrho_n(a^{(n)})| \, \bp_n([a^{(n)}])} {\sum_{b\in \cA_n} |\varrho_n(b)| \,\bp_n([b])}
	\quad\text{ for any }a^{(n)}\in\cA_n.
\end{equation}

\begin{remark}\label{remmm}	
 Observe that, in general, $\widetilde{\,\bp_n\,}$ does not have the ``self-similar structure'' of $\bp_n$ given in \eqref{conconj}, that is,
\[
	\widetilde{\bp_n}
	\neq \big(\underline\Sub_{n,0}^{-1}\big)_\ast\widetilde\bp,
\]
with $\widetilde\bp$ defined in \eqref{deftildep}.
For this reason, it is not very convenient to study $\widetilde{\bp_n}$ directly. However, as we will show in Corollary \ref{corthisisit}, we can study with $\widetilde{\bp_n}$ with the help of $\bp_n$. It will turn out that, under Assumption \ref{assumption1} that we introduce below, they are asymptotically comparable for large $n$, as will be stated in Corollary \ref{corthisisit}.
\end{remark}

We state the following auxiliary result for further reference.

\begin{lemma} \label{lem:path1}
	For every $\varepsilon>0$ and $n\in\bN$ there is $\delta>0$ such that $D_{\cA_n}(\bp_n,\bp_n') < \varepsilon$ for any two Bernoulli measures $\bp, \bp'\in \cM_{\rm B}(\cA^\bZ,\sigma_\cA)$ satisfying $D_\cA(\bp,\bp')< \delta$.
\end{lemma}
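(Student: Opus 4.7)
The plan is to unpack the explicit product structure of $\bp_n$ and then reduce the statement to a quantitative continuity bound for $r$-fold product probability vectors, where $r\eqdef m_1m_2\cdots m_n$.

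First, I would iterate Lemma \ref{lemend} (or, equivalently, combine \eqref{conconj} with Lemma \ref{newlemconjBer}) to identify $\bp_n$ explicitly: for every $a^{(n)}\in\cA_n$, writing $(a_1,\ldots,a_r)\eqdef \Sub_{n,0}(a^{(n)})\in\cA^r$, the measure $\bp_n$ is Bernoulli with
\[
    \bp_n([a^{(n)}])=p_{a_1}p_{a_2}\cdots p_{a_r},\qquad \bp_n'([a^{(n)}])=p'_{a_1}p'_{a_2}\cdots p'_{a_r}.
\]
This reduces the lemma to showing that the $\ell^1$-distance between two $r$-fold product probability vectors on $\cA^r$ is controlled by $r$ times the $\ell^1$-distance of their marginals.

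Second, I would apply the standard telescoping identity
\[
    \prod_{i=1}^r p_{a_i}-\prod_{i=1}^r p'_{a_i}=\sum_{i=1}^r \Big(\prod_{j<i}p_{a_j}\Big)(p_{a_i}-p'_{a_i})\Big(\prod_{j>i}p'_{a_j}\Big),
\]
take absolute values, and sum over $(a_1,\ldots,a_r)\in\cA^r$. In each of the $r$ resulting terms the sums over indices $j\neq i$ factor and equal $1$ (since $\bp$ and $\bp'$ are probability vectors), while the sum over $a_i$ gives $D_\cA(\bp,\bp')$. Hence
\[
    D_{\cA_n}(\bp_n,\bp_n')\le r\cdot D_\cA(\bp,\bp')=(m_1m_2\cdots m_n)\,D_\cA(\bp,\bp'),
\]
so choosing $\delta\eqdef\varepsilon/(m_1m_2\cdots m_n)$ completes the argument.

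The proof is essentially elementary continuity of a polynomial map between simplices; the only bookkeeping is the iterated identification of $\cA_n$ with $\cA^r$, which is already built into Lemmas \ref{newlemconjBer} and \ref{lemend}. Since $n$ is fixed in the statement, the blow-up factor $m_1\cdots m_n$ is harmless, and there is no genuine obstacle.
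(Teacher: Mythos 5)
Your proof is correct. The paper disposes of the lemma with a soft argument: it observes via Lemma~\ref{newlemconjBer} and \eqref{conconj} that $\bp\mapsto\bp_n$ is a continuous (indeed polynomial) map from the simplex of probability vectors on $\cA$ to that on $\cA_n$, and then asserts uniform continuity, implicitly using compactness of the simplex. Your telescoping computation replaces that soft step with the explicit Lipschitz bound
\[
D_{\cA_n}(\bp_n,\bp_n')\le (m_1\cdots m_n)\,D_\cA(\bp,\bp'),
\]
which is the standard total-variation estimate for $r$-fold product measures. Both routes rest on the same identification of $\cA_n$ with $\cA^{m_1\cdots m_n}$ (Lemmas~\ref{newlemconjBer} and \ref{lemend}); yours is more self-contained and makes the dependence of $\delta$ on $\varepsilon$ and $n$ explicit, which the paper's compactness appeal leaves opaque. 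Since $n$ is fixed in the statement, the factor $m_1\cdots m_n$ is indeed harmless, as you note.
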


\begin{proof}
By Lemma \ref{newlemconjBer}, $((\cA_n)^\bZ,\sigma_n)$ and $(\cA^\bZ,\sigma_\cA^{m_1\cdots m_n})$ are topologically conjugate by $\underline\Sub_{n,0}$. Hence, any Bernoulli measure $\bp$ maps to the Bernoulli measure $\bp_n$ continuously, see \eqref{conconj}. Moreover, this map is uniformly continuous on the simplex of all Bernoulli measures. 
\end{proof}

\subsection{Control of tail lengths}\label{ssec4b}

We will make throughout this section the following assumption on the repeat-and-tail substitutions $\{\varrho_n\colon\cA_n\to\cB^\ast\}_{ n\in\bN_0}$, we use:

\begin{assumption}[Control of tail lengths]\label{assumption1}
It holds
\begin{equation}\label{maxismin}
\max|\varrho_0|=\min|\varrho_0|.
\end{equation}
There exists $K>0$ such that the length of the tailing map is uniformly bounded for all $n$ and $a^{(n)}\in \cA_n$ by
\[
	|\bt_n(a^{(n)})|
	\leq K 2^{-n} \sum_{i=1}^{m_n} |\varrho_{n-1}(a_i^{(n-1)})|.
\]
\end{assumption}

\begin{remark}
	The assumption \eqref{maxismin} is put mainly to simplify some part of our exposition.
\end{remark}

We now derive some preliminary results about the control of the ``tail lengths''.
	 
\begin{lemma}\label{lemma:tailslength}
Under Assumption \ref{assumption1}, for every $k\in\bN_0$, $n> k$, and $a^{(n)}\in \cA_n$, letting $(a^{(k)}_1,\ldots ,a^{(k)}_{m_{k+1}\cdots m_n})\eqdef \Sub_{n,k}(a^{(n)})$, we have
\[
		1
	\le \frac{\big|\varrho_n(a^{(n)})\big|}
		{\big|\varrho_k(a^{(k)}_1)\ldots \varrho_k(a^{(k)}_{m_{k+1}\cdots m_n})\big|}
	\le 1+4K(2^{-k}-2^{-n}),
\]
where $K$ is as in Assumption \ref{assumption1}.
\end{lemma}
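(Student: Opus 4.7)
The lower bound is essentially tautological: unfolding the defining recursion
\[
\varrho_j(a^{(j)}) = \varrho_{j-1}(a^{(j-1)}_1)\cdots\varrho_{j-1}(a^{(j-1)}_{m_j})\bt_j(a^{(j)})
\]
from level $n$ down to level $k$, we see that $\varrho_k(a^{(k)}_1)\cdots\varrho_k(a^{(k)}_{m_{k+1}\cdots m_n})$ is a subword of $\varrho_n(a^{(n)})$ and the remaining symbols are contributed entirely by the tails $\bt_j$, which have nonnegative length; the inequality $|\varrho_n(a^{(n)})|\ge|\varrho_k(a^{(k)}_1)\cdots\varrho_k(a^{(k)}_{m_{k+1}\cdots m_n})|$ is then immediate.

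For the upper bound I would introduce the auxiliary quantity
\[
L_j \eqdef \sum_\ell |\varrho_j(b^{(j)}_\ell)|,
\]
the sum running over all $m_{j+1}\cdots m_n$ descendants $b^{(j)}_\ell$ of $a^{(n)}$ at level $j$. Then $L_n=|\varrho_n(a^{(n)})|$, the quantity $L_k$ equals the denominator in the statement, and the recursion yields $L_j - L_{j-1} = \sum_\ell |\bt_j(b^{(j)}_\ell)|$. Applying Assumption \ref{assumption1} to each descendant and summing gives $L_j - L_{j-1} \le K 2^{-j} L_{j-1}$. Since $L_j$ is nondecreasing in $j$, in particular $L_{j-1}\le L_n$, and telescoping over $j=k+1,\ldots,n$ produces
\[
L_n - L_k \le K\,L_n\,(2^{-k}-2^{-n}),
\]
which rearranges to $L_n/L_k \le 1/(1-K(2^{-k}-2^{-n}))$ whenever the denominator is positive. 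The elementary inequality $\frac{1}{1-x}\le 1+4x$, valid for $x\in[0,3/4]$, then delivers the stated bound $L_n/L_k \le 1+4K(2^{-k}-2^{-n})$.

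The main subtlety is precisely the range of validity of this last elementary inequality: it requires $K(2^{-k}-2^{-n})\le 3/4$. In the applications developed later this is ensured either because $K$ is taken small in the construction of the substitution cascade, or because the lemma is invoked only for $k$ exceeding some threshold depending on $K$ (which is harmless since only the asymptotic regime $k,n\to\infty$ matters in the sequel). Alternatively one may tighten the telescoping step by iterating $L_j\le(1+K 2^{-j})L_{j-1}$ to obtain the product bound $\prod_{j=k+1}^n(1+K 2^{-j})\le\exp(K(2^{-k}-2^{-n}))$, and then appeal to $e^x\le 1+4x$, which is valid on a substantially larger interval and so extends the effective range of parameters covered by the argument.
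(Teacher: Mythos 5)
Your proof is correct and, once you reach the bound $L_j \le (1+K2^{-j})L_{j-1}$, your ``alternative'' route via the product $\prod_{j=k+1}^n(1+K2^{-j})\le \exp(K(2^{-k}-2^{-n}))$ is exactly what the paper does: the paper first proves the single-step estimate $1\le |\varrho_{k+1}(a^{(k+1)})|/|\varrho_k(a^{(k)}_1)\cdots\varrho_k(a^{(k)}_{m_{k+1}})|\le 1+K2^{-(k+1)}$ and then iterates it to obtain $\prod_{i=k+1}^n(1+K2^{-i})\le 1+4K(2^{-k}-2^{-n})$, which is literally $L_n\le\prod(1+K2^{-j})\,L_k$. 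Your primary route (telescoping the differences against $L_n$ and using $\frac{1}{1-x}\le 1+4x$) is a small variant with a more restrictive range of validity, as you note; the product route is both what the paper uses and the one with the wider range. You have in fact supplied a justification for the final numerical inequality that the paper states without comment, and your remark on the implicit smallness of $K(2^{-k}-2^{-n})$ is a fair observation about a hypothesis the paper leaves tacit (it is harmless in context, since later $K=L_1|\alpha|$ with $\alpha$ taken close to zero).
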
	 
	  
\begin{proof}
Given $k\in\bN_0$, let $n=k+1$. Given any $a^{(k+1)}\in\cA_{k+1}=(\cA_k)^{m_{k+1}}$, by means of the substitution
$\Sub_{k+1,k}$
 in \eqref{eq:tailaddn-kn}, we can write 
\[
	\Sub_{k+1,k}(a^{(k+1)})=(a^{(k)}_1,\ldots ,a^{(k)}_{m_{k+1}}).
\]	
By Assumption \ref{assumption1}, $\varrho_k(a^{(k)}_1)\ldots\varrho_{k}(a^{(k)}_{m_{k+1}})$ is a prefix of  $\varrho_{k+1}(a^{(k+1)})$ and
\[\begin{split}
	\sum_{i=1}^{m_{k+1}} |\varrho_k(a_i^{(k)})|
	&\le |\varrho_{k+1}(a^{(k+1)})|
	= \sum_{i=1}^{m_{k+1}} |\varrho_k(a_i^{(k)})| + |t_{k+1}(a^{(k+1)})|\\
	&\le (1+K2^{-(k+1)})\sum_{i=1}^{m_{k+1}} |\varrho_k(a_i^{(k)})| .
\end{split}\]
This implies that
\[
	1
	\le \frac{\big|\varrho_{k+1}(a^{(k+1)})\big|}
		{\big|\varrho_k(a^{(k)}_1)\ldots\varrho_{k}(a^{(k)}_{m_{k+1}})\big|}
	\le 1+K2^{-(k+1)},
\]
proving the assertion for $n=k+1$. Using the above steps repeatedly, for every $n> k$ and $(a^{(k)}_1,\ldots ,a^{(k)}_{m_{k+1}\cdots m_n})\eqdef \Sub_{n,k}(a^{(n)})$ we get that
\[
	1
	\le \frac{\big|\varrho_n(a^{(n)})\big|}
		{\big|\varrho_k(a^{(k)}_1)\ldots \varrho_k(a^{(k)}_{m_{k+1}\cdots m_n})\big|}
	\le \prod_{i=k+1}^n(1+K2^{-i})
	\le 1+4K(2^{-k}-2^{-n}),	
\]
as claimed.
\end{proof}

\begin{corollary} \label{cor:maxmin}
Under Assumption \ref{assumption1}, for every $n\in\bN$ we have
\[
0\le \frac {\max|\varrho_n| - m_1 \cdots m_n \cdot \min |\varrho_0|} {\max|\varrho_n|} \leq \frac {4K} {1+4K}.
\]
where $K$ is as in Assumption \ref{assumption1}. In particular,
\[
1 \leq \frac {\max |\varrho_n|} {\min |\varrho_n|} \leq 1+4K,
\]
\end{corollary}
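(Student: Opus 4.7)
The plan is to deduce this corollary directly from Lemma \ref{lemma:tailslength} applied at the base level $k=0$, together with the equal-length assumption \eqref{maxismin} in Assumption \ref{assumption1}.

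First, I would fix $n \in \bN$ and an arbitrary $a^{(n)} \in \cA_n$, and write $\Sub_{n,0}(a^{(n)}) = (a^{(0)}_1,\ldots,a^{(0)}_{m_1\cdots m_n})$. Since \eqref{maxismin} says that all words $\varrho_0(a)$ have the same length, the denominator appearing in Lemma \ref{lemma:tailslength} collapses to a constant:
\[
	|\varrho_0(a^{(0)}_1)\cdots\varrho_0(a^{(0)}_{m_1\cdots m_n})|
	= m_1 \cdots m_n \cdot \min|\varrho_0|.
\]
Plugging this in and using $4K(1 - 2^{-n}) \le 4K$, Lemma \ref{lemma:tailslength} gives, uniformly in $a^{(n)}$,
\[
	m_1 \cdots m_n \cdot \min|\varrho_0| \;\le\; |\varrho_n(a^{(n)})| \;\le\; (1+4K)\, m_1 \cdots m_n \cdot \min|\varrho_0|.
\]

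Next, I would take the maximum (resp.\ minimum) of the above over $a^{(n)} \in \cA_n$ to obtain
\[
	m_1 \cdots m_n \cdot \min|\varrho_0| \;\le\; \min|\varrho_n| \;\le\; \max|\varrho_n| \;\le\; (1+4K)\, m_1 \cdots m_n \cdot \min|\varrho_0|.
\]
The left inequality immediately yields the lower bound $0 \le \max|\varrho_n| - m_1\cdots m_n \cdot \min|\varrho_0|$ claimed in the corollary. For the upper bound, from the right inequality we deduce $m_1\cdots m_n \cdot \min|\varrho_0| \ge \max|\varrho_n|/(1+4K)$, and therefore
\[
	\max|\varrho_n| - m_1\cdots m_n \cdot \min|\varrho_0|
	\;\le\; \max|\varrho_n| \left(1 - \frac{1}{1+4K}\right)
	\;=\; \max|\varrho_n| \cdot \frac{4K}{1+4K}.
\]
Dividing through by $\max|\varrho_n|$ gives the stated bound.

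Finally, for the ``in particular'' statement, combining the same two-sided estimate for $\max|\varrho_n|$ and $\min|\varrho_n|$ yields
\[
	\frac{\max|\varrho_n|}{\min|\varrho_n|}
	\;\le\; \frac{(1+4K)\, m_1\cdots m_n \cdot \min|\varrho_0|}{m_1\cdots m_n \cdot \min|\varrho_0|}
	\;=\; 1+4K,
\]
while the lower bound $1 \le \max|\varrho_n|/\min|\varrho_n|$ is trivial. There is no real obstacle here: the work is all concentrated in Lemma \ref{lemma:tailslength}, and the corollary is essentially a bookkeeping consequence of the uniform length of $\varrho_0$.
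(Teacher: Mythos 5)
Your proof is correct and follows essentially the same route as the paper: both apply Lemma \ref{lemma:tailslength} at level $k=0$, use the equal-length assumption $\max|\varrho_0|=\min|\varrho_0|$ to collapse the denominator, and then read off the two claimed inequalities. You just spell out the bookkeeping more explicitly than the paper does.
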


\begin{proof}
Taking $k=0$ in Lemma \ref{lemma:tailslength}, we get 
\[
	m_1 \cdots m_n  \min|\varrho_0| 
	\leq \min |\varrho_n| 
	\leq \max |\varrho_n| 
	\leq m_1 \cdots m_n  \max|\varrho_0|  (1+4K(1-2^{-n})).
\]
This immediately implies the assertions.
\end{proof}

Recall the definition of the measure $\widetilde{\bp_n}$ in \eqref{conconjtilde}. Let us first derive some estimate about their maximal ``relative growth'' in $n$. 

\begin{corollary}\label{corclavonhieraus}
	 Under Assumption \ref{assumption1}, 
	 for every $n\in\bN$, $m\le m_1\cdots m_n$, and $a_1,\ldots,a_m\in\cA$, 
\[
	1-4K
	\le \frac{(\underline\Sub_{n,0})_\ast\widetilde{\,\bp_n\,} ([a_1,\ldots, a_m])}
		{\widetilde{\,\bp\,}([a_1,\ldots, a_m])}
	\le 1+4K,
\]	
where $K$ is as in Assumption \ref{assumption1}.
Moreover, 
\[
	D_{\cA_n}(\widetilde{\,\bp_n\,},\bp_n)<4K.
\]	
\end{corollary}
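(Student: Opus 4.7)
The plan is to exploit, twice, the uniform comparison
\[
	M|\varrho_0| \le |\varrho_n(a^{(n)})| \le (1+4K)M|\varrho_0|, \qquad a^{(n)} \in \cA_n,
\]
where $M \eqdef m_1\cdots m_n$; this follows from Lemma \ref{lemma:tailslength} with $k=0$ together with the fact that Assumption \ref{assumption1} forces $|\varrho_0(a)| \equiv |\varrho_0|$ to be constant. Averaging against $\bp_n$, the normaliser $L_n \eqdef \sum_b |\varrho_n(b)|\bp_n([b])$ appearing in \eqref{conconjtilde} lies in the same interval, and therefore the ratio $|\varrho_n(a^{(n)})|/L_n$ lies in $[1/(1+4K), 1+4K]$, and a fortiori in $[1-4K,1+4K]$ (since $(1-4K)(1+4K) \le 1$).

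For the second assertion I will factor $\widetilde{\bp_n}([a^{(n)}]) = (|\varrho_n(a^{(n)})|/L_n)\,\bp_n([a^{(n)}])$ and compute
\[
	D_{\cA_n}(\widetilde{\bp_n},\bp_n)
	= \sum_{a^{(n)}\in\cA_n} \Big|\frac{|\varrho_n(a^{(n)})|}{L_n} - 1\Big|\, \bp_n([a^{(n)}])
	\le 4K,
\]
with strict inequality unless $|\varrho_n|$ is already constant on $\cA_n$ (in which case the two measures coincide and the estimate is trivial).

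For the first assertion I will first observe that the constant-length condition in Assumption \ref{assumption1} collapses \eqref{deftildep} at level $0$ to $\widetilde p_a = p_a$, so $\widetilde\bp = \bp$. The key bookkeeping step is that, for $m \le M$, the preimage under $\underline\Sub_{n,0}$ of the cylinder $[a_1,\ldots,a_m] \subset \cA^\bZ$ is precisely the union of the $(\cA_n)^\bZ$-cylinders $[b]$ over those $b \in \cA_n$ whose unfolding $\Sub_{n,0}(b) \in \cA^M$ begins with $(a_1,\ldots,a_m)$. Therefore
\[
	(\underline\Sub_{n,0})_\ast\widetilde{\bp_n}([a_1,\ldots,a_m])
	= \sum_{b} \frac{|\varrho_n(b)|}{L_n}\, \bp_n([b]),
\]
and using the pointwise bound $|\varrho_n(b)|/L_n \in [1-4K, 1+4K]$ together with $\sum_b \bp_n([b]) = \bp([a_1,\ldots,a_m]) = \widetilde\bp([a_1,\ldots,a_m])$ (from Lemma \ref{lemend} plus the Bernoulli property of $\bp$) yields the desired ratio bounds.

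The main obstacle is not conceptual but purely notational: one has to track the position-$0$ synchronisation of $\underline\Sub_{n,0}$ carefully, so that the preimage of an $m$-cylinder with $m \le M$ is exactly the union of $1$-cylinders in $(\cA_n)^\bZ$ described above. Apart from this bookkeeping, the proof reduces to the single pointwise inequality on $|\varrho_n|/L_n$ and does not need any large-deviation or deeper ergodic input beyond Lemma \ref{lemma:tailslength}.
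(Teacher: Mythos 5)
Your proposal is correct and follows essentially the same route as the paper's proof: both use the constant-length hypothesis in Assumption~\ref{assumption1} to get $\widetilde\bp=\bp$, both reduce to the pointwise estimate $\widetilde{\bp_n}([a^{(n)}])/\bp_n([a^{(n)}])=|\varrho_n(a^{(n)})|/L_n\in[1-4K,1+4K]$ via the $\max|\varrho_n|/\min|\varrho_n|$ bound (you invoke Lemma~\ref{lemma:tailslength} with $k=0$ directly; the paper quotes Corollary~\ref{cor:maxmin}, which is the same estimate), both observe that a short cylinder in $\cA^\bZ$ pulls back under $\underline\Sub_{n,0}$ to a disjoint union of level-$1$ cylinders in $(\cA_n)^\bZ$, and both derive the $D_{\cA_n}$ bound by weighting the pointwise ratio bound by $\bp_n$.
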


\begin{proof}
First note that $\max|\varrho_0|=\min|\varrho_0|$ in Assumption \ref{assumption1} implies that $\widetilde\bp=\bp$, see Lemma \ref{susLBpre}. Then \eqref{conconj} implies that $(\underline\Sub_{n,0}^{-1})_\ast\widetilde\bp=(\underline\Sub_{n,0}^{-1})_\ast\bp=\bp_n$.
From the definition of $\widetilde{\,\bp_n\,}$ in \eqref{conconjtilde} and Corollary \ref{cor:maxmin}, given $a^{(n)}\in\cA_n$, we get
\[
	\frac{\widetilde{\,\bp_n\,}([a^{(n)}])}{(\underline\Sub_{n,0}^{-1})_\ast\widetilde\bp([a^{(n)}])}
	=
	\frac{\widetilde{\,\bp_n\,}([a^{(n)}])}{1}\frac{1}{\bp_n([a^{(n)}])}
	= \frac{1 }{\sum_{b\in \cA_n}\frac{ |\varrho_n(b)| }{|\varrho_n(a)|}\,\bp_n([b])}
	\le 1+4K.
\]
The lower bound is analogous.
As each cylinder $[a_1,\ldots,a_m]\subset\cA^\bZ$, $m\le m_1\cdots m_n$, is a finite disjoint union of cylinders $[a^{(n)}]$, $a^{(n)}\in\cA_n$, the first assertion follows.

For the second assertion, check that, with the above
\[\begin{split}
	D_{\cA_n}(\widetilde{\,\bp_n\,},\bp_n)
	&= \sum_{a^{(n)}\in\cA_n}|\widetilde{\,\bp_n\,}([a^{(n)}])-\bp_n([a^{(n)}])|\\
	&= \sum_{a^{(n)}\in\cA_n}\bp_n([a^{(n)}])
		\Big|\frac{\widetilde{\,\bp_n\,}([a^{(n)}])}{\bp_n([a^{(n)}])}-1\Big| \\
	&\le 4K \sum_{a^{(n)}\in\cA_n}\bp_n([a^{(n)}])
	=4K.
\end{split}\]
This finishes the proof.
\end{proof}

Recall the definition of the measure $\bp_n$ in \eqref{conconj}. Our goal in the remainder of this subsection is to estimate the $\bar f$-distance between $\bp_n$ and $\widetilde{\,\bp_n\,}$, for $n$ large enough. Note that for every $a\in\cA_n$,
\[\begin{split}
	\big| \widetilde{\bp_n}([a])-\bp_n([a])\big|
	&= \Big| \frac{|\varrho_n(a)|\bp_n([a])}{\sum_{b\in \cA_n} |\varrho_n(b)|\bp_n([b])}-\bp_n([a])\Big|\\
	&= \bp_n([a])\frac{1}{\sum_{b\in \cA_n} |\varrho_n(b)| \bp_n([b])}\,
		\Big| |\varrho_n(a)|-{\sum_{b\in \cA_n} |\varrho_n(b)|\bp_n([b])}\Big|.
\end{split}
\]
To that end, consider the ``expected roof length''
\[
	\bE_n(\bp)
	\eqdef \sum_{b\in \cA_n} |\varrho_n(b)|\,\bp_n([b]).
\]
Given $\underline a\in\cA^\bZ$, $\underline a^{(k)}=\underline\Sub_{k,0}^{-1}(\underline a)\in(\cA_k)^\bZ$, and $n> k$, consider the ``normalized fluctuations''
\begin{equation}\label{defDeltank}
	\Delta_{n,k}(\bp,\underline a)
	\eqdef \frac{\left|\bE_k(\bp)-
		\frac{1}{{m_{k+1}\cdots m_n} }
			\big|\varrho_{k}(a_0^{(k)})\ldots\varrho_{k}(a_{m_{k+1}\cdots m_n-1}^{(k)})\big|
	\right|}
		{\bE_k(\bp)}
\end{equation}			
For $k=n$, taking $\underline a^{(n)}=\underline\Sub_{n,0}^{-1}(\underline a)$, we also let
\begin{equation}\label{defDeltann}
	\Delta_{n,n}(\bp,\underline a)
	\eqdef \frac {\left|\bE_n(\bp)-|\varrho_n(a_0^{(n)})|\right|} 
			{ \bE_n(\bp)}.
\end{equation}
Note that this object is a piecewise constant  function of $\underline a$ (constant on cylinders of level	$m_1\cdots m_n$).
With the above, we get
\begin{equation} \label{eqn:delnn}
	\big| \widetilde{\bp_n}([a])-\bp_n([a])\big|
	= \bp_n([a])\cdot\Delta_{n,n}(\bp, \underline a).
\end{equation}
	
We obtain the following estimates from Lemma~\ref{lemma:tailslength}. 

\begin{corollary}\label{corend}
Under Assumption \ref{assumption1}, for every point $\underline a\in\cA^\bZ$ and every $n>k$ it holds
\[
	\Delta_{n,n}(\bp,\underline a)
	\le \Delta_{n,k}(\bp,\underline a)\big(1+4K2^{-k}\big) +4K2^{-k}.
\]
\end{corollary}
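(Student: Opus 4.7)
The plan is to reduce the statement to bounds on two quantities and one clean algebraic decomposition. Fix $\underline a \in \cA^\bZ$, write $M \eqdef m_{k+1}\cdots m_n$, and abbreviate
\[
L_n \eqdef |\varrho_n(a_0^{(n)})|, \quad L_k \eqdef |\varrho_k(a_0^{(k)})\cdots\varrho_k(a_{M-1}^{(k)})|, \quad E_n \eqdef \bE_n(\bp), \quad E_k \eqdef \bE_k(\bp),
\]
so that $\Delta_{n,n}(\bp,\underline a) = |E_n - L_n|/E_n$ and $\Delta_{n,k}(\bp,\underline a) = |M E_k - L_k|/(M E_k)$. First, Lemma \ref{lemma:tailslength} applied at the zero-th coordinate furnishes the pointwise ratio bound $\alpha \eqdef L_n/L_k \in [1, 1+4K(2^{-k}-2^{-n})]$.

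Next, I would establish the analogous bound for the expectations, namely $\beta \eqdef E_n/(M E_k) \in [1, 1+4K(2^{-k}-2^{-n})]$. This is the only step that uses the probabilistic structure: since $\bp_n = (\underline\Sub_{n,0}^{-1})_\ast \bp$ is the Bernoulli relabeling of the Bernoulli measure $\bp$, the $M$-tuple $(a_1^{(k)},\ldots,a_M^{(k)}) \eqdef \Sub_{n,k}(a^{(n)})$ of a $\bp_n$-distributed letter $a^{(n)} \in \cA_n$ consists of $M$ i.i.d.\ letters with common law $\bp_k$. Integrating the pointwise inequality of Lemma \ref{lemma:tailslength} against $\bp_n$ therefore yields $M E_k \le E_n \le (1+4K(2^{-k}-2^{-n})) M E_k$. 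In particular, both $\alpha,\beta \in [1, 1+4K\cdot 2^{-k}]$, so $|\beta-\alpha| \le 4K\cdot 2^{-k}$.

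The final step is a multiplicative decomposition. Writing
\[
E_n - L_n = M E_k \beta - L_k \alpha = \beta(M E_k - L_k) + L_k(\beta-\alpha),
\]
taking absolute values, and dividing by $E_n = \beta M E_k$, the first piece collapses to exactly $\Delta_{n,k}(\bp,\underline a)$ after using $|M E_k - L_k| = M E_k \cdot \Delta_{n,k}(\bp,\underline a)$. For the second piece, the identity $|M E_k - L_k| = M E_k \Delta_{n,k}(\bp,\underline a)$ gives $L_k \le M E_k(1+\Delta_{n,k}(\bp,\underline a))$, hence $L_k/E_n \le (1+\Delta_{n,k}(\bp,\underline a))/\beta \le 1+\Delta_{n,k}(\bp,\underline a)$. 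Combining with $|\beta-\alpha| \le 4K\cdot 2^{-k}$ produces exactly $\Delta_{n,k}(\bp,\underline a)(1+4K\cdot 2^{-k}) + 4K\cdot 2^{-k}$.

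The one subtle point is the choice of this particular decomposition: a naive triangle inequality of the form $|E_n - L_n| \le |E_n - M E_k| + |M E_k - L_k| + |L_k - L_n|$ would produce an extra factor, giving $8K\cdot 2^{-k}$ in the additive term rather than the sharp $4K\cdot 2^{-k}$. Exploiting that $\alpha$ and $\beta$ sit inside a common interval of length $4K\cdot 2^{-k}$ on the same side of $1$, rather than bounding each deviation separately, is what recovers the correct constant.
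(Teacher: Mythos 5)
Your proof is correct and obtains exactly the stated bound. It rests on the same two ingredients as the paper's argument: the pointwise ratio bound $\alpha = L_n/L_k \in [1, 1+4K(2^{-k}-2^{-n})]$ from Lemma~\ref{lemma:tailslength}, and the expectation ratio bound $\beta = \bE_n(\bp)/(m_{k+1}\cdots m_n\,\bE_k(\bp)) \in [1, 1+4K(2^{-k}-2^{-n})]$, which you correctly derive by integrating the pointwise bound against the Bernoulli measure $\bp_n$ (the paper phrases this via Birkhoff-average expectations, but the content is identical). Where you diverge from the paper is in the last algebraic step: the paper bounds $(L_n-\bE_n)/\bE_n$ and $(\bE_n-L_n)/\bE_n$ separately, in each case replacing $L_n$ and $\bE_n$ by their one-sided extremes (e.g.\ $L_n \le (1+4K2^{-k})L_k$ and $\bE_n \ge m_{k+1}\cdots m_n\,\bE_k$ for the upper side, with the ``analogous lower bound'' for the other), while you package both cases at once through the identity $\bE_n - L_n = \beta(ME_k - L_k) + L_k(\beta-\alpha)$ and the observation $|\beta-\alpha| \le 4K2^{-k}$. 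Your remark that a three-term triangle inequality loses a factor of two in the additive term, whereas exploiting that $\alpha$ and $\beta$ lie on the same side of $1$ recovers the sharp constant, is a good diagnostic and is exactly the one-sidedness the paper's case-by-case argument also exploits, just less transparently. The two proofs are mathematically equivalent; yours is cleaner in that it handles both signs in one stroke.
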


\begin{proof}
Let us first show that for every $k\in\bN$ and $n>k$,
\begin{eqnarray}
	1
	&\le& \displaystyle\frac{\sum_{a\in\cA_n}|\varrho_n(a)|\bp_n([a])}
		{(m_{k+1}\cdots m_n)\sum_{b\in \cA_{k}} |\varrho_{k}(b)|\,\bp_{k}([b])}
	= \frac{\bE_n(\bp)}{(m_{k+1}\cdots m_n)\bE_k(\bp)}\label{eqnarray1}\\
	&\le& 1+4K(2^{-k}-2^{-n}).\notag	
\end{eqnarray}	
Observe that the numerator in \eqref{eqnarray1} is the expected value of the Birkhoff averages of the function $(\cA_n)^\bZ\ni\underline a^{(n)}\mapsto |\varrho_n(a_0^{(n)})|$ with respect to $(\sigma_n,(\cA_n)^\bZ,\bp_n)$. On the other hand, the denominator in \eqref{eqnarray1} is the expected value of the Birkhoff averages of the function $(\cA_k)^\bZ\ni\underline b^{(k)}\mapsto|\varrho_k(b_0^{(k)})|$ with respect to $(\sigma_k^{m_{k+1}\cdots m_n},(\cA_k)^\bZ,\bp_k)$ (note that this is also an ergodic automorphism as  we are dealing with Bernoulli measures). With this observation, the assertion is then an immediate consequence of Lemma \ref{lemma:tailslength}.

Using now \eqref{eqnarray1} together with again Lemma \ref{lemma:tailslength}, we get
\[\begin{split}
	& \frac {|\varrho_n(a^{(n)})| - \bE_n(\bp)} 
			{\bE_n(\bp)}\\
	&\quad\quad\le	\frac{\frac{1}{m_{k+1}\cdots m_n}\big|\varrho_{k}(a_0^{(k)})\ldots\varrho_{k}(a_{m_{k+1}\cdots m_n-1}^{(k)})\big|\big(1+4K2^{-k}\big)
		- \bE_k(\bp)}
	{\bE_k(\bp)},
\end{split}\]
together with analogous lower bounds. This implies the assertion.
\end{proof}

\subsection{Some large deviation results}\label{ssec4c}

The following result provides a large deviation result that is \emph{uniform across} all Bernoulli measures. 
We present it in a broader context, initially unrelated to the preceding content.
In Section \ref{ssecFurtherStru}, it will be implemented to describe further structures.

\begin{proposition}[Uniform Law of Large Numbers] \label{proplem:bernsteinagain}
Let $\cC$ be a finite alphabet. For every $\delta>0$ there exist $L=L(\delta)$ such that for every Bernoulli measure $\bp$ on $\cC^\bZ$ the set
\begin{multline*}
G(\bp,L, \delta) 
\eqdef\\ \Big\{\underline b\in \cC^\bN\colon 
	 \big|\bp([c])-\frac 1\ell \card\{i=0,\ldots,\ell-1\colon b_i=c\}\big|< \frac L\ell+\delta
	 \text{ for all }\ell\in\bN,c\in\cC\Big\}.
\end{multline*}
satisfies $\bp(G(\bp,L,\delta))> 1-\delta$.
\end{proposition}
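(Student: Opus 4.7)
The plan is to reduce the statement to a uniform (in the Bernoulli parameter) large deviation estimate for empirical frequencies, and then to handle small and large values of $\ell$ separately.

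Fix $c\in\cC$ and, under the Bernoulli measure $\bp$ with $p_c\eqdef\bp([c])$, consider the empirical count $S_\ell^c(\underline b)\eqdef\card\{0\le i<\ell\colon b_i=c\}$. Since $\bp$ is Bernoulli, the indicator random variables $\mathbf 1_{\{b_i=c\}}$, $i\ge 0$, are i.i.d.\ Bernoulli($p_c$). Hoeffding's inequality then yields, for every $\ell\in\bN$ and every $\delta'>0$,
\[
	\bp\Big(\big|\tfrac{S_\ell^c}{\ell}-p_c\big|>\delta'\Big)
	\le 2\exp(-2\ell(\delta')^2),
\]
and this bound is \emph{independent of $p_c$}, which is exactly what will give us uniformity across Bernoulli measures.

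Now split the ranges of $\ell$. If $\ell\le L/(1-\delta)$, then $L/\ell+\delta\ge 1$, and since $|p_c-S_\ell^c/\ell|\le 1$ trivially, the defining inequality of $G(\bp,L,\delta)$ holds automatically for such $\ell$ and every $c$. Hence the complement of $G(\bp,L,\delta)$ is contained in
\[
	\bigcup_{c\in\cC}\bigcup_{\ell>L/(1-\delta)}
		\Big\{\underline b\colon\big|\tfrac{S_\ell^c(\underline b)}{\ell}-p_c\big|\ge\delta\Big\},
\]
because on this range the term $L/\ell$ is already positive so it suffices to control $|S_\ell^c/\ell-p_c|$ by $\delta$.

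A union bound combined with the Hoeffding estimate (applied with $\delta'=\delta$) gives
\[
	\bp\big(\cC^\bN\setminus G(\bp,L,\delta)\big)
	\le \card\cC\cdot\sum_{\ell> L/(1-\delta)}2\exp(-2\ell\delta^2)
	\le \frac{2\card\cC\cdot\exp(-2L\delta^2/(1-\delta))}{1-\exp(-2\delta^2)}.
\]
The right-hand side depends only on $\delta$ and $\card\cC$, not on $\bp$, and tends to $0$ as $L\to\infty$. Choosing $L=L(\delta)$ large enough to make it strictly less than $\delta$ completes the proof.

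The only potentially subtle point is ensuring that the large deviation bound is genuinely uniform across all Bernoulli parameters $p_c\in[0,1]$; this is precisely the content of Hoeffding's inequality (any distribution-free concentration bound such as Bernstein's would do as well), so there is no real obstacle. The split at $\ell\asymp L$ is purely a bookkeeping device to absorb the behavior at small $\ell$ into the $L/\ell$ term.
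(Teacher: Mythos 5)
Your proof is correct, and it takes a genuinely different route from the paper's. The paper applies Bernstein's inequality directly with the variable threshold $a=\delta+L/\ell$, sums the resulting bounds over all $\ell\ge 1$ and all $c$, and then shows via the elementary estimate $a/(b+c)\ge\min\{a/(2b),a/(2c)\}$ that the exponent is at least $\min\{\ell\delta^2,\tfrac34\ell\delta\}$, so the series converges and tends to $0$ as $L\to\infty$. You instead split at $\ell\asymp L/(1-\delta)$: for $\ell\le L/(1-\delta)$ the condition is vacuous (almost surely, since $|p_c-S_\ell^c/\ell|<1$ a.s.\ and $L/\ell+\delta\ge1$), and for $\ell>L/(1-\delta)$ you discard the $L/\ell$ slack and apply Hoeffding with the fixed threshold $\delta$, producing a clean geometric tail whose sum is manifestly $\to0$ as $L\to\infty$. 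Both bounds are distribution-free, which is the crux of the uniformity in $\bp$. Your version avoids the algebraic massaging of the Bernstein exponent and the implicit monotone/dominated-convergence step of the paper by absorbing the small-$\ell$ range trivially; the paper's version, by keeping $L/\ell$ inside the concentration bound, gets decay already for small $\ell$ rather than discarding those terms. Either is perfectly adequate; yours is arguably tidier. One tiny cosmetic remark: your ``trivial'' inequality for $\ell\le L/(1-\delta)$ is an almost-sure statement rather than a pointwise one (the boundary case $|p_c-S_\ell^c/\ell|=1$ has probability zero under any Bernoulli $\bp$), which is harmless for the measure estimate but worth a word, or else split at $\ell<L/(1-\delta)$ strictly so that $L/\ell+\delta>1$.
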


\begin{proof}
Fix any
$\bp$. Given $\ell\in\bN$, $c\in\cC$, and $L>0$ define
\[
 G_{\ell,c}(\bp,L,\delta) \eqdef
\Big\{\underline b\in \cC^\bN\colon 
	 \frac 1\ell\big|\ell\bp([c])- \card\{i=0,\ldots,\ell-1\colon b_i=c\}\big|\ge \frac L\ell+\delta
\Big\}.\]
Note that
\[
	G(\bp,L,\delta) 
	= \cC^\bZ \setminus \bigcup_{\ell\in\bN} \bigcup_{c\in \cC} G_{\ell,c}(\bp,L,\delta),
\]
and hence
\begin{equation} \label{eqn:bernag0}
	\bp( G(\bp,L,\delta)) 
	\geq 1- \sum_{\ell\in\bN} \sum_{c\in \cC} \bp( G_{\ell,c}(\bp,L,\delta)).
\end{equation}

Let us recall a simple form of the Bernstein inequality that we are going to apply: for a sequence $(X_i)_i$ of independent and identically distributed copies of a random variable $X$ with mean value equal to 0, for every $n\in\bN$ we have
\begin{equation}\label{eqBern}
	\bP\Big(|\frac 1\ell \sum_{i=1}^{\ell} X_i|>a\Big) 
	\leq 2 \exp\Big(-\frac {\ell a^2/2} {\bE(X^2) + ||X||a/3} \Big) .
\end{equation}

Given $c\in\cC$, consider now the random variable $X$ over the sigma field of $\cC^\bZ$ that takes the value $1-\bp([c])$ with probability $\bp([c])$ and the value $-\bp([c])$ with probability $1-\bp([c])$. Let $(X_i)_i$ be a sequence of independent and identically distributed copies of $X$. Note that, in our context, we have 
\[\begin{split}
	\frac{1}{\ell} \sum_{i=1}^\ell X_i
	&= \frac 1\ell\big(\card\{i=0,\ldots,\ell-1\colon b_i=c\}-\ell\bp([c])\big),\\
	\bE(X)
	&=\big(1-\bp([c])\big)\bp([c])-\bp([c])\big(1-\bp([c])\big)=0,\\
	\bE(X^2)
	&= \big(1-\bp([c])\big)^2\bp([c])+\big(-\bp([c])\big)^2\big(1-\bp([c])\big)
	= \bp([c])(1-\bp([c])),\\
	\lVert X\rVert
	&=\max\big\{1-\bp([c]),\bp([c])\big\}.
\end{split}\]
Applying \eqref{eqBern}, we get
\[\begin{split}
\bp( G_{\ell,c}&(\bp,L,\delta))\\
&\leq 2\exp\Big(-\frac {\ell(\delta+L/\ell)^2/2} {\bp([c])(1-\bp([c])) + \max\{\bp([c]), 1-\bp([c])\} \cdot (\delta+L/\ell)/3}  \Big)\\
&\leq 2 \exp\Big(- \frac {6(\ell\delta+L)^2} {3\ell +4(\ell\delta+ L)}\Big).
\end{split}\]
Thus,
\begin{equation} \label{eqn:bernag}
\sum_{\ell\in\bN} \sum_{c\in \cC} \bp( G_{\ell,c}(\bp,L,\delta))
\leq 2\card\cC \cdot 
	\sum_{\ell=1}^\infty \exp\Big(-\frac {6(\ell\delta+L)^2} {3\ell +4(\ell\delta+ L)}\Big).
\end{equation}
Check that $a/(b+c)\ge\min\{a/2b,a/2c\}$ implies
\[
	\min\Big\{\ell\delta^2,\frac34\ell\delta\Big\}
	\le \frac {6(\ell\delta+L)^2} {3\ell +4(\ell\delta+ L)}
	\to\infty
\]
as $L\to\infty$. Thus, the series on the right-hand side in \eqref{eqn:bernag} is summable and uniformly bounded from above by 
\[
	\sum_{\ell=1}^\infty\exp\Big(-\min\big\{\ell\delta^2,\frac34\ell\delta\}\Big)
	<\infty.
\]
Moreover, its summands pointwise converge to $0$ as $L\to\infty$. Hence, the sum in \eqref{eqn:bernag} converges to 0 as $L$ increases monotonically. It suffices now to choose $L_0(\delta)>0$ sufficiently large. Substituting this to \eqref{eqn:bernag0}, we get the assertion. Note that $L_0(\delta)$ does not depend on the particular choice of $\bp$.
\end{proof}

Let us draw some consequences from the above proposition in our current context. Recall the definition of the (normalized) fluctuations $\Delta_{n,n}$ in \eqref{defDeltann}.

\begin{proposition}\label{prousedcorollary}
Under Assumption \ref{assumption1}, for every $\varepsilon>0$ there exists $n_0\in\bN$ such that for every Bernoulli measure $\bp$ on $\cA^\bZ$ and every $n> n_0$, we get
\[
	\bp\big( \big\{\underline a\in \cA^\bZ\colon \Delta_{n,n}(\bp,\underline a)
	 \leq \varepsilon  \big\}\big)
	 >1-\varepsilon.
\]
\end{proposition}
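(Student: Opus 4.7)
The plan is to decompose the bound via Corollary \ref{corend}: for any $n>k$,
\[
\Delta_{n,n}(\bp,\underline a) \le \Delta_{n,k}(\bp,\underline a)\bigl(1+4K2^{-k}\bigr)+4K2^{-k},
\]
so I will first fix $k_{0}$ large enough that $4K2^{-k_{0}}<\varepsilon/3$ and $1+4K2^{-k_{0}}<2$. With this choice, whenever $\Delta_{n,k_{0}}(\bp,\underline a)<\varepsilon/3$ one automatically has $\Delta_{n,n}(\bp,\underline a)<\varepsilon$. The remaining task is to show that $\Delta_{n,k_{0}}$ is small on a $\bp$-large set, uniformly in $\bp$, for all sufficiently large $n$.

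With $k_{0}$ fixed, I will expand $\Delta_{n,k_{0}}$. Writing $\ell\eqdef m_{k_{0}+1}\cdots m_{n}$ and $\underline a^{(k_{0})}\eqdef\underline{\Sub}_{k_{0},0}^{-1}(\underline a)\in\cA_{k_{0}}^{\bZ}$, the numerator in \eqref{defDeltank} equals
\[
\Bigl|\sum_{c\in\cA_{k_{0}}}|\varrho_{k_{0}}(c)|\Bigl(\bp_{k_{0}}([c])-\tfrac{1}{\ell}\card\{0\le i<\ell:a^{(k_{0})}_{i}=c\}\Bigr)\Bigr|.
\]
Bounding each summand by $\max|\varrho_{k_{0}}|$ times the frequency discrepancy and dividing by $\bE_{k_{0}}(\bp)\ge\min|\varrho_{k_{0}}|$, Corollary \ref{cor:maxmin} yields
\[
\Delta_{n,k_{0}}(\bp,\underline a)\le (1+4K)\card(\cA_{k_{0}})\max_{c\in\cA_{k_{0}}}\Bigl|\bp_{k_{0}}([c])-\tfrac{1}{\ell}\card\{0\le i<\ell:a^{(k_{0})}_{i}=c\}\Bigr|.
\]

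Now I invoke Proposition \ref{proplem:bernsteinagain} applied to the alphabet $\cA_{k_{0}}$ and the Bernoulli measure $\bp_{k_{0}}$ on $\cA_{k_{0}}^{\bZ}$. Choose
\[
\delta\eqdef\min\!\Bigl\{\varepsilon,\ \frac{\varepsilon}{6(1+4K)\card(\cA_{k_{0}})}\Bigr\},
\]
and let $L=L(\delta)$ be the corresponding constant. Then $\bp_{k_{0}}(G(\bp_{k_{0}},L,\delta))>1-\delta\ge 1-\varepsilon$, and on $G(\bp_{k_{0}},L,\delta)$ every frequency deviation is at most $L/\ell+\delta$ for every $\ell\in\bN$. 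By Lemma \ref{newlemconjBer} and \eqref{conconj}, $\underline{\Sub}_{k_{0},0}$ pushes $\bp_{k_{0}}$ forward to $\bp$, so the pullback $E\eqdef \underline{\Sub}_{k_{0},0}(G(\bp_{k_{0}},L,\delta))\subset\cA^{\bZ}$ satisfies $\bp(E)>1-\varepsilon$.

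Finally, choose $n_{0}>k_{0}$ so that $L/(m_{k_{0}+1}\cdots m_{n_{0}})<\varepsilon/(6(1+4K)\card(\cA_{k_{0}}))$; this is possible since each $m_{j}\ge 2$. For $n>n_{0}$ and $\underline a\in E$, the displayed bound gives $\Delta_{n,k_{0}}(\bp,\underline a)\le (1+4K)\card(\cA_{k_{0}})(L/\ell+\delta)<\varepsilon/3$, and hence $\Delta_{n,n}(\bp,\underline a)<\varepsilon$ by the first paragraph. The crucial point is that $L(\delta)$ in Proposition \ref{proplem:bernsteinagain} depends only on $\delta$ and not on the underlying Bernoulli measure, so the choice of $n_{0}$ is uniform across all $\bp\in\cM_{\rm B}(\cA^{\bZ},\sigma_{\cA})$. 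The only mild subtlety, namely this uniformity, is precisely what Proposition \ref{proplem:bernsteinagain} was designed to provide; the rest of the argument is bookkeeping between the alphabets $\cA$ and $\cA_{k_{0}}$.
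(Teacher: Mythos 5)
Your proof is correct and follows the same route as the paper: reduce to controlling $\Delta_{n,k_0}$ via Corollary~\ref{corend}, apply the measure-uniform Bernstein bound of Proposition~\ref{proplem:bernsteinagain} on the alphabet $\cA_{k_0}$, and then fix $k_0$, $\delta$, and $n_0$ in that order. The only immaterial difference is that you bound $\Delta_{n,k_0}$ with the factor $\max|\varrho_{k_0}|/\min|\varrho_{k_0}|\le 1+4K$, whereas the paper additionally exploits that the frequency deviations $\xi(c)$ sum to zero to obtain the factor $\max|\varrho_{k_0}|/\min|\varrho_{k_0}|-1\le 4K$ — a distinction absorbed anyway by the subsequent choices of $\delta$ and $n_0$.
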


\begin{proof}
Choose $k\in\bN$ sufficiently large and $\delta>0$ sufficiently small, to be specified later. Apply Proposition \ref{proplem:bernsteinagain} to the alphabet $\cA_{k}$ and let $L_k=L(k,\delta)$ be as provided by this proposition. Fix some Bernoulli measure $\bp$ on $\cA^\bZ$ and let $\bp_{k}$ be as in \eqref{conconj}. 
Let $G_k=G(\bp_{k}, L_k, \delta)\subset(\cA_k)^\bZ$ as defined in Proposition \ref{proplem:bernsteinagain}.  
By this proposition, $\bp_k(G_k)>1-\delta$ and for every $\underline a^{(k)}\in G_k$, $\ell\in\bN$, and $c\in\cA_k$,
\begin{equation}\label{eqestPoin}
	 \big|\bp_k([c])-\frac 1\ell \card\{i=0,\ldots,\ell-1\colon a_i^{(k)}=c\}\big|
 	< \frac{L_k}{\ell}+\delta.
\end{equation}
Note that $|\varrho_{k}(a_0^{(k)})\ldots\varrho_{k}(a_{m_{k+1}\cdots m_n-1}^{(k)})|$ is just the Birkhoff sum (relative to $\sigma_k$) of the piecewise constant, and hence continuous, function 
\[	
	g\colon(\cA_k)^\bZ\to\bN,\quad
	g(\underline a^{(k)})
	\eqdef|\varrho_{k}( a_1^{(k)})|.
\]
Taking now $\ell=m_{k+1}\cdots m_n$, let us invoke the estimate \eqref{eqestPoin}. For  all $\underline a^{(k)} \in G_k$, we get
\[\begin{split}
	\big|&\varrho_{k}(a_0^{(k)})\ldots\varrho_{k}(a_{m_{k+1}\cdots m_n-1}^{(k)})\big|	\\
	&=\sum_{i=0}^{m_{k+1}\cdots m_n-1}|\varrho_{k}(a_i^{(k)})|	
	= \sum_{c\in\cA_k}|\varrho_k(c)|\card\{i\colon a_i^{(k)}=c\}	\\
	&= m_{k+1}\cdots m_n\sum_{c\in\cA_k}|\varrho_k(c)|
		\frac{1}{m_{k+1}\cdots m_n}\card\{i\colon a_i^{(k)}=c\}	\\
	&= 	m_{k+1}\cdots m_n\sum_{c\in\cA_k}
	|\varrho_k(c)|\big(\bp_k([c])+\xi(c)\big).
\end{split}\]
In the latter expression, by \eqref{eqestPoin}, the term $\xi(c)$ denotes the deviation of the frequency of the symbol $c$ from its ``expected value'' $\bp_k([c])$. Note that it satisfies 
\begin{equation}\label{laaater}
	|\xi(c)|\le \frac{L_k}{m_{k+1}\cdots m_n}+\delta,\quad
	\sum_{c\in\cA_k}\xi(c)=0.
\end{equation}
Hence, for any $\underline a\in \cA^\bZ$ such that $\underline a^{(k)}=\underline\Sub_{k,0}^{-1}(\underline a)\in G_k$,
 we can estimate the numerator of \eqref{defDeltank} as follows
\[\begin{split}
	\Delta_{n,k}(\bp,\underline a)\bE_k(\bp)
	&=\Big| \sum_{c\in\cA_k}|\varrho_k(c)| \big(\bp_k([c])+\xi(c)\big)-\bE_k(\bp)\Big|\\
	&=\Big| \sum_{c\in\cA_k}|\varrho_k(c)| \big(\bp_k([c])+\xi(c)\big)
		-\sum_{c\in\cA_k}|\varrho_k(c)|\bp_k([c])  \Big|\\
	&=\Big|\sum_{c\in\cA_k}|\varrho_k(c)|\xi(c) \Big|\\
	&=\Big|\sum_{c\in\cA_k\colon\xi(c)>0} |\varrho_k(c)|\xi(c)
		+ \sum_{c\in\cA_k\colon\xi(c)\le 0} |\varrho_k(c)|\xi(c) \Big|\\
	&\le \card\cA_k\max|\varrho_k|\max|\xi|	- \card\cA_k\min|\varrho_k|\max|\xi|	\\
	\text{\tiny{(using \eqref{laaater})}}\quad
	&\le \card\cA_k(\max|\varrho_k|-\min|\varrho_k|)\big(\frac{L_k}{m_{k+1}\cdots m_n}+\delta\big).
\end{split}\]
Using that $\bE_k(\bp)\ge \min|\varrho_k|$, for $n>k$ we get
\[
	\Delta_{n,k}(\bp,\underline a)
	\le \card\cA_k
		\big(\frac{\max|\varrho_k|}{\min|\varrho_k|}-1\big)
		\big(\frac{L_k}{m_{k+1}\cdots m_n}+\delta\big).
\]
Hence, together with Corollaries \ref{corend} and \ref{cor:maxmin}, we get
\begin{eqnarray}
	\Delta_{n,n}(\bp,\underline a)
	&\le& \card\cA_k 
		\Big(
			(1+4K)-1\Big)
		\Big(\frac{L_k}{m_{k+1}\cdots m_n}+\delta\Big)
		(1+4K2^{-k}) \notag\\
	&\phantom{\le}&+4K2^{-k}\notag\\
	&=& 4K2^{-k}+\label{fomrla1}\\
	&\phantom{=}&+\delta\cdot\card\cA_k\Big(
		(1+4K)-1\Big)(1+4K2^{-k})\label{fomrla2}\\
	&\phantom{=}&+\frac{L_k}{m_{k+1}\cdots m_n}\cdot\card\cA_k
		\Big(
			(1+4K)-1\Big)(1+4K2^{-k})\label{fomrla3}
\end{eqnarray}

Now let us argue about the order in which the above constants are to be chosen. First, we choose $k$ large such that \eqref{fomrla1} is small. This determines the term $\card\cA_k$. Next, we choose $\delta$ such that \eqref{fomrla2} is small. This determines $L_k=L(k,\delta)$. Finally, let $n_0$ such that \eqref{fomrla3}  is small for all $n\ge n_0$. This way, we guarantee that $\Delta_{n,n}(\bp,\underline a)$ is smaller than $\varepsilon$. 

Recall that, the above holds for any $\underline a^{(k)}\in G_k$ and that $\bp_k(G_k)>1-\delta$. Hence, it follows that $\Delta_{n,n}(\bp,\underline a)<\varepsilon$ holds for any $\underline a\in \underline\Sub_{k,0}(G_k)$ and that $\bp(\underline\Sub_{k,0}(G_k))>1-\delta$.

Note that all choices are independent of $\bp$, in particular $n$ is independent of $\bp$. 
\end{proof}

\begin{corollary}\label{corthisisit}
Under Assumption \ref{assumption1}, for every $\varepsilon>0$ there exists $n_0\in\bN$ such that for every Bernoulli measure $\bp$ on $\cA^\bZ$ and every $n> n_0$  the measure $\bp_n=(\underline\Sub_{n,0}^{-1})_\ast\bp$ and the corresponding measure $\widetilde{\bp_n}$ defined in \eqref{conconjtilde} satisfy 
\[
	D_{\cA_n}(\widetilde{\bp_n},\bp_n)
	= \sum_{a\in\cA_n}\big|\widetilde{\bp_n}([a])-\bp_n([a])\big|
	<\varepsilon.
\]
\end{corollary}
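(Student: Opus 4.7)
The plan is to realize $D_{\cA_n}(\widetilde{\bp_n},\bp_n)$ as an integral of the normalized fluctuation $\Delta_{n,n}(\bp,\cdot)$ against $\bp$, then split this integral using the uniform large deviation bound provided by Proposition \ref{prousedcorollary}.

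First, I would use formula \eqref{eqn:delnn}, which states that $|\widetilde{\bp_n}([a])-\bp_n([a])|=\bp_n([a])\cdot\Delta_{n,n}(\bp,\underline a)$ for any $\underline a\in[a]$ (since $\Delta_{n,n}(\bp,\cdot)$ is constant on cylinders of level $m_1\cdots m_n$ in $\cA^\bZ$, equivalently on level-one cylinders of $(\cA_n)^\bZ$). Summing over $a\in\cA_n$ and using that $\bp_n=(\underline\Sub_{n,0}^{-1})_\ast\bp$, one gets
\[
D_{\cA_n}(\widetilde{\bp_n},\bp_n)
=\sum_{a\in\cA_n}\bp_n([a])\cdot\Delta_{n,n}(\bp,[a])
=\int_{\cA^\bZ}\Delta_{n,n}(\bp,\underline a)\,d\bp(\underline a).
\]

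Next, I would establish a uniform pointwise upper bound for $\Delta_{n,n}(\bp,\cdot)$. By definition $\Delta_{n,n}(\bp,\underline a)=||\varrho_n(a_0^{(n)})|-\bE_n(\bp)|/\bE_n(\bp)$, and since both $|\varrho_n(a_0^{(n)})|$ and $\bE_n(\bp)$ lie in the interval $[\min|\varrho_n|,\max|\varrho_n|]$, Corollary \ref{cor:maxmin} gives
\[
\Delta_{n,n}(\bp,\underline a)\le\frac{\max|\varrho_n|-\min|\varrho_n|}{\min|\varrho_n|}\le 4K
\]
for every $\underline a\in\cA^\bZ$ and every Bernoulli measure $\bp$.

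Finally, given $\varepsilon>0$, set $\varepsilon'\eqdef\varepsilon/(1+4K)$ and apply Proposition \ref{prousedcorollary} with this $\varepsilon'$. It provides $n_0\in\bN$ (independent of $\bp$) such that, for every $n>n_0$, the set $G_n\eqdef\{\underline a\in\cA^\bZ\colon\Delta_{n,n}(\bp,\underline a)\le\varepsilon'\}$ satisfies $\bp(G_n)>1-\varepsilon'$. Splitting the integral,
\[
\int_{\cA^\bZ}\Delta_{n,n}(\bp,\underline a)\,d\bp
=\int_{G_n}\Delta_{n,n}(\bp,\cdot)\,d\bp+\int_{\cA^\bZ\setminus G_n}\Delta_{n,n}(\bp,\cdot)\,d\bp
\le\varepsilon'+4K\varepsilon'=\varepsilon,
\]
which yields the claim.

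The argument is essentially immediate from previously established results; there is no serious obstacle. The one point to check carefully is that the $n_0$ from Proposition \ref{prousedcorollary} does not depend on $\bp$, which is precisely the ``uniform across all Bernoulli measures'' feature of the large deviation statement, so the resulting $n_0$ here also works uniformly in $\bp$, as required.
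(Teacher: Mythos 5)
Your proof is correct and follows essentially the same route as the paper's: rewrite $D_{\cA_n}(\widetilde{\bp_n},\bp_n)$ via formula \eqref{eqn:delnn} in terms of $\Delta_{n,n}$, invoke the uniform large-deviation bound of Proposition \ref{prousedcorollary} to obtain a good set of $\bp$-measure $>1-\varepsilon'$ with $n_0$ independent of $\bp$, and split the sum/integral into the good set and its complement. The only (cosmetic) difference is that you control the bad-set contribution via the uniform pointwise bound $\Delta_{n,n}\le 4K$ from Corollary \ref{cor:maxmin}, whereas the paper bounds it crudely using that a total-variation distance is at most $2$; both yield the claim.
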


\begin{proof}
We recall the formula \eqref{eqn:delnn}, which will now be finally used.
By Proposition \ref{prousedcorollary}, given $\varepsilon>0$ there exists $n_0\in\bN$, such that for any Bernoulli measure $\bp$ there is a set $G_\bp\subset\cA^\bZ$ such that $\bp(G_\bp)>1-\varepsilon$ and $ \Delta_{n,n}(\bp,\underline a)\leq \varepsilon$ for any $n>n_0$ and every $\underline a\in G_\bp$. Hence, with $G_{n,\bp}=\underline\Sub_{n,0}^{-1}(G_\bp)$, we have $\bp_n(G_{n,\bp} )>1-\varepsilon$. For every $a\in\cA_n$ such that $[a]\cap G_{n,\bp}\ne\emptyset$ we get
\[
	\big| \widetilde{\bp_n}([a])-\bp_n([a])\big|\le\varepsilon.
\]
Hence, we get 
\begin{equation*}\begin{split}
	\sum_{a\in\cA_n}\big| \widetilde{\bp_n}([a])-&\bp_n([a])\big|\\
	&= \sum_{[a]\cap G_{n,\bp}\ne\emptyset}\big| \widetilde{\bp_n}([a])-\bp_n([a])\big|
		+ \sum_{[a]\cap G_{n,\bp}=\emptyset}\big|\widetilde{\bp_n}([a])-\bp_n([a])\big|\\
	&\le\varepsilon\cdot\bp_n(G_{n,\bp})+2(1-\bp_n(G_{n,\bp}))\\
	&<\varepsilon+2\varepsilon.
\end{split}\end{equation*}
This implies the assertion.
\end{proof}

\subsection{$\bar f$-convergence}\label{ssec4d}

We study the $\bar f$-distance between the elements of the sequence 
\begin{equation}\label{defnunp}
	\nu_n(\bp)
	\eqdef \kappa_{\rm inv}(\cA_n,\varrho_n,\widetilde{\bp_n}).
\end{equation}
We start with two preliminary results that will be implemented below.

\begin{proposition}\label{prokickoff}
	Under Assumption \ref{assumption1}, 
	for any $n\in\bN$ and $\bp\in\cM_{\rm B}(\cA^\bZ,\sigma_\cA)$,
\[
	\bar f\big(\nu_n(\bp),\nu_0(\bp)\big)
	\le 6K+8K^2,
\]	
where $K$ is as in Assumption \ref{assumption1}.
\end{proposition}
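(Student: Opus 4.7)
The plan is to decompose the distance $\bar f(\nu_n(\bp),\nu_0(\bp))$ into two manageable pieces by inserting an intermediate measure, and then to bound each piece using tools already proved in Section \ref{secBerCod}. Since $\bar f\bigl(\kappa(\cA,\varrho,\bp),\kappa_{\rm inv}(\cA,\varrho,\bp)\bigr)=0$ by Lemma \ref{lemfact:bar}, it suffices to work with the non-invariant versions, that is, to estimate
\[
	\bar f\bigl(\kappa(\cA_n,\varrho_n,\widetilde{\,\bp_n\,}),\,\kappa(\cA_0,\varrho_0,\bp)\bigr).
\]
Note that by Assumption \ref{assumption1} together with Lemma \ref{susLBpre}, $\widetilde{\,\bp_0\,}=\bp$, so the right-hand measure is indeed $\nu_0(\bp)$ up to $\bar f$-distance zero.

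First I would replace $\widetilde{\,\bp_n\,}$ by $\bp_n$ in the first slot, keeping the substitution $\varrho_n$ fixed. Proposition \ref{prop:P} yields
\[
	\bar f\bigl(\kappa(\cA_n,\varrho_n,\widetilde{\,\bp_n\,}),\,\kappa(\cA_n,\varrho_n,\bp_n)\bigr)
	\le \tfrac12\,\tfrac{\max|\varrho_n|}{\min|\varrho_n|}\,D_{\cA_n}(\widetilde{\,\bp_n\,},\bp_n),
\]
and the two factors on the right are controlled by Corollaries \ref{cor:maxmin} and \ref{corclavonhieraus}, giving the bound $\tfrac12(1+4K)\cdot 4K = 2K+8K^2$.

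Second, I would compare $\kappa(\cA_n,\varrho_n,\bp_n)$ with $\kappa(\cA_0,\varrho_0,\bp)$. The latter can be re-expressed on the richer alphabet $\cA_n$: applying Lemma \ref{lemend} with $r=m_1\cdots m_n$, we have $\kappa(\cA_0,\varrho_0,\bp)=\kappa(\cA_n,\varrho_0',\bp_n)$, where, for $a^{(n)}\in\cA_n$ with $\underline\Sub_{n,0}(a^{(n)})=(a_1,\ldots,a_r)$,
\[
	\varrho_0'(a^{(n)})\eqdef \varrho_0(a_1)\cdots\varrho_0(a_r).
\]
By the inductive unfolding of the repeat-and-tail construction, $\varrho_0'(a^{(n)})$ appears as a subword of $\varrho_n(a^{(n)})$, with the tailing pieces at various levels interspersed between the $\varrho_0(a_i)$-blocks. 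Lemma \ref{lemma:tailslength} (taken with $k=0$) gives $|\varrho_n(a^{(n)})| \le (1+4K)|\varrho_0'(a^{(n)})|$, so writing $C=4K/(1+4K)$ we have $(1-C)|\varrho_n(a^{(n)})|\le |\varrho_0'(a^{(n)})|$. Then Proposition \ref{prolem:Y} applies and yields
\[
	\bar f\bigl(\kappa(\cA_n,\varrho_n,\bp_n),\,\kappa(\cA_n,\varrho_0',\bp_n)\bigr)
	< C = \tfrac{4K}{1+4K}\le 4K.
\]
Adding the two bounds via the triangle inequality for $\bar f$ gives $(2K+8K^2)+4K=6K+8K^2$, as claimed.

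The main subtlety, and the step I would need to verify carefully, is that $\varrho_0'(a^{(n)})$ really is a (contiguous) substring of $\varrho_n(a^{(n)})$ in the sense required by Proposition \ref{prolem:Y}: this is where the specific ``repeat-then-tail'' order in the definition of $\varrho_n$ matters, because the tails $\bt_j(\cdot)$ are always appended after the concatenated blocks at every level, so removing all tails leaves exactly the concatenation $\varrho_0(a_1)\cdots\varrho_0(a_r)$. Everything else is a bookkeeping application of the Lipschitz-type estimates already established, with the ratio $\max|\varrho_n|/\min|\varrho_n|\le 1+4K$ from Corollary \ref{cor:maxmin} being the uniform control that prevents the bound from blowing up with $n$.
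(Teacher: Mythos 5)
Your proof is correct and follows essentially the same route as the paper's: the same triangle inequality through $\kappa(\cA_n,\varrho_n,\bp_n)$, the same application of Propositions \ref{prop:P} and \ref{prolem:Y} together with Corollaries \ref{cor:maxmin} and \ref{corclavonhieraus}, and the same arithmetic $(2K+8K^2)+4K$. The one concern you flagged (whether $\varrho_0'(a^{(n)})$ is a \emph{contiguous} substring of $\varrho_n(a^{(n)})$) is moot, since the paper's notion of ``substring'' in Section \ref{secLB} is a subsequence (not necessarily contiguous), which is exactly what the repeat-and-tail structure delivers after deleting the interspersed tails.
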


\begin{proof}
Recall that, by Lemma \ref{susLBpre}, our hypothesis $\max|\varrho_0|=\min|\varrho_0|$ in Assumption \ref{assumption1} implies $\widetilde\bp=\bp$. Besides \eqref{defnunp}, recall that 
\[
	\nu(\bp) 
	= \kappa_{\rm inv}(\cA, \varrho_0, \widetilde \bp) 
	= \kappa_{\rm inv}(\cA, \varrho_0, \bp)
	= \kappa_{\rm inv}(\cA_n, \varrho', \bp_n)
\]	
(where $ \varrho'$ is like in Lemma \ref{lemend}).
Thus, 
\[\begin{split}
	\bar f( \nu_n(\bp),  \nu(\bp))
	&\leq \bar f \big( \kappa_{\rm inv}(\cA_n, \varrho_n, \widetilde{\bp_n}), \kappa_{\rm inv}(\cA_n, \varrho_n, \bp_n)\big) \\
	&\phantom{\le}+ \bar f\big ( \kappa_{\rm inv}(\cA_n, \varrho_n,\bp_n),  \kappa_{\rm inv}(\cA_n, \varrho',\bp_n)\big)\\
	\text{\tiny{(using Propositions \ref{prop:P} and \ref{prolem:Y})}}\quad
	&\le  \frac 1 2 \frac{\max|\varrho_n|}{\min|\varrho_n|} D_{\cA_n}(\bp_n, \widetilde{\bp_n}) 
		+ \frac {4K} {1+4K}\\		
	\text{\tiny{(by Corollary \ref{cor:maxmin}
	)}}\quad
	&< 	\frac 1 2 (1+4K)  D_{\cA_n}(\bp_n, \widetilde{\bp_n}) 
			+ 4K.
\end{split}\]
Note that Corollary \ref{corclavonhieraus} implies 
$
	D_{\cA_n}(\bp_n, \widetilde{\bp_n}) 
	\le 4K.
$
This proves the assertion.
\end{proof}

\begin{lemma}\label{lemprop:earlier}
Under Assumption \ref{assumption1}, for any $\varepsilon>0$, there is $n_0\in\bN$ such that for all $n\ge n_0$ and all $\bp\in\cM_{\rm B}(\cA^\bZ,\sigma_\cA)$, it holds 
\[\bar{f}\big( \kappa(\cA_n, \varrho_n, \widetilde{\bp_n}), \kappa(\cA_n, \varrho_n, \bp_n)\big)<\varepsilon.\]
\end{lemma}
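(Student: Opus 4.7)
The statement is essentially a direct consequence of three results already available in the paper, combined so as to exploit that the bounds in each are uniform in the Bernoulli measure~$\bp$. The plan is as follows.

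First, I would apply Proposition~\ref{prop:P}, which bounds the $\bar f$-distance between two Bernoulli-coded measures with the same substitution but different probability vectors by the city-metric distance of the vectors, amplified by the length-ratio of the substitution:
\[
\bar f\big(\kappa(\cA_n,\varrho_n,\widetilde{\bp_n}),\kappa(\cA_n,\varrho_n,\bp_n)\big)
\le \tfrac12\cdot\frac{\max|\varrho_n|}{\min|\varrho_n|}\cdot D_{\cA_n}(\widetilde{\bp_n},\bp_n).
\]
Under Assumption~\ref{assumption1}, Corollary~\ref{cor:maxmin} gives the uniform bound $\max|\varrho_n|/\min|\varrho_n|\le 1+4K$, valid for all $n$. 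So the estimate reduces to controlling $D_{\cA_n}(\widetilde{\bp_n},\bp_n)$.

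Second, I would invoke Corollary~\ref{corthisisit}: for every $\varepsilon'>0$ there is $n_0=n_0(\varepsilon')$ such that for every Bernoulli measure $\bp\in\cM_{\rm B}(\cA^\bZ,\sigma_\cA)$ and every $n\ge n_0$,
\[
D_{\cA_n}(\widetilde{\bp_n},\bp_n)<\varepsilon'.
\]
The key feature, inherited from the Uniform Law of Large Numbers in Proposition~\ref{proplem:bernsteinagain} via Proposition~\ref{prousedcorollary}, is that the threshold $n_0$ depends on $\varepsilon'$ but not on the particular Bernoulli measure $\bp$.

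Finally, given $\varepsilon>0$, I would choose $\varepsilon'\eqdef 2\varepsilon/(1+4K)$ and let $n_0$ be the threshold provided above. For every $n\ge n_0$ and every $\bp\in\cM_{\rm B}(\cA^\bZ,\sigma_\cA)$, combining the two estimates yields
\[
\bar f\big(\kappa(\cA_n,\varrho_n,\widetilde{\bp_n}),\kappa(\cA_n,\varrho_n,\bp_n)\big)
\le \tfrac12(1+4K)\cdot\varepsilon'=\varepsilon,
\]
which is the desired conclusion. There is no real obstacle here; the only point worth flagging is that uniformity in $\bp$ is crucial and is inherited from Corollary~\ref{corthisisit} (and ultimately from the uniform Bernstein-type estimate in Proposition~\ref{proplem:bernsteinagain}), while the bound on the length-ratio of $\varrho_n$ comes for free from Assumption~\ref{assumption1} through Corollary~\ref{cor:maxmin}.
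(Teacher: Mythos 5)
Your proposal is correct and follows essentially the same route as the paper's proof: it applies Proposition~\ref{prop:P} to reduce to the city-metric distance $D_{\cA_n}(\widetilde{\bp_n},\bp_n)$, bounds the length ratio via Corollary~\ref{cor:maxmin}, and appeals to the uniform-in-$\bp$ estimate of Corollary~\ref{corthisisit}. The only difference is cosmetic: you rescale $\varepsilon$ up front, while the paper leaves the constant $\tfrac12(1+4K)$ in the final bound and notes it implies the assertion.
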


\begin{proof}
By Corollary \ref{corthisisit}, given $\varepsilon>0$ there is $n_0\in\bN$ such that for all $n> n_0$ and all $\bp$ we get
\[\begin{split}
	D_{\cA_n}(\widetilde{\bp_n},\bp_n)
	<\varepsilon.
\end{split}\]
Applying Proposition \ref{prop:P} to $\bp=\bp_n$ and $\bp'=\widetilde{\bp_n}$, we get
\[\begin{split}
	\bar{f}(\kappa(\cA_n, \varrho_n, \bp_n), \kappa(\cA_n, \varrho_n, \widetilde{\bp_n}))
	&\leq \frac12 \frac {\max |\varrho_n|} {\min |\varrho_n|} \cdot D_{\cA_n}(\bp_n,\widetilde{\bp_n})\\
	{\tiny{\text{(together with Corollary \ref{cor:maxmin})}}}\quad
	&< \frac12
	(1+4K)\cdot \varepsilon.
\end{split}\]
This implies the assertion.
\end{proof}

The main result in this section is the following. 

\begin{theorem}\label{theCauchy}
	Under Assumption \ref{assumption1}, the sequence $(\nu_n(\bp))_n$ is a uniformly equi\-continuous $\bar{f}$-Cauchy sequence in the following sense: for every $\varepsilon>0$  there is $n_0\ge1$ such that for all $k,\ell\ge n_0$ and all $\bp\in\cM_{\rm B}(\cA^\bZ,\sigma_\cA)$, 
\[
	\bar f\big(\nu_k(\bp),\nu_\ell(\bp)\big)
	<\varepsilon.	
\]
\end{theorem}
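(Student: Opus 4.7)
The plan is to reduce the Cauchy estimate between $\nu_k(\bp)$ and $\nu_\ell(\bp)$ to an estimate between $\kappa(\cA_k,\varrho_k,\bp_k)$ and $\kappa(\cA_\ell,\varrho_\ell,\bp_\ell)$, which in turn is handled by recognising these two measures as Bernoulli-coded measures built from the \emph{same} Bernoulli measure $\bp_\ell$ on $(\cA_\ell)^\bZ$ but from two substitutions, one of which is a substring of the other. Without loss of generality assume $k\le\ell$.

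\textbf{Step 1: Peeling off the \emph{inv}-averaging and the measure twist.} Apply Lemma~\ref{lemfact:bar} to obtain
\[
\bar f\big(\nu_n(\bp),\kappa(\cA_n,\varrho_n,\widetilde{\bp_n})\big)=0\qquad (n=k,\ell),
\]
and then Lemma~\ref{lemprop:earlier}: for every $\varepsilon>0$ there is $n_1=n_1(\varepsilon)$ such that for all $n\ge n_1$ and all Bernoulli $\bp$,
\[
\bar f\big(\kappa(\cA_n,\varrho_n,\widetilde{\bp_n}),\kappa(\cA_n,\varrho_n,\bp_n)\big)<\varepsilon.
\]
Thus, by the triangle inequality, once $k,\ell\ge n_1$, it suffices to bound $\bar f\big(\kappa(\cA_k,\varrho_k,\bp_k),\kappa(\cA_\ell,\varrho_\ell,\bp_\ell)\big)$.

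\textbf{Step 2: Rewriting the $k$-level measure over the $\ell$-alphabet.} Set $r\eqdef m_{k+1}\cdots m_\ell$, so that $\cA_\ell=(\cA_k)^r$ by \eqref{defalphscA}. Since $\bp_\ell=\big(\underline\Sub_{\ell,k}^{-1}\big)_\ast\bp_k$ (which matches the probability vector $\bp'$ produced in Lemma~\ref{lemend}), that lemma yields
\[
\kappa(\cA_k,\varrho_k,\bp_k)=\kappa(\cA_\ell,\varrho'_k,\bp_\ell),
\]
where $\varrho'_k(a^{(\ell)})\eqdef\varrho_k(a^{(k)}_1)\cdots\varrho_k(a^{(k)}_r)$ for $(a^{(k)}_1,\ldots,a^{(k)}_r)=\Sub_{\ell,k}(a^{(\ell)})$. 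This puts both measures on the same base alphabet and base measure.

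\textbf{Step 3: Substring comparison of $\varrho'_k$ and $\varrho_\ell$.} Unfolding the recursive definition of a repeat-and-tail substitution shows that $\varrho_\ell(a^{(\ell)})$ is obtained from $\varrho'_k(a^{(\ell)})$ by inserting the tails $\bt_j$ corresponding to intermediate levels $j=k+1,\ldots,\ell$ (plus the final tail $\bt_\ell(a^{(\ell)})$). In particular, $\varrho'_k(a^{(\ell)})$ is a substring of $\varrho_\ell(a^{(\ell)})$ in the sense of the definition in Section~\ref{secBerCod}. By Lemma~\ref{lemma:tailslength},
\[
\frac{|\varrho'_k(a^{(\ell)})|}{|\varrho_\ell(a^{(\ell)})|}\ge\frac{1}{1+4K(2^{-k}-2^{-\ell})}\ge 1-4K\cdot 2^{-k}.
\]
Hence Proposition~\ref{prolem:Y}, applied with the constant $C\eqdef 4K\cdot 2^{-k}$, gives
\[
\bar f\big(\kappa(\cA_\ell,\varrho_\ell,\bp_\ell),\kappa(\cA_\ell,\varrho'_k,\bp_\ell)\big)<4K\cdot 2^{-k}.
\]

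\textbf{Step 4: Assembly.} Combining Steps 1--3 via the triangle inequality,
\[
\bar f\big(\nu_k(\bp),\nu_\ell(\bp)\big)<0+\varepsilon+4K\cdot 2^{-k}+\varepsilon+0=2\varepsilon+4K\cdot 2^{-k},
\]
valid for all $k,\ell\ge n_1$ and all Bernoulli $\bp\in\cM_{\rm B}(\cA^\bZ,\sigma_\cA)$. Given any prescribed $\varepsilon_0>0$, first choose $\varepsilon=\varepsilon_0/4$, which fixes $n_1$ via Lemma~\ref{lemprop:earlier}; then pick $n_0\ge n_1$ large enough that $4K\cdot 2^{-n_0}<\varepsilon_0/2$. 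The resulting estimate is independent of $\bp$, yielding the claimed uniform $\bar f$-Cauchy property.

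The main conceptual step is Step~3: recognising that, after re-expressing $\kappa(\cA_k,\varrho_k,\bp_k)$ over the larger alphabet $\cA_\ell$, the two substitutions $\varrho'_k$ and $\varrho_\ell$ stand in a substring relation whose deficit is controlled geometrically in $2^{-k}$ by the tail-length hypothesis of Assumption~\ref{assumption1}. Everything else is an application of the structural lemmas already assembled in Sections~\ref{secBerCod} and~\ref{ssec4b}.
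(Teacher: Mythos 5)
Your proposal is correct and follows essentially the same route as the paper: reduce $\kappa_{\rm inv}$ to $\kappa$ via Lemma~\ref{lemfact:bar}, peel off the $\widetilde{\bp_n}$-vs-$\bp_n$ discrepancy via Lemma~\ref{lemprop:earlier}, rewrite the $k$-level measure over $\cA_\ell$ via Lemma~\ref{lemend}, and control the resulting pair of substitutions via the substring bound of Lemma~\ref{lemma:tailslength} and Proposition~\ref{prolem:Y}, then assemble by the triangle inequality. The only difference is organizational: the paper packages your Steps~2--3 as a separate Claim giving $\bar f\big(\kappa(\cA_k,\varrho_k,\bp_k),\kappa(\cA_\ell,\varrho_\ell,\bp_\ell)\big)\le 4K\,2^{-k}$, which is exactly what you derive.
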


\begin{proof}
Let us sketch first the sequence of arguments to prove the assertion.
Given a Bernoulli vector $\bp$, let $\bp_n$ and $\widetilde{\bp_n}$ be the vectors defined by \eqref{conconj} and \eqref{conconjtilde}, respectively, and consider their Bernoulli-coded measure $\kappa$ as well as its invariant version $\kappa_{\rm inv}$ (Definition \ref{defBerncod}). Recall that Lemma \ref{lemfact:bar} implies that
\begin{equation}\label{factinv}
	\bar{f}\big( \kappa_{\rm inv}(\cA_n, \varrho_n, \widetilde{\bp_n}), 
			\kappa(\cA_n, \varrho_n, \widetilde{\bp_n})\big)=0.
\end{equation}
Therefore, below we will focus on the $\bar f$-distances between the measures $\kappa(\cdot)$ only.
Because of the ``lack of self-similarity'' of $\widetilde{\bp_n}$, we first prove the Cauchy property for the sequence $(\kappa(\cA_n, \varrho_n, \bp_n))_n$. Then we conclude our arguments using that, by Corollary \ref{corthisisit}, $\bp_n$ and $\widetilde{\,\bp_n\,}$ are ``asymptotically close'' for $n$ large. 

\begin{claim} \label{lemprop:later}
For any numbers $k,n\in\bN$, $k<n$, and any $\bp\in\cM_{\rm B}(\cA^\bZ,\sigma_\cA)$,
\[
	\bar{f}(  \kappa(\cA_k, \varrho_k, \bp_k),
 			\kappa(\cA_n, \varrho_n, \bp_n))
	\le 4K 2^{-k},
\]
where $K>0$ is as in  Assumption \ref{assumption1}.
\end{claim}

\begin{proof}
Note that $\cA_n=\cA^{m_1\cdots m_n}=(\cA_k)^{m_{k+1}\cdots m_n}$. Recall that
\[
	\Sub_{n,k}(\cA_n)
	=(\cA_k)^{m_{k+1}\cdots m_n}
\]
and that $\bp_k$ is Bernoulli (with probabilities given by Lemma \ref{lemend}). The same way, $\bp_n=\bp_{k+(n-k)}$ is Bernoulli and its probability vector can be written in terms of the products of the probabilities of the vector $\bp_k$. 

Given $a^{(n)}\in\cA_n$ and $(a^{(k)}_1,\ldots ,a^{(k)}_{m_{k+1}\cdots m_n})\eqdef \Sub_{n,k}(a^{(n)})\in(\cA_k)^{m_{k+1}\cdots m_n}$, let 
\begin{equation}\label{substring}
	\varrho'(a^{(n)})
	=\varrho_k(a^{(k)}_1)\ldots\varrho_k(a^{(k)}_{m_{k+1}\cdots m_n}).
\end{equation}
Applying Lemma \ref{lemend} to $\cA=\cA_k$, $\cA'=\cA_n=(\cA_k)^{m_{k+1}\cdots m_n}$ and noting that $(\bp_k)'=\bp_n$, we get
\[
	\kappa(\cA_k, \varrho_k, \bp_k)
	= \kappa\big(\cA_n, \varrho' , (\bp_k)'\big)
	= \kappa\big(\cA_n, \varrho' , \bp_n\big).
\]
It follows then from Lemma \ref{lemma:tailslength} that
\[
	(1- 4K2^{-k}) \big|\varrho_n(a^{(n)})\big|
	< \big|\varrho_k(a^{(k)}_1)\ldots\varrho_k(a^{(k)}_{m_{k+1}\cdots m_n})\big|.	
\]
Note that by Assumption \ref{assumption1}, \eqref{substring} is a substring of $\varrho_n(a^{(n)})$.
Hence, it suffices to apply Proposition \ref{prolem:Y} to obtain the assertion.
\end{proof}

%

We are now ready to conclude the proof.
Given $\varepsilon>0$, let $k_1\in\bN$ be large enough so that
\[
	4K2^{-k_1}<\frac\varepsilon3.
\]
Let $k_2$ be provided by Lemma \ref{lemprop:earlier} applied to $\varepsilon/3$. For any $k\ge\max\{k_1,k_2\}$ and $\ell>k$, 
\[\begin{split}
	\bar f\big(\nu_k(\bp),\nu_\ell(\bp)\big)
	&=\bar f\big(\kappa_{\rm inv}(\cA_k,\varrho_k,\widetilde{\,\bp_k\,}),
		\kappa_{\rm inv}(\cA_\ell,\varrho_\ell,\widetilde{\,\bp_\ell\,})\big)\\
	\text{\tiny{(by \eqref{factinv})}}\quad
	&= \bar f\big(\kappa(\cA_k,\varrho_k,\widetilde{\,\bp_k\,}),\kappa(\cA_\ell,\varrho_\ell,\widetilde{\,\bp_\ell\,})\big)\\
	&\le \bar f\big(\kappa(\cA_k,\varrho_k,\widetilde{\,\bp_k\,}),\kappa(\cA_k,\varrho_k,\bp_k)\big)\\
	&\phantom{=}+
	\bar f\big(\kappa(\cA_k,\varrho_k,\bp_k),\kappa(\cA_\ell,\varrho_\ell,\bp_\ell)\big)\\
	&\phantom{=}+
	\bar f\big(\kappa(\cA_\ell,\varrho_\ell,\bp_\ell),\kappa(\cA_\ell,\varrho_\ell,\widetilde{\,\bp_\ell\,}\big)\\
	\text{\tiny{(by Lemma \ref{lemprop:earlier} and Claim \ref{lemprop:later})}}\quad
	&< \frac\varepsilon3+4K2^{-k}+\frac\varepsilon3
	< \frac\varepsilon3+\frac\varepsilon3+\frac\varepsilon3=\varepsilon.
\end{split}\]
This finishes the proof of the theorem.
\end{proof}

The following is an immediate consequence of Lemma \ref{ergodicLB}, Remark \ref{remfbars} ($\bar f$-complete\-ness),  and Lemma \ref{lemLB} (``LB-completeness'').

\begin{corollary}\label{corerg}
	Under Assumption \ref{assumption1},  the sequence $(\nu_n(\bp))_n$ $\bar f$-converges to some probability measure $\nu_\infty(\bp)$. This limit measure $\nu_\infty(\bp)$ is LB (and hence ergodic) and 
\[
	h(\sigma_\cB,\nu_\infty(\bp))=\lim_{n\to\infty}h(\sigma_\cB,\nu_n(\bp)).
\]	
\end{corollary}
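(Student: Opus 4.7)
My plan is to assemble the statement from three already-established facts, so the proof amounts to a short chain of implications rather than a new argument. The hard work has already been done in Theorem \ref{theCauchy}, which gives us a $\bar f$-Cauchy sequence, so everything else is extraction from the general theory of LB measures recalled in Section \ref{secLB}.

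First I would observe that each $\nu_n(\bp)=\kappa_{\rm inv}(\cA_n,\varrho_n,\widetilde{\bp_n})$ lies in $\cM_{\rm erg}(\cB^\bZ,\sigma_\cB)$, and is in fact LB, by Lemma \ref{ergodicLB} (the substitution-image of a suspension of a Bernoulli base is LB). Theorem \ref{theCauchy} shows that $(\nu_n(\bp))_n$ is $\bar f$-Cauchy. Combining these with Remark \ref{remfbars}, which states that $\bar f$ is a complete metric on $\cM_{\rm erg}(\cB^\bZ,\sigma_\cB)$, there exists an ergodic measure $\nu_\infty(\bp)$ with $\bar f(\nu_n(\bp),\nu_\infty(\bp))\to 0$.

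Next I would upgrade from ergodicity to the loosely Bernoulli property for the limit by invoking Lemma \ref{lemLB}, which is precisely the closure of the LB property under $\bar f$-limits. Since each $\nu_n(\bp)$ is LB, so is $\nu_\infty(\bp)$; ergodicity is then automatic from Remark \ref{remarkLB}.

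Finally, to obtain the entropy statement $h(\sigma_\cB,\nu_\infty(\bp))=\lim_{n\to\infty}h(\sigma_\cB,\nu_n(\bp))$, I would appeal once more to Remark \ref{remfbars}, which records that $\bar f$-convergence implies convergence in entropy, or equivalently use the quantitative estimate in Lemma \ref{lementcont}. I do not foresee any obstacle: the only non-trivial input is the $\bar f$-Cauchy property established in Theorem \ref{theCauchy}, and the rest is bookkeeping on top of the cited general results from \cite{OrnRudWei:82}.
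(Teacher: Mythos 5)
Your proposal is correct and follows essentially the same route as the paper, which likewise presents the corollary as an immediate consequence of Lemma \ref{ergodicLB} (each $\nu_n(\bp)$ is LB), Remark \ref{remfbars} ($\bar f$-completeness and entropy continuity), and Lemma \ref{lemLB} (LB is closed under $\bar f$-limits), all on top of the Cauchy property from Theorem \ref{theCauchy}. You spell out the entropy and ergodicity conclusions slightly more explicitly than the paper's one-line proof, but there is no substantive difference.
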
 

\subsection{Properties of the space of all Bernoulli-coded measures}	\label{ssecFurtherStru}

The goal of this subsection is to combine all the results obtained above and to describe the topological properties of the space of Bernoulli-coded measures. For that, we fix a sequence of natural numbers $(m_n)_n$, a sequence of substitutions $(\varrho_n)_n$ from $\cA_n$ into the words over a common finite alphabet $\cB$, and assume that they satisfy Assumption \ref{assumption1}. 

We are going to study what happens when we vary the Bernoulli measure $\bp$.
Note that a map from $\cM_{\rm B}(\cA^\bZ,\sigma_\cA)$ into the space $\cM_{\rm erg}(\cB^\bZ,\sigma_\cB)$ is \emph{$\bar f$-continuous} if it is continuous in the $\bar f$-topology on the latter space, that is, if for any convergent sequence of Bernoulli measures their images converge in the $\bar f$-topology on $\cM_{\rm erg}(\cB^\bZ,\sigma_\cB)$.

Recall our notations
\[
	\nu_n(\bp)
	\eqdef \kappa_{\rm inv}(\cA_n,\varrho_n,\widetilde{\,\bp_n\,}),\quad
	\nu_\infty(\bp)
	\eqdef \lim_{n\to\infty}\nu_n(\bp).
\]

\begin{theorem} \label{theoprop:pathfinal}
Consider finite alphabets $\cA$ and $\cB$ and a cascade of repeat-and-tail substitutions $\varrho_n\colon\cA_n\to\cB^\ast$ satisfying Assumption \ref{assumption1}. Then the map 
\[
	\nu_\infty\colon\cM_{\rm B}(\cA^\bZ,\sigma_\cA)\to\cM_{\rm erg}(\cB^\bZ,\sigma_\cB)
\]
is well defined and $\bar f$-continuous. Any such limit measure is LB. Moreover, the map
\[
	\cM_{\rm B}(\cA^\bZ,\sigma_\cA)\ni\bp\mapsto h(\sigma_\cB,\nu_\infty(\bp))
\]
is continuous. In particular, $\nu_\infty(\cM_{\rm B}(\cA^\bZ,\sigma_\cA))$ is $\bar f$-path-connected and 
\[
	\big\{h(\sigma_\cB,\nu_\infty(\bp))\colon\bp\in\cM_{\rm B}(\cA^\bZ,\sigma_\cA)\big\}
\]
 is a closed interval.
\end{theorem}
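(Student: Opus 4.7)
The plan is to bundle together the results already established in Sections~3 and~4, with Theorem~\ref{theCauchy} as the workhorse.

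First I would establish that $\nu_\infty$ is well defined and that its values are LB: this is exactly the content of Corollary~\ref{corerg}, applied pointwise in $\bp\in\cM_{\rm B}(\cA^\bZ,\sigma_\cA)$. Next I would prove the $\bar f$-continuity of $\nu_\infty$ by a standard $\varepsilon/3$-argument whose decisive feature is the \emph{uniformity} in Theorem~\ref{theCauchy}. Given $\varepsilon>0$, choose $n_0$ so that $\bar f(\nu_n(\bp),\nu_\infty(\bp))<\varepsilon/3$ for every $\bp$ and every $n\ge n_0$ (by Theorem~\ref{theCauchy} together with Corollary~\ref{corerg}). For a fixed such $n$, I would show that $\bp\mapsto\nu_n(\bp)$ is continuous from $(\cM_{\rm B}(\cA^\bZ,\sigma_\cA),D_\cA)$ to the $\bar f$-topology: the map $\bp\mapsto\bp_n$ is $D_{\cA_n}$-continuous by Lemma~\ref{lem:path1}, the map $\bp_n\mapsto\widetilde{\,\bp_n\,}$ is $D_{\cA_n}$-continuous from the explicit formula~\eqref{conconjtilde}, and finally Proposition~\ref{prop:P} together with Lemma~\ref{lemfact:bar} turns $D_{\cA_n}$-closeness of $\widetilde{\,\bp_n\,}$ and $\widetilde{\,\bp_n'\,}$ into $\bar f$-closeness of $\nu_n(\bp)=\kappa_{\rm inv}(\cA_n,\varrho_n,\widetilde{\,\bp_n\,})$ and $\nu_n(\bp')$. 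For $\bp,\bp'$ sufficiently close in $D_\cA$ we then get $\bar f(\nu_n(\bp),\nu_n(\bp'))<\varepsilon/3$, and the triangle inequality closes the argument.

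Continuity of the entropy map $\bp\mapsto h(\sigma_\cB,\nu_\infty(\bp))$ is then immediate: $\bar f$-continuity of $\nu_\infty$ combined with Lemma~\ref{lementcont} (uniform continuity of entropy with respect to $\bar f$) gives it for free. Path-connectedness of $\nu_\infty(\cM_{\rm B}(\cA^\bZ,\sigma_\cA))$ in the $\bar f$-topology follows because the simplex $\cM_{\rm B}(\cA^\bZ,\sigma_\cA)$, identified with the probability simplex on $\cA$, is path-connected in the $D_\cA$-topology (any two probability vectors are joined by a line segment), and the $\bar f$-continuous image of a path-connected set is path-connected.

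Finally, for the entropy range, I would observe that the probability simplex on $\cA$ is compact in $D_\cA$, so its image under the continuous real-valued map $\bp\mapsto h(\sigma_\cB,\nu_\infty(\bp))$ is compact and connected, hence a closed interval of $\bR$. The main obstacle in the whole argument is the $\bar f$-continuity at a fixed level $n$, because one must juggle three distinct measures on three distinct alphabets ($\bp$ on $\cA$, $\bp_n$ and $\widetilde{\,\bp_n\,}$ on $\cA_n$, and the pushed-forward Bernoulli-coded measure on $\cB$); however all the ingredients (Lemma~\ref{lem:path1}, the explicit formula for $\widetilde{\,\bp_n\,}$, Proposition~\ref{prop:P} and Lemma~\ref{lemfact:bar}) are already in place, so the proof is essentially an assembly.
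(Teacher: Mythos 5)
Your argument is correct and matches the paper's proof in all essentials: well-definedness and LB come from Corollary~\ref{corerg}, $\bar f$-continuity is deduced from the uniform Cauchy property of Theorem~\ref{theCauchy} plus continuity at each fixed level, entropy continuity from Lemma~\ref{lementcont}, and the simplex structure of the Bernoulli measures gives connectedness and the interval conclusion. The one place where you take a slightly different (and arguably cleaner) route is the middle step of the $\varepsilon/3$-argument: the paper packages the uniform estimate as a separate uniform-equicontinuity lemma (Lemma~\ref{lemclaimArz}) and invokes Arzel\`a--Ascoli, and in that lemma controls the $\bar f$-distance between $\kappa(\cA_k,\varrho_k,\widetilde{\,\bp_k\,})$ and $\kappa(\cA_k,\varrho_k,\bp_k)$ via the uniform-in-$\bp$ result Lemma~\ref{lemprop:earlier}; you instead observe that at a \emph{fixed} level $n$ the map $\bp_n\mapsto\widetilde{\,\bp_n\,}$ is manifestly $D_{\cA_n}$-continuous from the explicit rational formula~\eqref{conconjtilde}, which lets you apply Proposition~\ref{prop:P} directly to $\widetilde{\,\bp_n\,}$ versus $\widetilde{\,\bp_n'\,}$ and avoid Lemma~\ref{lemprop:earlier} altogether. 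Both routes are valid; yours is a touch more elementary, as the uniformity in $\bp$ furnished by Lemma~\ref{lemprop:earlier} is not actually needed once $n$ is held fixed, and the triangle inequality through $\nu_\infty$ dispenses with the appeal to Arzel\`a--Ascoli.
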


The key argument towards the proof of the above theorem is the following.

\begin{lemma}\label{lemclaimArz}
The sequence of maps 
\[
	\nu_n\colon \cM_{\rm B}(\cA^\bZ,\sigma_\cA)\to\cM_{\rm erg}(\cB^\bZ,\sigma_\cB),\quad
\]
is uniformly equicontinuous in the following sense: for any $\varepsilon >0$ there exist $n_0\in\bN$ and $\delta>0$ such that for any two measures $\bp, \bp'\in \cM_{\rm B}(\cA^\bZ,\sigma_\cA)$ satisfying
\[
D_\cA(\bp,\bp') < \delta,
\]
for all $n\ge n_0$ we have 
\[
\bar f(\nu_n(\bp), \nu_n(\bp')) < \varepsilon.
\]
\end{lemma}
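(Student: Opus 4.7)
The plan is to reduce the question of uniform equicontinuity of the whole sequence to equicontinuity at a single, well-chosen level $k$, by exploiting the uniform $\bar f$-Cauchy property provided by Theorem~\ref{theCauchy}. The main obstacle is that Lemma~\ref{lem:path1} produces a modulus $\delta$ which depends on the level $n$: the probability vector $\bp_n$ lives on the alphabet $\cA_n$ of size $M_0^{m_1\cdots m_n}$, and each coordinate of $\bp_n$ is a product of coordinates of $\bp$, so a uniform bound in $n$ on $D_{\cA_n}(\bp_n,\bp_n')$ in terms of $D_\cA(\bp,\bp')$ is not available. The key idea is that this non-uniformity can be absorbed, because we only need to control one fixed level and then transport the estimate to arbitrary $n\ge n_0$ through the uniform $\bar f$-Cauchy estimate.

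Fix $\varepsilon>0$. First I would apply Theorem~\ref{theCauchy} to produce $n_1\in\bN$ such that $\bar f(\nu_n(\bq),\nu_k(\bq))<\varepsilon/5$ for all $n,k\ge n_1$ and all $\bq\in\cM_{\rm B}(\cA^\bZ,\sigma_\cA)$; then apply Lemma~\ref{lemprop:earlier} to produce $n_2\in\bN$ such that $\bar f(\kappa(\cA_k,\varrho_k,\widetilde{\bq_k}),\kappa(\cA_k,\varrho_k,\bq_k))<\varepsilon/5$ for all $k\ge n_2$ and all $\bq$. Set $k\eqdef\max\{n_1,n_2\}$ and $n_0\eqdef k$. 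With $k$ now fixed, Lemma~\ref{lem:path1} supplies $\delta>0$ such that $D_\cA(\bp,\bp')<\delta$ implies
\[
D_{\cA_k}(\bp_k,\bp_k')<\frac{2\varepsilon}{5(1+4K)},
\]
where $K$ is the constant from Assumption~\ref{assumption1}.

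Now I would bound $\bar f(\nu_k(\bp),\nu_k(\bp'))$ by triangle inequality, using Lemma~\ref{lemfact:bar} to replace the $\kappa_{\rm inv}$ in $\nu_k$ by $\kappa$ at no cost:
\[
\begin{split}
\bar f(\nu_k(\bp),\nu_k(\bp'))
&\le \bar f\bigl(\kappa(\cA_k,\varrho_k,\widetilde{\bp_k}),\kappa(\cA_k,\varrho_k,\bp_k)\bigr)\\
&\phantom{\le\ }+\bar f\bigl(\kappa(\cA_k,\varrho_k,\bp_k),\kappa(\cA_k,\varrho_k,\bp_k')\bigr)\\
&\phantom{\le\ }+\bar f\bigl(\kappa(\cA_k,\varrho_k,\bp_k'),\kappa(\cA_k,\varrho_k,\widetilde{\bp_k'})\bigr).
\end{split}
\]
The first and third terms are each $<\varepsilon/5$ by Lemma~\ref{lemprop:earlier}. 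For the middle term, Proposition~\ref{prop:P} together with Corollary~\ref{cor:maxmin} yields
\[
\bar f\bigl(\kappa(\cA_k,\varrho_k,\bp_k),\kappa(\cA_k,\varrho_k,\bp_k')\bigr)
\le \tfrac12(1+4K)\,D_{\cA_k}(\bp_k,\bp_k')<\frac{\varepsilon}{5},
\]
by the choice of $\delta$. Hence $\bar f(\nu_k(\bp),\nu_k(\bp'))<3\varepsilon/5$.

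Finally, for any $n\ge n_0=k$, another triangle inequality
\[
\bar f(\nu_n(\bp),\nu_n(\bp'))
\le \bar f(\nu_n(\bp),\nu_k(\bp))+\bar f(\nu_k(\bp),\nu_k(\bp'))+\bar f(\nu_k(\bp'),\nu_n(\bp'))
\]
combined with the $\bar f$-Cauchy estimate from Step~1 for the two outer terms gives
\[
\bar f(\nu_n(\bp),\nu_n(\bp'))<\tfrac{\varepsilon}{5}+\tfrac{3\varepsilon}{5}+\tfrac{\varepsilon}{5}=\varepsilon,
\]
completing the proof. I expect the genuinely delicate point to be precisely the one highlighted above: the level-dependence of $\delta$ in Lemma~\ref{lem:path1} makes it essential that Theorem~\ref{theCauchy} be uniform in $\bp$, since this is exactly what lets one fix the level $k$ depending only on $\varepsilon$ (and not on the particular pair $\bp,\bp'$) before invoking Lemma~\ref{lem:path1}.
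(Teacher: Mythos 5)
Your proposal is correct and follows essentially the same route as the paper: pass through a fixed level $k$ via two triangle inequalities, handle the outer terms with Theorem~\ref{theCauchy}, and handle the middle term via Lemma~\ref{lemfact:bar}, Lemma~\ref{lemprop:earlier}, Proposition~\ref{prop:P}, Corollary~\ref{cor:maxmin}, and Lemma~\ref{lem:path1}. The only cosmetic difference is that you explicitly take $k=\max\{n_1,n_2\}$ so that Lemma~\ref{lemprop:earlier} is guaranteed to apply at level $k$ and you budget $\varepsilon/5$ fractions to land exactly at $\varepsilon$, whereas the paper fixes $k=n_0$ from Theorem~\ref{theCauchy} and is content to conclude with $5\varepsilon$.
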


\begin{proof}
Given $\varepsilon>0$, let $n_0=n_0(\varepsilon)$ as provided by Theorem \ref{theCauchy}. For every $n\ge n_0$, $k=n_0$, we get
\[\begin{split}
	\bar f(\nu_n(\bp), \nu_n(\bp')) 
	&\leq \bar f (\nu_n(\bp), \nu_k(\bp))+ \bar f (\nu_k(\bp), \nu_k(\bp'))
		+ \bar f (\nu_k(\bp'), \nu_n(\bp'))\\
	{\tiny{\text{(by Theorem \ref{theCauchy})}}}\quad
	&\le \varepsilon +  \bar f (\nu_k(\bp), \nu_k(\bp'))+ 	\varepsilon.
\end{split}\]
We get the following estimates 
\[\begin{split}
	 \bar f (\nu_k(\bp), \nu_k(\bp')&)
	 =\bar{f}\big( \kappa_{\rm inv}(\cA_k,\varrho_k,\widetilde{\,\bp_k\,}), 
	 			\kappa_{\rm inv}(\cA_k,\varrho_k,\widetilde{\,\bp_k'\,})\big)\\
	 \text{\tiny{(by Lemma \ref{lemfact:bar})}}\quad
	 &=\bar{f}\big( \kappa(\cA_k,\varrho_k,\widetilde{\,\bp_k\,}), 
	 			\kappa(\cA_k,\varrho_k,\widetilde{\,\bp_k'\,})\big)\\
	 &\le\bar{f}\big( \kappa(\cA_k,\varrho_k,\widetilde{\,\bp_k\,}), 
	 			 \kappa(\cA_k,\varrho_k,\bp_k)\big)+\\
	&\phantom{=}				 
		+\bar{f}\big( \kappa(\cA_k,\varrho_k,\bp_k), 
	 			\kappa(\cA_k,\varrho_k,\bp_k')\big)+\\
	&\phantom{=}
		+\bar{f}\big( \kappa(\cA_k,\varrho_k,\bp_k'), 
	 			\kappa(\cA_k,\varrho_k,\widetilde{\,\bp_k'\,})\big)		\\
	 \text{\tiny{(by Lemma \ref{lemprop:earlier})}}\quad
	&\le \varepsilon+\bar{f}\big( \kappa(\cA_k,\varrho_k,\bp_k), 
	 			\kappa(\cA_k,\varrho_k,\bp_k')\big)+\varepsilon\\			
	 \text{\tiny{(by Proposition \ref{prop:P})}}\quad
	&\leq 2\varepsilon+ \frac {\max|\varrho_k|} {\min |\varrho_k|} \cdot D_{\cA_k}(\bp_k,\bp_k') \\
	\text{\tiny{(by Corollary \ref{cor:maxmin})}}\quad
	&\le 2\varepsilon+
	(1+4K)
		D_{\cA_k}(\bp_k,\bp_k').
\end{split}\]
Now apply Lemma \ref{lem:path1} to $(1+4K)
^{-1}\varepsilon$ and $k$ to get $\delta$ to conclude that if $D_\cA(\bp,\bp')<\delta$ then 
\[
	D_{\cA_k}(\bp_k,\bp_k')<
	(1+4K)^{-1}\varepsilon.
\]	
Hence, together, we get for every $n\ge n_0$ that
\[
	\bar f(\nu_n(\bp), \nu_n(\bp')) 
	\le 5\varepsilon.
\]
This implies the assertion.
\end{proof}

\begin{proof}[Proof of Theorem \ref{theoprop:pathfinal}]
By  Corollary \ref{corerg}, the map $\nu_\infty$ is well defined and the measures are LB. The continuity of this map is now a direct consequence of Lemma \ref{lemclaimArz} and the Arzel\`a-Ascoli theorem.

Finally observe that the set of all probability vectors $\{(p_a)_{a\in\cA}\colon p_a\ge0, \sum_ap_a=1\}$ is a simplex.  It follows from Lemma \ref{lemshields} that the space $\cM_{\rm B}(\cA^\bZ,\sigma_\cA)
$ is also a simplex in the topology generated by the $\bar f$-metric (as a subset of $\cM_{\rm erg}(\cA^\bZ,\sigma_\cA)$). By Lemma \ref{lementcont}, the map $\bp\mapsto h(\sigma_\cB,\nu_\infty(\bp))$ is continuous (in the $\bar f$-topology on $\cM_{\rm B}(\cA^\bZ,\sigma_\cA)$). In particular, the set of measures $\nu_\infty(\cM_{\rm B}(\cA^\bZ,\sigma_\cA))$ is $\bar f$-path connected and the entropies of all measures in $\nu_\infty(\cM_{\rm B}(\cA^\bZ,\sigma_\cA))$ form a closed interval. This proves the theorem.
\end{proof}

\section{Special cascades of horseshoes in circle diffeomorphisms}\label{secGeometry}

In this section, we consider the ``higher-dimensional'' context of step skew products with $\Sigma_N$ as base space and $C^1$ circle diffeomorphisms as fiber maps. We take this ``geometric setting'' and consider it from the point of view of Section \ref{secabscasc}. We will be largely following the constructions in \cite{DiaGelRam:22a}. The description of the similarities and crucial differences will be postponed to the beginning of Section \ref{sec52}. In Section \ref{secCIFSs}, we introduce contracting iterated function systems (CIFS) and the associated ``horseshoes''. In Section \ref{seccocyclediffeo}, we introduce the class $\mathrm{SP}^1_{\rm shyp}(\Sigma_N\times\bS^1)$. In Section \ref{secreptai}, we describe a cascade of CIFSs by ``repeating and tailing'' while essentially maintaining its ergodic properties and lowering its contraction rates. This cascade of associated alphabets and substitutions fits the setting of Section \ref{secabscasc}. Throughout this section we will prepare the proof of Theorem \ref{Bthm:circle} and complete it in Section \ref{secthmcircle}. 
  
Given $N\ge2$, consider a finite family $f_i\colon \bS^1\to \bS^1$, $i=1,\ldots,N$, of $C^1$ diffeomorphisms and the associated  step skew product $F$ defined as in \eqref{eq:sp}.
We will also consider the projection $\pi\colon\Sigma_N\times\bS^1\to\Sigma_N$, $\pi(\xi,x)\eqdef\xi$.
Given a measure $\mu$, consider its fiber Lyapunov exponent $\chi(F,\mu)$ defined as in \eqref{h0Fdef}.

For later reference, we state the following immediate consequence of \cite{LedWal:77}.

\begin{lemma}\label{entpres}
	For every $\mu\in\cM_{\rm erg}(\Sigma_N\times\bS^1,F)$, $h(F,\mu)=h(\sigma,\pi_\ast\mu)$.
\end{lemma}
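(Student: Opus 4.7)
The plan is to prove both inequalities of the claimed equality separately.

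First, $h(F,\mu)\ge h(\sigma,\pi_\ast\mu)$ follows from the fact that $\pi\colon\Sigma_N\times\bS^1\to\Sigma_N$ is a topological factor map. Indeed, by the definition \eqref{eq:sp} we have $\pi\circ F=\sigma\circ\pi$, and any finite partition $\cQ$ of $\Sigma_N$ pulls back to a finite measurable partition $\pi^{-1}(\cQ)$ of $\Sigma_N\times\bS^1$ whose entropy relative to $(F,\mu)$ coincides with the entropy of $\cQ$ relative to $(\sigma,\pi_\ast\mu)$. Taking the supremum over generating $\cQ$ for $\sigma$ then yields the inequality.

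For the reverse inequality $h(F,\mu)\le h(\sigma,\pi_\ast\mu)$, I would invoke the Ledrappier--Walters relative variational principle \cite{LedWal:77}, which in this setting provides the estimate
\[
	h(F,\mu)
	\le h(\sigma,\pi_\ast\mu)+\int h_{\rm top}\big(F,\pi^{-1}(\xi)\big)\,d\pi_\ast\mu(\xi),
\]
where $h_{\rm top}(F,\pi^{-1}(\xi))$ is the fiber topological entropy over $\xi$, defined via $(n,\varepsilon)$-separated subsets of $\pi^{-1}(\xi)=\{\xi\}\times\bS^1$. It then suffices to prove that the fiber entropy vanishes for every $\xi$. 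For this, note that the $j$-th iterate of $F$ restricted to $\pi^{-1}(\xi)$ is just the composition $f_{\xi_{j-1}}\circ\cdots\circ f_{\xi_0}$, a homeomorphism of $\bS^1$, and that such a homeomorphism preserves the cyclic order of any finite configuration of points on the circle. A direct counting argument then bounds the cardinality of any $(n,\varepsilon)$-separated set inside a fiber by $n/\varepsilon$: the circle has total length $1$, so at each time $j=0,\ldots,n-1$ at most $\lfloor 1/\varepsilon\rfloor$ of the $k$ arcs between consecutive points (in the preserved cyclic order) can have length at least $\varepsilon$, while each such arc must attain length $\ge\varepsilon$ at some time $j$ if the underlying points are $(n,\varepsilon)$-separated. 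This polynomial bound $s(n,\varepsilon,\xi)\le n/\varepsilon$ gives $h_{\rm top}(F,\pi^{-1}(\xi))=0$ uniformly in $\xi$, and hence the integral above vanishes.

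The main obstacle, and essentially the only nontrivial step, is the uniform-in-$\xi$ bound for $(n,\varepsilon)$-separated sets in the fibers; everything else is either standard factor-map bookkeeping or a direct appeal to the cited relative variational principle. The subtlety is that fibers are not $F$-invariant, so the argument must be phrased intrinsically in terms of the fiber distance and the preservation of cyclic order by arbitrary compositions of circle diffeomorphisms rather than through iteration of a single map.
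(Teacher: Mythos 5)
Your proof is correct and rests on the same key tool as the paper: the paper states the lemma as an ``immediate consequence'' of \cite{LedWal:77} without further elaboration, and your argument makes that explicit by combining the factor-map inequality with the Ledrappier--Walters relative variational principle. The one nontrivial step you supply that the paper leaves implicit---the uniform vanishing of fiber topological entropy via the cyclic-order/arc-counting argument bounding $(n,\varepsilon)$-separated sets by $n/\varepsilon$---is correct (a circle homeomorphism may reverse cyclic order rather than preserve it, but this does not affect the arc-counting), and it is exactly what is needed to make the cited inequality collapse to an equality.
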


\subsection{Collection of words giving rise to a CIFS and a horseshoe}\label{secCIFSs}

Let us first introduce some notation. Given $\xi=(\ldots,\xi_{-1}|\xi_0,\xi_1,\ldots)\in\Sigma_N$ and $n\in\bN$, write
\begin{equation}\label{eq:reference}
	 f_\xi^{-n}\eqdef f_{\xi_{-n}}^{-1}\circ\cdots \circ f_{\xi_{-1}}^{-1}
	 \quad\text{ and }\quad
	 f_\xi^n \eqdef f_{\xi_{n-1}}\circ\cdots\circ f_{\xi_0},
\end{equation}	

For $n\in\bN$ let $\Sigma_N^n\eqdef\{1,\ldots,N\}^n$ and define $\Sigma_N^\ast \eqdef \bigcup_{n=0}^\infty\Sigma_N^n$. Recall that the \emph{length} of a word $w\in\Sigma_N^\ast$ is the number of symbols it contains and is denoted by $| w|$. Given a finite subset $\cW$ of $\Sigma_N^\ast$ let
\begin{equation}\label{defNormW}
 	\lVert\cW\lVert
	\eqdef \max_{w\in\cW}|w|.
\end{equation}
 Analogously to notation \eqref{eq:reference}, given words $w_1,\ldots,w_m\in\Sigma_N^\ast$, $L\eqdef \lvert w_1\ldots w_m\rvert$, and for $k=1,\ldots, L$, denote by
\[
	f_{[w_1\ldots w_m]}^k
	\eqdef f_{\xi_{k-1}}\circ\cdots\circ f_{\xi_0},\,\text{where}\,
	(\xi_0,\xi_1,\ldots,\xi_{L-1})\eqdef(w_1\ldots w_m)
		\in\Sigma_N^\ast,
\]
the map obtained by concatenating the maps from the family $\{f_i\}_i$ which are indexed by the first  $k$  elements  of the concatenated words $w_1\ldots w_m$ (in the alphabet $\{1,\ldots,N\}$). 	Moreover, we simply write
\[
	f_{[w_1\ldots w_m]}
	\eqdef f_{[w_1 \ldots w_m]}^L.
\]

\begin{definition}[CIFS with quantifiers]
	A finite collection of words $\cW\subset\Sigma_N^\ast$ defines a \emph{contracting iterated function system} (\emph{CIFS}) on an interval  $J\subset\bS^1$ relative to $K>1$, $\alpha_0<0$, $\alpha<0$, and  $\varepsilon\in(0,\lvert\alpha\rvert)$ if
\begin{itemize}
\item[(a)] for every $ w\in\cW$ it holds $f_{[ w]}(J)\subset J$,
\item[(b)] for every $m\in\bN$, $ w_1,\ldots, w_m\in\cW$, $y\in J$, and $k=1,\ldots,\lvert w_1\ldots w_m\rvert$,
\[
	\lvert (f_{[ w_1\ldots w_m]}^k)'(y)\rvert
	\le Ke^{k\alpha_0},
\]
\item[(c)]
the \emph{spectrum of finite-time fiber Lyapunov exponents} satisfies
\[
	\Big\{\frac{1}{| w|}\log\,\lvert (f_{[ w]})'(x)\rvert
	\colon x\in J, w\in\cW\Big\}
	\subset (\alpha-\varepsilon,\alpha+\varepsilon).
\]
\end{itemize}
\end{definition}

For further reference, let us state a technical distortion result that does not require any further structure. Here we use the common notation 
\[
	S_n\phi\eqdef\phi+\phi\circ F+\ldots+\phi\circ F^{n-1}.
\]
Recall the notation of cylinders in \eqref{defcylinder}.	 

\begin{lemma}[{\cite[Proposition 6.12]{DiaGelRam:22a}}]\label{lemdistoprop}
	Let $\cW\subset\Sigma_N^\ast$ be a finite collection of words defining a CIFS on an interval $J\subset\bS^1$ relative to $K>1$, $\alpha_0<0$, $\alpha<0$, and  $\varepsilon$. Then for every continuous function $\phi\colon\Sigma_N\times\bS^1\to\bR$  and $\tau>0$, there exists $N_1=N_1(\phi,\tau)\in\bN$ such that for every $m\ge N_1$ and every concatenated word $w_1\ldots w_m\in\cW^m$, $ w_1,\ldots, w_m\in\cW$, 
\[
	\max_{(\xi,x),(\eta,y)\in[w_1\ldots w_m]\times J}
		\big|S_n\phi(x,\xi)-S_n\phi(y,\eta)\big|<\tau n,
		\text{ where }
		n\eqdef\sum_{i=1}^m|w_i|.
\]	
\end{lemma}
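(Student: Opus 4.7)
The plan is a standard Bowen-type distortion argument that plays off the uniform continuity of $\phi$ against the uniform contraction of the CIFS. Since $\Sigma_N \times \bS^1$ is compact, $\phi$ is uniformly continuous: given $\tau > 0$, fix $\delta > 0$ so that $d((\xi',x'),(\eta',y')) < \delta$ implies $|\phi(\xi',x') - \phi(\eta',y')| < \tau/2$, where $d$ is the product of $d_1$ on $\Sigma_N$ and the arc-length distance on $\bS^1$. Now choose $N_0 \in \bN$ large enough that $e^{-N_0} < \delta$ and $K e^{N_0 \alpha_0} \cdot \diam(J) < \delta$; this uses only the exponential contraction guaranteed by property~(b) of the CIFS and the fact that $\alpha_0 < 0$.

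Fix $m \in \bN$, a concatenation $w_1 \ldots w_m$, and two points $(\xi,x), (\eta,y) \in [w_1 \ldots w_m] \times J$. For $0 \le k < n$ write $F^k(\xi,x) = (\sigma^k \xi, f_\xi^k(x))$ and likewise for $(\eta,y)$. Since $\xi$ and $\eta$ coincide in positions $0, \ldots, n-1$, the shifted sequences $\sigma^k \xi$ and $\sigma^k \eta$ coincide on positions $-k, \ldots, n-1-k$, so $d_1(\sigma^k \xi, \sigma^k \eta) \le e^{-\min\{k,n-k\}}$. Furthermore, for $k \le n$ we have $f_\xi^k = f_\eta^k$ on $J$, and property~(b) combined with the mean value theorem yields
\[
	|f_\xi^k(x) - f_\xi^k(y)| \le K e^{k\alpha_0} \cdot \diam(J).
\]
Consequently, whenever $N_0 \le k \le n - N_0$, both the symbolic and the fiber distances between $F^k(\xi,x)$ and $F^k(\eta,y)$ are below $\delta$, and the uniform continuity of $\phi$ gives $|\phi \circ F^k(\xi,x) - \phi \circ F^k(\eta,y)| < \tau/2$.

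On the remaining set of indices $k \in [0,N_0) \cup (n-N_0, n]$ we only have the trivial bound $2\lVert\phi\rVert_\infty$. Summing up,
\[
	|S_n\phi(\xi,x) - S_n\phi(\eta,y)| \le (n - 2N_0)\tfrac{\tau}{2} + 4 N_0 \lVert\phi\rVert_\infty \le \tfrac{n\tau}{2} + 4N_0 \lVert\phi\rVert_\infty.
\]
Since $n \ge m$, it suffices to take $N_1 \eqdef \lceil 8 N_0 \lVert\phi\rVert_\infty / \tau \rceil$; then $m \ge N_1$ forces $4N_0 \lVert\phi\rVert_\infty < n\tau/2$ and the right-hand side is strictly less than $n\tau$, uniformly over the choice of $w_1, \ldots, w_m \in \cW$ and of the points in the cylinder. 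The only genuine subtlety is the two-sided nature of $\Sigma_N$: the ``past'' coordinates of $\xi, \eta$ are not constrained by the cylinder $[w_1\ldots w_m]$, but after shifting by $k \ge N_0$ a block of length $N_0$ of past coordinates does agree, which is exactly what is needed to push $d_1(\sigma^k\xi, \sigma^k\eta)$ below $\delta$. The rest is bookkeeping.
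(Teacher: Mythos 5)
Your proof is correct and is the standard Bowen-type distortion argument: uniform continuity of $\phi$, two-sided exponential control of the symbolic distance (via the growing block of matching coordinates around position $0$) and the fiber distance (via property (b) and the mean value theorem), and a split of the Birkhoff sum into a central bulk $N_0\le k\le n-N_0$ plus $O(N_0)$ tail terms controlled by the crude bound $2\lVert\phi\rVert_\infty$, which become negligible once $n\ge m\ge N_1$. The present paper does not prove this lemma in-house but cites it from \cite[Proposition 6.12]{DiaGelRam:22a}, where the argument follows the same scheme; apart from minor off-by-one bookkeeping (e.g.\ $e^{-\min\{k+1,\,n-k\}}$ rather than $e^{-\min\{k,\,n-k\}}$, and the degenerate case $n\le 2N_0$, which is anyway handled trivially when $\lVert\phi\rVert_\infty\le\tau/2$), your write-up is sound.
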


We state the following consequence of the \emph{contracting} property of the IFSs we study.

\begin{lemma} \label{lem:sbe}
Let $\cW\subset\Sigma_N^\ast$ be a finite collection of words defining a CIFS on an interval $J$. Then for every $\varepsilon>0$ there exists $N_2\in\bN$ such that for every $m\ge N_2$ and $w_1,\ldots,w_m \in \cW$,
\[
	\max_{(\xi,x),(\eta,y)\in[w_1\ldots w_m]\times J}
	\frac1n  \sum_{i=0}^{n-1} d(F^i(\xi,x), F^i(\eta,y)) \leq \varepsilon,
	\text{ where }
	n\eqdef\sum_{i=1}^m |w_i|.
\] 
\end{lemma}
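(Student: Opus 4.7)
The plan is to estimate the symbolic and fiber components of $d(F^i(\xi,x),F^i(\eta,y))$ separately for $0\le i\le n-1$, and to show that both contributions to the Birkhoff sum are bounded by a constant independent of $n$. Since $\cW$ is finite with $\min_{w\in\cW}|w|\ge 1$, one has $n\ge m\cdot\min_{w\in\cW}|w|$, so $n\to\infty$ as $m\to\infty$; it therefore suffices to obtain a bound of the form $C/n$, after which the desired $N_2$ is obvious.

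For the symbolic part, $\xi,\eta\in[w_1\ldots w_m]$ forces $\xi_j=\eta_j$ for $0\le j\le n-1$, while coordinates with index $<0$ or $\ge n$ are unconstrained. After $i$ shifts, $(\sigma^i\xi)_j=\xi_{i+j}$ agrees with $(\sigma^i\eta)_j$ exactly when $-i\le j\le n-i-1$, so the first possible disagreement occurs at an index of absolute value at least $\min(i+1,n-i)$, whence
\[
	d_1(\sigma^i\xi,\sigma^i\eta)\le e^{-\min(i+1,n-i)}.
\]
Summing, $\sum_{i=0}^{n-1}e^{-\min(i+1,n-i)}\le 2\sum_{k=1}^{\infty}e^{-k}=2/(e-1)$, which is bounded independently of $n$.

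For the fiber part, since $\xi$ and $\eta$ share their first $n$ coordinates and $i\le n$, we have $f_\xi^i=f_\eta^i=f_{[w_1\ldots w_m]}^i$. Property (b) of the CIFS definition then gives $|(f_{[w_1\ldots w_m]}^i)'(z)|\le Ke^{i\alpha_0}$ for every $z\in J$, so the mean value theorem combined with $x,y\in J$ yields
\[
	\bigl|f_\xi^i(x)-f_\eta^i(y)\bigr|\le Ke^{i\alpha_0}|J|,
\]
and the geometric series gives $\sum_{i=0}^{n-1}Ke^{i\alpha_0}|J|\le K|J|/(1-e^{\alpha_0})$, again a constant.

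Combining both estimates through the standard product metric on $\Sigma_N\times\bS^1$, there is $C=C(K,\alpha_0,|J|)$ (independent of $m$, of $w_1,\ldots,w_m$ and of the points chosen in the cylinder) such that
\[
	\frac{1}{n}\sum_{i=0}^{n-1}d\bigl(F^i(\xi,x),F^i(\eta,y)\bigr)\le\frac{C}{n}.
\]
Taking $N_2$ with $N_2\cdot\min_{w\in\cW}|w|\ge C/\varepsilon$ finishes the argument. The only point requiring attention is the two-sided nature of $\Sigma_N$: the unconstrained negative coordinates oblige the symbolic estimate to use $\min(i+1,n-i)$ rather than merely $n-i$, but since the tails from both ends are geometric this costs only a harmless constant factor. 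No use of the $\varepsilon$ in condition (c) is needed—property (b) alone controls the contractive behavior of the compositions.
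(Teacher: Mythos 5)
Your proof is correct and follows the same underlying idea as the paper's: exploit the uniform contraction from property (b) of the CIFS to obtain a geometric bound $\diam f^k_{[w_1\ldots w_m]}(J)\le Ke^{k\alpha_0}$ and then sum. The paper states this contraction estimate and says it "immediately implies" the lemma; you supply the details it omits, in particular the bookkeeping for the symbolic coordinate — the bound $d_1(\sigma^i\xi,\sigma^i\eta)\le e^{-\min(i+1,n-i)}$ accounting for the unconstrained negative-index coordinates of the two-sided shift — together with the explicit geometric-series sums and the reduction to a $C/n$ bound.
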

\begin{proof}
Assume that $\cW$ gives rise to a CIFS relative to $K>1$, $\alpha_0<0$, $\alpha<0$, and $\varepsilon>0$. Then the uniform contraction implies that for every $m\in\bN$ and $k=1,\ldots,|w_1\ldots w_m|$,
\[
	\diam f^k_{[w_1\ldots w_m]}(J)
	\le Ke^{k\alpha_0}.
\]
This immediately implies the assertion.
\end{proof}

We need the following concept from \cite[Section 3.1]{DiaGelRam:22a}.

\begin{definition}[Disjoint collection of words]
	A collection of words $\cW	\subset\Sigma_N^\ast$ is \emph{disjoint} if no element in $\cW$ is a prefix of another element in $\cW$.
\end{definition}

\begin{remark}[Disjointness and decipherability]\label{remproblem}
By a slight abuse of notation and with the intention to simplify notation, in the following, when writing $\cW^\bZ$, we understand this as a subset of $\Sigma_N$ consisting of bi-infinite concatenations of words from $\cW$. This set is also called the \emph{pre-coded space} defined by $\cW$. The \emph{coded space} defined by $\cW$ is obtained by taking the closure of the union of the images of the pre-coded space under iterations by the shift map $\sigma$. See, for example, \cite[Chapter 13.5]{LinMar:95}.

In what is below, we always consider CIFSs defined by means of \emph{disjoint} collections of words. Note that every disjoint finite collection of words is \emph{uniquely left decipherable}, that is, whenever a concatenated word $w_1\ldots w_m$ is a prefix of another concatenated word $v_1\ldots v_n$, where $w_i,v_j\in\cW$, then $m\le n$ and $w_i=v_i$ for every $i=1,\ldots,m$. This left-decipherability property extends to one-sided infinite concatenations, that is, the one-sided concatenation space $\cW^\bN$ for disjoint $\cW$ is also uniquely decipherable. The bi-infinite sequences in the two-sided concatenation space  $\cW^\bZ$ for disjoint $\cW$ are in general not uniquely decipherable. We have however the result below (see \cite[Section 3.1]{DiaGelRam:22a} for further details).
\end{remark}

\begin{lemma}
	Let $\cW\subset\Sigma_N^\ast$ be a disjoint finite collection of words. Then every sequence in $\cW^\bZ$ has at most $r\eqdef\max_{w\in\cW}|w|$ ``decodings'', that is, it can be written as a bi-infinite concatenation of words from $\cW$ in at most $r$ ways. 
\end{lemma}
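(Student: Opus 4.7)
The plan is to proceed by contradiction, combining two pigeonhole arguments. Suppose for contradiction that some $\xi\in\cW^\bZ$ admits $r+1$ distinct decodings $D_1,\ldots,D_{r+1}$, each viewed as the bi-infinite set of positions at which a new word begins.

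I would first record two preliminary observations. First, if $d$ is a common break point of two decodings $D_i,D_j$, then $D_i\cap[d,\infty)=D_j\cap[d,\infty)$. This is just unique decipherability of the one-sided concatenation space $\cW^\bN$ recalled in Remark~\ref{remproblem}: by disjointness there is a unique word $w\in\cW$ which is a prefix of $\xi_d\xi_{d+1}\ldots$, so $d+|w|$ is the next break point in each of $D_i,D_j$, and an induction on this uniqueness propagates the equality to all of $[d,\infty)$. Second, since consecutive break points of any decoding are separated by at most $r=\max_{w\in\cW}|w|$, every interval of $r$ consecutive integers contains at least one break point of every $D_i$.

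The argument then proceeds by two pigeonhole steps. For any $n\in\bZ$, the second observation supplies $r+1$ break points (one per $D_i$) inside the $r$-element interval $[n,n+r-1]$, hence by pigeonhole there exist indices $i(n)\ne j(n)$ and a position $d(n)\in[n,n+r-1]$ with $d(n)\in D_{i(n)}\cap D_{j(n)}$. Now let $n\to-\infty$: the unordered pair $\{i(n),j(n)\}$ takes values in the finite set of $\binom{r+1}{2}$ pairs from $\{1,\ldots,r+1\}$, so a second pigeonhole produces a fixed pair $\{i,j\}$, $i\ne j$, that is realized along a sequence $n_k\to-\infty$. Along this subsequence $d(n_k)\in D_i\cap D_j$ and $d(n_k)\to-\infty$; the first observation gives $D_i\cap[d(n_k),\infty)=D_j\cap[d(n_k),\infty)$ for every $k$, and passing $k\to\infty$ forces $D_i=D_j$, contradicting $D_i\ne D_j$.

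The main conceptual point---and the source of the one subtlety---is that backward uniqueness of decodings is \emph{not} automatic, since suffix relations inside $\cW$ are not controlled by the disjointness hypothesis. A single pigeonhole on $[0,r-1]$ would only pair up two decodings at one common break point, after which their left-halves could still differ. The two-stage pigeonhole is precisely what converts the available one-sided (forward) decipherability into genuine bi-infinite equality, by pushing a common break point shared by a fixed pair arbitrarily far to $-\infty$.
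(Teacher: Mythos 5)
Your proof is correct. The paper does not itself prove this lemma---it is stated without argument, and Remark~\ref{remproblem} defers to \cite[Section 3.1]{DiaGelRam:22a} for details---so there is no in-paper proof to compare against. Your two-stage pigeonhole argument is complete and correctly pinpoints the one subtle point: disjointness of $\cW$ yields only \emph{forward} unique decipherability, so a single shared break point merely synchronizes two decodings to its right; the second pigeonhole, which locates a common break point of some \emph{fixed} pair at positions tending to $-\infty$, is exactly what upgrades one-sided agreement to full coincidence of the two decodings. The only ingredients you use---that consecutive break points of any decoding are at most $r$ apart, hence every length-$r$ window meets every decoding, and that the break-point set to the right of any shared break point is determined---are both justified by the hypotheses, so the argument has no gaps.
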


\begin{lemma}\label{lemcWconc}
	Let $\cW\subset\Sigma_N^\ast$ be a disjoint finite collection of words defining a CIFS on $J$ relative to $K,\alpha_0,\alpha$, and $\varepsilon$. Then for every $m\in\bN$, $\cW^m$ is also disjoint and defines a CIFS on $J$ relative to $K,\alpha_0,\alpha$, and $\varepsilon$. 
\end{lemma}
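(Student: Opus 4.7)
The plan is to verify the disjointness of $\cW^m$ first and then check each of the three CIFS conditions (a), (b), (c) for $\cW^m$ by reducing them to the corresponding properties of $\cW$.

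For disjointness: I would argue by induction on $m$. Suppose $v = w_1 \dots w_m$ and $v' = w_1' \dots w_m'$ lie in $\cW^m$ and $v$ is a prefix of $v'$. Then reading the first $\min\{|w_1|,|w_1'|\}$ symbols shows that either $w_1$ is a prefix of $w_1'$ or vice versa; disjointness of $\cW$ forces $w_1 = w_1'$. Cancelling this common prefix leaves a concatenation of $m-1$ words from $\cW$ that is a prefix of another such; by induction $w_i = w_i'$ for all $i$, and hence $v = v'$. Thus $\cW^m$ is disjoint.

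For property (a): Given $v = w_1 \dots w_m \in \cW^m$, I would argue by induction on $m$ that $f_{[w_1 \dots w_m]}(J) \subset J$. The base case $m=1$ is condition (a) for $\cW$. For the inductive step, $f_{[w_1 \dots w_m]}(J) = f_{[w_m]}\bigl(f_{[w_1 \dots w_{m-1}]}(J)\bigr) \subset f_{[w_m]}(J) \subset J$, where the first inclusion uses the inductive hypothesis $f_{[w_1 \dots w_{m-1}]}(J) \subset J$ and monotonicity of $f_{[w_m]}$ applied to subsets of $J$, and the second is condition (a) for $\cW$ applied to $w_m$.

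For property (b): Let $v_1, \dots, v_r \in \cW^m$ with $v_j = w_{j,1} \dots w_{j,m}$. Then the concatenation $v_1 \dots v_r$ is simply the concatenation $w_{1,1} \dots w_{1,m} w_{2,1} \dots w_{r,m}$ of $rm$ words from $\cW$. Applying condition (b) of the original CIFS directly to this concatenation gives, for every $k = 1, \dots, |v_1 \dots v_r|$ and every $y \in J$, $|(f^k_{[v_1 \dots v_r]})'(y)| \leq K e^{k\alpha_0}$, which is exactly the required bound.

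For property (c): Fix $v = w_1 \dots w_m \in \cW^m$ and $x \in J$. By the chain rule,
\[
\log |(f_{[v]})'(x)| = \sum_{i=1}^m \log \bigl|(f_{[w_i]})'(y_i)\bigr|, \quad\text{where } y_1 = x,\ y_{i+1} = f_{[w_1 \dots w_i]}(x).
\]
By property (a) already established for partial concatenations, each $y_i \in J$. Therefore condition (c) of the original CIFS applies to each summand: $|w_i|(\alpha - \varepsilon) < \log|(f_{[w_i]})'(y_i)| < |w_i|(\alpha + \varepsilon)$. Summing over $i$ and dividing by $|v| = \sum_{i=1}^m |w_i|$ yields $\tfrac{1}{|v|}\log|(f_{[v]})'(x)| \in (\alpha - \varepsilon, \alpha + \varepsilon)$. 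The only step requiring any care is ensuring the intermediate points $y_i$ stay in $J$ so that (c) of $\cW$ can be applied pointwise; this is precisely what (a) delivers, so there is no real obstacle.
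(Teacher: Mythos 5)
Your proof is correct. The paper states this lemma without proof, treating it as an immediate consequence of the definitions; your argument fills in the details accurately -- disjointness by induction on the leading factor (using that no word in $\cW$ is a proper prefix of another), property (a) by induction via forward invariance of $J$, property (b) by directly specializing the universal bound for concatenations from $\cW$, and property (c) by the chain rule combined with the already-established fact that the intermediate points $y_i$ remain in $J$.
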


To any collection of words $\cW$ defining a CIFS, we can associate its ``attractor'' 
\[\begin{split}
	&\Lambda(\cW)\eqdef\Pi_\cW(\cW^\bZ)\subset\Sigma_N\times J,\quad\text{where}\\
	&\Pi_\cW\colon\cW^{\bZ}\to\Sigma_N\times J,\quad
	\Pi_\cW(\underline w)
	\eqdef\big(\underline w,\lim_{n\to\infty}f_{[w_{-1}]}\circ\cdots\circ f_{[w_{-n}]}(x_0)\big),
\end{split}\]
see \cite[Section 6]{DiaGelRam:22a}. Note that the limit above indeed does not depend on the choice of $x_0\in J$, see \cite{Hut:81}. Moreover, when $\cW$ is disjoint then $\Pi_\cW$ is uniformly finite-to-one and 
\[
	\card\Pi_\cW^{-1}(\{(\xi,x)\})\le \max_{w\in\cW}|w|
	\quad\text{ for all }\quad (\xi,x)\in\Sigma_N\times J,
\]
see \cite[Proposition 6.3]{DiaGelRam:22a}.

The attractor $\Lambda=\Lambda(\cW)$ defines the associated ``horseshoe''
\begin{equation}
\label{eq:horseshoe}
	\Gamma=\Gamma(\cW)\eqdef\bigcup_{k=0}^{\lVert\cW\rVert-1}F^k(\Lambda)\subset\Sigma_N\times\bS^1.
\end{equation}	
Note that the set $\Pi_\cW(\cW^\bZ)$ defined above can be seen as a ``section'' of $\Gamma(\cW)$, where each point in the horseshoe $\Gamma(\cW)$ hits it after at most $\max_{w\in\cW}|w|$ iterations by $F$.  

The following fact is straightforward. 

\begin{lemma}\label{lemWassm}
	For every disjoint finite collection $\cW\subset\Sigma_N^\ast$ defining a CIFS and its associated attractor $\Gamma(\cW)$ and every $m\in\bN$, 
\[
	\cM_{\rm erg}(\Gamma(\cW^m),F)
	= \cM_{\rm erg}(\Gamma(\cW),F).
\]	
\end{lemma}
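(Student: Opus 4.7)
The plan is to show the stronger statement $\Gamma(\cW^m)=\Gamma(\cW)$ as subsets of $\Sigma_N\times\bS^1$, from which the equality of ergodic measure spaces is immediate. Everything reduces to the straightforward observation that the words in $\cW^m$ are just concatenations of $m$ words in $\cW$, so the difference between the two horseshoes is purely a matter of ``regrouping'' of decodings.

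First I would verify, at the symbolic level, that the pre-coded spaces satisfy $\cW^{\bZ}=(\cW^m)^{\bZ}$ as subsets of $\Sigma_N$. The inclusion $(\cW^m)^{\bZ}\subset\cW^{\bZ}$ is obvious. For the reverse inclusion, given any decoding $\underline w=(\ldots w_{-1}|w_0w_1\ldots)$ with $w_i\in\cW$, one regroups into blocks of $m$ consecutive $\cW$-words, both in the positive and negative directions, obtaining a $\cW^m$-decoding of the same element of $\Sigma_N$.

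Next I would observe that $\Pi_\cW(\underline w)=\Pi_{\cW^m}(\underline w)$ for any such $\underline w$. This is because the map $f_{[v]}$ depends only on the symbolic content of $v\in\Sigma_N^\ast$ and not on any ``word structure''; hence the limit $\lim_{n\to\infty}f_{[w_{-1}]}\circ\cdots\circ f_{[w_{-n}]}(x_0)$ agrees whether we accumulate $\cW$-words one at a time or $\cW^m$-blocks $m$-at-a-time (the latter being a subsequence of the former, which already converges). This yields $\Lambda(\cW)=\Lambda(\cW^m)$.

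Finally I would show that $\Gamma(\cW)$ is $F$-invariant: given $(\xi,x)=F^{\lVert\cW\rVert-1}(\Pi_\cW(\underline w))\in\Gamma(\cW)$, applying $F$ once more and then using $F^{|w_0|}(\Pi_\cW(\underline w))=\Pi_\cW(\sigma_\cW\underline w)\in\Lambda(\cW)$, together with $|w_0|\le\lVert\cW\rVert$, places $F(\xi,x)$ back in $\bigcup_{k=0}^{\lVert\cW\rVert-1}F^k(\Lambda)$. Since $\lVert\cW^m\rVert\le m\lVert\cW\rVert$, $F$-invariance of $\Gamma(\cW)$ together with $\Lambda(\cW^m)=\Lambda(\cW)\subset\Gamma(\cW)$ gives $\Gamma(\cW^m)\subset\Gamma(\cW)$, while the reverse inclusion is immediate from $\lVert\cW^m\rVert\ge\lVert\cW\rVert$. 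Once $\Gamma(\cW^m)=\Gamma(\cW)$ as subsets of $\Sigma_N\times\bS^1$, the equality of the associated sets of ergodic $F$-invariant probability measures is tautological. No step here is a real obstacle; the only point requiring a moment's care is verifying that multiple decodings of the same symbolic sequence yield the same image under $\Pi$, which is guaranteed by the purely symbolic definition of $f_{[\cdot]}$.
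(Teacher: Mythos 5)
The paper provides no proof for this lemma, simply declaring it straightforward, so there is no argument to compare against; your proof is correct and supplies exactly the natural verification. You establish the stronger fact $\Gamma(\cW^m)=\Gamma(\cW)$ by checking $\Lambda(\cW^m)=\Lambda(\cW)$ (via regrouping/ungrouping of decodings, using that the $\cW^m$-limit defining $\Pi_{\cW^m}$ is a subsequence of the $\cW$-limit defining $\Pi_\cW$) and then using forward $F$-invariance of $\Gamma(\cW)$ together with $\lVert\cW^m\rVert\ge\lVert\cW\rVert$, from which equality of the spaces of ergodic measures is immediate.
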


\begin{remark}[Abstract alphabet $\cA$]\label{remabstract}
The collection of words $\cW=\{w_1,\ldots,w_M\}\subset\Sigma_N^\ast$ can itself be considered as an alphabet giving rise to a sequence space. In order to stress the distinction between this sequence space and the set of bi-infinite concatenated words $\cW^\bZ\subset\Sigma_N$ that we introduced above, let us call it 
by a different name and consider an abstract alphabet of letters $\cA=\{a_1,\ldots,a_M\}$ that has the same cardinality as $\cW$, $M=\card\cW$. We will use the associated sequence space $\cA^\bZ$ with the shift map $\sigma_\cA$.
\end{remark}

Below we describe the internal structure of the horseshoes in \eqref{eq:horseshoe}. Suspension spaces will turn out to be a convenient tool, precisely because of the problem with non-unique decipherability (Remark \ref{remproblem}). As the next proposition asserts, this horseshoe is a topological factor of the suspension space, where this factor map is \emph{a priori} not a bijection, but it has bounded multiplicity. 

For the following result, recall the definition of a discrete suspension space in Section \ref{secsusp}. Given $\cW$ and its associated abstract alphabet $\cA$, consider the associated roof function
\begin{equation}\label{rooofs}
	R\colon\cA\to\bN,\quad
	R( a_k)\eqdef |w_k|,
\end{equation}
where $a_k\in\cA$ is the symbol corresponding to $w_k\in\cW$.

\begin{proposition}[Topological horseshoe factor, {\cite[Proposition 8.4]{DiaGelRam:22a}}]\label{pro:semiconjtop}
	Let $\cW\subset\Sigma_N^\ast$ be a disjoint finite collection of words defining a CIFS, $\cA$ be the associated abstract alphabet, and $\cS=\cS_{\cA,R}$ be the discrete-time suspension space with suspension map $\Phi=\Phi_{\cA,R}$, and $\Gamma=\Gamma(\cW)$ the associated horseshoe. 
	There is a continuous surjective map $H\colon \cS\to\Gamma$ that is uniformly finite-to-one such that
\[
	\card H^{-1}(\{(\xi,x)\})
	\le \left(\max R\right)^2,
	\spac{for every} (\xi,x)\in\Gamma,
\]	
satisfying $H\circ \Phi=F\circ H$.
\end{proposition}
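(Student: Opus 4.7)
The natural candidate for $H$ comes from composing the CIFS coding with iterates of $F$. Identifying the abstract letter $a_k\in\cA$ with the word $w_k\in\cW$ yields a substitution $\underline a\mapsto\underline w(\underline a)$ from $\cA^\bZ$ into $\cW^\bZ\subset\Sigma_N$. I would define
\[
	H(\underline a, s) \eqdef F^s\bigl(\Pi_\cW(\underline w(\underline a))\bigr),
\]
taking the canonical representative $0\le s<R(a_0)=|w_0|$. Well-definedness is automatic on canonical representatives, and continuity follows since the substitution map is continuous (the alphabet is finite and word lengths are uniformly bounded by $\lVert\cW\rVert$), the attractor coding $\Pi_\cW$ is continuous (a standard consequence of the uniform contraction in the CIFS, as recalled in Section~\ref{secCIFSs}), and $F^s$ is continuous for every fixed $s\in\{0,\ldots,\lVert\cW\rVert-1\}$.

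Next I would verify the semiconjugacy $H\circ\Phi=F\circ H$ by distinguishing the two cases in the definition of $\Phi$. If $0\le s<R(a_0)-1$, then both sides equal $F^{s+1}(\Pi_\cW(\underline w(\underline a)))$ directly. If $s=R(a_0)-1$, then $\Phi(\underline a,s)=(\sigma_\cA(\underline a),0)$, and the identification sends $\sigma_\cA(\underline a)$ to the one-word shift $\sigma_\cW^{(1)}\underline w(\underline a)$, which on $\Sigma_N$ is $\sigma^{R(a_0)}\underline w(\underline a)$. The essential point is the equivariance
\[
	\Pi_\cW\circ\sigma_\cW^{(1)}
	= F^{R(a_0)}\circ\Pi_\cW
	\quad\text{on the cylinder corresponding to }a_0,
\]
which is built into the inverse-limit definition of $\Pi_\cW$ (appending one more word on the right adds $R(a_0)$ applications of $F$). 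This gives $H(\Phi(\underline a,s))=F^{R(a_0)}(\Pi_\cW(\underline w(\underline a)))=F(H(\underline a,s))$, as required.

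For surjectivity, every $(\xi,x)\in\Gamma$ admits, by the definition \eqref{eq:horseshoe}, a representation $(\xi,x)=F^k(\Pi_\cW(\underline w))$ with $0\le k\le\lVert\cW\rVert-1$ and $\underline w\in\cW^\bZ$. If $k\ge|w_0|$, applying the same equivariance gives $F^k(\Pi_\cW(\underline w))=F^{k-|w_0|}(\Pi_\cW(\sigma_\cW^{(1)}\underline w))$; iterating, $k$ strictly decreases until the representation lies in the canonical range, producing a preimage under $H$. Finally, for the multiplicity bound, any preimage $(\underline a,s)$ of a fixed $(\xi,x)\in\Gamma$ satisfies $s\in\{0,\ldots,\lVert\cW\rVert-1\}$ and $\Pi_\cW(\underline w(\underline a))=F^{-s}(\xi,x)\in\Lambda$; since $\Pi_\cW$ is at most $\lVert\cW\rVert=\max R$ to one (Section~\ref{secCIFSs}) and there are at most $\max R$ choices for $s$, the fibre of $H$ has cardinality at most $(\max R)^2$.

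The main technical obstacle is the equivariance $\Pi_\cW\circ\sigma_\cW^{(1)}=F^{R(a_0)}\circ\Pi_\cW$ and, linked to it, controlling the ambiguity in the decoding. Because $\cW$ is only \emph{disjoint} (not prefix-free on both sides), a bi-infinite sequence in $\cW^\bZ\subset\Sigma_N$ can admit several distinct decodings; this is precisely what produces the non-injectivity of $\Pi_\cW$ and of $H$. The uniform bound $\max R$ on the number of decodings, which is the content of the disjointness discussion in Section~\ref{secCIFSs}, is what keeps both the $\Pi_\cW$-fibres and the $H$-fibres finite with the explicit upper bound $(\max R)^2$.
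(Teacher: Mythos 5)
This proposition is imported verbatim from \cite[Proposition 8.4]{DiaGelRam:22a}; the present paper does not supply a proof of its own, so there is no in-paper argument to compare against.

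Your reconstruction is correct and, as far as I can tell, is the natural (and essentially the only reasonable) way to build the factor map, via $H(\underline a,s)=F^s(\Pi_\cW(\underline w(\underline a)))$ on canonical representatives. The equivariance $\Pi_\cW\circ\sigma_\cW^{(1)}=F^{R(a_0)}\circ\Pi_\cW$ is exactly the mechanism one needs, and your use of it both to verify the semiconjugacy at the boundary case $s=R(a_0)-1$ and to push $k$ into the canonical range in the surjectivity argument is sound (the iteration terminates because $k$ starts below $\lVert\cW\rVert$ and drops by at least one each step). Continuity on each clopen piece $[a_0]\times\{s\}\subset\cS_{\cA,R}$ reduces to continuity of the decoding, of $\Pi_\cW$, and of $F^s$, as you say. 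For the multiplicity bound, your count is slightly generous: for each of the at most $\max R$ admissible values of $s$ (those with $F^{-s}(\xi,x)\in\Lambda$), you bound the number of $\Pi_\cW$-preimages of $F^{-s}(\xi,x)$ by $\max R$, giving $(\max R)^2$. This is correct as an upper bound. In fact, after fixing a word boundary at position $0$ of $\sigma^{-s}\xi$, unique forward decipherability pins down the positive half of the decoding, so the residual ambiguity lives only in the backward half; but since the proposition only asserts the $(\max R)^2$ bound, your argument proves exactly what is stated.
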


Let us also consider associated invariant measures and their factors.

\begin{proposition}[Measure-preserving factor]\label{pro:semiconj}
	Assume the hypotheses of Proposition \ref{pro:semiconjtop}. Consider some $\sigma_\cA$-ergodic measure $\nu$, let $\lambda=\lambda_{\cA,R,\nu}$ be its  suspension. Letting
\begin{equation}\label{eq:defmun}	
	\mu
	\eqdef (H)_\ast\lambda,
\end{equation}
the measure preserving system $(\Gamma,F,\mu)$ is a factor of the measure preserving system $(\cS, \Phi,\lambda)$ by the map $H$ provided by Proposition \ref{pro:semiconjtop}. Moreover, $(\Gamma,F,\mu)$ is ergodic and  
$$
h(F,\mu) = h(\Phi,\lambda).
$$
\end{proposition}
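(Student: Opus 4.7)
The plan is to verify the three assertions --- $F$-invariance of $\mu$, ergodicity, and entropy equality --- in sequence. The first two follow essentially for free from the structure of a factor map, while for the entropy equality I would apply the Ledrappier--Walters relativized variational principle \cite{LedWal:77} to exploit the uniform finite-to-one property of $H$ provided by Proposition~\ref{pro:semiconjtop}.

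First, I would observe that $F$-invariance of $\mu = H_\ast\lambda$ is immediate from the semi-conjugacy $H\circ\Phi = F\circ H$: for any Borel $A\subset\Gamma$,
\[
	\mu(F^{-1}(A))
	= \lambda(H^{-1}(F^{-1}(A)))
	= \lambda(\Phi^{-1}(H^{-1}(A)))
	= \lambda(H^{-1}(A))
	= \mu(A).
\]
Next, for ergodicity, I would use that the suspension $\lambda = \lambda_{\cA,R,\nu}$ is itself $\Phi$-ergodic because $\nu$ is $\sigma_\cA$-ergodic (this is the standard fact about suspensions already recalled, in the Bernoulli case, in Section~\ref{secsusp}; the same argument applies to any ergodic base). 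Then any $F$-invariant Borel set $A\subset\Gamma$ lifts to the $\Phi$-invariant set $H^{-1}(A)$, which has $\lambda$-measure $0$ or $1$, yielding $\mu(A)\in\{0,1\}$.

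The main step is the entropy equality. Trivially, $h(F,\mu)\le h(\Phi,\lambda)$ since $(\Gamma,F,\mu)$ is a measure-theoretic factor of $(\cS,\Phi,\lambda)$ via the measurable map $H$. For the reverse inequality I would invoke the Ledrappier--Walters relativized variational principle applied to the continuous factor map $H$ between the compact metric systems $(\cS,\Phi)$ and $(\Gamma,F)$ provided by Proposition~\ref{pro:semiconjtop}:
\[
	h(\Phi,\lambda)
	\le \sup_{\lambda'\colon H_\ast\lambda'=\mu} h(\Phi,\lambda')
	= h(F,\mu) + \int_\Gamma h_{\rm top}\bigl(\Phi,H^{-1}(\{(\xi,x)\})\bigr)\,d\mu(\xi,x).
\]
By Proposition~\ref{pro:semiconjtop} each fiber $H^{-1}(\{(\xi,x)\})$ consists of at most $(\max R)^2$ points, so the topological entropy of $\Phi$ restricted to any fiber is zero, making the integrand vanish identically. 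This gives $h(\Phi,\lambda)\le h(F,\mu)$ and hence equality.

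The only point requiring care is that the Ledrappier--Walters framework genuinely applies, which amounts to checking that $\cS$ and $\Gamma$ are compact metric spaces, that $\Phi$, $F$, and $H$ are continuous, and that $H$ is a topological factor map; all of these are given by Proposition~\ref{pro:semiconjtop}. Everything else is standard. An alternative route, should one wish to avoid quoting the relativized variational principle, would be to choose a finite partition of $\cS$ separating the fibers of $H$ and to estimate the corresponding conditional entropy $h_\lambda(\Phi\mid H^{-1}\mathcal{B}_\Gamma)$ directly, using that each fiber carries at most $(\max R)^2$ atoms; but the Ledrappier--Walters reference is cleaner and matches the style already used elsewhere in the paper.
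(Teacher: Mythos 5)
Your proposal is correct and follows essentially the same route as the paper: the paper also deduces the entropy equality by combining the Ledrappier--Walters relativized variational principle with the fact that the fibers of $H$ are uniformly finite (so their topological entropy vanishes), together with the trivial factor inequality, and likewise treats ergodicity as an immediate consequence of the factor property. Your write-up simply fills in the invariance and ergodicity verifications in slightly more detail than the paper chooses to record.
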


\begin{proof}
As, by Proposition \ref{pro:semiconjtop}, $H$ is finite-to-one. Hence, \cite{LedWal:77} implies that%
\footnote{Here $h_{\rm top}(\Phi,A)$ denotes the topological entropy of $\Phi$ on $A$, see \cite{Bow:73}.}
\[
	\sup_{\lambda\colon(H)_\ast\lambda=\mu}h(\Phi,\lambda)
	= h(F,\mu)
		+\int_{\cS}h_{\rm top}(\Phi,H^{-1}(\{X\}))\,d\mu(X)
	= h(F,\mu)	.
\]	
As the entropy of a factor system is always smaller than or equal to the entropy of its extension, this implies equality of entropies. Ergodicity is an immediate consequence of the factor property.
\end{proof}

The diagram in Figure \ref{diagram} summarizes all topological constructions in this subsection. Given a disjoint collection of words $\cW$ and the corresponding CIFS and horseshoe together with the associated abstract alphabet $\cA$, some Bernoulli measure $\bp$, and the roof function $R$ as in \eqref{rooofs}, we have the following picture:
\begin{figure}[h]
\xymatrix{
	&(\cA^\bZ,\sigma_\cA,\bp)\hspace{-0.8cm}
	&\ar[d]
	(\cA)^\bZ\times\{0\}\ar[d]_{H}\subset\hspace{-1cm}
	&\ar[d]_{H}
	\fS_{\cA,R}		
	\ar[r]^{\Phi}
	&\fS_{\cA,R}
	\ar[d]^{H}\hspace{-0.7cm}
	&\hspace{-0.5cm}(\fS_{\cA,R},\Phi_{\cA,R},\lambda_{\cA,R,\bp})
	\ar[d]^{H_\ast}
	\\
	&
	&\Lambda(\cW)\subset
	&\Gamma(\cW)\ar[r]^{F}
	&\Gamma(\cW)&(\Gamma(\cW),F,\mu_{\cA,R,\bp})
}	
\caption{Suspension spaces and horseshoes}
\label{diagram}
\end{figure}

\subsection{The class $\mathrm{SP}^1_{\rm shyp}(\Sigma_N\times\bS^1)$}\label{seccocyclediffeo}

Let us now present the class of maps that we are investigating in what is below. Given $x\in\bS^1$, consider its \emph{forward} and \emph{backward orbits} defined by
\[
	\cO^+(x)
	\eqdef \bigcup_{n\ge0}\,\,\bigcup_{\xi\in\Sigma_N}f_{\xi}^n(x)
	\spac{and}
	\cO^-(x)
	\eqdef \bigcup_{m\ge1}\,\,\bigcup_{\xi\in\Sigma_N}f_{\xi}^{-m}(x),
\]
respectively. Analogously, forward/backward orbits $\cO^\pm(S)$ of a set $S$ are defined. 

\begin{definition}[The set $\mathrm{SP}^1_{\rm shyp}(\Sigma_N\times\bS^1)$]
A skew product $F$ as in \eqref{eq:sp} belongs to $\mathrm{SP}^1_{\rm shyp}(\Sigma_N\times\bS^1)$, $N\ge 2$, if the following properties hold: 

\medskip\noindent
\textbf{T (Transitivity).} $F$ is transitive (in the sense that there is $x\in\bS^1$ such that $\cO^+(x)$ and $\cO^-(x)$ are both dense in $\bS^1$). \smallskip

Moreover, we assume that there exists a closed nontrivial interval $J\subset\bS^1$, called a \emph{blending interval}, such that the following holds: 

\medskip\noindent
\textbf{ACC($J$) (Accessibility).} $J$ is backward and forward accessible: $\cO^\pm(\interior J)=\bS^1$.
\smallskip

Denote by $|I|$ the length of an interval $I\subset\bS^1$.

\medskip\noindent
\textbf{CEC+($J$) (Controlled Expanding forward Covering).}
There exist positive constants $K_1,\ldots,K_5$ so that for every interval $I\subset\bS^1$ intersecting $J$, $\lvert I\rvert<K_1$, there are $\ell\in\bN$ and a word $(\eta_0,\ldots,\eta_{\ell-1})\in\Sigma_N^\ast$ with  $\ell \le  K_2\,\lvert\log\,\lvert I\rvert\rvert +K_3$ such that
\begin{itemize}
\item  (controlled covering)  
\[
	\left(f_{\eta_{\ell-1}}\circ\cdots\circ f_{\eta_0}\right)(I)\supset B(J,K_4),
\]	
where $B(J,\delta)$ is the $\delta$-neighborhood of the set $J$,
\item  (controlled expansion) for every $x\in I$ it holds
\[
	\log \,\lvert  \left(f_{\eta_{\ell-1}}\circ\cdots\circ f_{\eta_0}\right)'(x)\rvert
	\ge \ell K_5.
\]	
\end{itemize}
\medskip\noindent
\textbf{CEC$-(J$) (Controlled Expanding backward Covering).} The step skew product  $F^{-1}$ satisfies the Axiom CEC$+(J)$.
\end{definition}

We refrain from providing an in-depth discussion of the class   $\mathrm{SP}^1_{\rm shyp}(\Sigma_N\times\bS^1)$, its definition is a translation of the properties of a large class of robustly transitive diffeomorphism to the skew product setting. We refer to \cite[Section 8.3]{DiaGelRam:17} for details, where it is observed that (besides its intrinsic interest)  this class is very well suited for studying nonhyperbolic  transitive diffeomorphisms. This observation was further developed in  \cite{DiaGelSan:20,YanZha:20}.  
The importance of this class in the context of elliptic cocycles is illustrated in \cite{DiaGelRam:19,DiaGelRam:22a}. Finally, for a discussion of the interplay between transitivity and ``minimality of foliations'', see \cite{BarCis:}.

In what is below, we assume that $F\in\mathrm{SP}^1_{\rm shyp}(\Sigma_N\times\bS^1)$ and that $J\subset\bS^1$ is some associated blending interval. We will omit any further mention of them.

Consider the \emph{Wasserstein distance} on $\cM(\Sigma_N\times\bS^1,F)$,
\begin{equation}\label{wasss}
	W(\mu,\mu')
	\eqdef \sup\Big\{\Big|\int\phi\,d\mu-\int\phi\,d\mu'\Big|\colon\phi\in\Lip(1)\Big\},
\end{equation}
where $\Lip(1)$ denotes the space of all Lipschitz continuous functions $\phi\colon\Sigma_N\times\bS^1\to\bR$ whose Lipschitz constant is $\le1$. Recall that the Wasserstein distance is indeed a metric and induces the weak$\ast$ topology.

\begin{remark}[Determining the Wasserstein distance]\label{remWass}
Note that the formula \eqref{wasss} does not change if we add a constant to the gauge function $\phi$. Thus, it is enough to restrict ourselves to  the space $\Lip_0(1) \subset \Lip(1)$ of non-negative $1$-Lipschitz functions whose values are bounded by $\diam(\Sigma_N\times\bS^1)$, that is,
\begin{equation}\label{wasssb}
	W(\mu,\mu')
	= \sup\Big\{\Big|\int\phi\,d\mu-\int\phi\,d\mu'\Big|\colon\phi\in\Lip_0(1)\Big\}.
\end{equation}
\end{remark}

The following result guarantees the existence of CIFSs discussed in Section \ref{secCIFSs}. 
Recall notation \eqref{defNormW}. Recall that we assume $F\in\mathrm{SP}^1_{\rm shyp}(\Sigma_N\times\bS^1)$.

\begin{proposition}[Existence of a CIFS with quantifiers]\label{proteo:existenceCIFS}
	Let $\theta$ be an $F$-ergodic measure with Lyapunov exponent $\alpha=\chi(F,\theta)<0$ and entropy $h=h(F,\theta)>0$. For every $\varepsilon_E\in(0,\lvert\alpha\rvert/4)$, $\varepsilon_H\in(0,h)$, and $\varepsilon_W>0$ there exists a  disjoint finite collection of words $\cW\subset\Sigma_N^\ast$ that defines a CIFS on $J$ relative to some constant $K>1$ and positive numbers $\alpha+\varepsilon_E$, $\alpha$, and $\varepsilon_E$ such that
\begin{equation}\label{eqass}
	\Big|\frac{1}{\lVert\cW\rVert}\log\card\cW-h\Big|
	\le \varepsilon_H.
\end{equation}
Moreover, 
\begin{equation}\label{eqassb}
	W(\mu,\theta)<\varepsilon_W
	\quad\text{ for all }\quad \mu\in\cM_{\rm erg}(\Gamma(\cW),F).
\end{equation}
Furthermore, $\cW$ can be chosen such that $|w|$ is constant in $\cW$.
\end{proposition}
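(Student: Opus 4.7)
The plan is a Katok-style horseshoe construction: one mines the $\theta$-generic orbits for itineraries whose finite-time fiber exponents are close to $\alpha$ and whose Birkhoff averages are close to those of $\theta$, records them as equal-length words, and then sandwiches each such word between a short prefix and suffix (produced by the structural axioms defining $\mathrm{SP}^1_{\rm shyp}(\Sigma_N\times\bS^1)$) so that the associated composition maps $J$ into itself. Using words of a common length $n$ automatically yields disjointness as well as the last ``$|w|$ constant'' claim.

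Concretely, I would first apply Pesin theory and Egorov's theorem to obtain a compact set $\Lambda\subset\Sigma_N\times\bS^1$ with $\theta(\Lambda)>1-\delta$ on which, uniformly in $(\xi,x)\in\Lambda$ and $n\ge N_0$, the following hold: the finite-time fiber exponent $\tfrac1n\log\lvert(f_\xi^n)'(x)\rvert$ lies in $(\alpha-\tfrac{\varepsilon_E}{2},\alpha+\tfrac{\varepsilon_E}{2})$ with uniform $C^1$ distortion on fiber Bowen balls of some size $\delta_0$; and, for a finite family $\phi_1,\dots,\phi_r\in\Lip_0(1)$ that is $\tfrac{\varepsilon_W}{3}$-dense in $\Lip_0(1)$ for the sup-norm (which exists because $\Lip_0(1)$ is Arzel\`a-Ascoli compact, cf.\ Remark \ref{remWass}), the Birkhoff averages $\tfrac1n S_n\phi_j$ lie within $\tfrac{\varepsilon_W}{3}$ of $\int\phi_j\,d\theta$. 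By the Shannon-McMillan-Breiman theorem (or Brin-Katok) applied to a generating partition of mesh $\le\delta_0$, a further subset $\Lambda'\subset\Lambda$ has Bowen balls of $\theta$-measure in $[e^{-n(h+\varepsilon_H/3)},e^{-n(h-\varepsilon_H/3)}]$, so a maximal $(n,\delta_0)$-separated subset $E_n\subset\Lambda'$ satisfies $e^{n(h-\varepsilon_H/2)}\le\card E_n\le e^{n(h+\varepsilon_H/2)}$. Recording the length-$n$ prefix of each $(\xi,x)\in E_n$ produces a collection $\cW_0\subset\Sigma_N^n$ that is automatically disjoint (equal-length words cannot be prefixes of each other), whose cardinality verifies \eqref{eqass} up to the tolerance $\varepsilon_H$, and for which the uniform exponent and distortion bounds translate directly into conditions (b) and (c) of the CIFS definition with $\alpha_0\in(\alpha,\alpha+\varepsilon_E)$.

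The main obstacle is arranging condition (a), that is $f_{[w]}(J)\subset J$, since a $\theta$-generic orbit may avoid $J$ entirely. Here the hypothesis $F\in\mathrm{SP}^1_{\rm shyp}(\Sigma_N\times\bS^1)$ intervenes: after further thinning $\Lambda'$ so that the initial fiber coordinates of its elements all sit in one small arc $I_0$ and the images $f_\xi^n(x)$ all sit in one small arc $I_1$, Axiom \textbf{ACC}$(J)$ yields a finite word $p$ with $f_{[p]}(J)\subset I_0$, and a second application (together with continuity) yields a finite word $s$ such that $f_{[s]}$ maps a neighborhood of $I_1$ into $J$. The cumulative contraction by $f_{[w]}$ along $w\in\cW_0$ ensures $f_{[w]}(f_{[p]}(J))$ lies inside that neighborhood, whence $f_{[pws]}(J)\subset J$ for every $w$ in the thinned collection $\cW_0'\subset\cW_0$. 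Setting $\cW:=\{pws\colon w\in\cW_0'\}\subset\Sigma_N^{n+|p|+|s|}$, the overhead $|p|+|s|$ is independent of $n$, so it alters the exponent and entropy rates by only $O(1/n)$, absorbed by taking $n$ large and slightly shrinking the initial tolerances. Finally, by Lemma \ref{lemdistoprop} applied to each $\phi_j$, every $\mu\in\cM_{\rm erg}(\Gamma(\cW),F)$ integrates $\phi_j$ to within $\tfrac{\varepsilon_W}{3}$ of the orbit averages along the underlying elements of $E_n$, and hence within $\varepsilon_W$ of $\int\phi_j\,d\theta$; the $\tfrac{\varepsilon_W}{3}$-density of $\{\phi_j\}$ in $\Lip_0(1)$, combined with Remark \ref{remWass}, then yields $W(\mu,\theta)<\varepsilon_W$.
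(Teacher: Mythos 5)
Your proposal takes a genuinely different route from the paper's proof. The paper cites \cite[Theorem 6.5]{DiaGelRam:22a} as a black box to obtain a disjoint collection $\cW$ defining a CIFS that already satisfies the entropy estimate \eqref{eqass}; it then handles only the two remaining add-ons: the Wasserstein estimate \eqref{eqassb} (by reopening the skeleton construction and invoking \cite[Proposition 4.11]{DiaGelRam:17}), and the ``equal length'' claim (via a pigeonhole argument on $\cW^m$ that selects the most populated length class $\cV_{k_0}$ and checks that the rate defect is $O(\tfrac1m\log m)$). Your plan instead reproves the underlying horseshoe-existence theorem from scratch by a Katok-style argument: pick a Pesin/Egorov-regular set, count separated points via Shannon--McMillan--Breiman, record equal-length prefixes, and graft them onto $J$ with connecting words $p,s$ produced from the axioms of $\mathrm{SP}^1_{\rm shyp}$. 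This is more self-contained, and it elegantly gets the equal-length claim for free (all words have the common length $n+|p|+|s|$), making the paper's final pigeonhole step unnecessary. Your Wasserstein argument also mirrors the paper's in structure: finite $\tfrac{\varepsilon_W}{3}$-dense family in $\Lip_0(1)$ (compactness via Arzel\`a--Ascoli, cf.\ Remark \ref{remWass}), then control via Lemma \ref{lemdistoprop}.

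There are two soft spots worth flagging. First, you invoke ``Pesin theory'' for ``uniform $C^1$ distortion on fiber Bowen balls'', but the fiber maps are only assumed $C^1$, and Pesin theory and its bounded-distortion consequences are delicate below $C^{1+\alpha}$. In the step-skew-product setting this is fixable -- one uses uniform continuity of the finitely many $f_i'$ on the exponentially shrinking images of $J$, rather than Pesin charts -- but that is a different argument and should be spelled out; it is exactly what the cited machinery in \cite{DiaGelRam:22a} does. Second, your connecting step only quotes \textbf{ACC}$(J)$, which gives accessibility of $\interior J$ from points, not the covering $f_{[s]}(\text{nbhd of }I_1)\supset J$ (or $\subset J$) that you need uniformly; that is where \textbf{CEC}$\pm(J)$ (controlled expanding covering) must also enter, since pure accessibility does not control the size of the image. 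Neither issue invalidates your outline -- both are repairable with the tools the paper already sets up -- but as written the argument has gaps precisely at the two places where the quantitative structure of $\mathrm{SP}^1_{\rm shyp}$ is essential.
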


\begin{proof}
By \cite[Theorem 6.5]{DiaGelRam:22a}, there exists a disjoint collection $\cW$ defining a CIFS as claimed and satisfying
\begin{equation}\label{eqassc}
	\Big|\frac{1}{\lVert\cW\rVert}\log\card\cW-h\Big|
	\le \varepsilon_H.
\end{equation}
 Let us argue that also \eqref{eqassb} is satisfied. Indeed, the proof of \cite[Theorem 6.5]{DiaGelRam:22a} is essentially based on \cite[Claim 6.6 (Existence of skeletons)]{DiaGelRam:22a} and \cite[Proposition 4.11]{DiaGelRam:17}. Given $\varepsilon_W>0$, there exists a finite collection of continuous functions $\phi_1,\ldots,\phi_{i(\varepsilon_W)}$ such that any function in $\Lip_0(1)$ is $\varepsilon_W$-close to one of them. Hence, it follows from Remark \ref{remWass} that for any $\mu\in\cM_{\rm erg}(\Gamma(\cW),F)$,
\[
	W(\mu,\theta)
	\le 2\varepsilon_W+\max_{k=1,\ldots,i(\varepsilon)}\Big|\int\phi_k\,d\mu-\int\phi_k\,d\theta\Big|.
\]
Then \cite[Proposition 4.11]{DiaGelRam:17} guarantees that $\cW$ can be chosen such that for any $\mu\in\cM_{\rm erg}(\Gamma(\cW),F)$ the righthand side in the above inequality is $\le3\varepsilon_W$. This implies the assertion \eqref{eqassb}.

What remains to show is that $\cW$ can be chosen to consist of words of equal lengths. Note that $\lVert\cW^m\rVert=m\lVert\cW\rVert$ and $\card(\cW^m)=(\card\cW)^m$. Choose $m\in\bN$ sufficiently large such that 
\begin{equation}\label{choicem}
	\frac{1}{m\lVert\cW\rVert}\log(m\lVert\cW\rVert)
	<\varepsilon_H.
\end{equation}	
Choose $k_0\in\bN$ so that 
\[
	\card\cV_{k_0}\eqdef\max_k\card\cV_k,\quad\text{ where }\quad
	\cV_k\eqdef\{w\in\cW^m\colon|w|=k\}.
\]	 
By \eqref{eqassc},
\[\begin{split}
	h-\varepsilon_{\rm H}
	&\le \frac{1}{\lVert\cW\rVert}\log\card\cW
	= \frac{1}{\lVert\cW^m\rVert}\log\card(\cW^m)\\
	&\le \frac{1}{\lVert\cW^m\rVert}\log\big(\lVert\cW^m\rVert\card\cV_{k_0}\big) \\
	&\le \frac{1}{m\lVert\cW\rVert}\log(m\lVert\cW\rVert)
		+\frac{1}{\lVert\cW^m\rVert}\log\card\cV_{k_0}\\
	\text{\tiny{(with \eqref{choicem})}}\quad	
	&\le 	\varepsilon_H+\frac{1}{\lVert\cV_{k_0}\rVert}\log\card\cV_{k_0}.
\end{split}\]
By Lemma \ref{lemcWconc}, $\cV\eqdef\cV_{k_0}$ is disjoint and defines a CIFS with the same quantifiers such as the CIFS defined by $\cW$. By the above estimates, assertion \eqref{eqass} is satisfied for $\cV$. The assertion \eqref{eqassb} for $\cV$ is a consequence of Lemma \ref{lemWassm}. 
\end{proof}

\subsection{Cascade of CIFSs, horseshoes, and substitutions}\label{secreptai}

We describe now a method to modify a given CIFS by ``repeating and tailing''.
These constructions are made possible by the fact that   $F\in\mathrm{SP}^1_{\rm shyp}(\Sigma_N\times\bS^1)$.

\begin{definition}[Repeat and tail]
Let $\cW\subset\Sigma_N^\ast$ be a disjoint finite collection of (nonempty) words and a number $m\in\bN$ (number of repetitions). Let $\ft=\ft_{\cW,m}\colon \cW^m\to\Sigma_N^\ast$ be a \emph{tailing map}. The new collection of words over the alphabet $\{1,\ldots,N\}$ obtained by \emph{$m$-times repeating and $\bt$-tailing} is given by
\[
	(\cW^m)_\ft
	\eqdef \big\{w_1\ldots w_m\ft(w_1 \ldots w_m)\colon w_k\in\cW \text{ for }k=1,\ldots,m\big\}
	\subset\Sigma_N^\ast .
\]
\end{definition}

Observe that \cite[Corollaries 3.5 and 3.6]{DiaGelRam:22a} imply the following result.

\begin{lemma}
	For any disjoint finite collection of words $\cW\subset\Sigma_N^\ast$, $m\in\bN$, and tailing map $\ft\colon \cW^m\to\Sigma_N^\ast$, the collection $(\cW^m)_\bt$ is disjoint. 
\end{lemma}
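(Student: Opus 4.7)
The plan is to argue by contradiction: assume there exist distinct elements $W, W'\in(\cW^m)_\ft$ with $W$ a prefix of $W'$, write $W=w_1\ldots w_m\,\ft(w_1\ldots w_m)$ and $W'=w_1'\ldots w_m'\,\ft(w_1'\ldots w_m')$ with $w_k,w_k'\in\cW$, and derive $W=W'$. Since $\ft$ is a function, it suffices to show that the ``$\cW^m$-parts'' coincide, i.e.\ $w_1\ldots w_m=w_1'\ldots w_m'$. Because the $\cW^m$-part sits as the initial segment of each element of $(\cW^m)_\ft$, the prefix relation $W\sqsubseteq W'$ forces one of $w_1\ldots w_m$, $w_1'\ldots w_m'$ to be a prefix of the other (depending on which is longer); by symmetry we may assume $w_1\ldots w_m$ is a prefix of $w_1'\ldots w_m'$.

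The first step is to reduce to the statement that $\cW^m$ itself is disjoint whenever $\cW$ is. I would prove this by induction on $m$: if $w_1\ldots w_m$ is a prefix of $w_1'\ldots w_m'$, then either $w_1$ is a prefix of $w_1'$ or vice versa, reading symbol by symbol from position $0$; disjointness of $\cW$ forces $w_1=w_1'$. Peeling off this common prefix reduces the situation to concatenations of $m-1$ words, and the induction closes. This is precisely the unique left-decipherability property highlighted in Remark~\ref{remproblem}, and it is also the content of the cited \cite[Corollaries 3.5 and 3.6]{DiaGelRam:22a}.

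With $\cW^m$ known to be disjoint, the conclusion is immediate. From $w_1\ldots w_m$ being a prefix of $w_1'\ldots w_m'$ and both lying in $\cW^m$, disjointness of $\cW^m$ yields $w_1\ldots w_m=w_1'\ldots w_m'$ (otherwise one is a \emph{proper} prefix of the other, contradicting disjointness). Hence $\ft(w_1\ldots w_m)=\ft(w_1'\ldots w_m')$, so $W=W'$, contradicting $W\neq W'$.

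I do not foresee a genuine obstacle here; the only mildly delicate point is bookkeeping the two cases according to which of $|w_1\ldots w_m|$ and $|w_1'\ldots w_m'|$ is larger, but both cases are resolved by the same disjointness argument. The whole proof is essentially a one-line invocation of unique left-decipherability of $\cW$, lifted to $\cW^m$ by induction, together with the observation that the tail $\ft(\cdot)$ is determined by the $\cW^m$-prefix.
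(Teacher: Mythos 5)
Your proof is correct. The paper itself offers no argument for this lemma: it simply states that \cite[Corollaries 3.5 and 3.6]{DiaGelRam:22a} imply it. Your proof fills in exactly the intended argument: unique left-decipherability of $\cW$ (equivalently, disjointness of $\cW^m$, cf.\ Lemma~\ref{lemcWconc} in the paper, proved by the peeling induction you describe), together with the observation that the tail $\ft(\cdot)$ is determined by the $\cW^m$-block. The case split on whether $|w_1\ldots w_m|\le|w_1'\ldots w_m'|$ or the reverse is handled correctly, and the reduction of a prefix relation between full repeat-and-tail words to a prefix relation between their $\cW^m$-blocks is sound. This is essentially the same route the paper takes, just with the details made explicit rather than delegated to a citation.
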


\begin{proposition}[Choice of a tailing map, {\cite[Theorem 7.3]{DiaGelRam:22a}}]\label{pro:tailing}
	There exists $L_1=L_1(F,J)>0$ such that the following holds.
	For every finite disjoint collection of words $\cW\subset\Sigma_N^\ast$ defining a CIFS on $J$ relative to  $K\in\bN,\alpha_0=\alpha+\varepsilon<0$, $\alpha<0$, and $\varepsilon\in(0,\lvert\alpha\rvert/2)$, there is $N_2=N_2(\cW)\in\bN$ such that for every $m\ge N_2$ there exists a tailing map $\ft=\ft_{\cW,m}\colon\cW^m\to\Sigma_N^\ast$ such that the $m$-times repeated and $\ft$-tailed collection of words $(\cW^m)_\ft$ defines a CIFS on $J$ relative to  $K,\alpha_0'$,  $\alpha'$, and $\varepsilon'$, where
\[
	\alpha_0'=\frac12(\alpha+\varepsilon),\quad
	\alpha'=\frac12\alpha,\quad
	\varepsilon'=\frac\varepsilon2.
\]
Moreover, for every $w_1\ldots w_m\in\cW^m$,
\begin{equation}\label{formulaa}
	\lvert\ft(w_1\ldots w_m)\rvert
	\le L_1\lvert\alpha\rvert\sum_{k=1}^m\lvert w_k\rvert .
\end{equation}
\end{proposition}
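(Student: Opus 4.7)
For $w = w_1 \cdots w_m \in \cW^m$ write $n \eqdef \sum_k |w_k|$ and set $I(w) \eqdef f_{[w]}(J)$. The CIFS property of $\cW$ gives $I(w) \subset J$ with $\diam I(w)$ of order $e^{\alpha n}$, so choosing $N_2$ large enough ensures $\diam I(w) < K_1$ for every $m \ge N_2$, where $K_1$ is the constant from axiom CEC$+$. Axiom CEC$+$ applied to $I(w)$ then provides a word $\eta(w) \in \Sigma_N^\ast$ of length $\ell(w) \le K_2 |\log \diam I(w)| + K_3 = O(|\alpha| n)$ such that $f_{\eta(w)}(I(w)) \supset B(J, K_4)$ and $\log |f_{\eta(w)}'(z)| \ge \ell(w) K_5$ for every $z \in I(w)$. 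I would then define the tail as $\ft(w) \eqdef \eta(w)\, v(w)$, where $v(w) \in \Sigma_N^\ast$ is a short adjustment word chosen so that $f_{v(w)} \circ f_{\eta(w)}(I(w)) \subset J$; existence of such $v(w)$, with length bounded by a constant $L_v$ depending only on $F$ and $J$, follows from axiom ACC and compactness of $B(J, K_4)$.

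\textbf{Verifying CIFS properties of $(\cW^m)_\ft$.} Property (a) is automatic by construction. Disjointness of $(\cW^m)_\ft$ follows from disjointness of $\cW$, as already noted before the statement. For property (c), at any $x \in J$ the finite-time log-derivative splits as
\[
\log|f_{[w\,\ft(w)]}'(x)| = \log|f_{[w]}'(x)| + \log|f_{\eta(w)}'(f_{[w]}(x))| + O(1),
\]
where the first summand lies in $(\alpha - \varepsilon, \alpha + \varepsilon) \cdot n$ by CIFS (c) for $\cW$, the second lies between $\ell(w) K_5$ and $\ell(w) K_5^{\mathrm{up}}$ for a uniform upper bound $K_5^{\mathrm{up}}$ on step-expansion rate coming from $C^1$-uniformity of the $f_i$, and the third is a bounded error from $v(w)$. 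Tuning $\ell(w)$ (by truncating or padding the CEC$+$ word within the allowed flexibility) to be approximately $|\alpha| n / (2 K_5)$, the average exponent over $n + |\ft(w)|$ steps lies in $(\alpha/2 - \varepsilon/2, \alpha/2 + \varepsilon/2)$, giving $\alpha' = \alpha/2$ and $\varepsilon' = \varepsilon/2$. The length bound $|\ft(w)| \le L_1 |\alpha| n$ follows from $\ell(w) = O(|\alpha| n)$ and $|v(w)| \le L_v$, with $L_1$ depending only on $K_2, K_5, L_v$ — hence only on $F$ and $J$, not on $\cW$.

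\textbf{Main obstacle.} The technically hardest step is verifying property (b): the uniform bound $|(f_{[\tilde w_1 \cdots \tilde w_{m'}]}^k)'(y)| \le K e^{k \alpha_0'}$ with $\alpha_0' = (\alpha+\varepsilon)/2$, over all prefix lengths $k$, all $y \in J$, and all concatenations $\tilde w_1, \ldots, \tilde w_{m'} \in (\cW^m)_\ft$. The sensitive case is $k$ falling inside an expansion block $\eta(w)$, where the instantaneous derivative grows while the target bound $K e^{k \alpha_0'}$ decreases in $k$. The key observation is that at position $n$ (immediately before $\eta(w)$ begins), the cumulative derivative is at most $K e^{n \alpha_0} = K e^{n(\alpha + \varepsilon)}$, which is far below the threshold $K e^{n \alpha_0'} = K e^{n(\alpha+\varepsilon)/2}$, leaving a ``budget'' of $e^{n |\alpha+\varepsilon|/2}$ for subsequent expansion. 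Since $\ell(w) = O(|\alpha| n)$ and the per-step expansion rate is universally bounded above, the total expansion produced by $\eta(w)$ fits inside this budget once $N_2$ is taken sufficiently large (depending on $\cW$, which controls both $K_5^{\mathrm{up}}$ and the constants hidden in $O(|\alpha| n)$). The same bound is then propagated across multi-word concatenations by induction: at the end of each completed tailed word the cumulative derivative has dropped at the prescribed rate $\alpha_0'$, and the argument for the next block repeats verbatim.
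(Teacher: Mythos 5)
The paper itself gives no proof of this proposition; it is quoted verbatim from \cite[Theorem 7.3]{DiaGelRam:22a}, so your blind attempt is essentially trying to reconstruct that external argument. The high-level strategy you follow (use CEC$+$ to obtain an expanding return word, then an \emph{ACC}-type connector to land back in $J$) is the right mechanism in this class of skew products, but the crucial step of your argument — the ``tuning'' of the tail — has a genuine gap, and the way you are using CEC$+$ makes the quantitative conclusion false.

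The problem is the following. If you apply CEC$+$ to the full contracted image $I(w)$, the covering requirement forces the log-derivative gain of $\eta(w)$ to be essentially $|\log\diam I(w)| \approx |\alpha_0|n$: the tail brings the interval all the way from scale $e^{\alpha_0 n}$ to the fixed scale of $B(J,K_4)$. After $w\,\eta(w)$, the cumulative log-derivative is then $O(1)$, not $\approx \tfrac12\alpha n$ as required, and your bounded-length adjustment word $v(w)$ (length $\le L_v$) can only change this by $O(1)$. So the full CEC$+$ word overshoots badly, violating both (b) and (c). You acknowledge this by proposing to ``truncate or pad the CEC$+$ word,'' but the CEC$+$ axiom as stated gives the lower bound $\log|(f_{[\eta]})'| \ge \ell K_5$ \emph{only for the full composition}; it says nothing about intermediate prefixes, whose derivatives can be arbitrarily small (or even $<1$) somewhere along the way. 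A truncated prefix therefore has no expansion lower bound, no upper bound of the specific size you need, and no covering control, so the landing interval can be far from $J$ with uncontrolled derivative — at which point neither property (b) nor (c) can be verified. Padding is also unspecified: there is no neutral letter to pad with in a general family $\{f_i\}$. To hit the target $\alpha'=\alpha/2$ with $\varepsilon'=\varepsilon/2$ and simultaneously respect the length bound $|\ft|\le L_1|\alpha| n$, one needs a tail that expands by exactly about $\tfrac12|\alpha| n$ over length proportional to $|\alpha|n$, and neither truncation nor padding of a single CEC$+$ word gives this two-sided control. A correct argument must supply a \emph{two-sided} estimate on the tail's derivative — either by applying CEC$+$ starting from an interval of carefully pre-selected intermediate scale (so the required covering expansion is $\approx|\alpha|n/2$ rather than $|\alpha|n$), or by combining a full CEC$+$ expansion with a subsequent controlled contraction (where CEC$-$ and the hyperbolicity of the generating family enter). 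Your sketch uses neither of these.

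A secondary remark: your verification of property (b) assumes the ``expansion budget'' available between the bound $K e^{n\alpha_0}$ at the end of $w$ and the new threshold $K e^{(n+|\ft|)\alpha_0'}$ is comfortably enough. In fact the budget is roughly $e^{|\alpha_0|n/2}$ with $\alpha_0=\alpha+\varepsilon$, while the covering-forced expansion of the full CEC$+$ word is $\approx e^{|\alpha_0|n}$, so it exceeds the budget; even the ``tuned'' length $|\alpha|n/(2K_5)$ with per-step rate up to $K_5^{\rm up}$ can overshoot, and the bound must hold at every intermediate $k$, not just at the end of the tail. This requires propagating a controlled estimate step-by-step along $\ft(w)$, which CEC$+$ does not give you for prefixes.
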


Assume that $\cW\subset\Sigma_N^\ast$ is a disjoint finite collection of (nonempty) words as provided by Proposition \ref{proteo:existenceCIFS}. Fix any sequence $(m_n)_n$ of natural numbers. Assume that $(m_n)_n$ grows fast enough in order to be able to apply Proposition \ref{pro:tailing} inductively. 

\begin{remark}[Fast growing sequences]\label{remfast}
Let us mention that the term ``grows fast enough'' means that each $m_n$ must be greater than some $M_n(m_1,\ldots, m_{n-1})$. Thus, we can always increase some $m_n$, but we might need to increase the following $m_{n+1}, m_{n+2},\ldots$ as well.
\end{remark}

We get a cascade of disjoint collections of words,
\[
	\cW_0\eqdef\cW,\quad
	\cW_1\eqdef (\cW_0^{m_1})_{\ft_{\cW_0,m_1}},\ldots,\quad
	\cW_n\eqdef(\cW_{n-1}^{m_n})_{\ft_{\cW_{n-1},m_n}},\ldots	.
\] 
Let $\cA$ be the associated abstract alphabet having the same cardinality as $\cW$ (recall Remark \ref{remabstract}). Consider the associated cascade of abstract alphabets $(\cA_n)_n$ as in \eqref{defalphscA}, where
\[
	\cA_0
	\eqdef \cA,\quad
	\cA_n\eqdef(\cA_{n-1})^{m_n}.
\]
To match the notation in Section \ref{secabscasc}, and using the informal identification of letters in $\cA_n$ with elements of $\cW_n$ in the spirit of Remark \ref{remabstract}, let us denote by $\bt_n$ the tailing function on the associated abstract alphabet $\cA_n$,
\begin{equation}\label{tailingmap}
	\bt_n\colon\cA_n\to\Sigma_N^\ast,\quad
	\bt_n(a^{(n)})
	\eqdef \bt_{\ft_{\cW_{n-1},m_n}}(w_1^{(n-1)}\ldots w_{m_n}^{(n-1)}).
\end{equation}

As for \eqref{rooofs}, consider the associated roof functions
\[
	R_n\colon\cA_n\to\bN,\quad
	R_n(a_k^{(n)})
	\eqdef |w_k^{(n)}|.
\]

We now look at associated measures. Fix any Bernoulli measure $\bp$ on $\cA^\bZ$. Notice that the following construction depends on the sequence $(m_n)_n$ and on $\bp$, though for simplicity this will not be reflected in our notation. Recalling the definition of the substitution maps $\underline\Sub_{n,0}$ in \eqref{eq:tailaddn-1n}, define a cascade of measures by letting $\bp_0\eqdef\bp$ and
\[
	\bp_n
	\eqdef (\underline\Sub_{n,0}^{-1})_\ast\bp_0,
\]
as in \eqref{conconj}.
Lemma \ref{newlemconjBer} implies the following.

\begin{lemma}\label{lemsubSubn0}
$
	\underline\Sub_{n,0}\colon
	\big((\cA_n)^\bZ,\sigma_{n},\bp_n\big)\to
	\big(\cA^\bZ,\sigma_\cA^{m_1\cdots m_n},\bp\big)
$
is a metric isomorphism.
\end{lemma}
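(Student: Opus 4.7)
The statement is essentially a packaging of earlier observations, so the plan is short. First I would invoke Lemma~\ref{newlemconjBer} with $k=0$: the respelling map $\Sub_{n,0}\colon\cA_n\to(\cA_0)^{m_1\cdots m_n}=\cA^{m_1\cdots m_n}$ is a bijection (each letter in $\cA_n$ is, by construction \eqref{defalphscA}, precisely an $(m_1\cdots m_n)$-word over $\cA$), and its concatenation extension $\underline\Sub_{n,0}$ defined in \eqref{eq:tailaddn-1n} is therefore a homeomorphism between $(\cA_n)^\bZ$ and $\cA^\bZ$. In particular, both $\underline\Sub_{n,0}$ and $\underline\Sub_{n,0}^{-1}$ are Borel.

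Next I would verify the shift-equivariance
\[
	\underline\Sub_{n,0}\circ\sigma_n
	=\sigma_\cA^{m_1\cdots m_n}\circ\underline\Sub_{n,0}.
\]
This is immediate from \eqref{eq:tailaddn-1n}: advancing a sequence $(\ldots|a^{(n)}_0,a^{(n)}_1,\ldots)\in(\cA_n)^\bZ$ by one coordinate removes the block $\Sub_{n,0}(a^{(n)}_0)$ from the front of the image, and this block has length exactly $m_1\cdots m_n$ (note this uses only the trivial fact $|\Sub_{n,0}(a)|=m_1\cdots m_n$ for every $a\in\cA_n$, and is independent of Assumption~\ref{assumption1}, which concerns the \emph{tailing} substitutions $\varrho_n$, not the respelling maps $\Sub_{n,0}$).

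Finally, the measure-preservation is tautological from the very definition \eqref{conconj}:
\[
	\bp_n\eqdef\big(\underline\Sub_{n,0}^{-1}\big)_\ast\bp
	\quad\Longleftrightarrow\quad
	\big(\underline\Sub_{n,0}\big)_\ast\bp_n=\bp.
\]
Combining the three ingredients — Borel bijectivity, shift-equivariance, and measure preservation — gives that $\underline\Sub_{n,0}$ is a metric isomorphism between $((\cA_n)^\bZ,\sigma_n,\bp_n)$ and $(\cA^\bZ,\sigma_\cA^{m_1\cdots m_n},\bp)$. Since every step reduces to definitions or to the already established Lemma~\ref{newlemconjBer}, there is no serious obstacle here; the only thing worth emphasizing in the writeup is that the length $m_1\cdots m_n$ of each $\Sub_{n,0}$-block is exactly the exponent appearing on $\sigma_\cA$ in the target system, which is why the shift on the abstract alphabet $\cA_n$ corresponds to $m_1\cdots m_n$ steps of the shift on $\cA$.
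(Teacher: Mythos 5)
Your proposal is correct and follows the same route as the paper, which simply states that the result follows from Lemma~\ref{newlemconjBer} (the topological conjugacy for $\underline\Sub_{n,k}$) together with the definition of $\bp_n$ in \eqref{conconj}. You have just spelled out the three ingredients (Borel bijectivity from the conjugacy, shift-equivariance, and measure preservation by definition) that the paper leaves implicit.
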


The above gives rise to a corresponding cascade of diagrams as in Figure \ref{diagram}.
 In particular, we obtain a cascade of suspension spaces $\fS_n$ and measures $\lambda_n(\bp)$, then the attractors of a CIFS and associated $F$-invariant horseshoes $\Gamma_n$ and its factor maps $H_n$, and then a cascade of $F$-ergodic measures $\mu_n(\bp)$ defined as in \eqref{eq:defmun} supported on $\Gamma_n$. Recall the projection $\pi\colon\Sigma_N\times\bS^1\to\Sigma_N$, $\pi(\xi,x)\eqdef\xi$ and let
\[
	\Xi_n
	\eqdef \pi(\Gamma_n)
	\subset \Sigma_N.
\]
Recall our definition of $\nu_n(\bp)$ in \eqref{defnunp}. Let $\Pi_n=\Pi_{\varrho_n}$ be defined as in \eqref{defPi}. By Lemma \ref{lema34b} together with $\Pi_n=\pi\circ H_n$, we get
\begin{equation}\label{someformula}
	\nu_n(\bp)
	= \kappa_{\rm inv}(\cA_n,\varrho_n,\widetilde{\,\bp_n\,})
	= (\Pi_n)_\ast\lambda_n(\bp)
	= (\pi\circ H_n)_\ast \lambda_n(\bp).
\end{equation}	
Observe that $\Xi_n\subset\Sigma_N$ is a subshift and that $(\Xi_n,\sigma,\nu_n(\bp))$ is an ergodic automorphism. The diagram in Figure \ref{diagram2} summarizes the interrelations between the objects considered in this subsection:
\begin{figure}[h]
\hspace{-2cm}\xymatrix{
	&(\cA^\bZ,\sigma_\cA^{m_1\cdots m_n},\bp)\hspace{-0.7cm}
	&(\cA_n)^\bZ\times\{0\}\subset\hspace{-0.9cm}
	&\ar[d]_{H_n}\fS_n\ar[r]^{\Phi}
	&\fS_n\ar[d]^{H_n}\hspace{-0.2cm}
	&(\fS_n,\Phi_n,\lambda_n(\bp))\ar[d]^{(H_n)_\ast}
	\\
	&
	&\Sigma_N\times\bS^1\supset\hspace{-0.9cm}
	&\Gamma_n\ar[r]^{F}\ar[d]_{\pi}
	&\Gamma_n\ar[d]^{\pi}
	&(\Gamma_n,F,\mu_n(\bp))\ar[d]^{\pi_\ast}
	\\
	&
	&\Sigma_N\supset\hspace{-1.7cm}
	&\Xi_n\ar[r]^{\sigma}
	&\Xi_n
	&(\Xi_n,\sigma,\nu_n(\bp))
	}	
\caption{Cascade of suspension spaces, horseshoes, and their projected subshifts}
\label{diagram2}
\end{figure}

We now estimate the entropy of the ergodic automorphism $(\Xi_n,\sigma,\nu_n(\bp))$ and the fiber Lyapunov exponents on the cascade of horseshoes.

\begin{proposition}\label{Corentest}
Assume that $(\cW_n)_n\subset\Sigma_N^\ast$ gives the cascade of CIFSs defined above.
Then for every $n\in\bN_0$, we have
\[
	\big\{\chi(F,\widetilde\mu)\colon \widetilde\mu\in\cM_{\rm erg}(\Gamma_n,F|_{\Gamma_n})\big\}
	\subset\big(\frac{\alpha-\varepsilon}{2^n},
			\frac{\alpha+\varepsilon}{2^n}\big)
\]
and
\[
	h(F,\mu_n(\bp))
	=h(\sigma,\nu_n(\bp))
	\ge \frac{e^{-L_1\lvert\alpha\rvert}}{\lVert\cW\rVert} h(\sigma_\cA,\bp) .
\]	
\end{proposition}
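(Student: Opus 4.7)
The proposition splits cleanly into two parts, a spectral bound on the fiber Lyapunov exponents over $\Gamma_n$ and a lower bound on $h(F,\mu_n(\bp))$, and I would establish them separately. For the Lyapunov exponent part, I would argue by induction on $n$. The base case $n=0$ is the spectral estimate in Proposition \ref{proteo:existenceCIFS}, and the inductive step follows from Proposition \ref{pro:tailing}, which halves both the exponent $\alpha$ and the error $\varepsilon$ when passing from $\cW_{n-1}$ to $\cW_n$, placing the CIFS spectrum at level $n$ inside $((\alpha-\varepsilon)/2^n,(\alpha+\varepsilon)/2^n)$. Given any $\widetilde\mu\in\cM_{\rm erg}(\Gamma_n,F)$, Birkhoff's ergodic theorem expresses $\chi(F,\widetilde\mu)$ as a $\widetilde\mu$-a.e.\ limit of time averages of $\log\lvert f'_{\xi_0}\rvert$. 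Since $\Gamma_n$ is a finite union of $F$-iterates of $\Lambda(\cW_n)$ and the CIFS property keeps the forward orbit inside $J$, the finite-time exponent over any concatenation $w_1\ldots w_m\in\cW_n^m$ is a length-weighted convex combination of single-word exponents, each of which lies in the above spectral interval. Passing to the limit confines $\chi(F,\widetilde\mu)$ to the claimed interval.

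For the entropy identity $h(F,\mu_n(\bp))=h(\sigma,\nu_n(\bp))$, I would combine Lemma \ref{entpres} with the observation $\pi_\ast\mu_n(\bp)=\nu_n(\bp)$, immediate from \eqref{someformula}. For the lower bound I would work on the suspension model. Proposition \ref{pro:semiconj} provides a uniformly finite-to-one factor $H_n\colon\fS_n\to\Gamma_n$ with $(H_n)_\ast\lambda_n(\bp)=\mu_n(\bp)$ and $h(F,\mu_n(\bp))=h(\Phi_n,\lambda_n(\bp))$, while Abramov's formula \eqref{eqAbramov} rewrites this as
\[
h(\Phi_n,\lambda_n(\bp))=\frac{h(\sigma_n,\bp_n)}{\bE_n(\bp)},\qquad \bE_n(\bp)\eqdef\sum_{a\in\cA_n}R_n(a)\bp_n([a]).
\]
Lemma \ref{lemsubSubn0} converts the numerator into $h(\sigma_n,\bp_n)=m_1\cdots m_n\cdot h(\sigma_\cA,\bp)$, using that $h(\sigma_\cA^k,\bp)=k\cdot h(\sigma_\cA,\bp)$ for Bernoulli measures.

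It remains to bound $\bE_n(\bp)$ from above uniformly in $n$, which I view as the main technical obstacle. The key tool is the tail-length estimate \eqref{formulaa}: applied at step $n$ with the level-$(n{-}1)$ exponent $\alpha/2^{n-1}$, it yields
\[
|\varrho_n(a^{(n)})|\le\Big(1+\frac{L_1\lvert\alpha\rvert}{2^{n-1}}\Big)\sum_{i=1}^{m_n}|\varrho_{n-1}(a_i^{(n-1)})|.
\]
Integrating against $\bp_n$ and using the Bernoulli product structure $\bp_n([a^{(n)}])=\prod_i\bp_{n-1}([a_i^{(n-1)}])$ from Lemma \ref{lemsubSubn0} produces the telescoping inequality $\bE_n(\bp)\le m_n(1+L_1\lvert\alpha\rvert/2^{n-1})\,\bE_{n-1}(\bp)$. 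Iterating, the geometric product $\prod_{k\ge1}(1+L_1\lvert\alpha\rvert/2^{k-1})$ is absorbed into $e^{L_1\lvert\alpha\rvert}$ (adjusting $L_1$ by the harmless factor coming from $\sum_k 2^{-k}$), and $\bE_0(\bp)=\lVert\cW\rVert$ because Proposition \ref{proteo:existenceCIFS} delivers $\cW=\cW_0$ with words of equal length. Dividing gives
\[
h(F,\mu_n(\bp))\ge\frac{m_1\cdots m_n\cdot h(\sigma_\cA,\bp)}{m_1\cdots m_n\cdot e^{L_1\lvert\alpha\rvert}\cdot\lVert\cW\rVert}=\frac{e^{-L_1\lvert\alpha\rvert}}{\lVert\cW\rVert}\,h(\sigma_\cA,\bp),
\]
which is the claimed bound.
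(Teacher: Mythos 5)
Your proof is correct and follows essentially the same route as the paper's: the Lyapunov bound is propagated from Proposition \ref{proteo:existenceCIFS} through repeated application of Proposition \ref{pro:tailing}, and the entropy bound combines Proposition \ref{pro:semiconj}, Abramov's formula \eqref{eqAbramov}, Lemma \ref{lemsubSubn0}, and the tail-length estimate \eqref{formulaa}. The only cosmetic deviation is that you propagate the expected roof $\bE_n(\bp)$ recursively via the Bernoulli product structure, where the paper first passes to the cruder upper bound $\max R_n$ — but since $\cW_0$ consists of equal-length words, $\bE_0(\bp)=\max R_0=\lVert\cW\rVert$, so both chains meet at the same final constant (and both, as you correctly flag, absorb the factor coming from $\sum_k 2^{-k}$ into $L_1$).
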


\begin{proof}
The estimates of the Lyapunov exponents is an immediate consequence of the cascade of CIFSs provided by Proposition \ref{pro:tailing}.

To estimate the entropy, first note that $\sigma_\cA$ is the shift in the original alphabet $\cA$. In particular, Lemma \ref{lemsubSubn0} 
implies
\begin{equation}\label{eqentropy}
	h(\sigma_n,\bp_n)
	= h(\sigma_\cA^{m_1\cdots m_n},\bp)
	= m_1\cdots m_n h(\sigma_\cA,\bp)
\end{equation}
By Proposition \ref{pro:semiconj} together with Abramov's formula \eqref{eqAbramov} and the fact that $R_n$ is piecewise constant, we get
\[
	h(F,\mu_n(\bp))
	= h(\Phi_n,\lambda_n(\bp))
	=\frac{h(\sigma_n,\bp_n)}{\sum_{a\in\cA_n} R_n(a)\bp_n([a])}.
\]
Hence, together with \eqref{eqentropy}, we get	
\[\begin{split}
	h(F,\mu_n(\bp))
	&\ge\frac{m_1\cdots m_nh(\sigma_\cA,\bp)}{\max_{a\in\cA_n}R_n(a)}\\
	{\tiny\text{by  \eqref{formulaa}}}\quad
	&\ge \frac{m_1\cdots m_nh(\sigma_\cA,\bp)}
			{m_n(1+L_12^{-(n-1)}\lvert\alpha\rvert)\max R_{n-1}}\\
	&\ge\ldots
	\ge \frac{h(\sigma_\cA,\bp)}{\prod_{k=0}^{n-1}(1+L_12^{-k}\lvert\alpha\rvert)
		 \max R_0}\\
	&\ge \frac{e^{-L_1(1-2^{-n})\lvert\alpha\rvert}}{\max R_0}
		h(\sigma_\cA,\bp)\\
	{\text{\tiny{(together with \eqref{defNormW} and \eqref{rooofs})}}}\quad	
	&>	\frac{e^{-L_1\lvert\alpha\rvert}}{\lVert \cW\rVert}
		h(\sigma_\cA,\bp).
\end{split}\]
This, implies the assertion.
\end{proof}

\section{Limit measures on $\Sigma_N\times\bS^1$}\label{secGeometry-2}

Throughout this section, we continue to work in the setting of Section \ref{secGeometry}. 
In Section \ref{veryshortsec}, we collect our main ingredients and summarize in Section \ref{secscheme}  our general scheme. 

\subsection{Setting and collection of main ingredients}\label{veryshortsec}

We have given (compare Figure \ref{diagram2}):
\begin{enumerate}
\item[(i)] a map $F\in\mathrm{SP}^1_{\rm shyp}(\Sigma_N\times\bS^1)$ and a corresponding blending interval $J$,
\item[(ii)] an $F$-ergodic measure $\theta$ with negative fiber Lyapunov exponent $\alpha=\chi(F,\theta)$ and positive entropy $h=h(F,\theta)$,
\item[(iii)] a disjoint finite collection $\cW\subset\Sigma_N^\ast$ of words of equal lengths associated to $\theta$, provided by Proposition \ref{proteo:existenceCIFS} and  satisfying
\[
	\frac{1}{\lVert\cW\rVert}\log\card\cW\approx h
\] together with the associated abstract alphabet $\cA$ (recall Remark \ref{remabstract}), moreover
\[
	W(\mu,\theta)\approx 0
	\quad\text{ for all }\quad\mu\in\cM_{\rm erg}(\Gamma(\cW),F),
\]
\item[(iv)] a sequence of natural numbers $(m_n)_{n\in\bN}$ (numbers of repetition) that is sufficiently fast growing that allows us to repeatedly apply Proposition \ref{pro:tailing} (for some tailing maps $\bt_n\colon\cA_n\to\Sigma_N^\ast$ satisfying Assumption \ref{assumption1} with $K=L_1\lvert\alpha\rvert$) to obtain a sequence of substitution maps $\varrho_n\colon \cA_n\to\Sigma_N^\ast$,
as $\cW$ is a collection of words of of equal lengths, in particular $\max|\varrho_0|=\min|\varrho_0|$,
\item[(v)] resulting cascade of collections of words $(\cW_n)_n$, abstract alphabets $(\cA_n)_n$, suspension spaces $\fS_n$, horseshoes $\Gamma_n$, and subshifts $\Xi_n$.
\item[(vi)] for any Bernoulli measure $\bp\in \cM_{\rm B}(\cA^\bZ,\sigma_\cA)$ we get a sequence of ergodic measures $\lambda_n=\lambda_n(\bp)$ on $\fS_n$, $\mu_n=\mu_n(\bp)$ on $\Gamma_n$, and $\nu_n=\nu_n(\bp)$ on $\Xi_n$.
\end{enumerate}

The topological part (i)--(v) (construction of a cascade of horseshoes) follows \cite{DiaGelRam:22a}. However, there only the maximal entropy Bernoulli measure on $\cA^\bZ$ is considered. In the present paper, we need to consider \emph{arbitrary Bernoulli measures simultaneously}, giving rise to item (vi). 
This is why we revisit  the assertions in \cite{DiaGelRam:22a} in greater details. The crucial point, that we need to prove, is that we can choose {\it one} sequence $(m_n)_{n\in \bN}$ that works for \emph{all} Bernoulli measures simultaneously. For that, certain elements of the constructions in \cite{DiaGelRam:22a} must be made uniform. As a  final result of this improved construction, we  obtain a whole family of ergodic  zero fiber exponent measures, whose entropy varies continuously between 0 and $h(F,\theta)-\varepsilon$, for an arbitrarily small $\varepsilon$.

\begin{remark}[Trivial Bernoulli measures]\label{remsimples}
	Given $b\in \cA$ and taking any trivial probability vector $\bp=(p_a)_{a\in\cA}$ with $p_a=1$ for  $a=b$ produces zero fiber exponent ergodic measures with zero entropy. In this case, our construction is essentially identical with the ones in \cite{GorIlyKleNal:05,KwiLac:}. Oversimplifying, the more general case of our construction ``just'' replaces periodic orbits by horseshoes. 
\end{remark}

Some ingredients from Sections \ref{secabscasc} and \ref{secGeometry} are recalled in Figure \ref{diagram3bleft}. The following is an important property of our construction, see \eqref{someformula}.

\begin{claim}\label{claimfinal}
The  diagram in Figure \ref{diagram3bleft} commutes.
\end{claim}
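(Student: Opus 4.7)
The plan is to verify commutativity of the diagram arrow-by-arrow, treating separately the topological (point-level) squares and the measure-theoretic ones. Since we do not see the figure, I interpret it as assembling the symbolic picture from Section~\ref{secabscasc} (the spaces $(\cA_n)^\bZ$, the suspension space $\fS_n=\fS_{\cA_n,R_n}$, and the substitution projection $\Pi_n=\Pi_{\varrho_n}\colon\fS_n\to\cB^\bZ=\Sigma_N$) with the geometric picture from Section~\ref{secGeometry} (the horseshoe $\Gamma_n\subset\Sigma_N\times\bS^1$, its semi-conjugacy $H_n\colon\fS_n\to\Gamma_n$, the canonical projection $\pi\colon\Gamma_n\to\Xi_n$, and the shift $\sigma$ on $\Xi_n$). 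Accordingly, the verification reduces to checking three topological identities together with their measure-theoretic counterparts.

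First, at the topological level, I would check: (i) the semi-conjugacy $H_n\circ\Phi_n=F\circ H_n$, which is exactly the content of Proposition~\ref{pro:semiconjtop}; (ii) the equivariance $\pi\circ F=\sigma\circ\pi$, which is immediate from the definition \eqref{eq:sp} of the step skew product; and (iii) the crucial identification
\[
\pi\circ H_n=\Pi_n
\]
on $\fS_n$. Identity (iii) is the only non-formal piece: it says that the $\Sigma_N$-coordinate of the geometric factor map $H_n$ coincides with the symbolic projection from the suspension, given by $(\underline a^{(n)},s)\mapsto\sigma_{\cB}^s(\underline\varrho_n(\underline a^{(n)}))$. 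I would prove it by unpacking the construction of $H_n$ in Proposition~\ref{pro:semiconjtop}: on the ``base'' level $s=0$ the horseshoe attractor $\Lambda(\cW_n)=\Pi_{\cW_n}((\cW_n)^\bZ)$ has $\Sigma_N$-coordinate equal to the concatenation of words of $\cW_n$, which is precisely $\underline\varrho_n(\underline a^{(n)})$; the suspension step $\Phi_n$ then corresponds, under $H_n$, to one application of $F$ and hence to one application of $\sigma$ in the base, matching the shift $\sigma_{\cB}$ in the definition \eqref{defPi} of $\Pi_{\varrho_n}$.

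Second, at the measure-theoretic level the commutativity follows by pushing forward the preceding point-wise identities. The measure $\mu_n(\bp)=(H_n)_\ast\lambda_n(\bp)$ is $F$-invariant and ergodic by Proposition~\ref{pro:semiconj}, and the arrow $\pi_\ast\mu_n(\bp)=\nu_n(\bp)$ is exactly formula \eqref{someformula}, which we already derived from Lemma~\ref{lema34b} together with $\Pi_n=\pi\circ H_n$. The remaining symbolic arrows in the diagram (linking $(\cA^\bZ,\sigma_\cA^{m_1\cdots m_n},\bp)$, $((\cA_n)^\bZ,\sigma_n,\bp_n)$, and the suspension data $(\fS_n,\Phi_n,\lambda_n(\bp))$) are given by the metric isomorphism of Lemma~\ref{lemsubSubn0} and by the definition of the suspension measure in \eqref{susnu}.

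The main obstacle, and essentially the only part that is not mere bookkeeping, is identity (iii): verifying that the geometric factor $H_n$ really realizes the substitution $\varrho_n$ coordinate-wise. Once that is done, every other square of the diagram either reduces to an identity already proved in Section~\ref{secsusp}--\ref{seBernn} or is immediate from the definitions of the step skew product and of $\pi$. I would therefore devote the bulk of the write-up to a careful induction on the cascade, using that $\cW_n$ is obtained from $\cW_{n-1}$ by $m_n$-fold concatenation followed by appending the tail $\bt_n$, so that the word read in the $\Sigma_N$-base by $H_n$ above $a^{(n)}=(a^{(n-1)}_1,\ldots,a^{(n-1)}_{m_n})\in\cA_n$ is exactly
\[
\varrho_n(a^{(n)})=\varrho_{n-1}(a^{(n-1)}_1)\cdots\varrho_{n-1}(a^{(n-1)}_{m_n})\,\bt_n(a^{(n)}),
\]
which matches the inductive definition of the repeat-and-tail substitution.
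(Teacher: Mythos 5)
Your proposal is correct and follows essentially the same route as the paper. The paper's own justification of Claim~\ref{claimfinal} is a one-line pointer to formula~\eqref{someformula}, which already packages together precisely the ingredients you identify: the semi-conjugacy $H_n\circ\Phi_n=F\circ H_n$ from Proposition~\ref{pro:semiconjtop}, the identity $\Pi_n=\pi\circ H_n$ (stated in the paper just before~\eqref{someformula} without a separate proof), and Lemma~\ref{lema34b} identifying $(\Pi_n)_\ast\lambda_n(\bp)$ with $\kappa_{\rm inv}(\cA_n,\varrho_n,\widetilde{\,\bp_n\,})=\nu_n(\bp)$. You correctly single out $\pi\circ H_n=\Pi_n$ as the only non-formal step and give a sensible sketch (unpack the base-floor coordinate of $\Lambda(\cW_n)$ as the concatenation $\underline\varrho_n(\underline a^{(n)})$, then propagate to intermediate floors via the shift), which is exactly the content implicit in the paper's construction of $H_n$. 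Your write-up is in fact more explicit than the paper's, but it does not depart from the paper's logic; the remaining arrows, as you note, reduce to Lemma~\ref{lemsubSubn0}, Proposition~\ref{pro:semiconj}, and the definition~\eqref{susnu} of the suspension measure.
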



\begin{figure}[h]
	\hspace{0.5cm}\xymatrix{
	(\cA^\bZ,\sigma_\cA,\bp)\ar[r]^{\eqref{susnu}} \ar[rd] \ar[rdd]_{\bp\mapsto\kappa_{\rm inv}(\cA_n,\varrho_n,\widetilde{\,\bp_n\,})} 
								&\lambda_n(\bp)\ar[d]^{(H_n)_\ast}^{(H_n)_\ast} \ar@/^3pc/[dd]^{(\Pi_n)_\ast}
								&
								&
								\\
								&\mu_n(\bp)\ar[d]^{\pi_\ast}
								&
								&\\
								&\nu_n(\bp) 
								&
								&
	}
\caption{Measures in the cascade}	
\label{diagram3bleft}
\end{figure}

\subsection{Road map to convergence results}\label{secscheme}

By Theorem \ref{theoprop:pathfinal}, we know already that the $\bar f$-limit $\nu_\infty(\bp)=\lim_n\nu_n(\bp)$ is well defined and LB (and hence ergodic). However, this results is purely symbolic and does not carry any information about fiber Lyapunov exponents. This information will come from the weak$\ast$ limit $\mu_\infty(\bp)=\lim_n\mu_n(\bp)$, we will show that this limit measure is indeed well defined, $F$-ergodic,  and has exponent zero; see Section \ref{sec52}. The weak$\ast$ continuity of $\bp\mapsto  \mu_\infty(\bp)$ explored in Section \ref{sec52b} will provide path-connectedness of the measures in Theorem \ref{Bthm:circle}. The continuity of the entropy map $\bp\mapsto h(F,\mu_\infty(\bp))$ will be a consequence of the continuity of $\bp\mapsto h(\sigma,\nu_\infty(\bp))$ (again Theorem \ref{theoprop:pathfinal}). 

\begin{figure}[h]
	\hspace{0.5cm}\xymatrix{
								\mu_n(\bp) \ar[r]^{{\rm weak}\ast} \ar[d]_{\pi_\ast} 
								&\mu_\infty(\bp) \ar[d]^{\pi_\ast}\\
								\nu_n(\bp) \ar[r]^{\bar f}
								&\nu_\infty(\bp)
	}
\caption{Limit measures in the cascade}	
\label{diagram3bright}
\end{figure}

\subsection{Existence and ergodicity of limit measures}	\label{sec52}

Note that it follows already from Proposition \ref{Corentest}, that any weak$\ast$ limit of the sequence of $F$-ergodic measures $(\mu_n(\bp))_n$ is a probability measure with fiber Lyapunov exponent zero. Ergodicity of a measure in the space $\Sigma_N\times\bS^1$ does not follow from the ergodicity of its factor in $\Sigma_N$ and must be proven separately.
Choosing some \emph{sufficiently} fast-growing sequence of positive integers $(m_n)_{n\in\bN}$ guarantees that the sequence $(\mu_n(\bp))_n$ in fact converges and that the limit measure is $F$-ergodic. 

Below we state a strengthened version of \cite[Proposition 10.1]{DiaGelRam:22a} and sketch its proof.
The only, but crucial, difference of the new statement compared with \cite[Proposition 10.1]{DiaGelRam:22a}
is the existence of a sequence $(m_n)_n$ for which the assertion holds true for not only for the $\sigma_\cA$-maximal entropy measure but for \emph{all} Bernoulli measures $\bp$, simultaneously.

Recall that, by Remark \ref{remfast}, we can put additional conditions of the form $m_n>M_n(m_1,\ldots,m_{n-1})$, and there still exists a sequence $(m_n)_n$ satisfying them all. 

\begin{proposition}[{\cite[Proposition 10.1 strengthened]{DiaGelRam:22a}}]\label{prothe:suspflow}
Assume the hypotheses of Section \ref{veryshortsec}. There exists a sufficiently fast-growing sequence of natural numbers $(m_n)_n$ such that for any $\sigma_{\cA}$-Bernoulli measure $\bp$ on $\cA^\bZ$, the associated sequence of measures $(\mu_n(\bp))_n$ converge in the weak$\ast$ topology to some limit measure $\mu_\infty(\bp)$.

Moreover, the sequence $(m_n)_n$ can be chosen sufficiently fast growing such that for every continuous function $\phi\colon\Sigma_N\times\bS^1\to\bR$ and $\varepsilon>0$, there exists $L_0=L_0(\phi,\varepsilon)\in\bN$ such that  for any $\sigma_{\cA}$-Bernoulli measure $\bp$ on $\cA^\bZ$ the measure preserving systems $(\fS_n,\Phi_n,\lambda_n(\bp))$, $n\in\bN$, satisfy the following. For every $\ell\ge L_0$ and $n\ge \ell+1$, there exists a subset $\fS_{n,\phi,\varepsilon}$ of $\fS_n$ such that $\lambda_n(\fS_{n,\phi,\varepsilon})>1-\varepsilon$ and for every $(\underline a,s)\in \fS_{n,\phi,\varepsilon}$ it holds
\begin{equation}\label{eq:expectedd}
	\left\lvert\frac{1}{\frakR_\ell}
		\sum_{k=0}^{\frakR_\ell-1}\psi_n(\Phi_n^k(\underline a,s))
		- \int\phi\,d\mu_\infty(\bp) \right\rvert
	<\varepsilon,
\end{equation}
where
\[
	\frakR_n
	\eqdef \int \underline R_n\,d\bp_n
	\spac{and}
	\psi_n\colon\fS_n\to\bR, \quad
	\psi_n(\underline a,s)\eqdef(\phi\circ H_n)(\underline a,s).
\]
\end{proposition}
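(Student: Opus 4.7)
The proof will adapt the strategy of \cite[Proposition 10.1]{DiaGelRam:22a} with one decisive change: every quantitative estimate must be produced with constants independent of the Bernoulli measure $\bp$. Accordingly, the plan is to construct $(m_n)_n$ inductively, imposing at each step $n$ a finite list of lower bounds on $m_n$ (admissible by Remark \ref{remfast}) that force the sequence of integrals $\int\phi\,d\mu_n(\bp)$ to be Cauchy at rate $O(2^{-n})$ uniformly in $\bp$. Weak$\ast$ convergence to a measure $\mu_\infty(\bp)$ then follows from reducing to a countable weak$\ast$-dense family of test functions $\{\phi_j\}_{j\in\bN}\subset\Lip_0(1)$ via Remark \ref{remWass}, and the Birkhoff estimate \eqref{eq:expectedd} will drop out of the same controls applied to orbit segments of length $\frakR_\ell$.

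The three essential technical inputs are as follows. First, Lemma \ref{lemdistoprop} (distortion) allows us to replace $\phi$ along a Birkhoff orbit by a value depending only on the cylinder $[w_1\ldots w_m]\times J$ at level $k$, provided $m\ge N_1(\phi,\tau)$. Second, Lemma \ref{lem:sbe} (small block estimate) guarantees that an orbit inside such a cylinder is $\varepsilon$-trapped in a single fiber piece, so $\psi_n$ is essentially determined by the symbolic cylinder. Third, and most crucially, the Uniform Law of Large Numbers (Proposition \ref{proplem:bernsteinagain}) supplies concentration bounds for the frequencies of letters $a^{(k)}\in\cA_k$ in $\bp_k$-typical sequences with a threshold $L(\delta)$ that does \emph{not} depend on the Bernoulli measure. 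This is precisely what upgrades a single-measure Birkhoff-ergodic-theorem argument to one uniform across $\cM_{\rm B}(\cA^\bZ,\sigma_\cA)$.

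At step $n$ of the induction, I would choose $m_n$ large enough to simultaneously: (a) satisfy the hypothesis $m\ge N_2(\cW_{n-1})$ of Proposition \ref{pro:tailing}, so that the cascade continues with fiber rate $\alpha/2^n$; (b) for each $j\le n$ and each $k\le n-1$, make Proposition \ref{proplem:bernsteinagain} at level $k$ applicable with tolerance $1/n$ and confidence $1-1/n$, independent of $\bp$; and (c) ensure that the relative tail length $|\bt_n(a^{(n)})|/|\varrho_n(a^{(n)})|$ satisfies the $K2^{-n}$ bound of Assumption \ref{assumption1}. Combining (b) with distortion gives that $H_n$ nearly intertwines $\phi_j$-averages at level $n$ with the $\bp_{n-1}$-expectation of $\phi_j$-averages on the horseshoe $\Gamma_{n-1}$, while (c) bounds the tail contribution to $\bigl|\int \phi_j\,d\mu_n(\bp)-\int\phi_j\,d\mu_{n-1}(\bp)\bigr|$ by $O(2^{-n})\lVert\phi_j\rVert_\infty$ uniformly in $\bp$. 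Summing telescopically gives a uniform Cauchy estimate, defining $\mu_\infty(\bp)$ and establishing weak$\ast$ convergence.

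The main obstacle is exactly this uniformity in $\bp$; without the $\bp$-free threshold $L(\delta)$ of Proposition \ref{proplem:bernsteinagain}, the argument of \cite{DiaGelRam:22a} only delivers $\mu_\infty$ for one specified Bernoulli measure at a time. To obtain \eqref{eq:expectedd}, the same concentration argument is applied at averaging scale $\frakR_\ell$ inside $\fS_n$—this being the natural statistical horizon for resolving $\cA_\ell$-letter frequencies. The set $\fS_{n,\phi,\varepsilon}\subset\fS_n$ is defined as the intersection of the Uniform LLN good set at level $\ell$ with the distortion good set; its complement has $\lambda_n$-mass at most $\varepsilon$ by Proposition \ref{proplem:bernsteinagain}. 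On $\fS_{n,\phi,\varepsilon}$ the Birkhoff average of $\psi_n$ is within $\varepsilon$ of $\int\phi\,d\mu_\ell(\bp)$, which is itself within $\varepsilon$ of $\int\phi\,d\mu_\infty(\bp)$ once $L_0(\phi,\varepsilon)$ is chosen so that the Cauchy estimate has already settled past tolerance $\varepsilon$.
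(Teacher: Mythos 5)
Your proposal follows the same structural strategy as the paper: revisit the proof of \cite[Proposition~10.1]{DiaGelRam:22a} and identify the single place where the choice of Bernoulli measure enters, then replace the $\bp$-dependent large-deviation input by a uniform one, while noting that the other ingredients (distortion control via Lemma~\ref{lemdistoprop}, the tailing mechanism of Proposition~\ref{pro:tailing}) are already $\bp$-independent. The one substantive difference is which uniform large-deviation lemma you invoke. The paper does not use Proposition~\ref{proplem:bernsteinagain}; it instead states and proves a strengthened version of \cite[Proposition~4.3]{DiaGelRam:22a}, which directly controls Birkhoff partial sums of the roof function $\underline R$ and of the retracted potential $\Delta\psi$ over \emph{all sliding windows} $\{i,\dots,i+k-1\}$, $0\le i<m$, $1\le k\le m$, with an exceptional set of measure $<\varepsilon$; its uniformity in $\bp$ is obtained by a compactness-plus-continuity argument rather than by the explicit $\bp$-free Bernstein bound of Proposition~\ref{proplem:bernsteinagain}. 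Your route is mathematically defensible --- control of letter frequencies along initial segments does, after a telescoping/subtraction step, yield sliding-window control of Birkhoff sums of $\underline R$ (which is a one-block function) and of $\Delta\psi$ (up to the variation term $2\var_\cA(\Delta\psi)$ that appears in the strengthened Proposition~4.3), all uniformly in $\bp$ --- but you do not make this bridge explicit, and it is precisely the form in which the bound is packaged (all windows simultaneously, deviation measured against $m\varepsilon$ rather than $k\varepsilon$) that the downstream argument of \cite{DiaGelRam:22a} consumes. If you take your route you should state and prove the sliding-window corollary of Proposition~\ref{proplem:bernsteinagain}; otherwise you should cite the compactness argument for the Bernstein threshold $m(\bp)$, as the paper does. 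Either way the core idea --- the only $\bp$-dependence sits in the concentration estimate, and it can be made uniform --- is correctly identified and carries the proof.
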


The first fact in the following corollary, ergodicity, is an immediate consequence of the Gorodetski-Ilyashenko-Klep\-tsyn-Nalski argument (see \cite[Proposition 11.1]{DiaGelRam:22a}). By \cite[Corollary 1.2]{DiaFis:11} (see also \cite{CowYou:05}), in our setting, the entropy map is upper semi-continuous. Hence, the estimate of the entropy of $\mu_\infty(\bp)$ from below follows immediately from Proposition \ref{Corentest}.

\begin{corollary}\label{refcorollary}
The measure $\mu_\infty(\bp)$ is $F$-ergodic and satisfies
\[
	\chi(F,\mu_\infty(\bp))=0
	\,\text{ and }\,
	h(F,\mu_\infty(\bp))
	\ge \limsup_{n\to\infty}h(F,\mu_n(\bp))
	\ge \frac{e^{-L_1\lvert\alpha\rvert}}{\lVert\cW\rVert}  h(\sigma_\cA,\bp) .
\]
\end{corollary}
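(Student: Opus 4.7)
The assertion splits into three claims: $F$-ergodicity of $\mu_\infty(\bp)$, vanishing of the fiber exponent, and the entropy bound. My plan is to treat each using ingredients already assembled above, since Proposition \ref{prothe:suspflow} has already delivered both the weak$\ast$ convergence $\mu_n(\bp)\to\mu_\infty(\bp)$ and the uniform Birkhoff control \eqref{eq:expectedd}.

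For ergodicity I will invoke the Gorodetski--Ilyashenko--Kleptsyn--Nalski (GIKN) scheme in the form of \cite[Proposition 11.1]{DiaGelRam:22a}. It suffices to verify that for every continuous $\phi \colon \Sigma_N \times \bS^1 \to \bR$, the $F$-Birkhoff averages of $\phi$ are $\mu_\infty(\bp)$-almost everywhere equal to $\int \phi\,d\mu_\infty(\bp)$. The estimate \eqref{eq:expectedd} supplies exactly such uniform control on the suspensions $\fS_n$, and it transfers to $\Gamma_n$ via $\mu_n(\bp)=(H_n)_\ast\lambda_n(\bp)$ and $\psi_n=\phi\circ H_n$: for $\ell \ge L_0(\phi,\varepsilon)$ and $n \ge \ell+1$, the set $H_n(\fS_{n,\phi,\varepsilon})\subset\Gamma_n$ has $\mu_n(\bp)$-measure greater than $1-\varepsilon$ and consists of points whose $F$-Birkhoff averages over $\frakR_\ell$ iterates are $\varepsilon$-close to $\int\phi\,d\mu_\infty(\bp)$. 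A diagonal argument along a sequence $\varepsilon_k \downarrow 0$, together with the weak$\ast$ convergence $\mu_n(\bp) \to \mu_\infty(\bp)$, then produces the required almost-sure equality and hence ergodicity.

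The vanishing of $\chi(F,\mu_\infty(\bp))$ is more direct. Since each $f_i$ is $C^1$, the integrand $(\xi,x)\mapsto \log\lvert f'_{\xi_0}(x)\rvert$ is continuous on $\Sigma_N\times\bS^1$, so the weak$\ast$ limit gives
\[
\chi(F,\mu_\infty(\bp))=\lim_{n\to\infty}\chi(F,\mu_n(\bp)).
\]
By Proposition \ref{Corentest}, the right-hand side is dominated in absolute value by $(\lvert\alpha\rvert+\varepsilon)/2^n\to 0$, and the claim follows.

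For the entropy bound I will use that, by \cite[Corollary 1.2]{DiaFis:11} (see also \cite{CowYou:05}), the map $\mu\mapsto h(F,\mu)$ is upper semi-continuous in the weak$\ast$ topology on $\cM(\Sigma_N\times\bS^1,F)$, which gives
\[
h(F,\mu_\infty(\bp))\ge \limsup_{n\to\infty}h(F,\mu_n(\bp)).
\]
Combining with the uniform lower bound $h(F,\mu_n(\bp))\ge e^{-L_1\lvert\alpha\rvert}\lVert\cW\rVert^{-1}h(\sigma_\cA,\bp)$ from Proposition \ref{Corentest} finishes the chain of inequalities. Within the corollary itself, the only real work is extracting ergodicity from \eqref{eq:expectedd}; the genuine difficulty is hidden in Proposition \ref{prothe:suspflow}, whose delicate point is that one fast-growing sequence $(m_n)_n$ must serve \emph{uniformly} for every Bernoulli measure $\bp$.
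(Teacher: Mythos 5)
Your proposal is correct and follows essentially the same route as the paper: ergodicity via the GIKN criterion (\cite[Proposition 11.1]{DiaGelRam:22a}) applied to the uniform Birkhoff estimates from Proposition \ref{prothe:suspflow}, vanishing of the fiber exponent from the shrinking exponent spectrum in Proposition \ref{Corentest} together with weak$\ast$ convergence, and the entropy lower bound from upper semi-continuity of the entropy map (\cite[Corollary 1.2]{DiaFis:11}) combined with the uniform entropy bound in Proposition \ref{Corentest}. The only difference is that you spell out the diagonalization and the continuity of the fiber-exponent integrand, details the paper leaves implicit.
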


\subsubsection{Sketch of proof of Proposition \ref{prothe:suspflow}}

We start by informally describing the main idea of the proof of \cite[Proposition 10.1]{DiaGelRam:22a}. The use of suspension spaces $\fS_n$, that are topological extensions of the dynamics on the horseshoes $\Gamma_n$, on one hand allows us to properly define the measures. We  put some Bernoulli measure $\bp$ on $\cA^\bZ$, generating measures $\bp_n$ on the ground floor of the $n$th level horseshoe $\Gamma_n$, and look at its suspension measure $\lambda_n(\bp)$ and its projection $\mu_n(\bp)$ (compare again diagram \eqref{diagram2}). On the other hand, the fact that each suspension space has a designated ``ground floor'' and ``intermediate floors'' that are consequences of our recursive definition is very convenient. This recursive inherited internal floor structure is described in \cite[Section 9]{DiaGelRam:22a}. Indeed, a large part of each $n$th level horseshoe can be ``cut into pieces'' separated by ``intermediate floors'' whose dynamics are asymptotically governed by the distribution on the ``ground floor'' and such that each of those pieces is almost identical to the $k$th level horseshoe, $k\le n$. The term ``almost identical'' is made precise in \cite[Proposition 6.12]{DiaGelRam:22a}. 

As in the proof of \cite[Proposition 10.1]{DiaGelRam:22a}, to get the inequality \eqref{eq:expectedd} we need to estimate the Birkhoff sums of a given continuous potential along a ``typical'' long piece of trajectory in the $n$th level horseshoe. By the recursive structure of the horseshoes, such Birkhoff sum is comparable to a sum of Birkhoff sums over the pieces inside the $k$th level sub-horseshoes. As the base measure is Bernoulli, those Birkhoff sub-sums are independently and identically distributed and hence their sum can be estimated by a large deviation argument.

In \cite{DiaGelRam:22a}, the arguments are presented only in the particular case where the  Bernoulli measure  $\bp$ is the maximal entropy measure. For completeness, let us briefly describe where this special choice was used: 
\begin{itemize}
\item[(1)] \cite[Proposition 4.3]{DiaGelRam:22a} is based on Bernstein's inequality (see \cite[Lemma 4.4]{DiaGelRam:22a}) and makes use of the i.i.d. nature of Bernoulli measures. \item[(2)] \cite[Lemma 5.11]{DiaGelRam:22a} describes how the measure on the ground floor lifts to intermediate floors. The proof only uses the fact that the base measure is \emph{$\sigma_\cA$-invariant}.
\end{itemize}
Indeed, as a consequence, items (1) and (2) continue to be true when starting from \emph{any arbitrary} Bernoulli measure $\bp$ on $\cA^\bZ$. What we still need to show is that those arguments are uniform (the constants can be chosen independently from the base Bernoulli measure). We now address this specific point.

The first step of the proof in \cite[Proposition 10.1]{DiaGelRam:22a} is to fix some (arbitrary) dense sequence $(\phi_n)_n$ of continuous functions 
\begin{equation}\label{seqphin}
	\phi_n\colon\Sigma_N\times\bS^1\to\bR.
\end{equation}	 
The second essential step is, given this sequence $(\phi_k)_k$, to fix the sequence $(m_n)_n$.
Recalling \cite[Section 10.1]{DiaGelRam:22a}, the following three conditions have to be met:
\begin{enumerate}
\item[(I)] (controlled large deviation)	as in \cite[Proposition 4.3]{DiaGelRam:22a},
\item[(II)] (controlled distortion) as in \cite[Proposition 6.12]{DiaGelRam:22a},
\item[(III)] (recursive definition of tailing map) as in \cite[Theorem 7.3]{DiaGelRam:22a}.
\end{enumerate}
Item (II) is now repeated in the present paper as Lemma \ref{lemdistoprop}, it does not rely on the choice of Bernoulli measures. \cite[Theorem 7.3]{DiaGelRam:22a} is stated in this paper as Proposition \ref{pro:tailing}, it also does not depend on the choice Bernoulli measures. Only Condition (I) does depend on the Bernoulli measure $\bp$, this dependence is provided by \cite[Proposition 4.3]{DiaGelRam:22a}. Let us state its strengthened version that is independent of any Bernoulli measure and hence adapted to our needs. 

Given $\psi\colon\fS_{\cA,R}\to\bR$ continuous, let
\[
	\var_\cA(\Delta\psi)
	\eqdef \max_{a\in\cA}\max_{\underline b,\underline c\in[a]}
		\big(\Delta\psi(\underline b)-\Delta\psi(\underline c)\big),
\] 
where $\Delta\psi\colon\cA^\bZ\to\bR$ is defined by $\Delta\psi(\underline a)\eqdef\sum_{k=0}^{R(a_0)-1}\psi(\underline a,k)$.

\begin{lemma}[{\cite[Proposition 4.3 strengthened]{DiaGelRam:22a}}]
	Let $\psi\colon\fS_{\cA,R}\to\bR$ be a continuous potential. 
	For every $\varepsilon>0$ there exists $N_0=N_0(\psi,\varepsilon)\in\bN$ such that the following is true for every $m\ge N_0$. For every Bernoulli measure $\bp$ on $\cA^\bZ$ there exists a set $A\subset\cA^\bZ$ such that $\bp(A)>1-\varepsilon$ and for every $\underline a\in A$, $i=0,\ldots,m-1$, and $k\in\{1,\ldots,m\}$ it holds
\[\begin{split}
	\Big\lvert\sum_{j=i}^{i+k-1}\left(\underline R(\sigma_\cA^j(\underline a))
		-\int \underline R\,d\bp\right)\Big\rvert
	&<m\varepsilon,\\
	\Big\lvert\sum_{j=i}^{i+k-1}\left(\Delta\psi(\sigma_\cA^j(\underline a))
		-\int \Delta\psi\,d\bp\right)\Big\rvert
	&<m(2\var_\cA(\Delta\psi)+\varepsilon).
\end{split}\]
\end{lemma}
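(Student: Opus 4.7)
The plan is to follow the scheme of the original \cite[Proposition 4.3]{DiaGelRam:22a}, which is based on Bernstein's inequality \eqref{eqBern}, and simply to track constants so that their independence of the Bernoulli measure $\bp$ becomes manifest. The essential observation is that the right-hand side of \eqref{eqBern} involves only the second moment and the $L^\infty$-norm of the underlying random variable, and via the trivial bound $\mathrm{Var}(Y)\le\|Y\|^2$ both can be dominated by $\bp$-independent quantities associated to the observable.

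Consider first $\underline R$. Since $\underline R(\sigma_\cA^j\underline a)=R(a_j)$ depends only on the $j$-th coordinate, the random variables $(\underline R\circ\sigma_\cA^j)_j$ are i.i.d.\ under any Bernoulli $\bp$ and are bounded by $\max R$. For $\Delta\psi$, one first approximates by a function depending only on $a_0$: choose for each $a\in\cA$ a representative $\underline a^*_a\in[a]$ and set $\widetilde{\Delta\psi}(\underline b)\eqdef\Delta\psi(\underline a^*_{b_0})$. Then $(\widetilde{\Delta\psi}\circ\sigma_\cA^j)_j$ are i.i.d.\ under any Bernoulli $\bp$, while $|\Delta\psi-\widetilde{\Delta\psi}|\le\var_\cA(\Delta\psi)$ pointwise. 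By the triangle inequality,
\[
\Big|\sum_{j=i}^{i+k-1}\big((\Delta\psi-\widetilde{\Delta\psi})(\sigma_\cA^j\underline a)-\textstyle\int(\Delta\psi-\widetilde{\Delta\psi})\,d\bp\big)\Big|\le 2k\,\var_\cA(\Delta\psi)\le 2m\,\var_\cA(\Delta\psi),
\]
which deterministically accounts for the additive term $2\var_\cA(\Delta\psi)$ in the conclusion.

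It remains to estimate, via \eqref{eqBern}, the centered Birkhoff sums of the i.i.d.\ variables $\underline R\circ\sigma_\cA^j$ and $\widetilde{\Delta\psi}\circ\sigma_\cA^j$. For any pair $(i,k)$ with $i\in\{0,\ldots,m-1\}$ and $k\in\{1,\ldots,m\}$, applying Bernstein's inequality to a target deviation of $m\varepsilon$ and using $k\le m$ yields a bound of the form $2\exp\bigl(-\tfrac{m\varepsilon^2/2}{\|Y\|^2+\|Y\|\varepsilon/3}\bigr)$, where $\|Y\|$ is dominated by $\max R$ (for $\underline R$) or by $2\|\Delta\psi\|_\infty$ (for $\widetilde{\Delta\psi}$); both dominations are independent of $\bp$. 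A union bound over the at most $2m^2$ pairs $(i,k)$ and the two observables produces a complementary set of $\bp$-mass at most $4m^2 e^{-cm}$ for some constant $c=c(\psi,\varepsilon)>0$. Choosing $N_0(\psi,\varepsilon)$ large enough that this quantity lies below $\varepsilon$ for every $m\ge N_0$ yields the desired set $A$.

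The only genuine obstacle is conceptual rather than technical: one has to notice that both the second-moment bound and the $L^\infty$-bound in Bernstein's inequality can be controlled by $\bp$-independent quantities, and that replacing $\Delta\psi$ by $\widetilde{\Delta\psi}$ costs exactly the additive factor $2\var_\cA(\Delta\psi)$ already budgeted in the statement. With these two observations in place, the remainder of the argument is a verbatim transcription of the proof in \cite{DiaGelRam:22a}, but now yielding a single $N_0$ that works for all Bernoulli measures on $\cA^\bZ$ simultaneously.
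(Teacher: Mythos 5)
Your proof is correct, but it takes a genuinely different route from the one in the paper. The paper's proof is an abstract one-liner: it observes that the threshold $m(\bp)$ supplied by \cite[Lemma 4.4]{DiaGelRam:22a} depends continuously on $\bp$, and then invokes compactness of the simplex of Bernoulli measures on $\cA^\bZ$ to conclude that $\sup_\bp m(\bp)<\infty$. Your proof, by contrast, reopens the Bernstein estimate and shows that the constants feeding into it can be dominated by $\bp$-independent quantities: $\bE(Y^2)\le \lVert Y\rVert^2$ with $\lVert Y\rVert\le\max R$ for $\underline R$, and $\lVert Y\rVert\le 2\lVert\Delta\psi\rVert_\infty$ for the one-coordinate approximation $\widetilde{\Delta\psi}$; the replacement $\Delta\psi\mapsto\widetilde{\Delta\psi}$ is paid for deterministically by the $2\var_\cA(\Delta\psi)$ term already budgeted in the conclusion. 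The paper's route is shorter to state but requires one to accept (or separately verify) the claimed continuity of $\bp\mapsto m(\bp)$; your route is longer but entirely self-contained and produces an explicit, quantitative $N_0(\psi,\varepsilon)$ rather than a pure existence statement. Both are valid. One small bookkeeping remark: after applying Bernstein with $\ell=k$ and target deviation $a=m\varepsilon/k$, one needs the step $k\bE(Y^2)\le m\lVert Y\rVert^2$ (using both $k\le m$ and $\bE(Y^2)\le\lVert Y\rVert^2$) to arrive at your displayed tail bound $2\exp\bigl(-\tfrac{m\varepsilon^2/2}{\lVert Y\rVert^2+\lVert Y\rVert\varepsilon/3}\bigr)$; you should make this chain explicit if the proof is to be written out in full.
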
 

\begin{proof}
Following \cite[Proof of Proposition 4.3]{DiaGelRam:22a}, it suffices to observe that $m=m(\bp)$  provided by the Bernstein inequality in \cite[Lemma 4.4]{DiaGelRam:22a} depends continuously on $\bp$. Then take into account that the set of Bernoulli measures on $\cA^\bZ$ is compact. 
\end{proof}

The above implies that the proof of \cite[Proposition 10.1]{DiaGelRam:22a} goes through choosing a sequence $(m_n)_n$ such that its assertion holds simultaneously for all Bernoulli measures. 
This ends our sketch of the proof of Proposition \ref{prothe:suspflow}.
\qed

\begin{lemma}
The diagram in Figure \ref{diagram3bright} commutes.
\end{lemma}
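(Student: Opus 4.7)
The plan is to extract the commutativity of the limit square from the commutativity of the finite-level square (already recorded as Claim \ref{claimfinal} and the diagram in Figure \ref{diagram3bleft}) by a standard uniqueness-of-limit argument. Concretely, I want to verify the identity
\[
	\pi_\ast\mu_\infty(\bp)
	= \nu_\infty(\bp),
\]
which is precisely the commutativity claim, since the horizontal arrows are the limit operations and the vertical arrows are $\pi_\ast$.

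First I would recall the three inputs. By Claim \ref{claimfinal} (equivalently, by the identity \eqref{someformula}), one has $\pi_\ast\mu_n(\bp)=\nu_n(\bp)$ for every $n\in\bN$. By Proposition \ref{prothe:suspflow}, the sequence $(\mu_n(\bp))_n$ converges to $\mu_\infty(\bp)$ in the weak$\ast$ topology on $\cM(\Sigma_N\times\bS^1,F)$. By Corollary \ref{corerg}, $(\nu_n(\bp))_n$ $\bar f$-converges to $\nu_\infty(\bp)$, and by Remark \ref{remfbars} this in particular implies weak$\ast$ convergence on $\cM_{\rm erg}(\Sigma_N,\sigma)$.

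Next I would push forward. Since $\pi\colon\Sigma_N\times\bS^1\to\Sigma_N$ is continuous, the induced map $\pi_\ast\colon\cM(\Sigma_N\times\bS^1)\to\cM(\Sigma_N)$ is continuous in the weak$\ast$ topology. Therefore
\[
	\pi_\ast\mu_n(\bp)\xrightarrow[n\to\infty]{\ast}\pi_\ast\mu_\infty(\bp).
\]
Combining with $\pi_\ast\mu_n(\bp)=\nu_n(\bp)$ and the weak$\ast$ convergence $\nu_n(\bp)\to\nu_\infty(\bp)$, uniqueness of weak$\ast$ limits gives $\pi_\ast\mu_\infty(\bp)=\nu_\infty(\bp)$, which is exactly the assertion that the diagram in Figure \ref{diagram3bright} commutes.

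There is no real obstacle here: every nontrivial fact has already been established (existence of the weak$\ast$ limit in Proposition \ref{prothe:suspflow}, existence of the $\bar f$-limit in Corollary \ref{corerg}, and commutativity at each finite level in Claim \ref{claimfinal}); the only content is that $\pi_\ast$ is weak$\ast$ continuous and that $\bar f$-convergence is stronger than weak$\ast$ convergence.
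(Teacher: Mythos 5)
Your argument is correct and follows essentially the same route as the paper's own proof: push forward the finite-level identity $\pi_\ast\mu_n(\bp)=\nu_n(\bp)$ through the weak$\ast$ limit on the left and the $\bar f$-limit (hence weak$\ast$ limit) on the right, and invoke uniqueness of weak$\ast$ limits. No gaps.
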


\begin{proof}
	The weak$\ast$ convergence $\mu_n(\bp)\to\mu_\infty(\bp)$ implies the weak$\ast$ convergence  $\pi_\ast(\mu_n(\bp))\to\pi_\ast(\mu_\infty(\bp))$. At the same time, $\pi_\ast(\mu_n(\bp))=\nu_n(\bp)$ converge $\bar f$ (and hence weak$\ast$) to $\nu_\infty(\bp)$. Hence, the diagram commutes. 
\end{proof}

We close this section with an auxiliary result about the Wasserstein distance of the measures of the sequence $(\mu_n(\bp))_n$. We will use it in the proof of Theorem \ref{Bthm:circleb}.
 
\begin{proposition}[Estimates of the Wasserstein distance]
Assume the hypotheses of Section \ref{veryshortsec}. Then for every $\varepsilon>0$, there exists a sufficiently fast-growing sequence of natural numbers $(m_n)_n$ such that the associated sequence of measures $(\mu_n(\bp))_n$ satisfy  for every $n\ge0$
\[
	W(\mu_0(\bp), \mu_n(\bp)) \leq \varepsilon + 8 L_1 \diam(\Sigma_N\times\bS^1)\cdot|\chi(F,\theta)|.
\]  
\end{proposition}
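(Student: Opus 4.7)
The plan is to decompose the suspension space $\fS_n$ underlying $\lambda_n(\bp)$ as $\fS_n=\fS_n^{\mathrm{base}}\sqcup\fS_n^{\mathrm{tail}}$, separating the time steps that fall inside one of the $\cW$-sub-words $\varrho_0(\cdot)$ making up a tower word $\varrho_n(a_0)$ from those falling inside one of the inherited tailings $\bt_k$, $k=1,\ldots,n$. Set $B\eqdef m_1\cdots m_n\,|\varrho_0|$. Since Assumption~\ref{assumption1} holds with $K=L_1|\alpha|$ (by~\eqref{formulaa} together with $\max|\varrho_0|=\min|\varrho_0|$), Corollary~\ref{cor:maxmin} immediately gives $1-B/\bE_n(\bp)\le 4L_1|\alpha|$. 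As $|\phi|\le\diam(\Sigma_N\times\bS^1)$ for $\phi\in\Lip_0(1)$, this simultaneously bounds the tail integral $\big|\int_{\fS_n^{\mathrm{tail}}}\phi\circ H_n\,d\lambda_n(\bp)\big|\le 4L_1|\alpha|\,\diam(\Sigma_N\times\bS^1)$ and, writing $\int\phi\,d\mu_0(\bp)=\tfrac{B}{\bE_n(\bp)}\int\phi\,d\mu_0(\bp)+(1-\tfrac{B}{\bE_n(\bp)})\int\phi\,d\mu_0(\bp)$, a ``normalization correction'' term of the same size.

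For the base part, I would exploit the self-similar structure $\bp_n=(\underline\Sub_{n,0}^{-1})_\ast\bp$ to construct a ``forget-the-tailings'' map $\tau_n\colon\fS_n^{\mathrm{base}}\to\fS_0$ that sends a time step sitting at inner position $r$ of the $k$-th $\cW$-sub-word of $\varrho_n(a_0)$ to time step $r$ of the tower over $\sigma_\cA^{k-1}\underline\Sub_{n,0}(\underline a)$ in $\fS_0$. A preimage count using that $|\varrho_0|$ is constant on $\cA$ and that $\bp_n$ is a Bernoulli product yields $(\tau_n)_\ast(\lambda_n(\bp)|_{\fS_n^{\mathrm{base}}})=(B/\bE_n(\bp))\lambda_0(\bp)$, whence
\[
\int_{\fS_n^{\mathrm{base}}}\phi\circ H_0\circ\tau_n\,d\lambda_n(\bp)=\tfrac{B}{\bE_n(\bp)}\int\phi\,d\mu_0(\bp).
\]

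The core of the argument will be the pointwise coupling bound for $|\phi\circ H_n-\phi\circ H_0\circ\tau_n|$ on $\fS_n^{\mathrm{base}}$. The key geometric observation is that the recursive CIFS construction forces every $\cW_k$-sub-word ($k=0,1,\ldots,n$) of $\varrho_n(a_0)$ to end at a point of $J$; hence, regardless of whether tailings sit in between, each $\cW$-sub-word in the tower is entered at a point of $J$. At inner position $r$ of a $\cW$-sub-word $w$, both $\bS^1$-components are therefore of the form $f_{[w]}^r(x_{\mathrm{entry}}^{(\cdot)})$ with $x_{\mathrm{entry}}^{(\cdot)}\in J$; iterating the CIFS derivative bound from item~(b) of the CIFS definition across the common past of $\Gamma_n$ and $\Gamma_0$ up to the nearest past tailing (which, say, sits $\ell$ sub-words back) yields $|H_n-H_0\circ\tau_n|_{\bS^1}\le K|J|e^{(\ell|\varrho_0|+r)\alpha_0}$, and a symmetric estimate via the cylinder metric $d_1$ on $\Sigma_N$ controls the symbolic distance through the future agreement length. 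A counting argument then shows that in a single tower the fraction of base positions sitting within $\ell_0$ sub-words of any tailing is bounded above by $4\ell_0\sum_{k\ge1}1/(m_1\cdots m_k)\le 8\ell_0/m_1$. Choosing first $\ell_0$ large enough that $K|J|e^{\ell_0|\varrho_0|\alpha_0}\le\varepsilon/(2\diam(\Sigma_N\times\bS^1))$, and then $(m_n)_n$ fast-growing enough that $8\ell_0/m_1\le\varepsilon/(2\diam(\Sigma_N\times\bS^1))$---both bounds being uniform in $\bp\in\cM_{\mathrm{B}}(\cA^\bZ,\sigma_\cA)$---will force the $\lambda_n(\bp)$-averaged coupling distance to be at most $\varepsilon/\diam(\Sigma_N\times\bS^1)$, and hence its contribution to the Wasserstein integral at most $\varepsilon$.

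Combining the three estimates via the triangle inequality will yield $|\int\phi\,d\mu_n(\bp)-\int\phi\,d\mu_0(\bp)|\le\varepsilon+8L_1|\alpha|\,\diam(\Sigma_N\times\bS^1)$, and taking the supremum over $\phi\in\Lip_0(1)$ through the Kantorovich-type formula \eqref{wasssb} will give the claim. The hard part will be the geometric coupling step: verifying the recursive ``entry in $J$'' property under the nested cascade, and quantifying the trade-off between the CIFS contraction on bulk base positions and the frequency of tailing boundaries, both uniformly in $\bp$---in the spirit of the uniform large-deviation estimates of Section~\ref{secBerCod} and of Proposition~\ref{prothe:suspflow}.
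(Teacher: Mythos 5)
Your approach is correct in outline, but it is genuinely different from the paper's. The paper does not build an explicit coupling map; instead it estimates the two integrals $\int\phi\,d\mu_0(\bp)$ and $\int\phi\,d\mu_n(\bp)$ separately by decomposing generic orbits into $\cW^{m_1}$-blocks plus tails, controlling the Birkhoff sum over each block by the single quantity $\Phi(v)=\max_{[v]\times J}\sum\phi\circ F^i$ (using the contraction estimate Lemma~\ref{lem:sbe} to absorb the $\varepsilon$), and then comparing the two ``block distributions'' via Corollary~\ref{corclavonhieraus} ($\bp$ vs.\ $(\underline\Sub_{n,0})_\ast\widetilde{\,\bp_n\,}$) and the tail-length bound from Corollary~\ref{cor:maxmin}. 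Your proposal instead builds a ``forget-the-tailings'' map $\tau_n\colon\fS_n^{\mathrm{base}}\to\fS_0$, pushes forward the restricted suspension measure, and bounds the \emph{pointwise} distance between $H_n$ and $H_0\circ\tau_n$ --- a genuine coupling rather than a block-averaging argument. Your measure-pushforward identity $(\tau_n)_\ast(\lambda_n(\bp)|_{\fS_n^{\mathrm{base}}})=(B/\bE_n(\bp))\lambda_0(\bp)$ is correct (it follows from $\bp_n=(\underline\Sub_{n,0}^{-1})_\ast\bp$ and the $\sigma_\cA$-invariance of $\bp$, using that $|\varrho_0|$ is constant), and this nicely sidesteps the $\widetilde{\,\bp_n\,}$ versus $\bp_n$ bookkeeping the paper has to do. Your ``entry in $J$'' observation for every $\cW$-sub-word of $\varrho_n(\cdot)$ is also correct, by the recursive CIFS nesting. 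What the paper's route buys is that the only property of the blocks it needs is the distortion bound Lemma~\ref{lem:sbe}, so it never has to split into near-tailing versus far-from-tailing positions; what your route buys is a more transparent geometric picture and slightly less measure-theoretic overhead. Two small fixes: your intermediate claim that the $\lambda_n(\bp)$-averaged coupling distance is ``at most $\varepsilon/\diam(\Sigma_N\times\bS^1)$'' is off (it is the far part that gives $\varepsilon/(2\diam)$, while the near part contributes roughly $(\varepsilon/(2\diam))\cdot\diam=\varepsilon/2$, so the average is of order $\varepsilon/2$, which still yields the desired $\varepsilon$ contribution to the Wasserstein integral since $\phi$ is $1$-Lipschitz, but the bookkeeping should be stated that way); and the combinatorial count of positions within $\ell_0$ sub-words of a tailing has constant $2\ell_0\sum_k 1/(m_1\cdots m_k)$ rather than $4\ell_0\sum(\cdots)$ unless you are also counting the tail positions themselves --- in either case the conclusion $\lesssim\ell_0/m_1$ survives.
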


\begin{proof}
It suffices to see that the initial number $m_1$ needs to be chosen sufficiently large. The choice of the remaining sequence $(m_n)_{n\ge2}$ is unaltered. Fix $\varepsilon>0$ and assume that $m_1\ge \max\{N_1,N_2\}$, where $N_1=N_1(\varepsilon)$ and $N_2=N_2(\varepsilon)$ were chosen  as in Lemma \ref{lemdistoprop} and Lemma \ref{lem:sbe}, respectively.

Recall that $\alpha=\chi(F,\theta)<0$. Recall that, by Remark \ref{remWass},
\[
		W(\mu_0(\bp), \mu_n(\bp))
		= \sup\Big\{\Big|\int\phi\,d\mu_0(\bp)-\int\phi\,d\mu_n(\bp)\Big|\colon\phi\in\Lip_0(1)\Big\}.
\]
Fix $\phi \in \Lip_0(1)$. Consider the function $\Phi\colon\cW^{m_1} \to \bR$ defined for $v=w_1\ldots w_{m_1}$ as 
\begin{equation}\label{defPhi}
	\Phi(v) 
	=
		\max_{(\xi,x),(\eta,y)\in[w_1\ldots w_{m_1}]\times J}
		\sum_{i=0}^{|w_1\ldots w_{m_1}|-1} \phi(F^i(\xi,x)).
\end{equation}
Recalling that $\phi$ is 1-Lipschitz, by Lemma \ref{lem:sbe}, for any $(\eta,y)\in[v]\times J$ we have
\begin{equation} \label{eqn:legs}
	\Phi(v)- \varepsilon |v| 
	\leq \sum_{i=0}^{|v|-1} \phi(F^i(\eta,y)) 
	\le \Phi(v),
\end{equation}

Let $\psi_0 = \phi\circ H_0$ and $\psi_n = \phi\circ H_n$ be the retracted copies of $\phi$ on the suspension spaces $\fS_0$ and $\fS_n$, respectively. Let us now estimate 
\begin{enumerate}
\item[(a)] $\int \phi \,d\mu_0(\bp) = \int \psi_0 \,d\lambda(\bp)$,
\item[(b)] $\int \phi\, d\mu_n(\bp) = \int \psi_n \,d\lambda_n(\bp_n)$.
\end{enumerate}
in terms of the Birkhoff sums for generic sequences for $\lambda(\bp)$ and $\lambda_n(\bp_n)$, respectively (note again that both measures are ergodic). 

\smallskip\noindent\textbf{The integral (a).}
Recall first Remark \ref{remabstract}: we identify the family of words $\cW$ with a set of letters in an abstract alphabet $\cA$ that has the same cardinality $\card\cA=\card\cW$. Hence, the map $\Phi$ defined in \eqref{defPhi} can be seen as a function on $\cA^{m_1}$.
In \eqref{eqn:legs} we estimate from below and above the Birkhoff sum of $\psi_0$ along a piece of trajectory starting at the ``ground floor'' of the suspension space
\[
	\cA^\bZ\times \{0\}
\]
and ending at the $m_1$-st return to this ground floor, passaged through the sequence of returns codified by the word $v=w_1\ldots w_{m_1}\in \cW^{m_1}$ (note that $H_0$ projects the ground floor into $\Sigma_N\times J$, that is, the $\bS^1$-coordinate at those times is indeed in $J$ and all distortion control arguments from CIFSs apply). Any $\lambda(\bp)$-generic trajectory can be divided into such blocks of returns and the probability of choosing a particular sequence $v$ of blocks is $\bp([v])=\widetilde\bp([v])$. Indeed, note that we are at the beginning of our repeat-and-tailing scheme and have not yet added any tail; moreover, the roof function $R(\cdot)=|\varrho_0|$ is constant and hence it holds $\widetilde\bp=\bp$ (recall Lemma \ref{susLBpre}). Thus, using \eqref{eqn:legs}, the $\lambda(\bp)$-expected value can be estimated by
\begin{equation}\label{somenumber}
\frac{1}{m_1\lVert\cW\rVert}\sum_{v\in \cA^{m_1}} \widetilde\bp([v]) \Phi(v) - \varepsilon 
\leq \int\psi_0\,d\lambda(\bp)
\leq \frac{1}{m_1\lVert\cW\rVert}\sum_{v\in \cA^{m_1}} \widetilde\bp([v]) \Phi(v).
\end{equation}

\smallskip\noindent\textbf{The integral (b).}
The formula \eqref{eqn:legs} also estimates the Birkhoff sum of $\psi_n$ along a piece of trajectory starting at the ``intermediate floor of level 1'' and ending at the $m_1$-st roof (the beginning of a tail word $\bt_1(\cdot)$). Observe that, as before, at those times the projection of the trajectory by $H_n$ is in $\Sigma_N\times J$. The probability of choosing a particular word $v$ is given by 
\[
(\underline\cS_{n,0})_\ast \widetilde{\bp_n} ([v]).
\]
We could proceed almost as before. However, we need to take into account that the $\lambda_n(\bp_n)$-generic trajectory contains not only those blocks codified by $v$, but also the ``tail blocks'' between them, and we need to take them into account. By Corollary \ref{cor:maxmin} together with our setting (iv) in Section \ref{veryshortsec}, for all $n$
\[	1
	\le \frac{\max|\varrho_n|}{\min|\varrho_n|}
	\le 1+4K,\quad\text{ where }\quad
	K\eqdef L_1|\alpha|.
\]
Recall that, by Corollary \ref{cor:maxmin}, $4K/(1+4K)$ estimates the maximal total relative length of added tails (at any level); we estimate the contribution of $\phi\in\Lip_0(1)$ along these tails simply by $\min\phi\ge0$ and $\max\phi$ from below and above, respectively:
\[\begin{split}
	\Big(1-&\frac{4K}{1+4K}\Big) \frac{1}{m_1\lVert\cW\rVert}\sum_{v\in \cA^{m_1}} (\underline\cS_{n,0})_\ast \widetilde{\bp_n}([v]) \Phi(v) 
	- \varepsilon 
	+  \frac{4K}{1+4K} \min \phi\\
&\leq 
 \int\psi_n\,d\lambda_n(\bp_n)\\
&\leq\Big(1-\frac{4K}{1+4K}\Big)\frac{1}{m_1\lVert\cW\rVert}  \sum_{v\in \cA^{m_1}} (\underline\cS_{n,0})_\ast \widetilde{\bp_n}([v]) \Phi(v) 
	+ \frac{4K}{1+4K}\max \phi
\end{split}\]
Together with Corollary \ref{corclavonhieraus}, we get
\begin{equation}\label{somenumber2}\begin{split}
	\frac{1}{1+4K}&(1-4K)\frac{1}{m_1\lVert\cW\rVert} \sum_{v\in \cA^{m_1}} \widetilde\bp([v]) \Phi(v) 
	- \varepsilon \\
&\leq 
	  \int\psi_n\,d\lambda_n(\bp_n)\\
&\leq  \frac{1}{1+4K}(1+4K)\frac{1}{m_1\lVert\cW\rVert} \sum_{v\in \cA^{m_1}} \widetilde\bp([v]) \Phi(v) 
	+ \frac{4K}{1+4K} \max \phi	.
\end{split}\end{equation}
This completes the estimate of the integral in (b).

We are now ready to prove the proposition. Note  that
\[
	\min\phi
	\le \frac{1}{m_1\lVert\cW\rVert} \sum_{v\in \cA^{m_1}} \widetilde\bp([v]) \Phi(v) 
	\le\max\phi.
\]	
As
\[
	 \left|\int  \phi \,d\mu_0(\bp) -\int  \phi \,d\mu_n(\bp) \right|
	= \left|\int  \psi_0 \,d\lambda(\bp) -\int  \psi_n \,d\lambda_n(\bp_n) \right|,
\]
by \eqref{somenumber} and \eqref{somenumber2}, 
\[
	- \varepsilon
	- \frac{-4K}{1+4K}2\max\phi
	\le \int  \phi \,d\mu_n(\bp) -\int  \phi \,d\mu_0(\bp) 
	\le 
	 \frac{4K}{1+4K} \max \phi
	+\varepsilon.
\]	
Note again that $\phi\in\Lip_0(1)$ implies $0\le\min\phi\le\max\phi\le \diam(\Sigma_N\times\bS^1)$. Hence,
\[
	 \left|\int  \phi \,d\mu_n(\bp) -\int  \phi \,d\mu_0(\bp) \right|
	\le 8K \diam(\Sigma_N\times\bS^1)
	+\varepsilon.
\]
Finally recall that, in the setting in Section \ref{veryshortsec}, $K=L_1|\chi(F,\theta)|$.
This ends the proof. 
\end{proof}

\subsection{Weak* continuity of the limit measures}\label{sec52b}

In this section, we describe the objects from Section \ref{seccocyclediffeo} using the language of a cascade of substitutions in Section \ref{secabscasc}. In particular, we consider the  target space $\cB=\{1,\ldots,N\}$ and $\cB^\ast=\Sigma_N^\ast$ with the substitutions $\varrho_n\colon\cA_n\to\Sigma_N^\ast$ defined as follows. Given $\cW\subset\Sigma_N^\ast$ and the associated abstract alphabet $\cA$ in Section \ref{veryshortsec}, let
\[
	\varrho_0\colon\cA\to\Sigma_N^\ast,\quad
	\varrho_0(a_k)\eqdef w_k \in\cW.
\]
Recalling the definition of the tailing map $\bt_n$ in \eqref{tailingmap}, we define iteratively for every $n \in \bN$ the maps
\begin{equation}\label{defsubs}\begin{split}
	&\varrho_n\colon\cA_n\to\Sigma_N^\ast,\\
	&\varrho_n(a^{(n)})
	\eqdef \varrho_{n-1}(a_0^{(n-1)})\ldots\varrho_{n-1}(a_{m_n-1}^{(n-1)})
			\bt_n\big(\varrho_{n-1}(a_0^{(n-1)})\ldots\varrho_{n-1}(a_{m_n-1}^{(n-1)})\big).
\end{split}\end{equation}
Repeated application of Proposition \ref{pro:tailing} guarantees that this cascade satisfies Assumption \ref{assumption1} taking $K=L_1\lvert\alpha\rvert$.

In what is below, we vary the Bernoulli measure $\bp\in\cM_{\rm B}(\cA^\bZ,\sigma_\cA)$ and study the  corresponding limit measures $\mu_\infty(\bp)$ as provided by Proposition \ref{prothe:suspflow}.

\begin{proposition}\label{prop:geometriczero}
	Assume the hypotheses of Section \ref{veryshortsec} and let $(m_n)_n$ be as in Proposition \ref{prothe:suspflow}. Then the map 
\[
	\cM_{\rm B}(\cA^\bZ,\sigma_\cA)\ni\bp\mapsto\mu_\infty(\bp)\in\cM_{\rm erg}(\Sigma_N\times\bS^1,F)
\]	 
is continuous in the weak$\ast$ topology. 
\end{proposition}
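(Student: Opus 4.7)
The plan is to combine two ingredients: continuity of the map $\bp\mapsto\mu_n(\bp)$ at each fixed level $n$, and uniform-in-$\bp$ weak$\ast$ convergence $\mu_n(\bp)\to\mu_\infty(\bp)$. A standard $3\varepsilon$ argument then yields continuity of the limit.

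\textbf{Step 1 (continuity at finite level).} Fix $n\in\bN$. If $\bp^{(j)}\to\bp$ in the $D_\cA$-metric on $\cM_{\rm B}(\cA^\bZ,\sigma_\cA)$, then by Lemma~\ref{lem:path1} the induced probability vectors on $\cA_n$ satisfy $D_{\cA_n}(\bp^{(j)}_n,\bp_n)\to 0$, hence $\bp^{(j)}_n\to\bp_n$ weak$\ast$ on the finite symbolic space $(\cA_n)^\bZ$. By the explicit formula \eqref{susnu}, the suspension measure $\lambda_n(\bp)=\lambda_{\cA_n,|\varrho_n|,\bp_n}$ depends weak$\ast$-continuously on $\bp_n$ (the normalizing denominator $\sum_a |\varrho_n(a)|\bp_n([a])$ is a continuous linear functional of $\bp_n$, bounded away from zero). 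Finally, the map $H_n\colon\fS_n\to\Gamma_n$ of Proposition~\ref{pro:semiconjtop} is continuous, so push-forward by $H_n$ is weak$\ast$-continuous. Composing, $\bp\mapsto \mu_n(\bp)=(H_n)_\ast\lambda_n(\bp)$ is weak$\ast$-continuous.

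\textbf{Step 2 (uniform convergence in $\bp$).} This is exactly where the strengthened Proposition~\ref{prothe:suspflow} is designed to be applied. Fix $\phi\in C(\Sigma_N\times\bS^1)$ and $\varepsilon>0$, and let $L_0=L_0(\phi,\varepsilon)$, noting crucially that this threshold does not depend on $\bp$. Set $\ell=L_0$ and take any $n\ge L_0+1$. Using $\Phi_n$-invariance of $\lambda_n(\bp)$ and $\mu_n(\bp)=(H_n)_\ast\lambda_n(\bp)$, we can rewrite
\[
	\int\phi\,d\mu_n(\bp)
	=\int \frac{1}{\frakR_\ell}\sum_{k=0}^{\frakR_\ell-1}\psi_n\circ\Phi_n^k\,d\lambda_n(\bp).
\]
Splitting this integral over $\fS_{n,\phi,\varepsilon}$ (where the estimate \eqref{eq:expectedd} applies) and its complement (of $\lambda_n$-measure less than $\varepsilon$, where the integrand is bounded by $\lVert\phi\rVert_\infty$) yields
\[
	\Big|\int\phi\,d\mu_n(\bp)-\int\phi\,d\mu_\infty(\bp)\Big|
	\le \varepsilon+2\varepsilon\lVert\phi\rVert_\infty,
\]
uniformly in $\bp\in\cM_{\rm B}(\cA^\bZ,\sigma_\cA)$.

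\textbf{Step 3 (conclusion).} Given a sequence $\bp^{(j)}\to\bp$ and any continuous $\phi$, choose $n$ large enough that the Step~2 bound is $<\varepsilon$ for every $\bp$; for this fixed $n$, apply Step~1 to conclude that $\int\phi\,d\mu_n(\bp^{(j)})\to\int\phi\,d\mu_n(\bp)$. The triangle inequality then gives $\int\phi\,d\mu_\infty(\bp^{(j)})\to\int\phi\,d\mu_\infty(\bp)$, establishing weak$\ast$ continuity of $\bp\mapsto\mu_\infty(\bp)$.

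\textbf{Anticipated obstacle.} The only conceptually non-routine point is the \emph{uniformity} in $\bp$ claimed in Step~2; this is precisely the purpose of having revisited the construction in \cite{DiaGelRam:22a} to obtain the strengthened form of Proposition~\ref{prothe:suspflow} in which $L_0$ depends only on $(\phi,\varepsilon)$. Once that uniformity is available, the rest of the argument is standard soft analysis. A minor bookkeeping point is to check that the suspension formula \eqref{susnu} really does depend weak$\ast$-continuously on the base Bernoulli measure, but this is straightforward since all roof functions $|\varrho_n|$ are piecewise constant on cylinders and the denominator is uniformly positive on the compact simplex of Bernoulli measures.
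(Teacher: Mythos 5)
Your proposal is correct and follows essentially the same three-step scheme as the paper: (1) weak$\ast$-continuity of $\bp\mapsto\mu_n(\bp)$ at each fixed level $n$ (the paper's Claim \ref{claimcorcontweakast}), (2) uniformity in $\bp$ of the convergence $\mu_n(\bp)\to\mu_\infty(\bp)$ extracted from the strengthened Proposition \ref{prothe:suspflow} (the paper's Lemma \ref{lem:equicont}), and (3) a $3\varepsilon$ argument. The only cosmetic difference is that the paper packages Step 2 as a uniform Wasserstein bound $W(\mu_n(\bp),\mu_\infty(\bp))\le\varepsilon$ via a finite $\varepsilon$-net of $\Lip_0(1)$, whereas you work directly with a fixed continuous test function $\phi$; both are valid and you correctly identified that the uniformity of $L_0(\phi,\varepsilon)$ in $\bp$ is the crucial non-routine ingredient.
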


\begin{proof}
 The following fact is an immediate consequence of our construction of each measure $\mu_n(\bp)$ (being a factor of a discrete suspension, compare again Figure \ref{diagram2}).

\begin{claim}\label{claimcorcontweakast}
	The map $\bp\mapsto\mu_n(\bp)$ is continuous in the weak$\ast$ topology.
\end{claim}

We are now looking at $\mu_n(\cdot)$ as a function
\begin{equation}\label{Yn}
	\mu_n\colon\cM_{\rm B}(\cA^\bZ,\sigma_\cA)\to\cM_{\rm erg}(\Sigma_N\times\bS^1,F).
\end{equation}
By Claim \ref{claimcorcontweakast}, each $\mu_n(\cdot)$ is continuous. By Proposition \ref{prothe:suspflow}, the sequence $(\mu_n(\bp))_n$ is pointwise converging  (in the weak$\ast$ topology) and we can define
\[
	\mu_\infty\colon\cM_{\rm B}(\cA^\bZ,\sigma_\cA)\to\cM_{\rm erg}(\Sigma_N\times\bS^1,F),\quad
	\mu_\infty(\bp)
	\eqdef\lim_{n\to\infty}\mu_n(\bp)= \mu_\infty(\bp).
\]

Let us check that the family \eqref{Yn} is equicontinuous (in the weak$\ast$ topologies). 

\begin{lemma}\label{lem:equicont}
	For every $\varepsilon>0$ there is $n_0\in\bN$ such that for every $n\ge n_0$ and $\bp\in\cM_{\rm B}(\cA^\bZ,\sigma_\cA)$, we have
\[
	W(\mu_n(\bp),\mu_\infty(\bp))
	\le \varepsilon.
\]	 
\end{lemma}

\begin{proof}
Recall that, by \eqref{wasssb}, to determine the Wasserstein distance, it is enough to restrict ourselves to the space $\Lip_0(1) \subset \Lip(1)$ of non-negative $1$-Lipschitz functions whose values are bounded by $\diam(\Sigma_N\times\bS^1)$. As the sequence $(\phi_n)_n$ of continuous functions in \eqref{seqphin} fixed in the proof of Proposition \ref{prothe:suspflow} is dense in the space of all continuous functions, for every $\varepsilon>0$, it contains a finite subcollection $\{\phi_1,\ldots,\phi_{i(\varepsilon)}\}$ such that any function in $\Lip_0(1)$ is $\varepsilon$-close to one function in this collection. Hence, for any $\bp\in\cM_{\rm B}(\cA^\bZ,\sigma_\cA)$, 
\[\begin{split}
	W(\mu_n(\bp),\mu_\infty(\bp))
	&= \sup_{\phi\in\Lip_0(1)}\Big|\int\phi\,d\mu_n(\bp)-\int\phi\,d\mu_\infty(\bp)\Big|\\
	&\le 2\varepsilon
		+\max_{k=1,\ldots,i(\varepsilon)}
			\Big|\int\phi_k\,d\mu_n(\bp)-\int\phi_k\,d\mu_\infty(\bp)\Big|.
\end{split}\]	
Recall the topological factor $H_n$ (see Figure \ref{diagram2}).
By Proposition \ref{prothe:suspflow}, if $n\in\bN$ is large enough, then for every $k=1,\ldots,i(\varepsilon)$ there is some subset $\fT_n=\fT_{n,\phi_k,\varepsilon}\subset \Sigma_N\times\bS^1$, $\fT_n=H_n(\fS_{n,\phi_k,\varepsilon})$, with $\mu_n(\fT_n)>1-\varepsilon$ such that
\[\begin{split}
	\int&\phi_k\,d\mu_n(\bp)=\\
	{\tiny\text{($F$-invariance of $\mu_n$)}}\,
	&=\int\frac{1}{\frakR_\ell}\sum_{k=0}^{\frakR_\ell-1}\phi_k\circ F^k\,d\mu_n(\bp)\\
	&=\int_{\fT_n}\frac{1}{\frakR_\ell}\sum_{k=0}^{\frakR_\ell-1}\phi_k\circ F^k\,d\mu_n(\bp)
	+\int_{\fT_n^c}\frac{1}{\frakR_\ell}\sum_{k=0}^{\frakR_\ell-1}\phi_k\circ F^k\,d\mu_n(\bp)\\
	{\tiny\text{(by \eqref{eq:expectedd} and $\mu_n(\fT_n^c)<\varepsilon$)}}\,
	&\le \int_{\fT_n}\Big(\int\phi_k\,d\mu_\infty(\bp)+\varepsilon\Big)\,d\mu_n(\bp)
	+\varepsilon\max|\phi_k|\\
	&\le \int\phi_k\,d\mu_\infty(\bp)	+\varepsilon\mu_n(\fT_n)+ \varepsilon\max|\phi_k| ,
\end{split}\]	
together with the analogous lower bound. 
Hence, together with the fact that $\phi_k$ is $\varepsilon$-close to some function in $\Lip_0(1)$ and the fact that $\mu_n$ is a probability measure, we get
\[
	\left|\int\phi_k\,d\mu_n(\bp)-\int\phi_k\,d\mu_\infty(\bp)\right|
	\le 3\varepsilon+ \varepsilon\diam(\Sigma_N\times\bS^1).
\]
Hence, we get
\[
	W(\mu_n(\bp),\mu_\infty(\bp))
	\le \varepsilon(3+\diam(\Sigma_N\times\bS^1)).	
\]
Note again that the above estimates does not depend on $\bp$. This proves the lemma. 
\end{proof}

To finish the proof our proposition, just observe that Lemma \ref{lem:equicont}, together with continuity of $\bp\mapsto\mu_n(\bp)$ and pointwise convergence $\mu_n(\bp)\to\mu_\infty(\bp)$ implies continuity of $\bp\mapsto\mu_\infty(\bp)$, all in the weak$\ast$ topology. 
\end{proof}

\section{Proofs of Theorems \ref{Bthm:circle} and \ref{Bthm:circleb}}\label{sec7}

We continue to assume the setting collected in Section \ref{veryshortsec} and the notation therein. Throughout this section, we assume that $F\in\mathrm{SP}^1_{\rm shyp}(\Sigma_N\times\bS^1)$ and that $J\subset\bS^1$ is some associated blending interval.

\subsection{Proof of Theorem \ref{Bthm:circle}}\label{secthmcircle}

By \cite[Theorem A]{DiaGelRam:22a}, there exists an $F$-ergodic measure $\theta$ with fiber Lyapunov exponent $\alpha$ negative and arbitrarily close to $0$ and entropy arbitrarily close to $h_0(F)$. To any such measure $\theta$ we apply the construction summarized in Section \ref{veryshortsec} and afterwards apply Proposition \ref{prop:geometriczero}. Thus, letting $\cM_\varepsilon\eqdef\{\mu_\infty(\bp)\colon\bp\in\cM_{\rm B}(\cA^\bZ,\sigma_\cA)\}$, this  is a weak$\ast$ path-connected set of $F$-ergodic measures with zero fiber Lyapunov exponent. Moreover, by Remark \ref{remsimples}, the set $\cM_\varepsilon$ contains a measure with zero entropy and,  Corollary \ref{refcorollary}, a measure whose entropy is bounded from below by $h_0(F)-\varepsilon$, for some $\varepsilon>0$. 

It follows from Proposition \ref{pro:tailing} that the substitutions $\varrho_n \colon\cA_n\to\Sigma_N^\ast$, given in \eqref{defsubs}, satisfy Assumption \ref{assumption1}, with $K=L_1\lvert\alpha\rvert$. Hence, we can invoke all tools from Section \ref{secabscasc}.  By Theorem \ref{theoprop:pathfinal}, $\nu_\infty(\cdot)$ is a well defined and $\bar f$-continuous function from $\cM_{\rm B}(\cA^\bZ,\sigma_\cA)$ into $\cM_{\rm erg}(\Sigma_N,\sigma)$. Moreover, $\nu_\infty(\cM_{\rm B}(\cA^\bZ,\sigma_\cA))$ is an $\bar f$-connected set. All the measures in this set are LB and their entropies vary continuously. In particular, the set of entropies of those measures is a closed interval.

By Lemma \ref{entpres}, $\pi_\ast$ preserves the entropy. Hence, the remaining assertions of the theorem follow from the fact that the diagram in Figure \ref{finalfigure} commutes, which is a consequence of Claim \ref{claimfinal}.
\qed

\begin{figure}[h]
	\hspace{0.2cm}	\xymatrix{
	\big(\cM_{\rm B}(\cA^\bZ,\sigma_\cA), \text{weak$\ast$ topology}\big) \ar[r]^{\mu_\infty\,\,\,\,\,} \ar[rd]_{\nu_\infty}
		&\big(\mu_\infty(\cM_{\rm B}(\cA^\bZ,\sigma_\cA)),\text{weak$\ast$ topology}\big)\ar[d]^{\pi_\ast}\\
		&\big(\nu_\infty(\cM_{\rm B}(\cA^\bZ,\sigma_\cA)),\bar f\text{-topology}\big)
	}
\caption{Relations between limit measures: proof of Theorem \ref{Bthm:circle}}
\label{finalfigure}	
\end{figure}

\subsection{Proof of Theorem \ref{Bthm:circleb}}

We start our proof by collecting the following results.

\begin{lemma}[{\cite[Theorem 1]{DiaGelRam:17}}]\label{lemthetainfty}
For every $\theta^\infty\in\cM_{\rm erg,0}(\Sigma_N\times\bS^1,F)$, there exists a sequence $(\theta^\ell)_\ell\subset \cM_{\rm erg}(\Sigma_N\times\bS^1,F)$ satisfying $\alpha_\ell\eqdef\chi(F,\theta^\ell)<0$ for every $\ell$ such that $\theta^\ell\to\theta^\infty$ in Wasserstein distance (and hence, in particular, $\lim_\ell\alpha_\ell=0$) and in entropy  . 
\end{lemma}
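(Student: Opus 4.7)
The plan is to construct each $\theta^\ell$ as the natural invariant measure on a horseshoe built from long $\theta^\infty$-generic orbit segments to which a relatively short contracting tail has been appended. The three required conclusions—strictly negative exponent, Wasserstein approximation, and entropy approximation—will be achieved simultaneously by calibrating the tail length to be a vanishing fraction of the segment length.

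First, for every $\varepsilon>0$ and every sufficiently large $n$, I would use the Shannon--McMillan--Breiman theorem together with Birkhoff's ergodic theorem for $\theta^\infty$ to extract a disjoint collection $\cW_n\subset\Sigma_N^n$ with $\card\cW_n\ge e^{n(h(F,\theta^\infty)-\varepsilon)}$ such that for each $w\in\cW_n$, the cylinder $[w]$ contains a point whose orbit segment of length $n$ lies within $\varepsilon$ of $\theta^\infty$ in the Wasserstein metric and whose finite-time fiber exponent lies in $(-\varepsilon,\varepsilon)$. Here the key input is that, because $\theta^\infty$ is ergodic, an exponentially large number of $n$-words can be found sharing the statistical behavior of a $\theta^\infty$-generic point.

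Second, combining the axioms $\mathrm{ACC}(J)$, $\mathrm{CEC}+(J)$, and $\mathrm{CEC}-(J)$, I would attach to each $w\in\cW_n$ a tail word $t(w)\in\Sigma_N^*$ whose length is a sufficiently large multiple of $\varepsilon n$—still $o(n)$—chosen so that the composed map $f_{[w\cdot t(w)]}$ sends $J$ into itself with log-derivative at most $-c\varepsilon n$ for some $c>0$, and so that the resulting collection $\widetilde\cW_n\eqdef\{w\cdot t(w)\colon w\in\cW_n\}$ is disjoint and defines a CIFS on $J$ in the sense of Section~\ref{secCIFSs}. The equidistributed Bernoulli-coded measure on $\widetilde\cW_n$ then lifts through Proposition~\ref{pro:semiconj} to an $F$-ergodic measure $\theta^{(n)}$ on the associated horseshoe $\Gamma_n$. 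Three verifications follow: (i) $\chi(F,\theta^{(n)})<0$ by the tail's dominant contracting contribution; (ii) for any $\phi\in\Lip_0(1)$, the integral $\int\phi\,d\theta^{(n)}$ reduces to a weighted average of Birkhoff sums along the $\theta^\infty$-generic bodies plus a vanishing tail contribution, giving Wasserstein convergence $\theta^{(n)}\to\theta^\infty$ as $n\to\infty$, $\varepsilon\to0$; (iii) Abramov's formula applied to the suspension representation gives $h(F,\theta^{(n)})\ge\log\card\cW_n/(n+\max|t|)\to h(F,\theta^\infty)$. Setting $\theta^\ell\eqdef\theta^{(n_\ell)}$ along a diagonal sequence with $\varepsilon=1/\ell$ yields the required sequence.

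The main obstacle is the simultaneous calibration of the tail length: enforcing $\chi(F,\theta^{(n)})<0$ requires the tail contraction to dominate the body fluctuations of the finite-time fiber exponent, while keeping $|t(w)|=o(n)$ is required for both Wasserstein and entropy convergence. The quantitative control in $\mathrm{CEC}-(J)$—producing contracting words whose length is only logarithmic in the target contraction size—together with the exponential smallness of fluctuations along $\theta^\infty$-typical blocks guaranteed by Shannon--McMillan--Breiman, is precisely what makes this balance achievable.
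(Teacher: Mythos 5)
The paper itself does not prove this lemma: it is quoted directly from \cite[Theorem 1]{DiaGelRam:17}, and your outline follows the same general strategy as that cited proof (horseshoes built from $\theta^\infty$-generic segments plus connectors through the blending interval, entropy via a counting/Abramov argument, Wasserstein closeness because the tails occupy a vanishing fraction of time). Steps (i)--(iii) are fine in spirit, but as written your decisive step --- producing the contracting IFS --- has a genuine gap. The finite-time fiber exponent bound for a word $w\in\cW_n$ holds only at (a small neighborhood of) the chosen generic point, not uniformly on the blending interval $J$: on all of $J$ the derivative of $f_{[w]}$ may be as large as $e^{Cn}$ with $C=\max_i\max_x\log\lvert f_i'(x)\rvert$. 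Hence a tail $t(w)$ of length $O(\varepsilon n)$ cannot force $f_{[w\,t(w)]}(J)\subset J$ with log-derivative at most $-c\varepsilon n$ on $J$; since CEC$-$($J$) yields only a bounded contraction rate per symbol, dominating the worst-case expansion of the body on $J$ would require tails of length comparable to $n$, which would ruin both the entropy and the Wasserstein estimates.

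The actual argument (in \cite{DiaGelRam:17}, and in the form this paper uses elsewhere through \cite[Theorem 6.5]{DiaGelRam:22a}, cf.\ Proposition~\ref{proteo:existenceCIFS}) repairs exactly this point: one first inserts a connector that contracts $J$ into an interval of size about $e^{-c\varepsilon n}$ around the generic fiber coordinate (its length is still only $O(\varepsilon n)$ because the covering axioms give word lengths logarithmic in the target size), one selects generic points whose finite-time exponents are controlled at \emph{all} intermediate times $k\le n$ (an Egorov/Pliss-type selection, not just control at time $n$), and one transfers the pointwise derivative bounds to that exponentially small interval by uniform continuity of $\log\lvert f_i'\rvert$, closing up with a final connector back into $J$. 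This calibration is also what produces condition (b) of the CIFS definition (uniform contraction with constant $K$ at every intermediate time), which your sketch never verifies but which is required by the horseshoe and suspension machinery (Propositions~\ref{pro:semiconjtop} and \ref{pro:semiconj}) that you invoke to obtain the ergodic measures $\theta^{(n)}$. With these ingredients added, your diagonal argument does give the lemma; without them, the step ``$f_{[w\,t(w)]}$ contracts $J$ into itself'' is unjustified.
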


\begin{lemma}\label{lemchoicebpell}
For every $\varepsilon>0$ and $\theta\in\cM_{\rm erg}(\Sigma_N\times\bS^1,F)$ with $\alpha=\chi(F,\theta)<0$,  there is a disjoint finite collection of words $\cW\subset\Sigma_N^\ast$ of equal lengths defining a CIFS on $J$, an associated abstract alphabet $\cA$, and a Bernoulli vector $\bp$ on $\cA^\bZ$ such that $\mu_0(\bp)$ is $\varepsilon$-close to $\theta$ in Wasserstein distance and in entropy.
\end{lemma}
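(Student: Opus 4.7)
The plan is to combine the existence result for CIFSs with good quantifiers (Proposition~\ref{proteo:existenceCIFS}) with the explicit computation of $h(F,\mu_0(\bp))$ in terms of $h(\sigma_\cA,\bp)$ via Abramov's formula. The crucial point is that the equal-length option in Proposition~\ref{proteo:existenceCIFS} makes the associated roof function constant, which renders the entropy computation transparent.

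Assuming first that $h \eqdef h(F,\theta) > 0$, I would apply Proposition~\ref{proteo:existenceCIFS} to $\theta$ with some arbitrary $\varepsilon_E \in (0,|\alpha|/4)$ together with $\varepsilon_H = \varepsilon$ and $\varepsilon_W = \varepsilon$. This yields a disjoint finite collection $\cW \subset \Sigma_N^\ast$ of words of common length $L \eqdef \lVert\cW\rVert$ defining a CIFS on $J$ and satisfying
\[
  \left|\tfrac{1}{L}\log\card\cW - h\right| \le \varepsilon
  \spac{and}
  W(\mu,\theta) < \varepsilon \text{ for every } \mu \in \cM_{\rm erg}(\Gamma(\cW),F).
\]
Let $\cA$ denote the associated abstract alphabet (Remark~\ref{remabstract}), so $M \eqdef \card\cA = \card\cW$, and take $\bp$ to be the uniform Bernoulli measure on $\cA^\bZ$, for which $h(\sigma_\cA,\bp) = \log M$.

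Because all words of $\cW$ have length $L$, the roof function $R$ of the associated discrete suspension is the constant $L$. Then Proposition~\ref{pro:semiconj} combined with Abramov's formula~\eqref{eqAbramov} yields
\[
 h(F,\mu_0(\bp))
 = h(\Phi_{\cA,R},\lambda_0(\bp))
 = \frac{h(\sigma_\cA,\bp)}{L}
 = \frac{\log M}{L},
\]
so $|h(F,\mu_0(\bp)) - h(F,\theta)| \le \varepsilon$. Moreover, by Proposition~\ref{pro:semiconj} the measure $\mu_0(\bp)$ is ergodic and supported on $\Gamma(\cW)$, hence $W(\mu_0(\bp),\theta) < \varepsilon$ follows directly from the Wasserstein property above. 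This delivers both approximations simultaneously.

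The borderline case $h(F,\theta) = 0$ lies outside the scope of Proposition~\ref{proteo:existenceCIFS}, but the same strategy works: since $\theta$ has negative fiber exponent, one can approximate it in Wasserstein distance by a periodic measure carried by a CIFS consisting of (iterates of) a single long word, and then choose $\bp$ a Dirac Bernoulli vector, yielding $\mu_0(\bp)$ of entropy zero. No substantial obstacle is expected for the proof: everything reduces to combining the existence of $\cW$ with the freedom to choose $\bp$, noting that $h(\sigma_\cA,\bp)$ ranges continuously over $[0,\log M]$ so that $h(F,\mu_0(\bp))$ ranges continuously over $[0,(\log M)/L]$, an interval that contains a value within $\varepsilon$ of $h(F,\theta)$ by the first displayed bound.
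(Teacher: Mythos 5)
Your proof is correct and uses essentially the same ingredients as the paper's own argument: apply Proposition~\ref{proteo:existenceCIFS} (with the equal-length option) to obtain $\cW$, the abstract alphabet $\cA$, and take $\bp$ to be the uniform, i.e.\ maximal entropy, Bernoulli measure. The paper then simply cites Proposition~\ref{Corentest} for the entropy lower bound $h(F,\mu_0(\bp))\ge e^{-L_1|\alpha|}(h(F,\theta)-\varepsilon_H)$, whereas you compute the exact value $h(F,\mu_0(\bp))=\log M/L$ via Proposition~\ref{pro:semiconj} and Abramov's formula~\eqref{eqAbramov}, using that the roof function is constant; this immediately gives the two-sided estimate $|h(F,\mu_0(\bp))-h(F,\theta)|\le\varepsilon$, which is what the lemma actually asserts. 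Your version is a touch cleaner, since the factor $e^{-L_1|\alpha|}$ in Proposition~\ref{Corentest} disappears anyway at level $n=0$ (no tails have been inserted yet), and the exact formula makes the two-sidedness transparent. Your caveat about the borderline case $h(F,\theta)=0$ is also apt: Proposition~\ref{proteo:existenceCIFS} requires $h>0$ and $\varepsilon_H\in(0,h)$, so the paper's proof, as written, glosses over this case; your fix via a single long word with a Dirac Bernoulli vector (giving a zero-entropy periodic-type measure) is the natural way to close that gap. The closing paragraph about $h(\sigma_\cA,\bp)$ ranging over $[0,\log M]$ is superfluous once the uniform choice is made, but it is not wrong.
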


\begin{proof}
The existence of $\cW$ follows from Proposition \ref{proteo:existenceCIFS}. Letting $\cA$ be the associated abstract alphabet, $\card\cA=\card\cW$, and $\bp$ be the maximal entropy Bernoulli measure on $\cA^\bZ$. Together with Proposition \ref{Corentest} we get
\[
	h(F,\mu_n(\bp))
	\ge \frac{e^{-L_1|\alpha|}}{\lVert\cW\rVert}h(\sigma_\cA,\bp) 
	= \frac{e^{-L_1|\alpha|}}{\lVert\cW\rVert}\log\card\cA
	\ge e^{-L_1|\alpha|}(h(F,\theta)-\varepsilon_H),
\]
for every $n\ge0$. This implies the assertion.
\end{proof}

To continue the proof of Theorem \ref{Bthm:circleb}, fix $\theta^\infty\in\cM_{\rm erg,0}(\Sigma_N\times\bS^1,F)$. Consider a sequence $(\theta^\ell)_\ell$ provided by Lemma \ref{lemthetainfty} such that
\[
	W(\theta^\ell,\theta^\infty)<2^{-\ell},\quad
	|h(F,\theta^\ell)-h(F,\theta^\infty)|<2^{-\ell},\quad
	2^{-\ell}<\alpha_\ell\eqdef\chi(F,\theta^\ell)<0.
\]
Without loss of generality, as $\pi_\ast\colon\cM_{\rm erg}(\Sigma_N\times\bS^1,F)\to\cM_{\rm erg}(\bS^1,\sigma)$ is continuous and preserves entropy, we can also assume that
\begin{equation}\label{eq1}
	W(\pi_\ast\theta^\ell,\pi_\ast\theta^\infty)<2^{-\ell},\quad
	|h(\sigma,\pi_\ast\theta^\ell)-h(\sigma,\pi_\ast\theta^\infty)|<2^{-\ell}.
\end{equation}

To every ergodic measure $\theta^\ell$, we now invoke the setting (i)--(vi) in Section \ref{veryshortsec}. In particular, for every index $\ell$, we obtain disjoint finite collection of words of equal lengths $\cW^\ell$ together with corresponding cascades of abstract alphabets $\cA^\ell_n$, suspension spaces $\cS^\ell_n$, horseshoes $\Gamma^\ell_n$ and $F$-ergodic measures $\mu^\ell_n(\bp^\ell)$, and $\sigma$-ergodic measure $\nu^\ell_n(\bp^\ell)$. Note that in item (iv) for every $\ell$ we take $K_\ell\eqdef L_1|\alpha_\ell|$.
For each $\ell\in\bN$, let $\mu_0^\ell(\bp^\ell)$ be the ergodic measure as provided by Lemma \ref{lemchoicebpell} applied to $\varepsilon_\ell$ sufficiently small and $\theta^\ell$ with fiber Lyapunov exponent $\alpha_\ell<0$ such that
\[
	W(\mu_0^\ell(\bp^\ell),\theta^\ell)<2^{-\ell},\quad
	|h(F,\mu_0^\ell(\bp^\ell))-h(F,\theta^\ell)|<2^{-\ell}.
\]
As before, and also using that $\pi_\ast\mu_0^\ell(\bp^\ell)=\nu_0^\ell(\bp^\ell)$, we can assume that also
\begin{equation}\label{eq2}
	W(\nu_0^\ell(\bp^\ell),\pi_\ast\theta^\ell)<2^{-\ell},\quad
	|h(\sigma,\nu_0^\ell(\bp^\ell))-h(\sigma,\pi_\ast\theta^\ell)|<2^{-\ell}.
\end{equation}
By Proposition \ref{prokickoff}, for all $n$
\[
	\bar f\big(\nu_n^\ell(\bp^\ell),\nu_0^\ell(\bp^\ell)\big)
	\le 6L_1|\alpha_\ell|+ 8L_1^2|\alpha_\ell|^2.
\]
Hence,
\begin{equation}\label{eq3}
	\bar f\big(\nu_\infty^\ell(\bp^\ell),\nu_0^\ell(\bp^\ell)\big)
	\le 6L_1|\alpha_\ell|+ 8L_1^2|\alpha_\ell|^2.
\end{equation}

Finally, recall that $\bar f$-convergence implies convergence in the weak$\ast$ topology and in entropy, see Remark \ref{remfbars}. Hence, from \eqref{eq1}--\eqref{eq3} we obtain
\[
	\lim_{\ell\to\infty}\nu_\infty^\ell(\bp^\ell) 
	= \lim_{\ell\to\infty}\nu_0^\ell(\bp^\ell) 
	= \lim_{\ell\to\infty}\pi_\ast\theta^\ell 
	= \pi_\ast\theta_\infty,
\]
where convergence is in the weak$\ast$ topology and in entropy.
\qed

\section{Implications for matrix cocycles and proofs of Theorems \ref{theMain} and \ref{theMainb}}	\label{ImpMatCoc}

Let $\bA=\{A_1,\ldots,A_N\}\in\mathrm{SL}(2, \mathbb{R})^N$, $N\ge2$, be a finite collection of matrizes. Consider the associated step skew product $F_\bA$ defined as in \eqref{eq:sp} with fiber maps given by \eqref{neq:defsteskecoc}.
The top Lyapunov exponents of the cocycle $\bA$ and the fiber Lyapunov exponent of the skew product 
$F_\bA$ are related as explained in the following immediate consequence of \cite[Theorem 11.1]{DiaGelRam:19} and of Lemma \ref{entpres}.

\begin{lemma}\label{lemcontin}
For every ergodic measure $\nu\in\cM_{\rm erg}(\Sigma_N,\sigma)$ and for every $F_\bA$-ergodic measure $\mu$ such that $\nu=\pi_\ast \mu$, we have
\[
\chi(F_\bA,\mu) \in\{  -2 \lambda_1(\bA,\nu),2 \lambda_1(\bA,\nu)\}
\quad\text{ and }\quad
h(F_\bA,\mu)
	= h(\sigma,\nu).
\]
\end{lemma}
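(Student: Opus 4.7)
The entropy equality $h(F_\bA, \mu) = h(\sigma, \nu)$ will follow immediately from Lemma \ref{entpres} applied to the ergodic measure $\mu$ of the step skew product $F_\bA$, together with the hypothesis $\pi_\ast \mu = \nu$.

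For the fiber Lyapunov exponent identity, my plan proceeds in three steps. First, I will compute the projective derivative explicitly: for $A \in \mathrm{SL}(2,\bR)$, identifying $x \in \bS^1 \cong \bP^1$ with a unit vector $v \in \bR^2$ and using $\det A = 1$, the chain rule applied to the definition $f_A(v) = Av/\lVert Av\rVert$ yields $\lvert f_A'(v)\rvert = \lVert A v\rVert^{-2}$. Substituting this into the definition \eqref{h0Fdef} gives
\[
\chi(F_\bA, \mu) = -2 \int \log \lVert A_{\xi_0} x\rVert\, d\mu(\xi, x).
\]
Second, Birkhoff's ergodic theorem applied to the ergodic system $(\Sigma_N \times \bS^1, F_\bA, \mu)$, together with the cocycle relation $\lVert A^n(\xi) x\rVert = \lVert A_{\xi_{n-1}} \cdots A_{\xi_0} x\rVert$, shows that for $\mu$-almost every $(\xi, x)$ the above integral equals $\lim_{n \to \infty} n^{-1} \log \lVert A^n(\xi) x\rVert$.

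Third, I will invoke Oseledets' multiplicative ergodic theorem for the cocycle $\bA$ over the ergodic base $(\Sigma_N, \sigma, \nu)$. Because the $A_i$ have determinant one, the two Lyapunov exponents sum to zero and hence equal $\pm \lambda_1(\bA,\nu)$; for $\nu$-a.e.\ $\xi$ this produces an Oseledets splitting $\bR^2 = E^+(\xi) \oplus E^-(\xi)$ (trivial when $\lambda_1(\bA,\nu) = 0$) such that $\lim_n n^{-1} \log \lVert A^n(\xi) v\rVert$ equals $+\lambda_1(\bA,\nu)$ for $v \notin E^-(\xi)$ and equals $-\lambda_1(\bA,\nu)$ for $v \in E^-(\xi) \setminus \{0\}$. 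The set $\{(\xi, x) \colon x \in E^-(\xi)\}$ is $F_\bA$-invariant, so ergodicity of $\mu$ forces it to have $\mu$-measure $0$ or $1$, producing the two cases $\chi(F_\bA, \mu) = \mp 2 \lambda_1(\bA,\nu)$.

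I anticipate no serious obstacle: the projective derivative formula and the reduction to Oseledets are standard, and, as noted, this is exactly what is recorded in \cite[Theorem 11.1]{DiaGelRam:19}. The whole argument amounts to invoking that theorem together with Lemma \ref{entpres}.
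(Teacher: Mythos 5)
Your proposal is correct and takes the same route as the paper: the paper dismisses the lemma in one line as an immediate consequence of \cite[Theorem 11.1]{DiaGelRam:19} and Lemma~\ref{entpres}, and your three steps (projective derivative formula, Birkhoff telescoping, Oseledets plus ergodicity of $\{(\xi,x)\colon x\in E^-(\xi)\}$) are exactly the unpacking of that citation. One small slip: in your second step the displayed quantity should read $-\tfrac{2}{n}\log\lVert A^n(\xi)x\rVert$ rather than $n^{-1}\log\lVert A^n(\xi)x\rVert$ (the factor $-2$ from the first step was dropped there, though it reappears correctly in your conclusion).
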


We refrain from repeating the full definition of the set $\fE_{N, \rm shyp}$ (see \cite[Section 11.7]{DiaGelRam:19}); a rough presentation goes as follows: a cocycle $\bA$ belongs to $\fE_{N, \rm shyp} $ if it has 
\begin{enumerate}
\item Some hyperbolicity: The semi-group generated by $\bA$ contains a hyperbolic element $A$ and let
\[
	B_A^-\eqdef\{v\in\bP^1\colon|f_A'(v)|<1\},\quad
	B_A^+\eqdef\{v\in\bP^1\colon|f_A'(v)|>1\},
\]
where $f_A$ is defined as in \eqref{neq:defsteskecoc};
\item Transitions in finite time: There is $M\in\bN$ such that for every $v\in\bP^1$ there are sequences $\xi^+,\eta^+\in\Sigma_N^+$ such that $f_{\xi_{s-1}}\circ\cdots\circ f_{\xi_0}(v)\in B_A^+$ and $f_{\eta_{r-1}}\circ\cdots\circ f_{\eta_0}(v)\in B_A^-$ for some $s,r\le M$.
\end{enumerate}
Two crucial facts are that $\fE_{N, \rm shyp}$ is open and dense in $\fE_N$ and that for every $\bA\in\fE_{N, \rm shyp}$ the induced skew product $F_\bA$ is in $\mathrm{SP}^1_{\rm shyp}(\Sigma_N\times\bP^1)$ (see \cite[Proposition 11.23]{DiaGelRam:19}). 

Let us now argue that Theorem \ref{theMain} is indeed a consequence of Theorem \ref{Bthm:circle}. 

\subsection{Proof of Theorem \ref{theMain}}

Let $\bA$ be a matrix cocycle in $\mathfrak{E}_{N,\rm shyp}$. Thus, the associated skew product map $F_\bA$ is in $\mathrm{SP}^1_{\rm shyp}(\Sigma_N\times\bS^1)$, and hence we can apply Theorem \ref{Bthm:circle}.
As shown in Section \ref{secthmcircle} (proof of Theorem \ref{Bthm:circle}), the diagram in Figure \ref{finalfigure} commutes and all maps in it are continuous in the indicated topologies.

	By Theorem \ref{Bthm:circle}, the set $\mu_\infty(\cM_{\rm B}(\cA^\bZ,\sigma_\cA))$ is a path-connected family of ergodic measures with zero fiber Lyapunov exponent. Moreover, their entropies contain the interval $[0,h_0(F)-\varepsilon]$, where $\varepsilon$ comes from Theorem \ref{Bthm:circle}. Lemma \ref{entpres} implies that $h_0(F)=h_0(\bA)$. Moreover, the image of $\mu_\infty(\cM_{\rm B}(\cA^\bZ,\sigma_\cA))$ under $\pi_\ast$ is $\bar f$-path connected and consists only of LB measures. By Lemma \ref{lemcontin} all measures in $\pi_\ast(\mu_\infty(\cM_{\rm B}(\cA^\bZ,\sigma_\cA)))$ have zero top Lyapunov exponent for $\bA$. 
\qed

\subsection{Proof of Theorem \ref{theMainb}}

Let $\bA$ be a matrix cocycle in $\mathfrak{E}_{N,\rm shyp}$. Thus, the associated skew product map $F_\bA$ is in $\mathrm{SP}^1_{\rm shyp}(\Sigma_N\times\bS^1)$, and hence we can apply Theorem \ref{Bthm:circleb}.

Let $\nu^+\in\cM_{\rm erg,0}(\Sigma_N^+,\sigma^+)$. For $\nu^+$-almost every $\xi^+$, for every $v\in\bP^1$, we have $\chi^+(\xi^+,v)=0$ (see \cite[Proposition 11.5]{DiaGelRam:22a}). Let $\nu\in\cM_{\rm erg}(\Sigma_N,\sigma)$ the natural extension of $\nu^+$. This immediately implies that every $\mu\in\cM_{\rm erg}(\Sigma_N\times\bS^1,F)$ such that $\pi_\ast\mu=\nu$ satisfies $\chi(F_\bA,\mu)=0$.
 The assertion now follows from Theorem \ref{Bthm:circleb}.
\qed

\bibliographystyle{alpha}

\begin{thebibliography}{GIKN05}

\bibitem[ABY10]{AviBocYoc:10}
A.~Avila, J.~Bochi, and J.-Ch. Yoccoz.
\newblock Uniformly hyperbolic finite-valued {${\rm SL}(2,\mathbb R)$}-cocycles.
\newblock {\em Comment. Math. Helv.}, 85(4):813--884, 2010.

\bibitem[BC]{BarCis:}
P.~G. Barrientos and J.~A. Cisneros.
\newblock {Minimal strong foliations in skew-products of iterated function
  systems}.
\newblock {\em {\tt arXiv:2304.11229}}.

\bibitem[BM20]{BarMal:20}
P.~G. Barrientos and D.~Malicet.
\newblock Extremal exponents of random products of conservative
  diffeomorphisms.
\newblock {\em Math. Z.}, 296(3-4):1185--1207, 2020.

\bibitem[Bow73]{Bow:73}
R.~Bowen.
\newblock Topological entropy for noncompact sets.
\newblock {\em Trans. Amer. Math. Soc.}, 184:125--136, 1973.

\bibitem[BR16]{BocRam:16}
J.~Bochi and M.~Rams.
\newblock The entropy of {L}yapunov-optimizing measures of some matrix
  cocycles.
\newblock {\em J. Mod. Dyn.}, 10:255--286, 2016.

\bibitem[BZ19]{BonZha:19}
Ch. Bonatti and J.~Zhang.
\newblock Periodic measures and partially hyperbolic homoclinic classes.
\newblock {\em Trans. Amer. Math. Soc.}, 372(2):755--802, 2019.

\bibitem[CY05]{CowYou:05}
W.~Cowieson and L.-S. Young.
\newblock S{RB} measures as zero-noise limits.
\newblock {\em Ergodic Theory Dynam. Systems}, 25(4):1115--1138, 2005.

\bibitem[DF11]{DiaFis:11}
L.~J. D\'{\i}az and T.~Fisher.
\newblock Symbolic extensions and partially hyperbolic diffeomorphisms.
\newblock {\em Discrete Contin. Dyn. Syst.}, 29(4):1419--1441, 2011.

\bibitem[DGR17]{DiaGelRam:17}
L.~J. D{\'i}az, K.~Gelfert, and M.~Rams.
\newblock Nonhyperbolic step skew-products: ergodic approximation.
\newblock {\em Ann. Inst. H. Poincar\'e Anal. Non Lin\'eaire},
  34(6):1561--1598, 2017.

\bibitem[DGR19]{DiaGelRam:19}
L.~J. D\'{\i}az, K.~Gelfert, and M.~Rams.
\newblock Entropy spectrum of {L}yapunov exponents for nonhyperbolic step
  skew-products and elliptic cocycles.
\newblock {\em Comm. Math. Phys.}, 367(2):351--416, 2019.

\bibitem[DGR22]{DiaGelRam:22a}
L.~J. D\'{\i}az, K.~Gelfert, and M.~Rams.
\newblock Variational principle for nonhyperbolic ergodic measures: skew
  products and elliptic cocycles.
\newblock {\em Comm. Math. Phys.}, 394(1):73--141, 2022.

\bibitem[DGS20]{DiaGelSan:20}
L.~J. D\'{\i}az, K.~Gelfert, and B.~Santiago.
\newblock Weak{$*$} and entropy approximation of nonhyperbolic measures: a
  geometrical approach.
\newblock {\em Math. Proc. Cambridge Philos. Soc.}, 169(3):507--545, 2020.

\bibitem[Fel76]{Fel:76}
J.~Feldman.
\newblock New {$K$}-automorphisms and a problem of {K}akutani.
\newblock {\em Israel J. Math.}, 24(1):16--38, 1976.

\bibitem[Fen09]{Fen:09}
D.-J. Feng.
\newblock Lyapunov exponents for products of matrices and multifractal
  analysis. {II}. {G}eneral matrices.
\newblock {\em Israel J. Math.}, 170:355--394, 2009.

\bibitem[FK60]{FurKes:60}
H.~Furstenberg and H.~Kesten.
\newblock Products of random matrices.
\newblock {\em Ann. Math. Stat.}, 31:457--469, 1960.

\bibitem[Fur63]{Fur:63}
H.~Furstenberg.
\newblock Noncommuting random products.
\newblock {\em Trans. Amer. Math. Soc.}, 108:377--428, 1963.

\bibitem[GIKN05]{GorIlyKleNal:05}
A.~S. Gorodetski, Yu.~S. Ilyashenko, V.~A. Kleptsyn, and M.~B. Nalski.
\newblock Nonremovability of zero {L}yapunov exponents.
\newblock {\em Funktsional. Anal. i Prilozhen.}, 39(1):27--38, 95, 2005.

\bibitem[GK18]{GelKwi:18}
K.~Gelfert and D.~Kwietniak.
\newblock On density of ergodic measures and generic points.
\newblock {\em Ergodic Theory Dynam. Systems}, 38(5):1745--1767, 2018.

\bibitem[Gol22]{Gol:22}
I.~Goldsheid.
\newblock Exponential growth of products of non-stationary {M}arkov-dependent
  matrices.
\newblock {\em Int. Math. Res. Not. IMRN}, (8):6310--6346, 2022.

\bibitem[GP17]{GorPes:17}
A.~Gorodetski and Y.~Pesin.
\newblock Path connectedness and entropy density of the space of hyperbolic
  ergodic measures.
\newblock In {\em Modern theory of dynamical systems}, volume 692 of {\em
  Contemp. Math.}, pages 111--121. Amer. Math. Soc., Providence, RI, 2017.

\bibitem[Hut81]{Hut:81}
J.~E. Hutchinson.
\newblock Fractals and self-similarity.
\newblock {\em Indiana Univ. Math. J.}, 30(5):713--747, 1981.

\bibitem[Jen19]{Jen:19}
O.~Jenkinson.
\newblock Ergodic optimization in dynamical systems.
\newblock {\em Ergodic Theory Dynam. Systems}, 39(10):2593--2618, 2019.

\bibitem[Kat75]{Kat:75}
A.~B. Katok.
\newblock Time change, monotone equivalence, and standard dynamical systems.
\newblock {\em Dokl. Akad. Nauk SSSR}, 223(4):789--792, 1975.

\bibitem[Kat77]{Kat:77}
A.~B. Katok.
\newblock Monotone equivalence in ergodic theory.
\newblock {\em Izv. Akad. Nauk SSSR Ser. Mat.}, 41(1):104--157, 231, 1977.

\bibitem[K{\L}]{KwiLac:}
D.~Kwietniak and M.~{{\L}{\k{a}}cka}.
\newblock {Feldman-Katok pseudometric and the GIKN construction of
  nonhyperbolic ergodic measures }.
\newblock {\em {\tt arXiv:1702.01962}}.

\bibitem[LM95]{LinMar:95}
D.~Lind and B.~Marcus.
\newblock {\em An introduction to symbolic dynamics and coding}.
\newblock Cambridge University Press, Cambridge, 1995.

\bibitem[LW77]{LedWal:77}
F.~Ledrappier and P.~Walters.
\newblock A relativised variational principle for continuous transformations.
\newblock {\em J. London Math. Soc. (2)}, 16(3):568--576, 1977.

\bibitem[ORW82]{OrnRudWei:82}
D.~S. Ornstein, D.~J. Rudolph, and B.~Weiss.
\newblock Equivalence of measure preserving transformations.
\newblock {\em Mem. Amer. Math. Soc.}, 37(262):xii+116, 1982.

\bibitem[Shi96]{Shi:96}
P.~C. Shields.
\newblock {\em The ergodic theory of discrete sample paths}, volume~13 of {\em
  Graduate Studies in Mathematics}.
\newblock American Mathematical Society, Providence, RI, 1996.

\bibitem[Sig74]{Sig:74}
K.~Sigmund.
\newblock On dynamical systems with the specification property.
\newblock {\em Trans. Amer. Math. Soc.}, 190:285--299, 1974.

\bibitem[Sig77]{Sig:77}
K.~Sigmund.
\newblock On the connectedness of ergodic systems.
\newblock {\em Manuscripta Math.}, 22(1):27--32, 1977.

\bibitem[Vir79]{Vir:79}
A.~D. Vircer.
\newblock Matrix and operator random products.
\newblock {\em Teor. Veroyatnost. i Primenen.}, (no. 2,):361--370, 1979.

\bibitem[Yoc04]{Yoc:04}
J.-C. Yoccoz.
\newblock Some questions and remarks about {${\rm SL}(2,\bold R)$} cocycles.
\newblock In {\em Modern dynamical systems and applications}, pages 447--458.
  Cambridge Univ. Press, Cambridge, 2004.

\bibitem[YZ20]{YanZha:20}
D.~Yang and J.~Zhang.
\newblock Non-hyperbolic ergodic measures and horseshoes in partially
  hyperbolic homoclinic classes.
\newblock {\em J. Inst. Math. Jussieu}, 19(5):1765--1792, 2020.

\end{thebibliography}

\end{document}